\documentclass[11pt]{amsart}
\usepackage[margin=2.5cm]{geometry}
\usepackage{tikz-cd}
\usepackage{graphicx}
\usepackage{mathdots}		
\usepackage{amssymb}
\usepackage{amsmath}
\usepackage{relsize}
\usepackage{subfig, graphicx}
\usepackage{mathrsfs}
\usepackage{amssymb, amsthm, indentfirst}
\usepackage{relsize}
\usepackage{hyperref}
\usepackage{stmaryrd}
\usepackage{lineno}
\usepackage{mathabx}

\title[Betti--Whittaker periods under duality]{Betti--Whittaker periods under duality: \\ variations and applications}
\author[Castellano, \ Chen, \ Darshan, \ Raghuram]{Giancarlo Castellano, \ \ Shih-Yu Chen, \ \ Nasit Darshan, \ \ A.\,Raghuram}
\subjclass[2020]{11F67; 11F70, 11F75, 22E55}

\address{Institut f\"ur Informatik, University of Applied Sciences Wiener Neustadt, Wiener Neustadt, Austria.} 

\email{giancarlo.castellano@fhwn.ac.at}

\address{Department of Mathematics, National Tsing Hua University, 101, Section 2, Kuang-Fu Road, Hsinchu, Taiwan, R.O.C.} 

\email{sychen.math@gmail.com}

\address{Department of Mathematics, National Tsing Hua University, 101, Section 2, Kuang-Fu Road, Hsinchu, Taiwan, R.O.C.} 

\email{darshan.nasit1111@gmail.com}

\address{Dept.\,of Mathematics, Fordham University at Lincoln Center, New York, NY 10023, USA.} 

\email{araghuram@fordham.edu}

\usepackage{mathtools} 							
\usepackage{amssymb}						    
\usepackage{amsmath}
\usepackage{mathdots}
\usepackage{mathrsfs}							
\usepackage{stmaryrd} 							

\newtheorem{theorem}{Theorem}[section]
\newtheorem{proposition}[theorem]{Proposition}
\newtheorem{lemma}[theorem]{Lemma}
\newtheorem{corollary}[theorem]{Corollary}
\theoremstyle{definition}
\newtheorem{remark}[theorem]{Remark}

\let\oleft\left
\let\oright\right
\renewcommand{\left}{\mathopen{}\mathclose\bgroup\oleft}
\renewcommand{\right}{\aftergroup\egroup\oright}

\makeatletter
\newcommand{\extp}{\@ifnextchar^\@extp{\@extp^{\,}}}
\def\@extp^#1{\mathop{\bigwedge\nolimits^{\!#1}}}
\makeatother

\usepackage{tikz}
\usetikzlibrary{cd}

\newcommand{\Z}{\mathbb{Z}}
\newcommand{\Q}{\mathbb{Q}}
\newcommand{\R}{\mathbb{R}}
\newcommand{\C}{\mathbb{C}}

\newcommand{\eps}{\varepsilon}

\newcommand{\defeq}{:=}

\newcommand{\dual}{^{\vee}}

\newcommand{\lquot}{\backslash} 				

\DeclareMathOperator{\Hom}{Hom}

\DeclareMathOperator{\Aut}{Aut}

\newcommand{\longemb}{\lhook\joinrel\longrightarrow}

\DeclareMathOperator{\GL}{GL}
\DeclareMathOperator{\Orth}{O}
\DeclareMathOperator{\U}{U}
\DeclareMathOperator{\SO}{SO}
\DeclareMathOperator{\Res}{Res}

\DeclareMathOperator{\Lie}{Lie}
\newcommand{\frg}{\mathfrak{g}}
\newcommand{\frk}{\mathfrak{k}}

\newcommand{\A}{\mathbb{A}} 					
\newcommand{\Gsum}{\mathcal{G}} 				

\newcommand{\cusp}{\mathrm{cusp}}
\newcommand{\Cusp}{\mathcal{A}_\cusp}
\newcommand{\W}{\mathcal{W}} 					
\newcommand{\cS}{\mathcal{S}}                    

\newcommand{\bp}{\begin{pmatrix}}  				
\newcommand{\ep}{\end{pmatrix}} 				
\DeclareMathOperator{\diag}{diag} 				
\newcommand{\Sm}{C^\infty} 						
\newcommand{\pr}{\mathrm{pr}} 					

\newcommand{\M}{\mathcal{M}} 					
\newcommand{\Coh}{\mathrm{Coh}}

\newcommand{\twistW}[1][\sigma]{#1_\W}

\newcommand{\invol}{\theta}

\newcommand{\involW}{\invol_{\W}}

\begin{document}

\begin{abstract}
One of the authors (Chen) had previously proved a result on the behavior of Betti--Whittaker periods under duality for 
cohomological cuspidal automorphic representations of ${\rm GL}_n/\Q$ under some regularity assumptions while using their relation to $L$-values as an 
anchor in his proof. In this article we prove a generalization of this result to ${\rm GL}_n$ over any number 
field $F$ without any regularity assumptions and without recourse to $L$-values, while using the outer-automorphism of ${\rm GL}_n$ as the main tool. 
Then, 
using results of Harder and one of the other authors (Raghuram), we give applications to new rationality results for the ratios of special 
values of general triple product $L$-functions and for general twisted Asai $L$-functions. We also give a new proof of a previous 
result of Bhagwat and Raghuram on the special values of $L$-functions for orthogonal groups. 
We present variations on period relations for the Betti--Shalika periods under duality, and the behavior of Betti--Whittaker periods 
under Galois automorphisms of $F$. 
\end{abstract}

\maketitle

\section*{Introduction}

Attached to a cohomological cuspidal automorphic representation $\Pi$ of ${\rm GL}_n$ over a number field $F$ and a permissible signature $\epsilon$ 
are the Betti--Whittaker periods $p^\epsilon(\Pi)$ that arise from a 
comparison of two different rational structures on different models for $\Pi$. These periods play a fundamental role in understanding 
the special values of various $L$-functions attached to $\Pi.$ One of the authors, Chen, had previously proved a result (see \cite{Chen2023}) on the 
behavior of such periods under duality when the base field $F$ is $\Q$, and under certain regularity assumptions on $\Pi$. 
The main theorem of {\it loc.\,cit.}\ proves that the ratio 
$$
\frac{p^\eps(\Pi\dual)}{\Gsum(\omega_{\Pi})^{1-n}\cdot p^\eps(\Pi)}
$$
is algebraic and Galois equivariant, where $\Pi^\vee$ is the contragredient of $\Pi$ and $\Gsum(\omega_\Pi)$ is the Gauss sum of the central character 
$\omega_\Pi$ of $\Pi$. The proof of this theorem was based on induction on $n$ and while using known results on the 
special values of $L$-functions for $\GL_n \times \GL_{n-1}$.

\smallskip

In this article we prove that such a period relation holds for a cohomological cuspidal automorphic representation $\Pi$ of ${\rm GL}_n$ over any number field $F$, 
without any regularity assumptions, and without recourse to $L$-values, while using the non-trivial outer-automorphism of ${\rm GL}_n$ as the main tool. 
The principal theorem of this article is stated and proved as Theorem~\ref{thm:period-relation}. 
It is a fundamental fact that the outer-automorphism of the algebraic group ${\rm GL}_n/F$ given by `transpose-inverse' 
realizes the contragredient representation; this not only works for algebraic finite-dimensional representations where it is an exercise with highest weights, but 
also works for the local components of global automorphic representations---a celebrated result of Gelfand and Kazhdan \cite{GK1971}---and so also works globally by 
multiplicity one for cuspidal representations for ${\rm GL}_n/F.$ 
The point of view taken in our proof is that, after all, the definition of the periods does not involve any $L$-function, 
and so one should expect that the proof of such a period relation should also not involve $L$-functions. Another guiding principle 
from the motivic point of view, afforded by Deligne's conjecture \cite{Deligne1979}, is that such a period relation should arise due to some 
intrinsic algebraic reason, and the period relation should then explain relations between $L$-values.

\smallskip

After reviewing some preliminaries (Section \ref{sec:prelims}) and basic cohomological ingredients (Section \ref{sec:coh-loc-sym-spaces}), 
we review the definition of the Betti--Whittaker periods in Section~\ref{sec:aut-C-BW-periods}. In the definition of these periods, one compares 
a rational structure on the Whittaker model $\W(\Pi_f)$ of the finite-part $\Pi_f$ of $\Pi$ with a cohomological model for $\Pi_f$, but for the latter there is a 
choice of cohomology degree. For applications to special values of Rankin--Selberg $L$-functions for $\GL_n \times \GL_{n-1}/F$ 
one often considers the bottom-most degree 
$b_n^F$ in which we see cuspidal cohomology for $\GL_n/F$; however, we may and do also consider the top-most degree $t_n^F$. 
The top-degree periods, for example, played an important role in the special values of the Adjoint $L$-function; see \cite{BR2017}. 
In either of these extreme degrees one has the same period relation as above. In Section~\ref{sec:dual-rep}, we introduce 
the precise form of the outer-automorphism $g \mapsto \theta(g):= \omega_n\cdot {}^tg^{-1} \cdot \omega_n^{-1},$ 
where $\omega_n$ denotes the signed anti-diagonal representative of the long Weyl element; this is better suited when we consider Whittaker models. 
In Section~\ref{sec:period-rel} we state and prove the main result on period relations. 
The proof  follows from considering the variation of rational structures across a diagram of maps \eqref{eq:cube-diagram} induced by the
Betti--Whittaker comparison, Galois action, and the automorphism $\theta$. 

\smallskip

There is an archimedean sub-problem in the proof of Theorem~\ref{thm:period-relation} that is deferred to Section~\ref{sec:arch}. 
The comparison map between the Whittaker model 
$\W(\Pi_f)$ and a cohomological model, in either of the two extreme degrees $b_n^F$ or $t_n^F$, 
depends on the choice of a generator $[\Pi_\infty]^\eps$ 
of a one-dimensional relative Lie algebra cohomology group in that degree attached to the archimedean component $\Pi_\infty$ of $\Pi.$ 
One needs to know that such generators are well-behaved with respect to $\theta.$ 
This archimedean period relation is stated and proved in Theorem~\ref{thm:archi-period-relation}. The proof, via K\"unneth theorem, 
boils down to proving it separately for representations of $\GL_n(\R)$ (see Theorem~\ref{thm:archi-period-relation-1}) and for representations of 
$\GL_n(\C)$ (see Theorem~\ref{thm:archi-period-relation-3}). 
 
\smallskip 

Once the main period relation (Theorem~\ref{thm:period-relation}) is in place, in Section~\ref{sec:critical-l-values} we consider its consequences for the 
special values of various automorphic $L$-functions. 
Recall that an essentially self-dual cuspidal representation $\Pi$ of 
$\GL_n/F$ is either $\chi$-symplectic or $\chi$-orthogonal for some Hecke character $\chi$. 
For the applications one needs to generalize an arithmeticity result of 
Gan and Raghuram [GR13, Theorem 5.3] for $\chi$-symplectic or $\chi$-orthogonal representations, which is done in 
Corollary~\ref{coro:arithmeticity} when $F$ admits a real place and $n$ is even (see also Remark \ref{rem:Clozel-Kret-Taibi}). Under such conditions, for $\chi$-orthogonal representations, in 
Theorem~\ref{thm:opposite-period-relation}, we prove that the ratio $p^\varepsilon(\Pi)/p^{-\varepsilon}(\Pi)$ is essentially the Gauss sum
$\Gsum(\chi^{\frac{n}{2}}\omega_\Pi^{-1})$. This gives a new proof of the main theorem of Bhagwat and Raghuram \cite{BR2020} on the 
ratios of special values for $L$-functions of split orthogonal group ${\rm O}(n/2, n/2)$; see Remark~\ref{rem:bhagwat-raghuram-1}. 
Now suppose $\Pi_i$ is a cuspidal automorphic representation of $\GL_{n_i}(\A_F),$ for $i=1,2,3$. The triple product 
$L$-function $L(s,\Pi_1\times\Pi_2\times\Pi_3)$ conjecturally admits all the usual properties for automorphic $L$-functions. 
For $F$ totally real, $n_1n_2$ even,  
$\Pi_1$ and $\Pi_2$ essentially self-dual, and assuming the existence of Langlands transfer $\Pi_1 \boxtimes \Pi_2$ on $\GL_{n_1n_2}$, 
using \cite{HR2020}, we prove in Theorem~\ref{thm:ratio-triple} a rationality result for ratios of critical values for triple product $L$-function. 
Next, suppose $E/F$ is an extension of totally real fields of even degree, $\Sigma$ a cuspidal representation of $\GL_n/E$ and 
$\Pi'$ a cuspidal representation of $\GL_{n'}/F$, the twisted Asai $L$-function $L(s,{\rm As}(\Sigma) \times \Pi')$ 
conjecturally admits all the usual properties for automorphic $L$-functions. When $\Sigma$ is essentially self-dual, 
we prove in Theorem~\ref{thm:ratio-Asai}, under some additional hypotheses, a rationality result for ratios of critical values for the
twisted Asai $L$-function. 
It is worth pointing out that the results in Theorem~\ref{thm:ratio-triple} and Theorem~\ref{thm:ratio-Asai} are outside the purview of 
what one hopes to prove via techniques of Eisenstein cohomology as in general they are not amongst the Langlands--Shahidi $L$-functions. 

\smallskip

We also present variations on period relations. Attached to a cohomological cuspidal automorphic representation of $\GL_n/F$,  
when $F$ has a real place and $n$ is even and $\Pi$ being $\chi$-symplectic, 
one has the Betti--Shalika periods $p^\epsilon_\cS(\Pi)$ generalizing the scope of the periods defined 
in Grobner--Raghuram \cite{GR2014a}.
In Theorem~\ref{thm:period-relation-2} we prove an analogue of Theorem~\ref{thm:period-relation} for these Betti--Shalika periods. Finally, in 
Theorem~\ref{thm:period-relation-3} we study the behavior of Betti--Whittaker periods 
under a Galois automorphism of $F$ construed as an algebraic automorphism of the algebraic group ${\rm Res}_{F/\Q}(\GL(n))$. 

\smallskip 

As this paper was nearing completion, we became aware that Yubo Jin, Dongwen Liu, and Binyong Sun have recently and independently proved related period relations for Betti–Whittaker periods under duality for a certain class of isobaric automorphic representations; see \cite{JLS2026}.

\section{Preliminaries}
\label{sec:prelims}

\noindent
Throughout, $F$ will denote a fixed algebraic number field.

\subsection{Subgroups of \texorpdfstring{$\GL_n$}{GLn}}%
\label{sec:GLn-and-subgroups}%

Let $n$ be a fixed positive integer.
Throughout, we shall denote
\begin{align*}
    G_0 := \GL_n/F, \quad
    G   := \Res_{F/\Q} G_0.
\end{align*}
More generally, the subscript ``\textsubscript{0}'' will be used for algebraic groups over $F$, 
and the same, unadorned letter will stand for the corresponding algebraic group over $\Q$, 
obtained by restriction of scalars.
We write $B_0$ for the standard Borel subgroup of $G_0$ consisting of upper-triangular matrices; $N_0$ for the unipotent radical of $B_0$; $T_0$ for the subgroup of diagonal matrices; 
and $Z_0$ for the subgroup of scalar matrices.
By restricting scalars, we obtain $\Q$-groups $B$, $N$, $T$, and $Z$, respectively.
The maximal $\Q$-split torus in $Z$ will be denoted by $S$.
Note that
\[
    G \supset B = TN \supset T \supset Z \supset S.
\]

\subsection{The real points of \texorpdfstring{$G$}{\textit{G}}}

Fix an $\R$-algebra isomorphism
\[
F_\infty:=F \otimes_\Q \R \cong \prod_{v \in S_r} \R \times \prod_{v \in S_c} \C,
\]
where $S_r$ and $S_c$ denote the sets of real and complex places of $F$, respectively.
For each archimedean place $v$, this identification depends on a choice of embedding of $F$ which we denote by $\tau_v$.
With respect to this identification we have
\begin{align}\label{eq:real-identification}
G(\R) = G_0(F \otimes_\Q \R)
   \cong
   \prod_{v \in S_r} \GL_n(\R)
   \times
   \prod_{v \in S_c} \GL_n(\C).
\end{align}
Let $\pi_0(G(\R))$ denote the component group of the Lie group $G(\R)$.
Then
\[
\widehat{\pi_0(G(\R))}
   \cong
   \prod_{v \in S_r} \widehat{\Z/2\Z},
\]
and any character $\varepsilon \in \widehat{\pi_0(G(\R))}$ decomposes as $\varepsilon = (\varepsilon_v)_{v \in S_r}$.
We write 
\[
\pm 1 \in \prod_{v \in S_r}\widehat{\Z/2\Z}
\] 
for the constant sign character $(\pm 1)_{v\in S_r}$.

\subsection{Maximal compact subgroup at infinity}%
\label{sec:max-cp-infty}%

For each archimedean place $v$, let $C_v$ be the maximal compact subgroup of $\GL_n(F_v)$ defined by
\[
C_v:= \begin{cases}
{\rm O}(n) & \mbox{ if $v \in S_r$},\\
{\rm U}(n) & \mbox{ if $v \in S_c$}.
\end{cases}
\]
Via the isomorphism (\ref{eq:real-identification}), we have the standard maximal compact subgroup of $G(\R)$:
\[
C_\infty \cong
\prod_{v \in S_r} \Orth(n)
\times
\prod_{v \in S_c} \U(n).
\]

Let $K_\infty := C_\infty S(\R)$. 
Note that
\[
K_\infty^\circ \cong
\left(
\prod_{v \in S_r} \SO(n)
\times
\prod_{v \in S_c} \U(n)
\right) \R_+^\times
\]
under the isomorphism (\ref{eq:real-identification}),
where, as usual, the superscript ``${}^\circ$'' denotes the connected component of the identity of the topological group in question.
Moreover, by the inclusion \(K_\infty \hookrightarrow G(\R)\) we identify $\pi_0(G(\R)) = K_\infty / K_\infty^\circ$.
Let $K_v:= C_v\R_+^\times$.

\subsection{Adèles and additive characters}\label{sec:add-char-AF/F}

Let $\A = \A_\Q$ denote the ring of adèles of $\Q$, and $\A_f$ its subring of finite adèles.
Since $G = \Res_{F/\Q}\GL_n$, we have natural identifications
$G(\A) = \GL_n(\A_F),$ and $G(\A_f) = \GL_n(\A_{F,f}),$
where $\A_F$ denotes the adèle ring of $F$.

Let $\psi_F : \A_F/F \to \C^\times$ be the standard additive character of $\A_F$ defined by $\psi_F := \psi_\Q \circ \mathrm{tr}_{F/\Q}$,
where $\psi_\Q = \prod_v \psi_{\Q_v}$ is the additive character of $\A$ with
\[
\psi_{\Q_p}(x) := e^{-2\pi i x} \quad (x \in \Z[p^{-1}]), 
\quad 
\psi_{\R}(x) := e^{2\pi i x} \quad (x \in \R).
\]

\subsection{Gauss sums and algebraic Hecke characters}%
\label{sec:Hecke-char-Gauss-sums}%

Let $\chi : \A_{F,f}^\times\rightarrow \C^\times$ be a continuous character. 
For each finite place $v$ of $F$, we denote by $\chi_v$ the component of $\chi$ at $v$.
The \emph{Gauss sum} of $\chi$ is defined by
\[
\mathcal{G}(\chi):= |\delta_{F/\Q}|^{-1/2}\prod_{v\nmid \infty}\varepsilon(0,\chi_v,\psi_{F_v}),
\]
where $\delta_{F/\Q}$ is the discriminant of $F/\Q$ and $\varepsilon(s,\chi_v,\psi_{F_v})$ denotes the local $\varepsilon$-factor of $\chi_v$ with respect to $\psi_{F_v}$ (cf.\,\cite{Tate1979}). 
For $\sigma \in \mathrm{Aut}(\C)$, define character ${}^\sigma\chi$ of $\A_{F,f}^\times$ by ${}^\sigma\chi(x) := \sigma(\chi(x))$ for all $x \in \A_{F,f}^\times$.
Then we have
\begin{align}\label{eq:Gauss-sum}
\sigma(\mathcal{G}(\chi)) = {}^\sigma\chi(u_\sigma)\,\mathcal{G}({}^\sigma\chi),
\end{align}
where $u_\sigma \in \prod_p \Z_p^\times \subset \A_{F,f}^\times$ is the unique element such that
$\sigma(\psi_\Q(x)) = \psi_\Q(u_\sigma x)$ for all $x \in \A_f$.
In particular, for continuous complex characters $\chi_1,\chi_2$ of $\A_{F,f}^\times$, we have
\begin{align}\label{eq:Gauss-sum-2}
\sigma \left(\frac{\mathcal{G}(\chi_1\chi_2)}{\mathcal{G}(\chi_1)\mathcal{G}(\chi_2)}\right) 
= \frac{\mathcal{G}({}^\sigma\chi_1 {}^\sigma\chi_2)}{\mathcal{G}({}^\sigma\chi_1)\,\mathcal{G}({}^\sigma\chi_2)}.
\end{align}

Let $\omega: \A_F^\times \to \C^\times$ be a \emph{Hecke character} of $F$, that is, a continuous complex character of $F^\times \backslash \A_F^\times$.
Let $\omega_\infty$ and $\omega_f$ denote the archimedean and finite parts of $\omega$, respectively.
The Gauss sum of $\omega$ is defined to be the Gauss sum of $\omega_f$, that is, $\mathcal{G}(\omega):=\mathcal{G}(\omega_f)$.
We say that $\omega$ is \emph{algebraic} if there exists
\[
a(\omega)=(a(\omega)_\tau)_{\tau:F\rightarrow\C}\in \prod_{\tau:F\rightarrow\C}\Z,
\]
called the \emph{infinity type} of $\omega$, such that
\[
\omega_\infty(x_\infty)
=
\prod_{v\in S_r} x_v^{a(\omega)_{\tau_v}}
\prod_{v\in S_c} x_v^{a(\omega)_{\tau_v}}\overline{x}_v^{a(\omega)_{\overline{\tau}_v}}
\]
for all totally positive $x_\infty\in F_\infty^\times$.
Suppose that $\omega$ is algebraic. Then there exists an integer ${\sf w}(\omega)$,
called the \emph{pure weight} of $\omega$, such that
\begin{itemize}
\item if $S_r\neq\varnothing$, then $a(\omega)_\tau={\sf w}(\omega)$ for all $\tau$;
\item if $S_r=\varnothing$, then $a(\omega)_{\sigma\circ\tau}+a(\omega)_{\sigma\circ\overline{\tau}}
= {\sf w}(\omega)$
for all $\tau$ and all $\sigma\in{\rm Aut}(\C)$.
\end{itemize}
The \emph{signature} $\varepsilon(\omega)\in \prod_{v\in S_r}\widehat{\Z/2\Z}$ of $\omega$ is defined by
\[
\varepsilon(\omega)_v
:=
(-1)^{{\sf w}(\omega)}\,\omega_v(-1),
\quad v\in S_r .
\]
For $\sigma \in {\rm Aut}(\C)$, let ${}^\sigma \omega$ be the algebraic Hecke character of $F$ such that ${}^\sigma\omega_f = \sigma(\omega_f)$. Note that ${}^\sigma\omega_\infty$ is determined by
\[
a({}^\sigma\omega) = (a(\omega)_{\sigma^{-1}\circ \tau})_{\tau:F\rightarrow \C},\quad \varepsilon({}^\sigma\omega) = \varepsilon(\omega).
\]

\subsection{Cuspidal automorphic representations}%
\label{sec:cusp-aut-rep}%

We denote by $\mathcal{A}(G(\Q)\backslash G(\A))$ the space of automorphic forms on $G(\A)$ (cf.\,\cite{BJ1979} or \cite{Bump1998}%
\footnote{%
	As in \cite{Bump1998}, we take our cusp forms to be $C_\infty$-finite, where $C_\infty$ is the standard maximal compact subgroup of $G(\R)$; see \S\,\ref{sec:max-cp-infty}.%
}%
).
This space is equipped with a natural $(\frak{g}_\infty,C_\infty)\times G(\A_f)$-module structure, where $\frg_\infty$ is the complexified Lie algebra of $G(\R)$.
Let $\Cusp(G(\Q)\backslash G(\A))$ be the subspace of cusp forms.
It is semisimple and an irreducible constituent $\Pi$ is called a \emph{cuspidal automorphic representation} of $G(\A)$. 
It occurs with multiplicity one, and we denote by $V_\Pi$ its underlying representation space. 
We also denote by $\omega_\Pi$ the central character of $\Pi$.

\subsection{Whittaker models}%
\label{sec:Wh-mod}%

Let $U$ be a maximal unipotent subgroup of $G$, and 
\[
\xi:U(\Q)\backslash U(\A) \rightarrow \C^\times
\] 
be a non-degenerate additive character of $U(\A)$. 
For an automorphic form $\phi$ on $G(\A)$, the Whittaker function of $\phi$ with respect to $\xi$ is defined by
\begin{equation*}
	W_{\xi}(g;\phi) = \int_{U(\Q)\lquot U(\A)} \phi(u g) \overline{\xi(u)} \,du^{\rm Tam}, \quad g \in G(\A).
\end{equation*}
Here $du^{\rm Tam}$ is the Tamagawa measure on $U(\A)$.
Let $\Pi$ be a cuspidal automorphic representation of $G(\A)$.
The space 
\[
\W(\Pi, \xi) \defeq \{ W_{\xi}(\phi) \,\vert\, \phi \in V_\Pi\}
\] 
is called the \emph{Whittaker model of $\Pi$ with respect to $\xi$}; it is a model of $\Pi$ in the sense that we have an isomorphism of $(\frak{g}_\infty,C_\infty)\times G(\A_f)$-modules:
\begin{align}\label{eq:Whittaker-realization}
V_\Pi \longrightarrow \W(\Pi, \xi), \quad \phi \longmapsto W_{\xi}(\phi).
\end{align}
For each place $v$ of $\Q$, let $\mathcal{W}(\Pi_v,\xi_{v})$ be the Whittaker model of $\Pi_v$ with respect to $\xi_{v} := \xi\vert_{U(\Q_v)}$. 
It is realized in the space of smooth functions $W:G(\Q_v) \rightarrow \C$ such that
\[
W(ug) = \xi_{v}(u)W(g),\quad u \in U(\Q_v),\ g \in G(\Q_v)
\]
and $W$ is of moderate growth when $v=\infty$. 
Then we have an isomorphism
\[
{\bigotimes_v}' \mathcal{W}(\Pi_v,\xi_{v}) \longrightarrow \W(\Pi, \xi),\quad \bigotimes_v W_v \longmapsto \prod_v W_v.
\]
A similar factorization holds for the Whittaker model $\mathcal{W}(\Pi_f,\xi_f)$ of $\Pi_f = \otimes_{p}'\Pi_p$ with respect to $\xi_f:= \xi\vert_{U(\A_f)}$. 

Recall the unipotent radical $N$ of the standard Borel subgroup of $G$ in \S\,\ref{sec:GLn-and-subgroups}.
Let $\psi_{N} : N(\Q)\backslash N(\A) \rightarrow \C^\times$ be the standard non-degenerate additive character defined by
\begin{align}\label{eq:additive}
\psi_{N}(u) := \psi_F(u_{12}+u_{23}+\cdots+u_{n-1,n}),\quad u=(u_{ij}) \in N(\A) = N_0(\A_F),
\end{align}
where $\psi_F$ is as in \S\,\ref{sec:add-char-AF/F}. In this case, we write 
\[
\mathcal{W}(\Pi_v) = \mathcal{W}(\Pi_v,\psi_{N,v}),\quad
\mathcal{W}(\Pi_f) = \mathcal{W}(\Pi_f,\psi_{N,f}),\quad\mathcal{W}(\Pi) = \mathcal{W}(\Pi,\psi_N).
\]

\section{Cohomology of locally symmetric spaces}
\label{sec:coh-loc-sym-spaces}

\subsection{Adèlic locally symmetric spaces}%
\label{sec:loc-sym-space}%


Consider the symmetric space
\[
X_\infty \coloneqq G(\R)/K_\infty^\circ .
\]
It carries a left action of $G(\Q) \subset G(\R)$ and a right action of $K_\infty/K_\infty^\circ = \pi_0(G(\R))$.
The product space
\[
X \coloneqq X_\infty \times G(\A_f)
\]
then admits commuting left and right actions of $G(\Q)$ and
$G(\A_f)\times\pi_0(G(\R))$, respectively. The action of $G(\Q)$ is
continuous, free, and properly discontinuous, so the quotient space
\[
S^G \coloneqq G(\Q)\backslash X
\]
is Hausdorff.
For an open compact subgroup $K_f\subset G(\A_f)$ we define the
\emph{adèlic locally symmetric space with level structure $K_f$} by
\[
S^G_{K_f}
\coloneqq S^G/K_f
= G(\Q)\backslash G(\A)/K_\infty^\circ K_f .
\]
In general $S^G_{K_f}$ is an orbifold. If $K_f$ is \emph{neat}
(cf.\,\cite[\S\,15.2]{GH2024}), then $S^G_{K_f}$ is a smooth
manifold. Every $K_f$ contains a neat subgroup $K_f'$ of finite index,
which may moreover be chosen normal in $K_f$.
An inclusion $K_f' \subset K_f$ induces a natural map
$
S^G_{K_f} \longrightarrow S^G_{K_f'}.$ 
The spaces $S^G_{K_f}$ therefore form an inverse system whose inverse
limit is naturally isomorphic to $S^G$
(cf.\ \cite[Proposition~1.9]{Rohlfs1996}).

\subsection{Weights of the algebraic torus}

Let $X^*(T\times\C)$ be the group of the algebraic characters (integral weights) of the torus $T\times\C$. We have
\begin{align*}
X^*(T\times\C) = \prod_{\tau:F \rightarrow \C}X^*(T_0\times_{F,\tau}\C) &\cong \prod_{\tau:F \rightarrow \C} \Z^n,\\
\lambda = (\lambda^\tau)_{\tau:F \rightarrow \C} &\longmapsto (\lambda_1^\tau,...,\lambda_n^\tau)_{\tau:F \rightarrow \C}.
\end{align*}
The ${\rm Aut}(\C)$-action on $X^*(T\times\C)$ is given by
\[
{}^\sigma\lambda = (\lambda^{\sigma^{-1}\circ\tau})_{\tau:F \rightarrow \C},\quad \sigma \in {\rm Aut}(\C).
\]
The \emph{rationality field} $\Q(\lambda)$ of $\lambda$ is the fixed field in $\C$ of the stabilizer of $\lambda$ in ${\rm Aut}(\C)$.
Define subgroups 
\[
X^*(T\times\C)\supset X^+(T\times \C) \supset X_{\rm alg}^+(T\times \C) \supset X_0^+(T\times \C) \supset X_{00}^+(T\times\C)
\]
as follows.
\begin{itemize}
\item $X^+(T\times\C)$ (dominant integral weights): an integral weight $\lambda$ is \emph{dominant} if 
\[
\lambda_1^\tau \geq \cdots \geq \lambda_n^\tau,\quad \tau \in {\rm Hom}(F,\C).
\]
\item $X_{\rm alg}^+(T\times\C)$ (algebraic dominant integral weights): a dominant integral weight $\lambda$ is \emph{algebraic} if 
\[
\sum_{i = 1}^n \lambda_i^\tau = \sum_{i = 1}^n \lambda_i^{\tau'}, \quad \tau, \tau' \in \Hom(F, \C).
\]
\item $X_0^+(T\times\C)$ (pure weights): a dominant integral weight $\lambda$ is \emph{pure} if it is algebraic and there exists an integer ${\sf w}$, called the \emph{purity weight} of $\lambda$, such that
\[
\lambda_i^\tau + \lambda_{n+1-i}^{\bar\tau} = {\sf w},\quad 1 \leq i \leq n,\ \tau \in \Hom(F,\C).
\]
\item $X_{00}^+(T\times\C)$ (strongly-pure weights): a pure weight $\lambda$ with purity weight ${\sf w}$ is \emph{strongly-pure} if 
\[
\lambda_i^{\sigma^{-1}\circ\tau} + \lambda_{n+1-i}^{\sigma^{-1}\circ\bar{\tau}} = {\sf w},\quad 1 \leq i \leq n,\ \tau \in \Hom(F,\C),\ \sigma \in {\rm Aut}(\C).
\]
In other words, $\lambda$ is strongly-pure if both $\lambda$ and ${}^\sigma\lambda$ are pure with the same purity weight for every $\sigma\in\Aut(\C)$.
\end{itemize}

\subsection{Finite-dimensional representations}%
\label{sec:irr-reps}%

The irreducible algebraic representations of $G\times\C$ are parametrized by the dominant integral weights of $T\times\C$.
More precisely, for $\lambda = (\lambda^\tau)_{\tau:F \rightarrow \C} \in X^+(T\times\C)$, via the isomorphism 
\[
G\times\C = \prod_{\tau:F\rightarrow \C}G_0\times_{F,\tau}\C \cong \prod_{\tau:F\rightarrow \C} \GL_n/\C,
\]
let $(\rho_\lambda,\mathcal{M}_\lambda)$ be the irreducible algebraic representation of $G\times\C$ defined by
\[
(\rho_\lambda,\mathcal{M}_\lambda)
:= \bigotimes_{\tau:F\rightarrow \C}(\rho_{\lambda^\tau},\mathcal{M}_{\lambda^\tau}),
\]
where $(\rho_{\lambda^\tau},\mathcal{M}_{\lambda^\tau})$ denotes the irreducible algebraic representation of $\GL_n(\C)$ with highest weight $\lambda^\tau$ with respect to the upper triangular Borel subgroup.
Then $(\rho_\lambda,\mathcal{M}_\lambda)$ has highest weight $\lambda$ with respect to $B \times \C$.
The group $G(\Q)=\GL_n(F)$ acts diagonally on $\mathcal{M}_\lambda$, that is,
\[
\rho_\lambda(\gamma)\cdot\bigotimes_\tau m_\tau
=
\bigotimes_\tau \rho_{\lambda^\tau}(\tau(\gamma))\,m_\tau,
\quad \gamma \in G(\Q).
\]
Note that the resulting representation of $G(\Q)$ is defined over $\Q(\lambda)$ (cf.\,\cite[Lemma 7.1]{GR2014b}).

Let $(\rho,\mathcal{M})$ be an algebraic representation of $G\times\C$. 
For $\sigma \in {\rm Aut}(\C)$, recall the \emph{$\sigma$-conjugate representation} $({}^\sigma\rho,{}^\sigma\mathcal{M})$ of $(\rho,\mathcal{M})$ 
is the algebraic representation of $G\times\C$ defined by
\[
{}^\sigma\mathcal{M}:=\mathcal{M} \otimes_{\C,\sigma}\C
\]
and
\[
{}^\sigma\rho(g)(m\otimes z):= \rho(\sigma^{-1}g)m \otimes z,\quad g \in G(\C)
\]
where
\[
\sigma^{-1}(g_\tau)_{\tau:F\rightarrow \C} = (\sigma^{-1}(g_{\sigma\circ\tau}))_{\tau:F\rightarrow \C}.
\]
Note that the natural map
\begin{align}\label{eq:Galois-identity}
t_\sigma : \mathcal{M} \longrightarrow {}^\sigma\mathcal{M},\quad m \longmapsto m\otimes 1
\end{align}
is $\sigma$-linear and $G(\Q)$-equivariant.
The \emph{dual representation} of $(\rho,\M)$ is denoted by $(\rho^\vee,\M^\vee)$.

For $\lambda \in X^+(T\times\C)$, the representations $({}^\sigma\rho_\lambda,{}^\sigma\mathcal{M}_\lambda)$ 
and $(\rho_{{}^\sigma\lambda},\mathcal{M}_{{}^\sigma\lambda})$ are isomorphic, since they both have highest weight ${}^\sigma\lambda$.
More explicitly, for each $\tau \in \Hom(F,\C)$ we fix a $\Q$-rational structure $\mathcal{M}_{\lambda^\tau,\Q}$ of $\mathcal{M}_{\lambda^\tau}$, 
and let $t_{\tau, \sigma} : \mathcal{M}_{\lambda^\tau} \rightarrow \mathcal{M}_{\lambda^\tau}$ be the $\sigma$-linear 
isomorphism acting as the identity on $\mathcal{M}_{\lambda^\tau,\Q}$. Then we have an isomorphism of algebraic representations
\begin{align}\label{eq:finite-rep}
({}^\sigma\rho_\lambda,{}^\sigma\mathcal{M}_\lambda) \longrightarrow (\rho_{{}^\sigma\lambda},\mathcal{M}_{{}^\sigma\lambda}),\quad 
\left(\bigotimes_\tau m_\tau\right)\otimes 1\longmapsto \bigotimes_\tau t_{\sigma^{-1}\circ\tau,\sigma}\bigl(m_{\sigma^{-1}\circ\tau}\bigr).
\end{align}
Also note that $(\rho_\lambda^\vee,\M_\lambda^\vee)$ and $(\rho_{\lambda^\vee},\M_{\lambda^\vee})$ are isomorphic, 
where $\lambda^\vee \in X^+(T\times\C)$ is the dual of $\lambda$ defined by
\[
(\lambda^\vee)^\tau := (-\lambda_n^\tau,...,-\lambda_1^\tau),\quad \tau \in \Hom(F,\C).
\]

\subsection{Sheaf cohomology and Hecke action}%
\label{sec:sheaf-locsym}%

Let $(\rho,\mathcal{M})$ be an algebraic representation of $G\times{E}$ for some field extension $E/\Q$. 
For an open compact subgroup $K_f \subset G(\A_f)$, we denote by $\widetilde{\mathcal{M}}$ the sheaf of $E$-vector spaces on $S_{K_f}^G$ defined as follows. 
For each open subset $U \subset S_{K_f}^G$, we set
\[
\widetilde{\mathcal{M}}(U)
:=
\left\{
s : \pr^{-1}(U) \to \mathcal{M}
\ \middle|\
\begin{aligned}
&s \text{ is locally constant, and} \\
&s(\gamma x)=\rho(\gamma)s(x)
\text{ for all } \gamma\in G(\Q),\, x\in \pr^{-1}(U)
\end{aligned}
\right\}.
\]
where $\pr$ denotes the canonical projection $\pr : X/K_f \longrightarrow S_{K_f}^G$.
If $K_f$ is neat, then $\widetilde{\mathcal{M}}$ is a locally constant sheaf.
We consider the sheaf cohomology groups
\[
H^\bullet(S_{K_f}^G,\widetilde{\mathcal{M}}).
\]
An inclusion $K_f' \subset K_f$ of open compact subgroups of $G(\A_f)$ induces a canonical map
\[
H^\bullet(S_{K_f}^G,\widetilde{\mathcal{M}})
\longrightarrow
H^\bullet(S_{K_f'}^G,\widetilde{\mathcal{M}}).
\]
Passing to the direct limit over all open compact subgroups, we define
\[
H^\bullet(S^G,\widetilde{\mathcal{M}})
:=
\varinjlim_{K_f} H^\bullet(S_{K_f}^G,\widetilde{\mathcal{M}}).
\]
For $x = x_\infty \times x_f \in \pi_0(G(\R)) \times G(\A_f)$, right multiplication by $x$ defines a $G(\Q)$-equivariant diffeomorphism
\begin{align}\label{eq:m_x}
m_x : X/K_f \longrightarrow X/(x_f^{-1}K_f x_f).
\end{align}
This induces a canonical map on cohomology
\[
m_x^\bullet :
H^\bullet(S_{x_f^{-1}K_f x_f}^G,\widetilde{\mathcal{M}})
\longrightarrow
H^\bullet(S_{K_f}^G,\widetilde{\mathcal{M}}).
\]
Passing to the direct limit, we obtain a map
\[
m_x^\bullet :
H^\bullet(S^G,\widetilde{\mathcal{M}})
\longrightarrow
H^\bullet(S^G,\widetilde{\mathcal{M}}),
\]
which defines an action of $\pi_0(G(\R)) \times G(\A_f)$ on $H^\bullet(S^G,\widetilde{\mathcal{M}})$, called the \emph{Hecke action}.
Moreover, we have a natural identification
\[
H^\bullet(S_{K_f}^G,\widetilde{\mathcal{M}})\cong H^\bullet(S^G,\widetilde{\mathcal{M}})^{K_f},
\]
where the superscript ``${}^{K_f}$'' on the right-hand side is the usual notation for the subspace of $K_f$-invariant elements.

\subsection{Betti and de Rham cohomology}%
\label{sec:Hecke-action}%

We keep the notation of \S\,\ref{sec:sheaf-locsym} and
assume that $E=\C$. 
We recall two standard complexes computing the sheaf cohomology at the transcendental level: the smooth singular complex and the de Rham complex. 
We also describe explicitly the Hecke action in each realization.

Let $\left(C_\bullet^\infty(X/K_f),\partial\right)$ denote the complex of smooth singular chains. 
Recall that the boundary operator $\partial$ sends a smooth singular $q$-simplex $\alpha : \Delta^q \to X/K_f$ to a $(q-1)$-simplex
\[
\partial \alpha
:=
\sum_{i=0}^q (-1)^i \alpha \circ \iota_i ,
\]
where $\iota_i : \Delta^{q-1} \hookrightarrow \Delta^q$ is the inclusion of the $i$-th face of the standard simplex.
The group $G(\Q)$ acts on smooth simplices by
\[
(\gamma\cdot\alpha)(t) := \gamma\cdot\alpha(t),
\quad 
\gamma \in G(\Q),\ \alpha \in C_\bullet^\infty(X/K_f).
\]
It is clear that $\gamma\circ\partial = \partial \circ \gamma$, that is, $G(\Q)$ acts on the chain complex.
Taking the $\C$-linear dual, we obtain the smooth singular cochain complex
\[
\left(\Hom_\Z(C_\bullet^\infty(X/K_f), \mathcal{M}), \partial^\ast\right).
\]
This complex carries a natural action of $G(\Q)$ defined by
\[
(\gamma\cdot \ell)(\alpha)
:=
\rho(\gamma)\,\ell(\gamma^{-1}\cdot \alpha),
\]
for $\gamma \in G(\Q)$ and $\ell \in \Hom_\C(C_\bullet^\infty(X/K_f), \mathcal{M})$.
The \emph{Betti/singular cohomology} of $\widetilde{\mathcal M}$ on $S_{K_f}^G$ is the cohomology of the subcomplex of $G(\Q)$-invariants:
\[
H_{\rm B}^\bullet(S_{K_f}^G,\mathcal M)
:=
H^\bullet\left(
\Hom_\Z(C_\bullet^\infty(X/K_f), \mathcal M)^{G(\Q)}
\right).
\]
An inclusion $K_f' \subset K_f$ induces a canonical map
\[
C_\bullet^\infty(X/K_f) \longrightarrow C_\bullet^\infty(X/K_f'),
\]
which commutes with both the boundary operator $\partial$ and the $G(\Q)$-action. 
Taking duals, we obtain a cochain map, and hence an induced map on cohomology
\[
H_{\rm B}^\bullet(S_{K_f}^G,\widetilde{\mathcal{M}})
\longrightarrow
H_{\rm B}^\bullet(S_{K_f'}^G,\widetilde{\mathcal{M}}).
\]
Taking the direct limit over all open compact subgroups, we define
\[
H_{\rm B}^\bullet(S^G,\widetilde{\mathcal{M}})
:=
\varinjlim_{K_f} H_{\rm B}^\bullet(S_{K_f}^G,\widetilde{\mathcal{M}}).
\]
For $x = x_\infty \times x_f \in \pi_0(G(\R)) \times G(\A_f)$, the diffeomorphism $m_x$ in \eqref{eq:m_x} induces a pullback on smooth singular chains by precomposition
\[
(m_x)_\sharp : C_\bullet^\infty(X/K_f)
\longrightarrow
C_\bullet^\infty(X/x_f^{-1}K_f x_f),
\quad
\alpha \longmapsto m_x \circ \alpha.
\]
Taking the dual, we obtain a cochain map
\begin{align*}
m_{x,{\rm B}} :
\left(\Hom_\Z(C_\bullet^\infty(X/x_f^{-1}K_f x_f),\mathcal{M}),\partial^\ast\right)
&\longrightarrow
\left(\Hom_\Z(C_\bullet^\infty(X/K_f),\mathcal{M}),\partial^\ast\right), \\
\ell &\longmapsto \ell \circ (m_x)_\sharp.
\end{align*}
It is straightforward to verify that $m_{x,{\rm B}}$ commutes with the $G(\Q)$-action.
Therefore it induces a canonical map on cohomology
\[
m_{x,{\rm B}}^\bullet :
H_{\rm B}^\bullet(S_{x_f^{-1}K_f x_f}^G,\widetilde{\mathcal{M}})
\longrightarrow
H_{\rm B}^\bullet(S_{K_f}^G,\widetilde{\mathcal{M}}).
\]
Passing to the direct limit, we obtain a map
\[
m_{x,{\rm B}}^\bullet :
H_{\rm B}^\bullet(S^G,\widetilde{\mathcal{M}})
\longrightarrow
H_{\rm B}^\bullet(S^G,\widetilde{\mathcal{M}}),
\]
which defines the Hecke action of $\pi_0(G(\R)) \times G(\A_f)$ on
$H_{\rm B}^\bullet(S^G,\widetilde{\mathcal{M}})$.

Let $\left(\Omega^\bullet(X/K_f;\M),d\right)$ denote the de Rham cochain complex of
$\M$-valued smooth differential forms on $X/K_f$.
Recall that the differential $d$ sends a $\M$-valued smooth $q$-form $\omega$
on $X/K_f$ to a $(q+1)$-form given by
\[
\begin{aligned}
(d\omega)(X_0,\dots,X_q)
&:=
\sum_{i=0}^q (-1)^i
X_i\!\left(\omega(X_0,\dots,\widehat{X_i},\dots,X_q)\right) \\
&\quad +
\sum_{0 \le i < j \le q}
(-1)^{i+j}\,
\omega\!\left([X_i,X_j],\,X_0,\dots,\widehat{X_i},\dots,\widehat{X_j},\dots,X_q\right),
\end{aligned}
\]
for smooth vector fields $X_0,\dots,X_q$ on $X/K_f$.
The group $G(\Q)$ acts on $\M$-valued smooth differential forms by
\[
\gamma\cdot\omega := \rho(\gamma)\bigl((\gamma^{-1})^*\omega\bigr),
\quad
\gamma\in G(\Q),\ \omega\in\Omega^\bullet(X/K_f;\M).
\]
It is straightforward to verify that this action commutes with the differential. Hence $G(\Q)$ acts on the de Rham complex.
The \emph{de Rham cohomology} of $\widetilde{\mathcal M}$ on $S_{K_f}^G$
is defined as the cohomology of the subcomplex of $G(\Q)$-invariants:
\[
H_{\rm dR}^\bullet(S_{K_f}^G,\widetilde{\mathcal M})
:=
H^\bullet\!\left(\Omega^\bullet(X/K_f;\M)^{G(\Q)}\right).
\]
If $K_f' \subset K_f$ is an inclusion of neat open compact subgroups, the natural covering map induces a pullback
\[
\Omega^\bullet(X/K_f;\M)
\longrightarrow
\Omega^\bullet(X/K_f';\M),
\]
which commutes with both the differential $d$ and the $G(\Q)$-action.
Consequently, we obtain a map on cohomology
\[
H_{\rm dR}^\bullet(S_{K_f}^G,\widetilde{\mathcal M})
\longrightarrow
H_{\rm dR}^\bullet(S_{K_f'}^G,\widetilde{\mathcal M}).
\]
Taking the direct limit over all open compact subgroups, we define
\[
H_{\rm dR}^\bullet(S^G,\widetilde{\mathcal M})
:=
\varinjlim_{K_f}
H_{\rm dR}^\bullet(S_{K_f}^G,\widetilde{\mathcal M}).
\]
For $x=x_\infty\times x_f \in \pi_0(G(\R))\times G(\A_f)$,
the diffeomorphism $m_x$ in \eqref{eq:m_x} induces a cochain map
\begin{align*}
m_{x,{\rm dR}} :
\bigl(\Omega^\bullet(X/x_f^{-1}K_f x_f;\mathcal M),d\bigr)
&\longrightarrow
\bigl(\Omega^\bullet(X/K_f;\mathcal M),d\bigr), \\
\omega &\longmapsto m_x^*\omega .
\end{align*}
One checks easily that $m_{x,{\rm dR}}$ commutes with the $G(\Q)$-action.
Hence it induces a canonical map on cohomology
\[
m_{x,{\rm dR}}^\bullet :
H_{\rm dR}^\bullet(S_{x_f^{-1}K_f x_f}^G,\widetilde{\mathcal M})
\longrightarrow
H_{\rm dR}^\bullet(S_{K_f}^G,\widetilde{\mathcal M}).
\]
Passing to the direct limit, we obtain a map
\[
m_{x,{\rm dR}}^\bullet :
H_{\rm dR}^\bullet(S^G,\widetilde{\mathcal M})
\longrightarrow
H_{\rm dR}^\bullet(S^G,\widetilde{\mathcal M}),
\]
which defines the Hecke action of $\pi_0(G(\R)) \times G(\A_f)$ on
$H_{\rm dR}^\bullet(S^G,\widetilde{\mathcal M})$.

The Betti and de Rham complexes introduced above are related by the classical de Rham theorem. This yields a canonical \emph{comparison isomorphism}
\[
I^\bullet :
H_{\rm dR}^\bullet(S^G,\widetilde{\mathcal M})
\longrightarrow
H_{\rm B}^\bullet(S^G,\widetilde{\mathcal M}).
\]
More precisely, for each level $K_f$ there is a cochain map
\begin{align*} 
I : \left(\Omega^\bullet(X/K_f;\M),d\right) &\longrightarrow \left(\Hom_\Z(C_\bullet^\infty(X/K_f), \mathcal{M}), \partial^\ast\right),\\ \omega &\longmapsto \left(\alpha \mapsto \int_{\Delta^\bullet}\alpha^*\omega\right). 
\end{align*}
One checks that $I$ commutes with the action of $G(\Q)$. Passing to $G(\Q)$-invariants and taking the direct limit over $K_f$, we obtain the comparison isomorphism $I^\bullet$ above.
The definitions of the Hecke actions imply that
\[
I\circ m_{x,{\rm dR}}
=
m_{x,{\rm B}}\circ I,\quad x\in\pi_0(G(\R))\times G(\A_f).
\]
Consequently, the comparison isomorphism $I^\bullet$ is equivariant with respect to the Hecke action.

\subsection{Automorphic and cuspidal cohomology}%
\label{sec:coh-groups-transc-level}%

We keep the notation of \S\,\ref{sec:sheaf-locsym} and assume that $E=\C$. 
The \emph{automorphic cohomology} of $G$ with respect to $(\rho,\M)$ is the relative $(\frak{g}_\infty,K_\infty^\circ)$-cohomology group
\[
H^\bullet(\frak{g}_\infty,K_\infty^\circ;\mathcal{A}(G(\Q)\backslash G(\A))\otimes\M).
\]
Franke \cite{Franke1998} proved Borel's conjecture that the natural inclusion
\[
\mathcal{A}(G(\Q)\backslash G(\A)) \longemb C^\infty(G(\Q)\backslash G(\A))
\]
induces an isomorphism
\[
H^\bullet(\frak{g}_\infty,K_\infty^\circ;\mathcal{A}(G(\Q)\backslash G(\A))\otimes\M)
\longrightarrow
H^\bullet(\frak{g}_\infty,K_\infty^\circ;C^\infty(G(\Q)\backslash G(\A))\otimes\M).
\]
On the other hand, for each level $K_f$ there is an isomorphism between the relative Lie algebra complex and the de Rham complex (cf.\,\cite[VII, Corollary 2.7]{BW2000})
\[
\left(
\Hom_{K_\infty^\circ}\left(\extp^\bullet \frg_\infty/\frk_\infty,
\Sm(G(\Q)\lquot G(\A)/K_f)\otimes\M\right),d
\right)
\longrightarrow
\left(\Omega^\bullet(X/K_f;\M)^{G(\Q)},d\right).
\]
Passing to the direct limit over $K_f$ yields a $\pi_0(G(\R))\times G(\A_f)$-equivariant isomorphism
\[
H^\bullet(\frak{g}_\infty,K_\infty^\circ;C^\infty(G(\Q)\backslash G(\A))\otimes\M)
\longrightarrow
H_{\rm dR}^\bullet(S^G,\widetilde\M).
\]
In the sequel we identify automorphic cohomology with $H_{\rm dR}^\bullet(S^G,\widetilde\M)$ via these isomorphisms.

The \emph{cuspidal cohomology} of $G$ with respect to $(\rho,\M)$ is
\[
H_{\rm cusp}^\bullet(S^G,\widetilde\M)
:=
H^\bullet(\frak{g}_\infty,K_\infty^\circ;
\mathcal{A}_{\rm cusp}(G(\Q)\backslash G(\A))\otimes\M).
\]
By the result of Borel \cite[Corollary 5.5]{Borel1981}, the inclusion
\[
\mathcal{A}_{\rm cusp}(G(\Q)\backslash G(\A))
\longemb
C^\infty(G(\Q)\backslash G(\A))
\]
induces a $\pi_0(G(\R))\times G(\A_f)$-equivariant injective homomorphism
\[
H_{\rm cusp}^\bullet(S^G,\widetilde\M)
\longrightarrow
H_{\rm dR}^\bullet(S^G,\widetilde\M),
\]
whose image lies in the interior cohomology. Since
$\mathcal{A}_{\rm cusp}(G(\Q)\backslash G(\A))$ is semisimple as a $G(\A_f)$-module, the same holds for the cuspidal cohomology.
Let $\Coh_\cusp(G,\M)$ denote the set of isomorphism classes of irreducible admissible representations of $G(\A_f)$ occurring in $H_{\rm cusp}^\bullet(S^G,\widetilde\M)$. 
For $\varepsilon \in \widehat{\pi_0(G(\R))}$ and $\Pi_f\in\Coh_\cusp(G,\M)$, write
\[
H_{\rm cusp}^\bullet(S^G,\widetilde\M)(\varepsilon\times\Pi_f)
\]
for the corresponding isotypic component under the action of $\pi_0(G(\R)) \times G(\A_f)$.
When $(\rho,\M)=(\rho_\lambda,\M_\lambda)$ with $\lambda\in X^+(T\times\C)$, we write
\[
\Coh_\cusp(G,\lambda):=\Coh_\cusp(G,\M_\lambda).
\]
In this case $\Coh_\cusp(G,\lambda)$ is nonempty only if
$\lambda\in X_{00}^+(T\times\C)$ (cf.\,\S\,\ref{sec:Galois-action-coho-mod}).

\subsection{Multiplicity one in bottom and top degrees}

Let $\lambda \in X_0^+(T \times \C)$. 
We denote by $\Omega(\lambda)$ the set of isomorphism classes of irreducible, admissible, generic $(\frak{g}_\infty, C_\infty)$-modules $\pi$ such that
\[
H^\bullet(\frak{g}_\infty, K_\infty^\circ; \pi \otimes \M_\lambda) \neq 0.
\]
We refer to \cite[\S\,2.4]{Raghuram2016} for an explicit description of $\Omega(\lambda)$ in terms of parabolically induced representations.
In particular, one has $\Omega(\lambda)^\vee = \Omega(\lambda^\vee)$, and 
\[
|\Omega(\lambda)| = \begin{cases}
1 & \mbox{ if $n$ is even or $S_r$ is empty},\\
2^{|S_r|} & \mbox{ otherwise}.
\end{cases}
\]
Also, the relative Lie algebra cohomology groups are nonvanishing if and only if
\[
b_n^F \leq \bullet \leq t_n^F,
\]
where $b_n^F$ and $t_n^F$ denote the bottom and top degrees, respectively, defined by
\begin{align*}
b_n^F &:= 
|S_r|\cdot\lfloor \tfrac{n^2}{4} \rfloor + |S_c|\cdot\tfrac{n(n-1)}{2},\\
t_n^F &:= |S_r|\cdot\lfloor \tfrac{n-1}{2} \rfloor + |S_c|\cdot(n-1) + b_n^F + [F:\Q]-1.
\end{align*}
We say that $\varepsilon \in \widehat{\pi_0(G(\R))}$ is \emph{permissible} for $\pi \in \Omega(\lambda)$ if the $\varepsilon$-isotypic component of the $(\frak{g}_\infty,K_\infty^\circ)$-cohomology of $\pi \otimes \M_\lambda$ is nonzero.
More precisely, every $\varepsilon$ is permissible if either $n$ is even or $S_r$ is empty. If $n$ is odd and $S_r$ is nonempty, then the only permissible character for $\pi$ is given by (cf.\,\S\,\ref{sec:Hecke-char-Gauss-sums})
\[
\varepsilon(\pi) := \varepsilon(\omega_{\pi}).
\]
Furthermore, for any permissible $\varepsilon$, we have
\[
\dim_\C H^\bullet(\frak{g}_\infty,K_\infty^\circ;\pi \otimes \M_\lambda)(\varepsilon) = 1,
\quad \text{for } \bullet = b_n^F,\, t_n^F.
\]

\section{\texorpdfstring{$\Aut(\C)$}{Aut(\textbf{C})}-actions and Betti–Whittaker periods}
\label{sec:aut-C-BW-periods}

\subsection{\texorpdfstring{$\Aut(\C)$}{Aut(\textbf{C})}-action on the cohomological models}
\label{sec:Galois-action-coho-mod}

Let $(\rho,\mathcal{M})$ be an algebraic representation of $G \times \C$. 
The action of ${\rm Aut}(\C)$ on the sheaf cohomology of $\widetilde{\mathcal{M}}$ is defined via its Betti realization.
Let $\sigma \in {\rm Aut}(\C)$. 
Recall the $\sigma$-conjugate representation $({}^\sigma\rho,{}^\sigma\mathcal{M})$ introduced in \S\,\ref{sec:irr-reps}, 
together with the $\sigma$-linear $G(\Q)$-equivariant isomorphism
$t_\sigma : \mathcal{M} \longrightarrow {}^\sigma\mathcal{M}$ 
defined in \eqref{eq:Galois-identity}.
For each open compact subgroup $K_f \subset G(\A_f)$, define a $\sigma$-linear isomorphism
\[
\sigma_{\rm B} : \Hom_\Z (C_\bullet^\infty(X/K_f),\mathcal{M})
\longrightarrow 
\Hom_\Z (C_\bullet^\infty(X/K_f),{}^\sigma\mathcal{M}),
\quad 
\ell \longmapsto t_\sigma \circ \ell .
\]
It is immediate that $\sigma_{\rm B}$ commutes with the boundary operator, the $G(\Q)$-action, and the Hecke action. More precisely,
\[
\sigma_{\rm B}\circ \partial^* = \partial^*\circ\sigma_{\rm B}, 
\quad 
\sigma_{\rm B}\circ \gamma = \gamma \circ \sigma_{\rm B}, 
\quad 
\sigma_{\rm B} \circ m_{x,{\rm B}} = m_{x,{\rm B}}\circ \sigma_{\rm B}
\]
for all $\gamma \in G(\Q)$ and $x \in \pi_0(G(\R))\times G(\A_f)$.
Passing to $G(\Q)$-invariants and taking the direct limit over $K_f$, we obtain a $\sigma$-linear and $\pi_0(G(\R))\times G(\A_f)$-equivariant isomorphism
\[
\sigma_{\rm B}^\bullet :
H_{\rm B}^\bullet(S^G,\widetilde{\mathcal{M}})
\longrightarrow
H_{\rm B}^\bullet(S^G,{}^\sigma\!\widetilde{\mathcal{M}}).
\]
Using the comparison isomorphism, we define the induced $\sigma$-linear and $\pi_0(G(\R))\times G(\A_f)$-equivariant isomorphism on the de Rham realization
\[
\sigma_{\rm dR}^\bullet :
H_{\rm dR}^\bullet(S^G,\widetilde{\mathcal{M}})
\longrightarrow
H_{\rm dR}^\bullet(S^G,{}^\sigma\!\widetilde{\mathcal{M}}), \ \ {\rm by} \ \ 
\sigma_{\rm dR}^\bullet
:=
(I^\bullet)^{-1}
\circ
\sigma_{\rm B}^\bullet
\circ
I^\bullet .
\]
In \cite[Théorème 3.19]{Clozel1990}, Clozel proved the ${\rm Aut}(\C)$-equivariance of the cuspidal cohomology. More precisely, we have
\[
\sigma_{\rm dR}^\bullet H_{\rm cusp}^\bullet(S^G,\widetilde{\mathcal{M}})
=
H_{\rm cusp}^\bullet(S^G,{}^\sigma\!\widetilde{\mathcal{M}}).
\]

Let $\Pi$ be a cuspidal automorphic representation of $G(\A)$. 
Following Clozel in \emph{loc.\ cit.}, we say that $\Pi$ is \emph{regular algebraic} if the infinitesimal character of $\Pi_\infty$ is regular and lies in
\[
\prod_{\tau : F \rightarrow \C} (\Z+\tfrac{n-1}{2})^n.
\]
By \cite[Théorème 3.13 and Lemme 4.9]{Clozel1990}, $\Pi$ is regular algebraic if and only if $\Pi_f \in \Coh_{\rm cusp}(G,\lambda)$ 
for some $\lambda \in X_{00}^+(T\times\C)$.
Assume we are in this case.
For every $\sigma \in {\rm Aut}(\C)$, the irreducible admissible $(\frak{g}_\infty,C_\infty)\times G(\A_f)$-module
${}^\sigma\Pi := {}^\sigma\Pi_\infty \otimes {}^\sigma\Pi_f$
is again cuspidal automorphic and regular algebraic, where 
\[
{}^\sigma\Pi_f := \Pi_f \otimes_{\C,\sigma} \C
\] 
and ${}^\sigma\Pi_\infty$ is uniquely determined by 
\[
{}^\sigma\Pi_\infty \in \Omega({{}^\sigma\lambda}),
\]
and $\varepsilon({}^\sigma\Pi_\infty) = \varepsilon(\Pi_\infty)$ if $n$ is odd and $S_r$ is nonempty.
Moreover, we have ${}^\sigma\Pi_f \in \Coh_{\rm cusp}(G,{}^\sigma\lambda)$. 
The \emph{rationality field} $\Q(\Pi)$ of $\Pi$ is the fixed field in $\C$ of the stabilizer of $\Pi$ in ${\rm Aut}(\C)$; it is a number field.
Let $\bullet \in \{b_n^F,t_n^F\}$ be an extreme degree, and $\varepsilon \in \widehat{\pi_0(G(\R))}$ be permissible for $\Pi_\infty$. By taking the Galois invariants over $\Q(\Pi)$, we obtain a $\Q(\Pi)$-rational structure on the $\varepsilon\times\Pi_f$-isotypic component of the cuspidal cohomology (cf.\,\cite[Lemme 3.2.1]{Clozel1990}):
\begin{align}\label{eq:Coh-rational-structure}
\begin{split}
&H_{\rm cusp}^\bullet({S}^G,\widetilde{\M}_\lambda)(\varepsilon\times\Pi_f)^{{\rm Aut}(\C/\Q(\Pi))}\\
& := \left.\left\{\omega\in H_{\rm cusp}^\bullet(\mathcal{S}_n,\widetilde{\M}_\lambda)(\varepsilon\times\Pi_f)\,\right\vert\,\sigma_{\rm dR}^\bullet (\omega)=\omega\mbox{ for $\sigma \in {\rm Aut}(\C/\Q(\Pi))$}\right\}.
\end{split}
\end{align}
Here we have identified $({}^\sigma\rho_\lambda,{}^\sigma\M_\lambda)$ with $(\rho_{{}^\sigma\lambda},\mathcal{M}_{{}^\sigma\lambda})$ for $\sigma \in {\rm Aut}(\C)$ via the isomorphism (\ref{eq:finite-rep}).
In particular, ${}^\sigma\M_\lambda = \M_\lambda$ if $\sigma \in {\rm Aut}(\C/\Q(\Pi))$, since $\Q(\lambda) \subset \Q(\Pi)$.

\subsection{${\rm Aut}(\C)$-action on the Whittaker models}%
\label{sec:Galois-action-Wh-mod}%

Let $\Pi$ be a regular algebraic cuspidal automorphic representation of $G(\A)$.
For $\sigma \in {\rm Aut}(\C)$, as in Harder \cite{harder}, let 
\[
\underline{u}_\sigma:=\diag(u_\sigma^{-(n-1)}, u_\sigma^{-(n-2)}, \dotsc, 1) \in T(\A_f),
\]
where $u_\sigma \in \prod_p \Z_p^\times \subset \A_{F,f}^\times$ is the unique element such that
$\sigma(\psi_\Q(x)) = \psi_\Q(u_\sigma x)$ for all $x \in \A_f$.
We define a $\sigma$-linear $G(\A_f)$-equivariant isomorphism 
\[
\sigma_\W : \W(\Pi_f)\longrightarrow \W({}^\sigma\Pi_f) 
\]
by
\[
(\sigma_\W W)(g):= \sigma(W(\underline{u}_\sigma g)),\quad g \in G(\A_f).
\]
By taking the Galois invariants over $\Q(\Pi)$, we obtain a subspace of $\W(\Pi_f)$ over $\Q(\Pi)$:
\begin{align}\label{eq:Wh-mod-rational-structure}
\mathcal{W}(\Pi_f)^{{\rm Aut}(\C/\Q(\Pi))} := \left.\left\{W\in \mathcal{W}(\Pi_f)\,\right\vert\,\sigma_\W W=W\mbox{ for $\sigma \in {\rm Aut}(\C/\Q(\Pi))$}\right\}.
\end{align}
Note that $\mathcal{W}(\Pi_f)^{{\rm Aut}(\C/\Q(\Pi))}$ is nonzero by the newform theory for $G(\A_f)$ \cite{JPSS1981}, the existence of the Kirillov model for $\Pi_f$ \cite[Theorem 5.20]{BZ1976}, and \cite[Lemme I.1]{Wald1985B}.
In particular, this defines a $\Q(\Pi)$-rational structure of $\W(\Pi_f)$.

\subsection{Betti–Whittaker periods}%
\label{sec:comparison-isomorphism}%
These periods were originally defined by Harder \cite{harder} and Hida \cite{hida-duke} independently. 
The definition given below is as in Raghuram--Shahidi \cite{RS2008}. 
Let $\Pi$ be a regular algebraic cuspidal automorphic representation of $G(\A)$ such that $\Pi_f \in \Coh_{\rm cusp}(G,\lambda)$ 
for some $\lambda \in X_{00}^+(T\times\C)$.
Let 
\[
\Upsilon_\Pi : \mathcal{W}(\Pi) \longrightarrow V_\Pi
\]
be the inverse of the isomorphism in (\ref{eq:Whittaker-realization}) with $\xi = \psi_N$. It induces a $\pi_0(G(\R))\times G(\A_f)$-equivariant isomorphism
\[
\Upsilon_\Pi^\bullet : H^\bullet(\frak{g}_\infty,K_\infty^\circ;\W(\Pi)\otimes\M_\lambda) \longrightarrow H_{\rm cusp}^\bullet(S^G,\widetilde\M_\lambda)(\Pi_f).
\]
Let $\bullet \in \{b_n^F,t_n^F\}$ be an extreme degree, and $\varepsilon \in \widehat{\pi_0(G(\R))}$ be permissible for $\Pi_\infty$.
Fix a generator
\[
[\Pi_\infty]^\varepsilon \in H^\bullet(\frak{g}_\infty,K_\infty^\circ;\W(\Pi_\infty)\otimes\M_\lambda)(\varepsilon).
\]
With respect to this choice, we define a $G(\A_f)$-equivariant isomorphism 
\[
\mathcal F_{\Pi_f,[\Pi_\infty]^\varepsilon} : \W(\Pi_f) \longrightarrow H_{\rm cusp}^\bullet(S^G,\widetilde\M_\lambda)(\varepsilon\times\Pi_f)
\]
by
\[
\mathcal{F}_{\Pi_f,[\Pi_\infty]^\varepsilon}(W):= \Upsilon_\Pi^\bullet([\Pi_\infty]^\varepsilon\otimes W).
\]
By comparing the $\Q(\Pi)$-rational structures of $\Pi_f$ in (\ref{eq:Coh-rational-structure}) and (\ref{eq:Wh-mod-rational-structure}), there exists a unique element
\[
p^\varepsilon(\Pi) \in \C^\times /\Q(\Pi)^\times
\]
such that 
\[
\mathcal{F}_{\Pi_f,[\Pi_\infty]^\varepsilon}\left(\frac{\mathcal{W}(\Pi_f)^{{\rm Aut}(\C/\Q(\Pi))}}{p^\varepsilon(\Pi)}\right) = H_{\rm cusp}^\bullet({S}^G,\widetilde{\M}_\lambda)(\varepsilon\times\Pi_f)^{{\rm Aut}(\C/\Q(\Pi))}.
\]
We call $p^\varepsilon(\Pi)$ the \emph{bottom-degree (resp.\,top-degree) Betti--Whittaker period} of $\Pi$ with respect to 
$[\Pi_\infty]^\varepsilon$ if $\bullet = b_n^F$ (resp.\,$\bullet = t_n^F$).

For each $\sigma \in {{\rm Aut}(\C)}$, we fix a generator $[{}^\sigma\Pi_\infty]^\varepsilon$ and obtain the associated Betti--Whittaker period $p^\varepsilon({}^\sigma\Pi)$ of ${}^\sigma\Pi$.
{For compatibility, we assume $[{}^\sigma\Pi_\infty]^\varepsilon = [\Pi_\infty]^\varepsilon$ if $\sigma \in {\rm Aut}(\C/\Q(\Pi))$.}
We normalize the periods $(p^\varepsilon({{}^\sigma}\Pi))_{\sigma \in {{\rm Aut}(\C)}}$ such that 
\begin{align}\label{eq:W-H-periods}
\sigma_{\rm dR}^\bullet\circ \frac{\mathcal{F}_{\Pi_f,[\Pi_\infty]^\varepsilon}}{p^\varepsilon(\Pi)} = \frac{\mathcal{F}_{{}^\sigma\Pi_f,[{}^\sigma\Pi_\infty]^\varepsilon}}{p^\varepsilon({}^\sigma\Pi)} \circ \sigma_\W,\quad \sigma \in {\rm Aut}(\C).
\end{align}
In other words, the diagram
\[
	\begin{tikzcd}[column sep=8em]
		\W(\Pi_f) \arrow[r,"\frac{\mathcal{F}_{\Pi_f,[\Pi_\infty]^\varepsilon}}{p^\varepsilon(\Pi)}"] \arrow[d,"\twistW"] & H_{\rm cusp}^{\bullet}(S^G, \widetilde \M_{\lambda})(\eps\times\Pi_f) \arrow[d,"\sigma_{\rm dR}^\bullet"] \\ \W({}^\sigma\Pi_f) \arrow[r,"\frac{\mathcal{F}_{{}^\sigma\Pi_f,[{}^\sigma\Pi_\infty]^\varepsilon}}{p^\varepsilon({}^\sigma\Pi)}"] & H_{\rm cusp}^{\bullet}(S^G, {}^\sigma\!\widetilde{\mathcal{M}}_\lambda)(\eps\times{}^\sigma\Pi_f)
	\end{tikzcd}
\]
is commutative for all $\sigma \in {\rm Aut}(\C)$.

\section{The dual representations}
\label{sec:dual-rep}

\subsection{An algebraic automorphism of $G$}\label{sec:alg-auto-1}

Let $\theta : G \rightarrow G$ be an algebraic automorphism defined by
\[
\theta(g):= \omega_n\cdot {}^tg^{-1} \cdot \omega_n^{-1},
\]
where $\omega_n$ denotes the signed anti-diagonal representative of the long Weyl element:
\begin{align}\label{eq:long-Weyl}
\omega_n:=
\begin{pmatrix}
0 & 0 & \cdots & (-1)^n\\
\vdots & \vdots & \iddots & \vdots\\
0 & 1 & \cdots & 0\\
-1 & 0 & \cdots & 0
\end{pmatrix}.
\end{align}
We also write $\theta(g) = {}^\theta g$. Note that $\theta$ is an involution, ${}^\theta C_\infty = C_\infty$, ${}^\theta N = N$, and 
\begin{align}\label{eq:theta-additive}
\psi_N({}^\theta u) = \psi_N(u),\quad u \in N(\A).
\end{align}

\subsection{The dual of an algebraic representation}%
\label{sec:dual-alg-rep}%

Let $(\rho,\M)$ be an algebraic representation of $G\times\C$. Define its \emph{$\theta$-twist representation} $({}^\theta\rho,{}^\theta\M)$ by 
\[
{}^\theta\M:=\M,\quad {}^\theta\rho(g):=\rho({}^\theta g).
\]

\begin{lemma}
Let $\lambda \in X^+(T\times\C)$. We have
\[
({}^\theta\rho_\lambda,{}^\theta\M_\lambda) \cong (\rho_\lambda^\vee,\M_\lambda^\vee).
\]
\end{lemma}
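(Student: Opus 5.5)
The plan is a highest-weight computation, carried out one embedding at a time. Since $G\times\C\cong\prod_{\tau}\GL_n/\C$ and $\theta$ is defined by the same formula in each factor, $\theta$ acts factor-wise on $G\times\C$. The representation $\M_\lambda$ is the external tensor product $\bigotimes_\tau\M_{\lambda^\tau}$, and both the $\theta$-twist and the dual are compatible with external tensor products. Hence it suffices to show, for a single dominant weight $\mu=(\mu_1,\dots,\mu_n)$ of $\GL_n(\C)$, that ${}^\theta\rho_\mu\cong\rho_\mu^\vee$.

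Fix $\mu$ and argue as follows.

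First, ${}^\theta\rho_\mu$ is irreducible algebraic, because $\theta$ is an algebraic automorphism of $\GL_n/\C$. Such a representation is determined up to isomorphism by its highest weight with respect to the upper triangular Borel $B$. So the task reduces to computing that highest weight.

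Second, I would record three facts:
\begin{itemize}
\item $\theta(B)=B$, since $g\mapsto{}^tg^{-1}$ swaps the upper and lower triangular Borels and conjugation by the antidiagonal $\omega_n$ swaps them back. The same argument gives ${}^\theta N=N$, as noted in \S\ref{sec:alg-auto-1}.
\item $\theta(T)=T$.
\item On the torus,
\[
\theta(\diag(t_1,\dots,t_n))=\diag(t_n^{-1},\dots,t_1^{-1}),
\]
because conjugation by $\omega_n$ reverses the diagonal; the signs in $\omega_n$ cancel on diagonal matrices.
\end{itemize}

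Third, let $v$ be a highest weight vector of $\rho_\mu$, so that $v$ is $N$-fixed with $T$-weight $\mu$. Since ${}^\theta N=N$, the vector $v$ is still $N$-fixed for ${}^\theta\rho_\mu$. Its weight under ${}^\theta\rho_\mu$ is
\[
t\mapsto\mu(\theta(t))=\prod_i t_{n+1-i}^{-\mu_i}.
\]
This is the character $(-\mu_n,\dots,-\mu_1)=\mu^\vee$, which is dominant. So ${}^\theta\rho_\mu$ has highest weight $\mu^\vee$, whence ${}^\theta\rho_\mu\cong\rho_{\mu^\vee}\cong\rho_\mu^\vee$, using the identification of $\rho_\lambda^\vee$ with $\rho_{\lambda^\vee}$ stated in \S\ref{sec:irr-reps}.

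Taking the tensor product over $\tau$ then gives ${}^\theta\rho_\lambda\cong\rho_{\lambda^\vee}\cong\rho_\lambda^\vee$.

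None of these steps is a serious obstacle. The only point needing care is verifying $\theta(B)=B$ and the formula for $\theta$ on $T$ with the specific signed $\omega_n$, that is, checking that conjugation by $\omega_n$ maps lower triangular matrices to upper triangular ones and reverses diagonal entries. This follows from $\omega_n$ being a monomial matrix for the longest Weyl element.
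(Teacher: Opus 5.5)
Your proposal is correct and is exactly the highest-weight exercise the paper leaves to the reader. Your checks fill in the omitted details: $\theta$ preserves $B$ and $N$, it acts on $T$ by $\diag(t_1,\dots,t_n)\mapsto\diag(t_n^{-1},\dots,t_1^{-1})$ (the signs in $\omega_n$ cancel), and it acts factor-wise on $\prod_\tau \GL_n/\C$ because $\omega_n$ has integer entries.
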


\begin{proof}
The proof---an easy exercise involving highest weights---is left to the reader. 
\end{proof}

\subsection{The dual of a local representation}\label{sec:local-dual}

Let $v$ be a place of $\Q$. Let $(\pi,V)$ be a representation of $G(\Q_p)$ if $v=p$ is a rational prime, and a $(\mathfrak g_\infty,C_\infty)$-module if $v=\infty$. 
We define the \emph{$\theta$-twist representation} $({}^\theta\pi,{}^\theta V)$ by ${}^\theta V := V$ and
\[
\begin{cases}
{}^\theta\pi(g) := \pi({}^\theta g), \quad g \in G(\Q_p) &\mbox{if $v=p$}, \\[6pt]
{}^\theta\pi(Y) := \pi(\theta^* Y), \quad {}^\theta\pi(k) := \pi({}^\theta k), \quad 
 Y \in \mathfrak g_\infty,\; k \in C_\infty&\mbox{if $v=\infty$}.
\end{cases}
\]
It is clear that $(\pi,V)$ is irreducible admissible if and only if $({}^\theta\pi,{}^\theta V)$ is. Moreover, in this case, we have
\begin{align}\label{eq:GK}
({}^\theta\pi,{}^\theta V) \cong (\pi^\vee,V^\vee),
\end{align}
by the result of Gelfand--Kazhdan \cite[Theorem 7.3]{BZ1976} when $v=p$, and can be directly verified by using parabolic induction when $v = \infty$.

\begin{lemma}\label{lem:GK}
Assume that $(\pi,V)$ is irreducible, admissible, and generic. Then we have
\[
\W(\pi^\vee) = \left\{{}^\theta W \,\vert\, W \in \W(\pi)\right\}.
\]
Here ${}^\theta W(g):=W({}^\theta g)$ for $g \in G(\Q_v)$.
\end{lemma}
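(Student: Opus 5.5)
The plan is to show that $W \mapsto {}^\theta W$ sends $\W(\pi)=\W(\pi,\psi_{N,v})$ into the space of Whittaker functions with respect to $\psi_{N,v}$, and that the image, as a $G(\Q_v)$-module (or $(\frg_\infty,C_\infty)$-module) under right translation, is isomorphic to ${}^\theta\pi\cong\pi^\vee$ by \eqref{eq:GK}. Uniqueness of Whittaker models then identifies the image with $\W(\pi^\vee)$.

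First, the Whittaker transformation property. For $u\in N(\Q_v)$ and $g\in G(\Q_v)$ we have ${}^\theta(ug)={}^\theta u\,{}^\theta g$ with ${}^\theta u\in N(\Q_v)$, since ${}^\theta N=N$. Hence
\[
{}^\theta W(ug)=W({}^\theta u\,{}^\theta g)=\psi_{N,v}({}^\theta u)\,W({}^\theta g)=\psi_{N,v}(u)\,{}^\theta W(g),
\]
using the local form of \eqref{eq:theta-additive}. That local form holds because $\theta$ preserves the superdiagonal sum $u_{12}+\cdots+u_{n-1,n}$; this is exactly why the signs in $\omega_n$ are chosen as they are, and it is a quick check at a single place. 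Smoothness is clear because $\theta$ is an algebraic automorphism. At $v=\infty$, moderate growth is preserved because $\theta$ is algebraic, so $\|{}^\theta g\|$ is polynomially bounded in $\|g\|$. The $C_\infty$-finiteness of ${}^\theta W$ follows from ${}^\theta C_\infty=C_\infty$.

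Second, equivariance. Let $R$ denote right translation. Then
\[
R(g)\,{}^\theta W(x)=W({}^\theta x\,{}^\theta g)=\bigl(R({}^\theta g)W\bigr)^\theta(x),
\]
so $W\mapsto{}^\theta W$ is an isomorphism from ${}^\theta\pi$ (realized on $\W(\pi)$) onto its image, with inverse $W\mapsto{}^\theta W$ because $\theta$ is an involution. At the archimedean place the same computation holds at the level of $\frg_\infty$, with $\theta^*$ the differential of $\theta$. Hence the image is a $\psi_{N,v}$-Whittaker model of ${}^\theta\pi\cong\pi^\vee$, the last isomorphism being \eqref{eq:GK}.

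Third, uniqueness. The contragredient $\pi^\vee$ is irreducible, admissible, and generic, since its Whittaker model is nonzero by the previous step. By uniqueness of Whittaker models (local multiplicity one), any nonzero space of $\psi_{N,v}$-Whittaker functions on which right translation realizes $\pi^\vee$ coincides with $\W(\pi^\vee)$. Indeed, such a space corresponds to a nonzero Whittaker functional on $\pi^\vee$, which is unique up to scalar, and a scalar does not change the space. This gives the stated equality.

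The main obstacle is the archimedean case. There one must make sure that uniqueness is applied to the correct class of functionals: moderate growth, or continuous functionals on the Casselman--Wallach completion. One must also check that the $(\frg_\infty,C_\infty)$-module isomorphism \eqref{eq:GK} is compatible with this class. I expect this to be handled by noting that $\theta$ induces an automorphism of $G(\R)$ preserving $C_\infty$, so it extends to the smooth globalizations, and then applying Shalika's multiplicity-one theorem there. The sign check for \eqref{eq:theta-additive} is routine.
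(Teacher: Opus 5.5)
Your proposal is correct and follows the paper's route. The paper's proof also shows that $W\mapsto{}^\theta W$ gives an injective equivariant map from $({}^\theta\pi,{}^\theta V)$ into the space of $\psi_{N,v}$-Whittaker functions, and then concludes from \eqref{eq:GK} and multiplicity one for the Whittaker models of $\pi^\vee$. You spell out the checks the paper calls straightforward: the sign computation behind \eqref{eq:theta-additive}, preservation of moderate growth and $C_\infty$-finiteness, and the archimedean care about which class of Whittaker functionals uniqueness is applied to.
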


\begin{proof}
Let $V=\W(\pi)$ be the Whittaker model of $\pi$ with respect to $\psi_{N,v}$.
Then $\pi$ acts on $V$ by right translation (and by the
corresponding differential action when $v=\infty$).
Let $\W_{\psi_{N,v}}$ denote the space of Whittaker functions on $G(\Q_v)$
with respect to $\psi_{N,v}$.
Consider the $\C$-linear map
$
\W(\pi)\longrightarrow \W_{\psi_{N,v}},$ given by $W\longmapsto {}^\theta W.$ 
It is straightforward to verify that this map induces an injective equivariant homomorphism
$({}^\theta\pi,{}^\theta V)\longemb \W_{\psi_{N,v}}.$ 
The assertion then follows from \eqref{eq:GK} and the multiplicity one for the Whittaker models of $\pi^\vee$.
\end{proof}

\subsubsection{${\rm Aut}(\C)$-action under duality}\label{sec:aut-local}

Assume that $v = p$ and that $(\pi,V)$ is irreducible, admissible, and generic. For $\sigma \in \mathrm{Aut}(\C)$, 
we define, as in \S\ref{sec:Galois-action-Wh-mod}, a $\sigma$-linear $G(\Q_p)$-equivariant isomorphism
$\sigma_{\W} : \W(\pi) \longrightarrow \W({}^\sigma\pi)$ by
$(\sigma_{\W} W)(g) := \sigma\bigl(W(\underline{u}_{\sigma,p} g)\bigr), \ g \in G(\Q_p),$ 
where $\underline{u}_{\sigma,p} \in T(\Q_p)$ denotes the $p$-component of $\underline{u}_{\sigma}$.
By Lemma \ref{lem:GK}, we have a $\C$-linear isomorphism
\[
\theta_{\W} : \W(\pi) \longrightarrow \W(\pi^\vee), \quad W \longmapsto {}^\theta W.
\]
The following lemma concerns the $\mathrm{Aut}(\C)$-equivariance of $\theta_{\W}$.

\begin{lemma}\label{lem:Wh-rational}
Assume that $v = p$ and that $(\pi,V)$ is irreducible, admissible, and generic.
We have 
\[
\sigma_\W\circ \theta_\W = {}^\sigma\omega_\pi(u_{\sigma,p})^{n-1} \cdot \theta_\W\circ \sigma_\W,\quad \sigma \in {\rm Aut}(\C).
\]
\end{lemma}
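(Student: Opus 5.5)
The plan is a direct computation: evaluate both sides of the identity on an arbitrary $W \in \W(\pi)$ at an arbitrary $g \in G(\Q_p)$, and compare. Both sides are $\sigma$-linear maps $\W(\pi) \to \W({}^\sigma(\pi^\vee))$, so I first check that the targets agree. Since ${}^\sigma(\pi^\vee) \cong ({}^\sigma\pi)^\vee$, Lemma~\ref{lem:GK} applied to ${}^\sigma\pi$ identifies $\W({}^\sigma(\pi^\vee))$ with $\{{}^\theta W' \,\vert\, W' \in \W({}^\sigma\pi)\}$. In particular $\theta_\W\circ\sigma_\W$ indeed lands in the same Whittaker model as $\sigma_\W\circ\theta_\W$. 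Put $a := u_{\sigma,p}$ and $\underline{u} := \underline{u}_{\sigma,p} = \diag(a^{-(n-1)},\dots,a^{-1},1)$.

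Unwinding the definitions gives
\[
(\sigma_\W\theta_\W W)(g) = \sigma\bigl(W({}^\theta(\underline{u} g))\bigr) = \sigma\bigl(W({}^\theta\underline{u}\,{}^\theta g)\bigr),
\qquad
(\theta_\W\sigma_\W W)(g) = \sigma\bigl(W(\underline{u}\,{}^\theta g)\bigr).
\]
Everything therefore reduces to computing ${}^\theta\underline{u}$. The transpose-inverse of $\underline{u}$ is $\diag(a^{n-1},\dots,a,1)$. Conjugating a diagonal matrix by the signed anti-diagonal $\omega_n$ reverses the order of its entries, and the signs cancel because the matrix is diagonal. Hence
\[
{}^\theta\underline{u} = \diag(1,a,\dots,a^{n-1}) = a^{n-1}\cdot\underline{u},
\]
where $a^{n-1}$ is viewed as a central element of $G(\Q_p)$.

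Since central elements act on $\W(\pi)$ through the central character $\omega_\pi$, I get $W({}^\theta\underline{u}\,{}^\theta g) = \omega_\pi(a)^{n-1}\,W(\underline{u}\,{}^\theta g)$. Applying $\sigma$ turns the scalar into $\sigma(\omega_\pi(a))^{n-1} = {}^\sigma\omega_\pi(u_{\sigma,p})^{n-1}$, which is the claimed identity. The only delicate step is the matrix computation of ${}^\theta\underline{u}$: one has to make sure that the order reversal produces exactly a scalar multiple of $\underline{u}$ and not of $\underline{u}^{-1}$, and that the scalar comes out with exponent $n-1$ rather than $-(n-1)$.
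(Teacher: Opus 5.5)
Your proof is correct and follows the paper's argument: the paper's proof is the one-line identity ${}^\theta \underline{u}_{\sigma,p} = u_{\sigma,p}^{n-1}\cdot \underline{u}_{\sigma,p}$. You derive this identity explicitly and then apply the central-character step and $\sigma$-linearity, which the paper leaves implicit.
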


\begin{proof}
This is clear, since
$
{}^\theta \underline{u}_{\sigma,p} = u_{\sigma,p}^{n-1}\cdot \underline{u}_{\sigma,p}.
$
\end{proof}

\subsubsection{The dual of a local generator}\label{sec:archi-local-dual}

Assume that $v = \infty$ and that $\pi \in\Omega(\lambda)$ for some $\lambda \in X_0^+(T\times\C)$. Let $\theta_\W$ be the $\C$-linear 
isomorphism from $\W(\pi)$ to $\W(\pi^\vee)$ defined similarly as in \S\,\ref{sec:aut-local}.
Let $\bullet \in \{b_n^F,t_n^F\}$ be an extreme degree, and $\varepsilon \in \widehat{\pi_0(G(\R))}$ be permissible for $\pi$.
Let
\[
[\pi]^\varepsilon \in H^\bullet(\frak{g}_\infty,K_\infty^\circ;\W(\pi)\otimes\M_\lambda)(\varepsilon)
\]
be a generator.
Since $\pi$ is essentially unitary, we have (cf.\,\cite[II, Proposition 3.1]{BW2000})
\begin{align*}
H^\bullet(\frak{g}_\infty,K_\infty^\circ;\W(\pi)\otimes\M_\lambda) &= {\rm Hom}_{K_\infty^\circ}\left(\extp^\bullet \frg_\infty/\frk_\infty,
\W(\pi)\otimes\M_\lambda\right),\\
H^\bullet(\frak{g}_\infty,K_\infty^\circ;\W(\pi^\vee)\otimes{}^\theta\M_\lambda) &= {\rm Hom}_{K_\infty^\circ}\left(\extp^\bullet \frg_\infty/\frk_\infty,
\W(\pi^\vee)\otimes{}^\theta\M_\lambda\right).
\end{align*}
We define the dual generator 
\[
{}^\theta[\pi]^\varepsilon \in H^\bullet(\frak{g}_\infty,K_\infty^\circ;\W(\pi^\vee)\otimes{}^\theta\M_\lambda)(\varepsilon)
\]
by
\[
{}^\theta[\pi]^\varepsilon:=(\theta_\W \otimes {\rm id})\circ [\pi]^\varepsilon \circ \extp^\bullet d\theta.
\]
In other words, we have the following commutative diagram
\begin{equation}\label{eq:theta-twist}
\begin{tikzcd}[row sep=normal, column sep=normal]
\extp^{\bullet}\frak{g}_\infty/\frak{k}_\infty \arrow[r, "{[\pi]}^\varepsilon"]  & \mathcal{W}(\pi)\otimes \mathcal{M}_{\lambda}\arrow[d, "\theta_\W \otimes {\rm id}"]\\
\extp^{\bullet}\frak{g}_\infty/\frak{k}_\infty \arrow[r, "{}^\theta{[\pi]}^\varepsilon"]\arrow[u, "\extp^\bullet d\theta"] & \mathcal{W}(\pi^\vee)\otimes \mathcal{M}_{\lambda-{\sf w}}.
\end{tikzcd}
\end{equation}

\subsection{The dual of a cuspidal automorphic representation}

The algebraic automorphism $\theta$ induces a $\C$-linear isomorphism
\begin{align*}
C^\infty(G(\Q)\backslash G(\A))
\longrightarrow
C^\infty(G(\Q)\backslash G(\A)),\quad
\phi
\longmapsto
{}^\theta\phi
\end{align*}
where ${}^\theta\phi(g):=\phi({}^\theta g)$.
When restricted to the space of automorphic forms
$\mathcal{A}(G(\Q)\backslash G(\A))$, this map defines a
$(\frak g_\infty,C_\infty)\times G(\A_f)$-equivariant
isomorphism of the space with itself.
Indeed, a function $\phi$ is $C_\infty$-finite if and only if
${}^\theta\phi$ is $C_\infty$-finite, since
${}^\theta C_\infty=C_\infty$.
The same argument applies to the other conditions required
for $\phi$ to be an automorphic form on $G(\A)$.

The following lemma is a well-known consequence of (\ref{eq:GK}) and Lemma \ref{lem:GK}, we include a proof for the convenience of the reader.
\begin{lemma}
Let $\Pi$ be a cuspidal automorphic representation of $G(\A)$.
Then we have
\[
V_{\Pi^\vee} = \left\{{}^\theta \phi \,\vert\, \phi \in V_{\Pi}\right\},\quad \W(\Pi^\vee) = \left\{{}^\theta W \,\vert\, W \in \W(\Pi)\right\}.
\]
\end{lemma}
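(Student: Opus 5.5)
The plan is to define $V' := \{{}^\theta\phi \mid \phi \in V_\Pi\}$ and show that it is the space $V_{\Pi^\vee}$. This amounts to three checks: $V'$ consists of cusp forms, it is an irreducible $(\frg_\infty,C_\infty)\times G(\A_f)$-submodule, and it is isomorphic to $\Pi^\vee$. Multiplicity one for cuspidal representations of $\GL_n$ then forces $V' = V_{\Pi^\vee}$. The statement about Whittaker models will be deduced afterwards by computing Whittaker coefficients of ${}^\theta\phi$.

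\textbf{Step 1: cuspidality.} The map $\phi \mapsto {}^\theta\phi$ is already known to preserve $\mathcal{A}(G(\Q)\backslash G(\A))$, since $\theta$ is defined over $\Q$ and so preserves $G(\Q)$. For cuspidality we use that $\theta$ maps the standard Borel to itself: ${}^\theta B = B$, because ${}^\theta N = N$ and ${}^\theta T = T$. Hence $\theta$ permutes standard parabolics, sending $P$ to the standard parabolic associated with the reversed composition, and it maps unipotent radicals to unipotent radicals. After a change of variables with Haar measures preserved (since $\theta$ is an algebraic $\Q$-automorphism), every constant term of ${}^\theta\phi$ along $U_P$ becomes a constant term of $\phi$ along $U_{{}^\theta P}$, evaluated at ${}^\theta g$. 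These vanish, so ${}^\theta\phi$ is cuspidal.

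\textbf{Step 2: identification of the representation.} By construction the map is an isomorphism of vector spaces $V_\Pi \cong V'$ which intertwines ${}^\theta\Pi$ with the right regular action on $V'$. Indeed, $({}^\theta\phi)(gh) = \phi({}^\theta g\,{}^\theta h)$, and the analogous identity holds for $\frg_\infty$ acting through $d\theta$. It follows that $V'$ is irreducible and isomorphic to ${}^\theta\Pi = \otimes_v {}^\theta\Pi_v$. By \eqref{eq:GK} applied at every place, this is $\otimes_v \Pi_v^\vee \cong \Pi^\vee$. Since $\Pi^\vee$ occurs in $\Cusp$ with multiplicity one (strong multiplicity one, or simply multiplicity one), we conclude $V' = V_{\Pi^\vee}$.

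\textbf{Step 3: Whittaker models.} For $\phi \in V_\Pi$, substitute $u \mapsto {}^\theta u$, which preserves $N(\Q)\backslash N(\A)$ and the Tamagawa measure, in the integral
\[
W_{\psi_N}(g;{}^\theta\phi)=\int \phi({}^\theta u\,{}^\theta g)\overline{\psi_N(u)}\,du .
\]
By \eqref{eq:theta-additive} we have $\psi_N({}^\theta u)=\psi_N(u)$, so the integral equals $W_{\psi_N}({}^\theta g;\phi)={}^\theta W_{\psi_N}(\phi)(g)$. Combining this with Step 2 and the realization \eqref{eq:Whittaker-realization} gives the second equality. Alternatively it follows placewise from Lemma~\ref{lem:GK}, but the global computation is more direct.

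The main obstacle is the measure and constant-term bookkeeping in Step 1. I expect it to be routine once we observe that $\theta$ preserves $B$, and hence the set of standard parabolics, and that it preserves $\Q$-structures, so that the quotients $U_P(\Q)\backslash U_P(\A)$ are carried onto one another with compatible measures.
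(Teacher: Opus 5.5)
Your proof is correct and uses the same ingredients as the paper: the local isomorphism \eqref{eq:GK}, multiplicity one, and the global identity ${}^\theta W_{\psi_N}(\phi)=W_{\psi_N}({}^\theta\phi)$ coming from \eqref{eq:theta-additive}. The only difference is the order: the paper identifies the representation on ${}^\theta V_\Pi$ with $\Pi^\vee$ by comparing Whittaker models through Lemma~\ref{lem:GK} and uniqueness of Whittaker models, whereas you apply \eqref{eq:GK} directly, use the Whittaker identity only for the second equality, and also spell out the cuspidality of ${}^\theta\phi$ via constant terms, which the paper takes for granted.
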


\begin{proof}
Note that the subspace ${}^\theta V_\Pi \subset \mathcal{A}_{\rm cusp}(G(\Q)\backslash G(\A))$ is an irreducible admissible $(\frak g_\infty,C_\infty)\times G(\A_f)$-submodule.
Indeed, a subspace $W \subset V_\Pi$ is invariant if and only if ${}^\theta W \subset {}^\theta V_\Pi$ is invariant.
Thus ${}^\theta V_\Pi = V_{\Pi'}$ for some cuspidal automorphic representation $\Pi'$ of $G(\A)$.
By (\ref{eq:theta-additive}), we have
\[
{}^\theta(W_{\psi_N}(\phi)) = W_{\psi_N}({}^\theta\phi),\quad \phi \in \mathcal{A}(G(\Q)\backslash G(\A)).
\]
In particular, 
\[
{}^\theta \W(\Pi) = \W(\Pi').
\] 
On the other hand, by Lemma \ref{lem:GK} we have
\[
{}^\theta \W(\Pi) = \W(\Pi^\vee).
\]
It then follows from the uniqueness of the Whittaker model that $\Pi' = \Pi^\vee$. This completes the proof.
\end{proof}

\begin{remark}
In fact, by the strong multiplicity one theorem, it suffices to have (\ref{eq:GK}) at unramified places. 
This follows from 
\[
{}^\theta {\rm Ind}_{B(\Q_p)}^{G(\Q_p)}(\chi_1 \boxtimes\cdots\boxtimes\chi_n) = {\rm Ind}_{B(\Q_p)}^{G(\Q_p)}(\chi_n^{-1} \boxtimes\cdots\boxtimes\chi_1^{-1})
\]
for characters $\chi_1,...,\chi_n$ of $F_p^\times = (F\otimes_\Q\Q_p)^\times$.
\end{remark}

\subsection{The dual of the cohomological models}%
\label{sec:dual-arithm}%

Let $(\rho,\M)$ be an algebraic representation of $G\times\C$. The algebraic automorphism $\theta$ induces natural cochain maps for the Betti and 
de~Rham complexes. For an open compact subgroup $K_f$ of $G(\A_f)$, let
\[
\theta: X/{}^\theta K_f \longrightarrow X/K_f,\quad x\longmapsto {}^\theta x.
\]

For each open compact subgroup $K_f$ of $G(\A_f)$, define a cochain map
\begin{align*}
\theta_{\rm B} :
\left(\Hom_\Z(C_\bullet^\infty(X/K_f),\mathcal{M}),\partial^\ast\right)
&\longrightarrow
\left(\Hom_\Z(C_\bullet^\infty(X/{}^\theta K_f),{}^\theta\mathcal{M}),\partial^\ast\right),
\quad \ell \longmapsto \ell \circ \theta_\sharp\\
\theta_{\rm dR} :
\left(\Omega^\bullet(X/K_f;\M),d\right)
&\longrightarrow
\left(\Omega^\bullet(X/{}^\theta K_f;{}^\theta\M),d\right),
\quad \omega \longmapsto \theta^*\omega .
\end{align*}
Here $\theta_\sharp(\alpha):=\theta\circ\alpha$.
Let ${\star} = {\rm B}$ or ${\star} = {\rm dR}$.
One can easily verify: 
\begin{align*}
\theta_{\star}\circ \gamma &= {}^\theta\gamma \circ \theta_{\star}, \quad
\theta_{\star} \circ m_{x,{\star}} = m_{{}^\theta x,{\star}}\circ \theta_{\star}
\end{align*}
for all $\gamma \in G(\Q)$ and $x \in \pi_0(G(\R))\times G(\A_f)$.
Passing to $G(\Q)$-invariants and taking the direct limit over $K_f$, we obtain a $\C$-linear isomorphism
\begin{align*}
\theta_{\star}^\bullet :
H_{\star}^\bullet(S^G,\widetilde{\mathcal{M}})
&\longrightarrow
H_{\star}^\bullet(S^G,{}^\theta\widetilde{\mathcal{M}})
\end{align*}
such that
\begin{align}\label{eq:theta-coh-rational}
\theta_{\star}^\bullet \circ m_{x,{\star}}^\bullet = m_{{}^\theta x,{\star}}^\bullet\circ \theta_{\star}^\bullet, \quad x \in \pi_0(G(\R))\times G(\A_f).
\end{align}
In particular, $\theta_{\star}^\bullet$ is $\pi_0(G(\R))$-equivariant.
Moreover, we have the following result on the compatibility of the comparison isomorphism with $\theta_{\star}$.

\begin{proposition}\label{prop:theta-dR-B}
We have
\begin{align*}
I^\bullet\circ \theta_{\rm dR}^\bullet = \theta_{\rm B}^\bullet\circ I^\bullet.
\end{align*}
\end{proposition}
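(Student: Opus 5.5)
The plan is to prove the identity already at the cochain level, for each fixed open compact $K_f$, and then pass to $G(\Q)$-invariants, cohomology, and the direct limit. The comparison map is $I(\omega)(\alpha)=\int_{\Delta^q}\alpha^*\omega$. It sends $\Omega^\bullet(X/K_f;\M)$ to $\Hom_\Z(C^\infty_\bullet(X/K_f),\M)$, and the same formula defines it for $X/{}^\theta K_f$ with coefficients ${}^\theta\M=\M$ as a vector space. So it suffices to show
\[
I\circ\theta_{\rm dR}=\theta_{\rm B}\circ I
\]
as maps $\Omega^q(X/K_f;\M)\to\Hom_\Z(C^\infty_q(X/{}^\theta K_f),{}^\theta\M)$.

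For this, fix $\omega\in\Omega^q(X/K_f;\M)$ and a smooth simplex $\alpha:\Delta^q\to X/{}^\theta K_f$. Then
\[
\bigl(I(\theta_{\rm dR}\omega)\bigr)(\alpha)=\int_{\Delta^q}\alpha^*\theta^*\omega=\int_{\Delta^q}(\theta\circ\alpha)^*\omega .
\]
On the other side,
\[
\bigl(\theta_{\rm B}(I\omega)\bigr)(\alpha)=(I\omega)(\theta_\sharp\alpha)=\int_{\Delta^q}(\theta\circ\alpha)^*\omega .
\]
These agree by functoriality of pullback, $(\theta\circ\alpha)^*=\alpha^*\circ\theta^*$. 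One point needs care: $\theta:X/{}^\theta K_f\to X/K_f$ must be a smooth map. It is, because $\theta$ is an algebraic automorphism of $G$. Its real points act diffeomorphically on $G(\R)$ and preserve $K_\infty^\circ$, since ${}^\theta C_\infty=C_\infty$ and $\theta$ preserves $S(\R)$. It therefore descends to $X_\infty=G(\R)/K_\infty^\circ$, and on the finite-adelic factor it sends ${}^\theta K_f$-cosets to $K_f$-cosets.

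The remaining steps are bookkeeping. Both $\theta_{\rm dR}$ and $\theta_{\rm B}$ intertwine the $G(\Q)$-action via $\gamma\mapsto{}^\theta\gamma$, and $I$ is $G(\Q)$-equivariant. Hence the cochain identity restricts to the $G(\Q)$-invariant subcomplexes and induces the claimed identity on $H^\bullet(S^G_{K_f},\cdot)$. The transition maps $K_f'\subset K_f$ are pullbacks along coverings. These commute with $I$ (pullback-compatibility again) and with $\theta_\star$, since $\theta$ commutes with the covering maps $X/{}^\theta K_f'\to X/{}^\theta K_f$. Passing to $\varinjlim_{K_f}$ then gives $I^\bullet\circ\theta_{\rm dR}^\bullet=\theta_{\rm B}^\bullet\circ I^\bullet$.

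There is no real obstacle. The only subtle point is the orbifold issue for non-neat $K_f$, together with the identification of the complexes with the correct level on each side. I would handle both by working throughout with neat $K_f$, which is cofinal and stable under $\theta$ because ${}^\theta$ of a neat subgroup is neat. This suffices because the direct limit can be computed over neat levels.
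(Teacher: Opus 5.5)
Your proof is correct and matches the paper's: the cochain-level identity $\int_{\Delta^\bullet}\alpha^*\theta^*\omega=\int_{\Delta^\bullet}(\theta\circ\alpha)^*\omega=I(\omega)(\theta_\sharp\alpha)$, followed by passing to $G(\Q)$-invariants, cohomology, and the direct limit. Restricting to neat levels is harmless but not needed, because the complexes are built on $X/K_f$, which is a manifold for every open compact $K_f$.
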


\begin{proof}
Indeed, 
\begin{align*}
I(\theta^*\omega)(\alpha) = \int_{\Delta^\bullet}\alpha^*(\theta^*\omega) = \int_{\Delta^\bullet}(\theta\circ\alpha)^*\omega = I(\omega)(\theta_\sharp(\alpha)).
\end{align*}
Therefore, 
$I\circ \theta_{\rm dR} = \theta_{\rm B}\circ I.$
The assertion then follows immediately.
\end{proof}

In the following proposition, we prove that $\theta_{\star}^\bullet$ commutes with the ${\rm Aut}(\C)$-action.

\begin{proposition}\label{prop:theta-deRham-rational}
Let ${\star} = {\rm B}$ or ${\star} = {\rm dR}$.
We have
\[
\sigma_{\star}^\bullet \circ \theta_{\star}^\bullet =  \theta_{\star}^\bullet \circ \sigma_{\star}^\bullet,\quad \sigma \in {\rm Aut}(\C).
\]
In other words, the diagram
\begin{equation*}
		\begin{tikzcd}
			H^\bullet_{\star}(S^G, \widetilde \M) \arrow[r,"\theta_{\star}^\bullet"] \arrow[d,"\sigma_{\star}^\bullet"] & 
			H_{\star}^\bullet(S^G, {}^\theta\widetilde \M) \arrow[d,"\sigma_{\star}^\bullet"] \\
			H^\bullet_{\star}(S^G, {}^\sigma\!\widetilde \M) \arrow[r,"\theta_{\star}^\bullet"] & 
			H^\bullet_{\star}(S^G, {}^{\sigma\circ\theta}\!\widetilde \M) 
		\end{tikzcd}
		\end{equation*}
commutes for any $\sigma \in {\rm Aut}(\C)$.
\end{proposition}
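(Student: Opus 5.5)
The plan is to prove the identity first for the Betti realization at the level of cochains, and then deduce the de Rham case from the comparison isomorphism. On Betti cochains both maps are explicit. For $\ell \in \Hom_\Z(C_\bullet^\infty(X/K_f),\M)$ we have $\sigma_{\rm B}(\ell) = t_\sigma\circ \ell$, which only changes the coefficients. We also have $\theta_{\rm B}(\ell)=\ell\circ\theta_\sharp$, which only changes the chains. Post-composition and pre-composition commute, so
\[
\sigma_{\rm B}(\theta_{\rm B}(\ell)) = t_\sigma\circ\ell\circ\theta_\sharp = \theta_{\rm B}(\sigma_{\rm B}(\ell)).
\]
The one point to check is that the two target spaces coincide. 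Here ${}^\sigma({}^\theta\M)$ and ${}^\theta({}^\sigma\M)$ both have underlying space $\M\otimes_{\C,\sigma}\C$. The action on the first is $g\mapsto \rho({}^\theta(\sigma^{-1}g))\otimes 1$, and on the second it is $g\mapsto\rho(\sigma^{-1}({}^\theta g))\otimes 1$.

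These two actions agree because $\theta$ is defined over $\Q$ (indeed $\omega_n$ has entries $0,\pm1$). Hence $\theta$ commutes with the action of $\Aut(\C)$ on $G(\C)=\prod_\tau \GL_n(\C)$, which permutes the factors and applies $\sigma^{-1}$ entrywise. Moreover, $t_\sigma$ is the same map $m\mapsto m\otimes 1$ for $\M$ and for ${}^\theta\M$, since ${}^\theta\M=\M$ as vector spaces. It remains $G(\Q)$-equivariant for the twisted actions, because ${}^\theta\gamma\in G(\Q)$ and $t_\sigma$ is $G(\Q)$-equivariant.

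Next I will verify that the cochain identity descends. Both maps are cochain maps that intertwine the $G(\Q)$-actions (twisted by $\theta$ in the case of $\theta_{\rm B}$), so they preserve $G(\Q)$-invariants, and they are compatible with the transition maps for $K_f'\subset K_f$. For $\theta_{\rm B}$ these transition maps go from level $K_f$ to level ${}^\theta K_f$, and this is compatible with the direct limit since ${}^\theta$ permutes the open compact subgroups. Passing to cohomology and to the direct limit then gives $\sigma_{\rm B}^\bullet\circ\theta_{\rm B}^\bullet=\theta_{\rm B}^\bullet\circ\sigma_{\rm B}^\bullet$.

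For de Rham, I will use the definition $\sigma_{\rm dR}^\bullet=(I^\bullet)^{-1}\sigma_{\rm B}^\bullet I^\bullet$ together with Proposition~\ref{prop:theta-dR-B}, which says $\theta_{\rm dR}^\bullet=(I^\bullet)^{-1}\theta_{\rm B}^\bullet I^\bullet$. With these, the de Rham identity is a formal conjugate of the Betti one. The only care needed is that $I^\bullet$ here is the comparison map for the various coefficient systems $\M$, ${}^\theta\M$, ${}^\sigma\M$ and ${}^{\sigma\circ\theta}\M$. Each of these is natural in the coefficients, being defined by the same integration formula.

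I expect the only real obstacle to be the bookkeeping of identifying ${}^{\sigma}({}^\theta\M)$ with ${}^\theta({}^\sigma\M)$, as in the diagram's label ${}^{\sigma\circ\theta}\widetilde\M$. That identification relies on $\theta$ being $\Q$-rational and on it commuting with the Galois twist $\sigma^{-1}(g_\tau)_\tau=(\sigma^{-1}(g_{\sigma\circ\tau}))_\tau$. Everything else is formal.
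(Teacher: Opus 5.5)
Your proposal is correct and follows essentially the same route as the paper. The paper likewise identifies ${}^\sigma({}^\theta\M)$ with ${}^\theta({}^\sigma\M)$ via $\theta(\sigma^{-1}g)=\sigma^{-1}\theta(g)$, checks $t_\sigma\circ\ell\circ\theta_\sharp$ from both sides on Betti cochains, and transfers to de Rham via the definition of $\sigma_{\rm dR}^\bullet$ and Proposition~\ref{prop:theta-dR-B}; your remark that this identity rests on the $\Q$-rationality of $\theta$ makes explicit a step the paper leaves implicit.
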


\begin{proof}
Implicitly in the statement, we have
\[
({}^{\sigma}({}^\theta\rho),{}^{\sigma}({}^\theta\M)) = ({}^{\theta}({}^\sigma\rho),{}^{\theta}({}^\sigma\M)),\quad \sigma \in {\rm Aut}(\C).
\]
Indeed, ${}^{\sigma}({}^\theta\M) = {}^{\theta}({}^\sigma\M) = \M \otimes_{\C,\sigma}\C$ by definition.
For $g \in G(\C)$ and $m\otimes z \in \M \otimes_{\C,\sigma}\C$, we have
\begin{align*}
{}^{\sigma}({}^\theta\rho)(g) (m\otimes z) 
& = {}^\theta\rho(\sigma^{-1}g)m\otimes z = \rho(\theta(\sigma^{-1}g))m \otimes z \\
& = \rho(\sigma^{-1}\theta(g))m\otimes z = {}^\sigma\rho(\theta(g))(m\otimes z) = {}^{\theta}({}^\sigma\rho)(m\otimes z).
\end{align*}


By the definition of $\sigma^\bullet_{\mathrm{dR}}$ and Proposition \ref{prop:theta-dR-B}, it suffices to prove the statement in the case 
$\star = \mathrm{B}$. Since both $\sigma_{\mathrm{B}}$ and $\theta_{\mathrm{B}}$ are cochain maps, it is enough to verify that
\[
\sigma_{\mathrm{B}} \circ \theta_{\mathrm{B}} = \theta_{\mathrm{B}} \circ \sigma_{\mathrm{B}}.
\]
Consider the commutative diagram
\[
\begin{tikzcd}
	\mathcal{M} \arrow[r,"{\mathrm{id}^1}"] \arrow[d,"{t_\sigma^1}"] 
	& {}^\theta\mathcal{M} \arrow[d,"{t_\sigma^2}"] \\
	{}^\sigma\mathcal{M} \arrow[r,"{\mathrm{id}^2}"] 
	& {}^{\sigma}({}^\theta\mathcal{M}) = {}^{\theta}({}^\sigma\mathcal{M})
\end{tikzcd}
\]
where the vertical maps are the $\sigma$-linear isomorphisms defined in \eqref{eq:Galois-identity}, and the horizontal maps are identity maps. 
The commutativity of this diagram follows immediately from the definition.
For $\ell \in \operatorname{Hom}_{\mathbb{Z}}(C_\bullet^\infty(X/K_f), \mathcal{M})$, we compute
\[
(\sigma_{\mathrm{B}} \circ \theta_{\mathrm{B}})(\ell)
= t_\sigma^2 \circ \mathrm{id}^1 \circ \ell \circ \theta_\sharp
= \mathrm{id}^2 \circ t_\sigma^1 \circ \ell \circ \theta_\sharp
= (\theta_{\mathrm{B}} \circ \sigma_{\mathrm{B}})(\ell).
\]
This completes the proof.
\end{proof}

Recall that in \S\ref{sec:coh-groups-transc-level} we identified
\(H_{\mathrm{dR}}^\bullet(S^G,\mathcal{M})\) with the automorphic cohomology of \(G\)
with respect to the coefficient system \((\rho,\mathcal{M})\).
On the level of cochain \((\mathfrak{g}_\infty,K_\infty^\circ)\)-complexes,
the map 
\begin{align*}
\theta_{\mathrm{dR}} \ : \ &\left(
\operatorname{Hom}_{K_\infty^\circ}\left(
\extp^\bullet\mathfrak{g}_\infty/\mathfrak{k}_\infty,
C^\infty\bigl(G(\mathbb{Q})\backslash G(\mathbb{A})/K_f\bigr)\otimes \mathcal{M}
\right), d
\right) \\
&\longrightarrow
\left(
\operatorname{Hom}_{K_\infty^\circ}\left(
\extp^\bullet\mathfrak{g}_\infty/\mathfrak{k}_\infty,
C^\infty\bigl(G(\mathbb{Q})\backslash G(\mathbb{A})/{}^\theta\!K_f\bigr)\otimes {}^\theta\!\mathcal{M}
\right), d
\right),
\end{align*}
is given by
\begin{align}\label{eq:Lie-deRham}
f \longmapsto (\theta \otimes \mathrm{id}) \circ f \circ \extp^\bullet d\theta.
\end{align}

\section{The period relation}%
\label{sec:period-rel}%

\subsection{Statement and proof of the period relation}

Let $\Pi$ be a regular algebraic cuspidal automorphic representation of $G(\A)$ such that $\Pi_f \in \Coh_{\rm cusp}(G,\lambda)$ for 
some $\lambda \in X_{00}^+(T\times\C)$.
Let $\bullet \in \{b_n^F,t_n^F\}$ be an extreme degree, and $\varepsilon \in \widehat{\pi_0(G(\R))}$ be permissible for $\Pi_\infty$.
Fix a generator
\[
[\Pi_\infty]^\varepsilon \in H^\bullet(\frak{g}_\infty,K_\infty^\circ;\W(\Pi_\infty)\otimes\M_\lambda)(\varepsilon).
\]
As in \S\,\ref{sec:archi-local-dual}, its dual generator is denoted by
\[
{}^\theta[\Pi_\infty]^\varepsilon \in H^\bullet(\frak{g}_\infty,K_\infty^\circ;\W(\Pi_\infty^\vee)\otimes{}^\theta\M_\lambda)(\varepsilon).
\]
We then obtain the associated Betti--Whittaker periods $p^\varepsilon(\Pi)$ and $p^\varepsilon(\Pi^\vee)$ of $\Pi$ and $\Pi^\vee$ 
respectively as in \S\,\ref{sec:comparison-isomorphism}.
Similarly, for each $\sigma \in {{\rm Aut}(\C)}$ we fix a generator $[{}^\sigma\Pi_\infty]^\varepsilon$ and obtain the associated Betti--Whittaker periods $p^\varepsilon({}^\sigma\Pi)$ and $p^\varepsilon({}^\sigma\Pi^\vee)$.
For compatibility, we assume $[{}^\sigma\Pi_\infty]^\varepsilon = [\Pi_\infty]^\varepsilon$ if $\sigma \in {\rm Aut}(\C/\Q(\Pi))$.
The periods $(p^\varepsilon({}^\sigma\Pi))_{\sigma \in {{\rm Aut}(\C)}}$ and $(p^\varepsilon({}^\sigma\Pi^\vee))_{\sigma \in {{\rm Aut}(\C)}}$ are normalized by (\ref{eq:W-H-periods}).

Following is our main result on the period relation under duality.

\begin{theorem}\label{thm:period-relation}
We have
	\begin{equation*}
		\sigma\left( \frac{p^\eps(\Pi\dual)}{\Gsum(\omega_{\Pi})^{1-n}\cdot p^\eps(\Pi)} \right) = 
		\frac{p^\eps({}^\sigma\Pi\dual)}{\Gsum({}^\sigma\omega_{\Pi})^{1-n}\cdot p^\eps({}^\sigma\Pi)},\quad \sigma \in {\rm Aut}(\C).
	\end{equation*}
	In particular,
	\begin{equation*}
		p^\eps(\Pi\dual) \sim_{\Q(\Pi)} \Gsum(\omega_{\Pi})^{1-n}\cdot p^\eps(\Pi),
	\end{equation*}
	where ``$\sim_{\Q(\Pi)}$'' denotes equality up to a nonzero element in $\Q(\Pi)$.
\end{theorem}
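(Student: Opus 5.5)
We set up a cube of maps: the front face is the period diagram \eqref{eq:W-H-periods} for $\Pi$, the back face is the same diagram for $\Pi^\vee$, and the two faces are joined by the $\theta$-maps. On the Whittaker side the connecting map is $\theta_\W:\W(\Pi_f)\to\W(\Pi_f^\vee)$. On the cohomology side it is $\theta_{\rm dR}^\bullet : H^\bullet_{\rm cusp}(S^G,\widetilde\M_\lambda)(\eps\times\Pi_f)\to H^\bullet_{\rm cusp}(S^G,{}^\theta\widetilde\M_\lambda)(\eps\times\Pi_f^\vee)$. Here we identify ${}^\theta\M_\lambda\cong\M_\lambda^\vee\cong\M_{\lambda^\vee}$ using the lemma on ${}^\theta\rho_\lambda$, and the isomorphism should be chosen rationally, i.e.\ compatibly with the isomorphisms \eqref{eq:finite-rep}. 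The image is indeed the $\Pi_f^\vee$-component because of \eqref{eq:theta-coh-rational}, the Gelfand--Kazhdan isomorphism \eqref{eq:GK}, and the fact that $\theta_{\rm dR}$ preserves cusp forms and sends $V_\Pi$ to $V_{\Pi^\vee}$. It lands in the same $\eps$-component because $\theta_{\rm dR}^\bullet$ is $\pi_0(G(\R))$-equivariant and $\theta$ acts trivially on $\pi_0$; this triviality has to be checked, via the determinant of ${}^\theta k$ for $k\in\Orth(n)$.

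The first step is to show that the $\theta$-face commutes on the nose:
\[
\theta_{\rm dR}^\bullet\circ\mathcal F_{\Pi_f,[\Pi_\infty]^\eps}=\mathcal F_{\Pi_f^\vee,{}^\theta[\Pi_\infty]^\eps}\circ\theta_\W .
\]
To prove it, we use the description \eqref{eq:Lie-deRham} of $\theta_{\rm dR}$ on relative Lie algebra complexes, together with the identity ${}^\theta(W_{\psi_N}(\phi))=W_{\psi_N}({}^\theta\phi)$, which follows from \eqref{eq:theta-additive}. This identity shows that $\Upsilon_{\Pi^\vee}\circ\theta=\theta\circ\Upsilon_\Pi$. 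Factorizing $\theta$ as $\theta_{\W,\infty}\otimes\theta_{\W,f}$ and using the definition \eqref{eq:theta-twist} of ${}^\theta[\Pi_\infty]^\eps$ then gives the identity. This is where the \emph{choice} ${}^\theta[\Pi_\infty]^\eps$ of archimedean generator for $\Pi^\vee$ enters. To compare with ${}^\sigma\Pi^\vee$ we must take $[{}^\sigma\Pi_\infty^\vee]^\eps={}^\theta[{}^\sigma\Pi_\infty]^\eps$. Otherwise we would need an archimedean statement that these dual generators are compatible up to rational scalars; that is what Theorem~\ref{thm:archi-period-relation} is meant to supply.

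Next we chase an element $W\in\W(\Pi_f)$ around the cube. Going down and then across, we have
\[
\sigma_{\rm dR}^\bullet\theta_{\rm dR}^\bullet\mathcal F_{\Pi_f}(W)=\frac{p^\eps(\Pi)}{p^\eps(\Pi^\vee)}\,\sigma_{\rm dR}^\bullet\Bigl(\frac{\mathcal F_{\Pi_f^\vee}}{p^\eps(\Pi^\vee)}\Bigr)(\theta_\W W),
\]
and by \eqref{eq:W-H-periods} for $\Pi^\vee$ this equals
\[
\sigma\Bigl(\frac{p^\eps(\Pi)}{p^\eps(\Pi^\vee)}\Bigr)\frac{p^\eps({}^\sigma\Pi^\vee)}{p^\eps({}^\sigma\Pi)}\,\mathcal F_{{}^\sigma\Pi_f^\vee}(\sigma_\W\theta_\W W)\cdot\frac{p^\eps({}^\sigma\Pi)}{p^\eps({}^\sigma\Pi^\vee)} .
\]
Going the other way, Proposition~\ref{prop:theta-deRham-rational} lets us commute $\sigma_{\rm dR}^\bullet$ past $\theta_{\rm dR}^\bullet$. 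Applying \eqref{eq:W-H-periods} for $\Pi$ and then the $\theta$-face for ${}^\sigma\Pi$ produces $\mathcal F_{{}^\sigma\Pi_f^\vee}(\theta_\W\sigma_\W W)$ with the factor $\sigma(p^\eps(\Pi))/p^\eps({}^\sigma\Pi)$. Lemma~\ref{lem:Wh-rational}, taken over all primes, gives $\sigma_\W\theta_\W={}^\sigma\omega_{\Pi_f}(u_\sigma)^{n-1}\theta_\W\sigma_\W$. Comparing the two sides then yields
\[
\sigma\Bigl(\frac{p^\eps(\Pi^\vee)}{p^\eps(\Pi)}\Bigr)={}^\sigma\omega_\Pi(u_\sigma)^{1-n}\,\frac{p^\eps({}^\sigma\Pi^\vee)}{p^\eps({}^\sigma\Pi)} ,
\]
possibly up to an inversion of the sign of the exponent, which has to be tracked.

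Finally, \eqref{eq:Gauss-sum} gives $\sigma(\Gsum(\omega_\Pi)^{1-n})={}^\sigma\omega_\Pi(u_\sigma)^{1-n}\Gsum({}^\sigma\omega_\Pi)^{1-n}$, so the cocycle ${}^\sigma\omega_\Pi(u_\sigma)^{1-n}$ is exactly absorbed by the Gauss sum. This proves the displayed equivariance, and the "in particular" statement follows by restricting to $\sigma\in\Aut(\C/\Q(\Pi))$.

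I expect the main obstacle to be the compatibility of the rational structures on the coefficient systems: the identification ${}^\theta\M_\lambda\cong\M_{\lambda^\vee}$ must intertwine the maps $t_\sigma$ of \eqref{eq:finite-rep} up to a $\sigma$-equivariant scalar, so that $\theta_{\rm dR}^\bullet$ carries Clozel's rational structure to Clozel's rational structure. I would handle this by choosing the isomorphism over $\Q(\lambda)$, one factor $\tau$ at a time, and checking that it is compatible with the $t_{\tau,\sigma}$. The archimedean normalization ${}^\theta[{}^\sigma\Pi_\infty]^\eps$ is the other delicate point.
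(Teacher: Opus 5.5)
Your proposal is correct and is essentially the paper's proof. It uses the same cube \eqref{eq:cube-diagram}, with the dual generators ${}^\theta[{}^\sigma\Pi_\infty]^\eps$ throughout:
\begin{itemize}
\item the top and bottom faces commute on the nose by \eqref{eq:Lie-deRham} and $\Upsilon_{\Pi^\vee}\circ\theta=\theta\circ\Upsilon_\Pi$;
\item the right face commutes by Proposition~\ref{prop:theta-deRham-rational};
\item the left face is handled by Lemma~\ref{lem:Wh-rational} together with \eqref{eq:Gauss-sum}.
\end{itemize}

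Two minor remarks. First, the intermediate scalars in your element chase are garbled: the two paths carry $\sigma(p^\eps(\Pi^\vee))/p^\eps({}^\sigma\Pi^\vee)$ and $\sigma(p^\eps(\Pi))/p^\eps({}^\sigma\Pi)$ respectively, but your final relation with exponent $1-n$ is correct. Second, the rational identification ${}^\theta\M_\lambda\cong\M_{\lambda^\vee}$ that you flag as the main obstacle is not needed. The paper defines $p^\eps(\Pi^\vee)$ directly with coefficients ${}^\theta\M_\lambda$, and the $\Aut(\C)$-structure on them is transported via ${}^\sigma({}^\theta\M_\lambda)={}^\theta({}^\sigma\M_\lambda)$.
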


\begin{proof}
Consider the diagram
\begingroup
\begin{equation}\label{eq:cube-diagram}
	\begin{tikzpicture}[baseline= (a).base]
		\node[scale=0.813] (a) at (0,0){
		\begin{tikzcd}[column sep={4.8cm,between origins}]
		& [0.5cm] \W(\Pi_f^\vee) \arrow[rr,"\mathcal F_{\Pi_f^\vee,{}^\theta[\Pi_\infty]^\varepsilon}"] \arrow[d,"\twistW"] & & 
		[0.5cm] H_{\rm cusp}^\bullet(S^G, {}^\theta\widetilde \M_{\lambda})(\eps\times\Pi_f^\vee) \arrow[d,"\sigma_{\rm dR}^\bullet"] \\[20pt]
		& [0.5cm] \W({}^\sigma\Pi_f^\vee) \arrow[ddl,swap,"\involW" {xshift=-5pt,yshift=-5pt},<-] & & 
		[0.5cm] H_{\rm cusp}^\bullet(S^G, {}^{\sigma\circ\theta}\widetilde\M_{\lambda})(\eps\times{}^\sigma\Pi_f^\vee) 
		\arrow[ll,<-,swap,"\mathcal F_{{}^\sigma\Pi_f^\vee,{}^\theta[{}^\sigma\Pi_\infty]^\varepsilon}"] \\
		\W(\Pi_f)
  \arrow[rr,crossing over,swap,
    "{\mathcal F_{\Pi_f,[\Pi_\infty]^\varepsilon}}" {yshift=-0.5ex}] \arrow[d,"\twistW"] \arrow[uur,"\involW" {xshift=-5pt,yshift=-5pt}] & & 
    H_{\rm cusp}^\bullet(S^G, \widetilde \M_{\lambda})(\eps\times \Pi_f) \arrow[d,"\sigma_{\rm dR}^\bullet"] 
    \arrow[uur,crossing over,"\theta_{\rm dR}^\bullet" {xshift=15pt,yshift=10pt}] & 
    \\[20pt] \W({}^\sigma\Pi_f) \arrow[rr,"\mathcal F_{{}^\sigma \Pi_f,[{}^\sigma\Pi_\infty]^\varepsilon}"]  & & 
    H_{\rm cusp}^\bullet(S^G, {}^\sigma\widetilde \M_{\lambda})(\eps\times{}^\sigma \Pi_f) \arrow[uur,"\theta_{\rm dR}^\bullet" {xshift=15pt,yshift=10pt}] &
		\end{tikzcd}
	};
	\end{tikzpicture}
\end{equation}
\endgroup
We emphasize that this diagram is \emph{not} commutative.
While half of the faces are commutative, the remaining ones become commutative after appropriate normalization of the maps by scaling.
By Lemma \ref{lem:Wh-rational} together with \eqref{eq:Gauss-sum}, we have
\[
\sigma_\W \circ \frac{\theta_\W}{\mathcal{G}(\omega_\Pi)^{n-1}}
=
\frac{\theta_\W}{\mathcal{G}({}^\sigma\omega_\Pi)^{n-1}} \circ \sigma_\W.
\]
Combining this with \eqref{eq:W-H-periods}, we obtain
\begin{multline*}
\sigma_{\rm dR}^\bullet
\circ
\frac{\mathcal{F}_{\Pi_f^\vee,{}^\theta[\Pi_\infty]^\varepsilon}}{p^\varepsilon(\Pi^\vee)}
\circ
\frac{\theta_\W}{\mathcal{G}(\omega_\Pi)^{n-1}}
\circ
\left(\frac{\mathcal{F}_{\Pi_f,[\Pi_\infty]^\varepsilon}}{p^\varepsilon(\Pi)}\right)^{-1}
\\
\ = \ 
\frac{\mathcal{F}_{{}^\sigma\Pi_f^\vee,{}^\theta[{}^\sigma\Pi_\infty]^\varepsilon}}{p^\varepsilon({}^\sigma\Pi^\vee)}
\circ
\frac{\theta_\W}{\mathcal{G}({}^\sigma\omega_\Pi)^{n-1}}
\circ
\left(\frac{\mathcal{F}_{{}^\sigma\Pi_f,[{}^\sigma\Pi_\infty]^\varepsilon}}{p^\varepsilon({}^\sigma\Pi)}\right)^{-1}
\circ
\sigma_{\rm dR}^\bullet.
\end{multline*}
Therefore, the desired period relation is equivalent to
\[
\sigma_{\rm dR}^\bullet
\circ
\mathcal{F}_{\Pi_f^\vee,{}^\theta[\Pi_\infty]^\varepsilon}
\circ
\theta_\W
\circ
\mathcal{F}_{\Pi_f,[\Pi_\infty]^\varepsilon}^{-1}
=
\mathcal{F}_{{}^\sigma\Pi_f^\vee,{}^\theta[{}^\sigma\Pi_\infty]^\varepsilon}
\circ
\theta_\W
\circ
\mathcal{F}_{{}^\sigma\Pi_f,[{}^\sigma\Pi_\infty]^\varepsilon}^{-1}
\circ
\sigma_{\rm dR}^\bullet.
\]
On the other hand, we claim that
\[
\mathcal{F}_{\Pi_f^\vee,{}^\theta[\Pi_\infty]^\varepsilon}
\circ
\theta_\W
\circ
\mathcal{F}_{\Pi_f,[\Pi_\infty]^\varepsilon}^{-1}
=
\theta_{\rm dR}^\bullet,
\quad
\mathcal{F}_{{}^\sigma\Pi_f^\vee,{}^\theta[{}^\sigma\Pi_\infty]^\varepsilon}
\circ
\theta_\W
\circ
\mathcal{F}_{{}^\sigma\Pi_f,[{}^\sigma\Pi_\infty]^\varepsilon}^{-1}
=
\theta_{\rm dR}^\bullet.
\]
In other words, the top and bottom faces of \eqref{eq:cube-diagram} are commutative.
To verify this, write
\[
[\Pi_\infty]^\varepsilon = \sum_i X_i^* \otimes W_i \otimes m_i
\]
for some \(X_i \in \extp^\bullet(\frak{g}_\infty/\frak{k}_\infty)^*\), 
\(W_i \in \W(\Pi_\infty)\), and \(m_i \in \M_\lambda\).
By the definition of \({}^\theta[\Pi_\infty]^\varepsilon\) and \eqref{eq:Lie-deRham}, for any \(W \in \W(\Pi_f)\), we have
\begin{align*}
(\mathcal{F}_{\Pi_f^\vee,{}^\theta[\Pi_\infty]^\varepsilon} \circ \theta_\W)(W)
&=
\sum_i \extp^\bullet(d\theta)^* X_i^*
\otimes
\Upsilon_{\Pi^\vee}({}^\theta W_i \otimes {}^\theta W)
\otimes m_i, \\
(\theta_{\rm dR}^\bullet \circ \mathcal{F}_{\Pi_f,[\Pi_\infty]^\varepsilon})(W)
&=
\sum_i \extp^\bullet(d\theta)^* X_i^*
\otimes
{}^\theta \Upsilon_{\Pi}(W_i \otimes W)
\otimes m_i.
\end{align*}
By \eqref{eq:theta-additive}, we have
\[
\Upsilon_{\Pi^\vee} \circ \theta = \theta \circ \Upsilon_\Pi,
\]
which proves the claim.
Consequently, the period relation holds if and only if the right-hand face of \eqref{eq:cube-diagram} is commutative. 
This follows from Proposition \ref{prop:theta-deRham-rational}, since 
\(\sigma_{\rm dR}^\bullet\) is \(\pi_0(G(\R)) \times G(\A_f)\)-equivariant, 
and \(\theta_{\rm dR}^\bullet\) is \(\pi_0(G(\R))\)-equivariant and $\theta$-twist equivariant under $G(\A_f)$ by (\ref{eq:theta-coh-rational}).
This completes the proof.
\end{proof}

\section{Archimedean period relations}
\label{sec:arch}

\subsection{Period relation under $\det$-twist and $\theta$-twist}

Let $\pi \in \Omega(\lambda)$ be an irreducible, admissible, generic $(\frak{g}_\infty,C_\infty)$-module for some pure weight $\lambda \in X_0^+(T\times\C)$. We denote by ${\sf w}$ the purity weight of $\lambda$. 
Let $\bullet \in \{b_n^F,t_n^F\}$ be an extreme degree, and $\varepsilon \in \widehat{\pi_0(G(\R))}$ be permissible for $\pi$.
Fix a generator
\[
[\pi]^\varepsilon \in H^\bullet(\frak{g}_\infty,K_\infty^\circ;\W(\pi)\otimes\M_\lambda)(\varepsilon).
\]
Assume 
\[
\lambda^{\overline{\tau}} = \lambda^{\tau}
\]
for any embedding $\tau :F\rightarrow \C$, that is,
\[
\lambda^\vee = \lambda-{\sf w}.
\] 
Then
\[
\pi^\vee \cong \pi \otimes \nu_{\sf w}\circ\det,
\]
where $\nu_{\sf w}$ is a character of $F_\infty^\times$ defined by
\begin{align}\label{eq:det-character}
\nu_{{\sf w},v}(a) := \begin{cases}
 a^{\sf w} & \mbox{ if $v \in S_r$},\\
 |a|_\C^{\sf w} & \mbox{ if $v \in S_c$}.
 \end{cases}
\end{align}
In this case, define a dual generator 
\begin{align}\label{eq:archi-local-dual-2}
\widetilde{[\pi]}^\varepsilon \in H^\bullet(\frak{g}_\infty,K_\infty^\circ;\W(\pi^\vee)\otimes\M_{\lambda-{\sf w}})(\varepsilon)
\end{align}
such that the diagram
\[
\begin{tikzcd}[row sep=normal, column sep=normal]
\extp^{\bullet}\frak{g}_\infty/\frak{k}_\infty \arrow[r, "{[\pi]}^\varepsilon"]  & \mathcal{W}(\pi)\otimes \mathcal{M}_{\lambda}\arrow[d, "(\,\cdot\,\otimes \nu_{\sf w}\circ\det) \otimes {\rm id}"]\\
\extp^{\bullet}\frak{g}_\infty/\frak{k}_\infty \arrow[r, "\widetilde{[\pi]}^\varepsilon"]\arrow[u, "{\rm id}"] & \mathcal{W}(\pi^\vee)\otimes \mathcal{M}_{\lambda-{\sf w}}
\end{tikzcd}
\]
is commutative. Here $(\rho_{\lambda-{\sf w}}, \mathcal{M}_{\lambda-{\sf w}})$ is the irreducible algebraic representation of $G\times\C$ of highest weight $\lambda-{\sf w}$ realized by 
\[
\mathcal{M}_{\lambda-{\sf w}}:= \mathcal{M}_\lambda,\quad \rho_{\lambda-{\sf w}}:= \rho_\lambda \otimes {\det}^{-{\sf w}}.
\]
On the other hand, as in \S\,\ref{sec:archi-local-dual}, we have another dual generator
\[
{}^\theta[\pi]^\varepsilon \in H^\bullet(\frak{g}_\infty,K_\infty^\circ;\W(\pi^\vee)\otimes{}^\theta\M_\lambda)(\varepsilon).
\]
Let 
\[
\Phi_\lambda : (\rho_{\lambda-{\sf w}}, \mathcal{M}_{\lambda-{\sf w}}) \longrightarrow ({}^\theta\rho_{\lambda}, {}^\theta\mathcal{M}_{\lambda})
\]
be the unique $G(\C)$-equivariant isomorphism normalized so that 
\begin{align}\label{eq:canonical-normalization}
\Phi_\lambda(m_\lambda) = m_\lambda
\end{align}
for any highest weight vector $m_\lambda \in \M_\lambda$.
Then there exists a nonzero constant $c^\varepsilon(\pi)$ such that
\begin{align}\label{eq:local-period}
({\rm id}\otimes \Phi_\lambda)\circ \widetilde{[\pi]}^\varepsilon = c^\varepsilon(\pi)\cdot {}^\theta[\pi]^\varepsilon.
\end{align}
It is clear that the constant is independent of the choice of $[\pi]^\varepsilon$.
Let $v$ be an archimedean place of $F$. Similarly we can define a constant $c^{\varepsilon_v}(\pi_v)$ for the $v$-component of $\pi$ with respect to the isomorphism in (\ref{eq:real-identification}).
More precisely, let $\frak{g}_v$ be the complexified Lie algebra of $\GL_n(F_v)$, 
\[
\mathcal{M}_{\lambda_v}:=\begin{cases}
\mathcal{M}_{\lambda^{\tau_v}} & \mbox{ if $v \in S_r$}\\
\mathcal{M}_{\lambda^{\tau_v}} \otimes \mathcal{M}_{\lambda^{{\tau}_v}} & \mbox{ if $v \in S_c$}
\end{cases},
\]
and
\[
b_n:= \begin{cases}
\lfloor \tfrac{n^2}{4} \rfloor & \mbox{ if $v \in S_r$}\\
\tfrac{n(n-1)}{2} & \mbox{ if $v \in S_c$}
\end{cases}, \quad t_n:= b_n + \begin{cases}
\lfloor \tfrac{n-1}{2} \rfloor & \mbox{ if $v \in S_r$}\\
n-1 & \mbox{ if $v \in S_c$}
\end{cases}.
\]
Then $c^{\varepsilon_v}(\pi_v)$ is the ratio between two generators in the $\varepsilon_v$-isotypic component of the $(\frak{g}_v,K_v^\circ)$-cohomology of $\mathcal{W}(\pi_v^\vee) \otimes \mathcal{M}_{\lambda_v-{\sf w}}$ and $\mathcal{W}(\pi_v^\vee) \otimes {}^\theta\mathcal{M}_{\lambda_v}$ in an extreme degree $\bullet \in \{b_n,t_n\}$, with respect to the $\GL_n(\C)$-equivariant isomorphism $\Phi_{\lambda_v}$ between $\mathcal{M}_{\lambda_v-{\sf w}}$ and ${}^\theta\mathcal{M}_{\lambda_v}$ normalized as above.

The following period relation is the main result of this section.

\begin{theorem}\label{thm:archi-period-relation}
Let $\pi \in \Omega(\lambda)$ for some $\lambda \in X_{0}^+(T\times\C)$. Assume 
\[
\lambda^{\sigma\circ\tau} = \lambda^{\sigma\circ\overline{\tau}},\quad \tau \in \Hom(F,\C),\ \sigma \in {\rm Aut}(\C).
\]
Then we have
\[
c^\varepsilon(\pi) = c^\varepsilon({}^\sigma \pi) \in \{\pm1\},\quad \sigma \in {\rm Aut}(\C).
\]
\end{theorem}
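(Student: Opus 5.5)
We would reduce to a single archimedean place via the K\"unneth theorem and then compute the constant in each of the two local cases. The $(\frak{g}_\infty,K_\infty^\circ)$-complex for $\W(\pi)\otimes\M_\lambda$ is the tensor product over $v\in S_r\cup S_c$ of the local complexes, up to the factor coming from $S(\R)$. In the extreme degrees $b_n^F,t_n^F$, the one-dimensional $\varepsilon$-isotypic cohomology is the tensor product of the local one-dimensional $\varepsilon_v$-pieces in degrees $b_n$ or $t_n$ (tensored with a degree $0$ or $[F:\Q]-1$ class from $\mathfrak{s}$, on which $\theta$ and the $\det$-twist act compatibly, namely by $-1$ on $\mathfrak{s}$ in each case).

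All three structures factor over places: $\theta$, $\nu_{\sf w}\circ\det$, and $\Phi_\lambda=\bigotimes_v\Phi_{\lambda_v}$, the last because a tensor product of highest weight vectors is a highest weight vector. Hence
\[
c^\varepsilon(\pi)=\pm\prod_{v} c^{\varepsilon_v}(\pi_v),
\]
with the sign determined purely by $|S_r|,|S_c|,n$ and the degree. We would record this sign precisely, since it is Galois invariant because ${}^\sigma\pi$ has the same archimedean type data. The $\sigma$-action permutes the embeddings $\tau$. The hypothesis $\lambda^{\sigma\circ\tau}=\lambda^{\sigma\circ\bar\tau}$ then guarantees that ${}^\sigma\lambda$ again satisfies $\lambda^\vee=\lambda-{\sf w}$ and that each local factor of ${}^\sigma\pi$ is a local factor of $\pi$ for some other place, with the same $\varepsilon_v$ for real places. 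So it suffices to show that each local constant $c^{\varepsilon_v}(\pi_v)$ lies in $\{\pm1\}$ and depends only on $(n,\lambda^{\tau_v},\varepsilon_v)$ through data preserved by permuting embeddings, for instance only on $n$, ${\sf w}$, and the parity of entries.

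Locally, for $\GL_n(\R)$ (Theorem~\ref{thm:archi-period-relation-1}) and $\GL_n(\C)$ (Theorem~\ref{thm:archi-period-relation-3}), we would compute both generators explicitly. Since $c$ is independent of the choice of $[\pi]^\varepsilon$, one can pick an explicit generator, either as in the known descriptions via $K$-types (minimal $K$-type $\tau$ embedded in $\Hom_{K}(\extp^\bullet\frak{p},\tau)\otimes\M_\lambda$) or via a lowest weight vector. Then compare $(\theta_\W\otimes{\rm id})\circ[\pi]^\varepsilon\circ\extp^\bullet d\theta$ with $[\pi]^\varepsilon$ followed by the $\det$-twist and $\Phi_\lambda$. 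The key observation is that $\theta$ and the $\det$-twist both send $\pi$ to $\pi^\vee$, so their ratio is an automorphism of an irreducible module and hence a scalar. We need to evaluate this scalar on three pieces.

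\begin{itemize}
\item On $\extp^\bullet\frak{p}$: $d\theta=-{}^t(\cdot)$ conjugated by $\omega_n$. On $\frak{p}$ (symmetric matrices) this is $-\mathrm{Ad}(\omega_n)$, contributing $(-1)^\bullet\det(\mathrm{Ad}\,\omega_n|\,\cdot)$, which is a sign.
\item On $\M_\lambda$: $\Phi_\lambda$ fixes highest weight vectors, but the generator pairs with the lowest weight vector. The ratio of $\Phi_\lambda$ on a lowest weight vector is computed by $\rho_\lambda(\omega_n)$ acting, giving a sign $\pm1$ depending on parities of $\lambda^{\tau_v}$.
\item On the minimal $K$-type: $\theta|_{\Orth(n)}$ is conjugation by $\omega_n$, which again gives a sign from its action on the minimal $K$-type, an integral representation.
\end{itemize}

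Since every contribution is the action of a finite-order element ($\omega_n$ with $\omega_n^2=\pm1$, or $-1$) on rational structures, each is a root of unity defined over $\Q$, hence $\pm1$. Galois invariance follows because these signs depend only on $n$, the degree, $\varepsilon_v$, and parities of $\lambda^{\tau_v}$, which are permuted compatibly.

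The main obstacle is making this scalar computation honest: verifying that the Whittaker-side identification $\W(\pi)\to\W(\pi^\vee)$ given by $\theta_\W$ versus the $\det$-twist agrees on the chosen $K$-type vector up to exactly this sign. We would handle it by evaluating a Whittaker function on the diagonal torus at a suitably normalized vector, noting that ${}^\theta a=\omega_n{}^ta^{-1}\omega_n^{-1}$ is the reversed inverse torus element, and using $\pi^\vee\cong\pi\otimes\nu_{\sf w}$ to relate the values.
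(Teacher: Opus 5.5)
Your K\"unneth reduction matches the paper's, but both steps after it have genuine gaps.

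\emph{The Galois step.} It is not true that each local component of ${}^\sigma\pi$ is a local component of $\pi$ at another place. $\mathrm{Aut}(\C)$ acts on embeddings, and the component of ${}^\sigma\pi$ at a real place $v$ has weight $\lambda^{\sigma^{-1}\circ\tau_v}$. Here $\sigma^{-1}\circ\tau_v$ may be a complex embedding. For example, let $F=F_0(\sqrt{\beta})$ with $F_0$ real quadratic and $\beta$ positive at one real place of $F_0$ and negative at the other. Any $\lambda$ that is a base change from $F_0$ satisfies the hypothesis of the theorem. Any $\sigma$ nontrivial on $F_0$ then moves the weight at the two real places of $F$ to its complex place, and vice versa. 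For $n$ odd, moreover, $\varepsilon_v$ stays attached to $v$ while the weight moves. So local constants depending on ``$n$, $\mathsf{w}$ and parities of the entries of $\lambda^{\tau_v}$'' would not give $c^\varepsilon(\pi)=c^\varepsilon({}^\sigma\pi)$. What is needed, and what the paper proves in Theorems~\ref{thm:archi-period-relation-1} and~\ref{thm:archi-period-relation-3}, is that $c^{\varepsilon_v}(\pi_v)$ depends only on $n$, on whether $v$ is real or complex, and on the degree. In particular it must not depend on $\lambda$ or $\varepsilon_v$.

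\emph{The local step.} The local computation, which is the only possible source of this independence, is not carried out.

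Your ``key observation'' is incorrect as stated. The map $A\colon W\mapsto {}^\theta(W\cdot\nu_{\mathsf w}\circ\det)$ is not an endomorphism of $\pi$; it is an involution of $\W(\pi)$ intertwining ${}^\theta\pi$ with $\pi\otimes\nu_{\mathsf w}\circ\det$. Only the ratio of the two cohomology classes is a scalar, and that ratio is $c^\varepsilon(\pi)$ itself.

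Your three pieces (on $\extp^\bullet\mathfrak{p}$, on $\M_\lambda$, on the $K$-type) are not rational signs but $\lambda$-dependent roots of unity. Already for $n=2$, Lemma~\ref{lem:base-cases} produces factors $(\sqrt{-1})^{\pm\kappa}$ and $(\sqrt{-1})^{\pm(\kappa-2)}$, which cancel only in the product. Whether they cancel is decided by the comparison you defer: how $\theta_\W$ and the determinant twist compare on the actual Whittaker vectors in the generator.

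Evaluating at the identity requires $W(1)\neq 0$ for those vectors, and this is not formal.
\begin{itemize}
\item For $\GL_n(\C)$ the paper obtains it from the explicit formulas of Ishii--Miyazaki (Proposition~\ref{prop:Ishii-Miyazaki}).
\item For $\GL_n(\R)$ with $n\ge 3$, the minimal $\Orth(n)$-type is multidimensional, and ${}^\theta W$ and $W\cdot\nu_{\mathsf w}\circ\det$ need not be proportional for a given $W$ in it. No such explicit formula is available. The paper instead runs an induction on $n$ via Delorme's lemma, Shahidi's local coefficient formula and Weselmann's intertwining computation. It closes the induction with an explicit $\varepsilon$-factor and Kostant-representative calculation (Lemma~\ref{lem:core-sign}).
\end{itemize}

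The membership $c^\varepsilon(\pi)\in\{\pm1\}$ alone is cheap, though not for your reason. The maps $A$, $\Phi_\lambda$ and $\extp^\bullet d\theta$ are all involutions; for $\Phi_\lambda$, note that $\Phi_\lambda^2$ commutes with $\rho_\lambda$ and fixes $m_\lambda$. The definition of $c^\varepsilon(\pi)$ can be rewritten as $(A\otimes\Phi_\lambda)\circ[\pi]^\varepsilon\circ\extp^\bullet d\theta=c^\varepsilon(\pi)\,[\pi]^\varepsilon$, which gives $c^\varepsilon(\pi)^2=1$.
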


\begin{proof}
Note that $\Phi_\lambda = \otimes_{v \mid \infty}\Phi_{\lambda_v}$ by definition.
Therefore, by the K\"unneth formula, we have
\[
c^\varepsilon(\pi) = \prod_{v \mid \infty}c^{\varepsilon_v}(\pi_v).
\]
In Theorems \ref{thm:archi-period-relation-1} and \ref{thm:archi-period-relation-3} below, we show that for each archimedean place $v$, there exists $\eta_{v,n} \in \{\pm1\}$ depending only on $n$ and $F_v$ such that
\[
c^{\varepsilon_v}(\pi_v) = \eta_{v,n}.
\]
We write $\eta_{v,n} = \eta_{\R,n}$ (resp.\, $\eta_{v,n} = \eta_{\C,n}$) if $v \in S_r$ (resp.\,$v \in S_c$).
Then we have
\[
c^\varepsilon(\pi) = \eta_{\R,n}^{|S_r|}\cdot  \eta_{\C,n}^{|S_c|}.
\]
The assertion then follows immediately.
\end{proof}

\subsection{Real cases}
\label{sec:glnr}

In this and the following subsections, we drop the subscript $v$ for brevity and write
\[
\frak{g} = \frak{g}_v,\quad C=C_v,\quad K = K_v,\quad \mathcal{M}_\lambda = \mathcal{M}_{\lambda_v},\quad \Phi_\lambda = \Phi_{\lambda_v},\quad \pi = \pi_v,\quad \varepsilon = \varepsilon_v.
\]

In this subsection, let $v \in S_r$ be a real place.
For any given generator
\[
[\pi]^\varepsilon \in H^\bullet(\frak{g},K^\circ; \mathcal{W}(\pi^\vee)\otimes \mathcal{M}_\lambda)(\varepsilon),
\]
we can associate generators $\widetilde{[\pi]}^\varepsilon$ and ${}^\theta[\pi]^\varepsilon$ as above. 
Then $c^\varepsilon(\pi)$ is the scalar such that 
\[
({\rm id}\otimes \Phi_\lambda)\circ \widetilde{[\pi]}^\varepsilon = c^\varepsilon(\pi)\cdot {}^\theta[\pi]^\varepsilon.
\]

The main result of this section is to prove the following period relation.

\begin{theorem}\label{thm:archi-period-relation-1}
The scalar $c^\varepsilon(\pi)$ belongs to $\{\pm 1\}$ and depends only on $n$.
\end{theorem}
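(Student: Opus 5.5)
The plan is to reduce the comparison of the two generators to a comparison of two explicit $\GL_n(\R)$-equivariant maps, and then compute the discrepancy on a single test vector. For $v\in S_r$ the representation $\pi$ is, up to a twist by $|\det|^{{\sf w}/2}$, the essentially tempered induced representation attached to $\lambda$. Its Whittaker model is $\W(\pi)$, and $\W(\pi^\vee)$ is realized both as $\W(\pi)\otimes\nu_{\sf w}\circ\det$ and as $\theta_\W\W(\pi)$ (Lemma \ref{lem:GK}). Both $\widetilde{[\pi]}^\varepsilon$ and ${}^\theta[\pi]^\varepsilon$ are obtained from $[\pi]^\varepsilon$ by composing with a $K^\circ$-equivariant map. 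Hence $c^\varepsilon(\pi)$ measures the failure of commutativity of a square built from three ingredients: the isomorphism $T:\W(\pi)\otimes\nu_{\sf w}\circ\det\to\W(\pi^\vee)$ and $\theta_\W$; the map $\extp^\bullet d\theta$ on $\extp^\bullet\frak g/\frak k$; and $\Phi_\lambda$ on coefficients.

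Concretely, I will introduce the $\GL_n(\R)$-isomorphism
\[
A:=\theta_\W\circ\bigl((\,\cdot\,\otimes\nu_{\sf w}\circ\det)\bigr)^{-1}
\]
from $\W(\pi^\vee)$ to itself, suitably twisted by $\theta$. Composing with $\Phi_\lambda$ and $\extp^\bullet d\theta$ gives an automorphism of the one-dimensional space
\[
\Hom_{K^\circ}\bigl(\extp^\bullet\frak g/\frak k,\W(\pi^\vee)\otimes{}^\theta\M_\lambda\bigr)(\varepsilon),
\]
and this automorphism is multiplication by $c^\varepsilon(\pi)^{\pm1}$. Since $\theta$ is an involution, applying the whole construction twice returns the identity. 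I will use this to show $c^\varepsilon(\pi)^2=1$, provided the compositions of the scalars behave multiplicatively. That in turn needs the normalization $\Phi_\lambda(m_\lambda)=m_\lambda$ to be $\theta$-compatible, i.e. ${}^\theta\Phi\circ\Phi=\mathrm{id}$ on the highest weight line. I will check this directly: $\theta$ preserves $B$ and $T$, and $\omega_n$ swaps highest and lowest weight vectors up to a sign determined by $n$ and $\lambda$. This sign has to be tracked; the $\lambda$-dependence should cancel because the same vector appears twice.

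To show independence of $\lambda$ and $\varepsilon$, the alternative is a direct computation on a highest weight vector. I will use the standard description of the bottom (and top) degree cohomology via the minimal $K$-type: the $\varepsilon$-component is spanned by a class of the form $\sum_i X_i^*\otimes W_i\otimes m_i$, where $W_i$ runs over the minimal $K$-type $\tau_\lambda$ and $m_i$ over the highest-weight part of $\M_\lambda$ restricted to $K$. Pairing against a fixed wedge vector in $\extp^\bullet\frak p$, I will evaluate both sides at $g=1$. On the Whittaker side, $\theta$ fixes the identity and $\nu_{\sf w}(\det 1)=1$, so only the actions on $\extp^\bullet\frak p$ and on $\M_\lambda$ remain. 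The map $d\theta$ acts on $\frak p=\mathrm{Sym}^2$ by $X\mapsto-\omega_n{}^tX\omega_n^{-1}$, which is determined by $n$ alone. The coefficient action through $\Phi_\lambda$ contributes a sign from $\omega_n$, namely $\rho_\lambda(\omega_n)$ on an extremal vector; for $\lambda$ with $\lambda^\vee=\lambda-{\sf w}$ this is $(-1)^{\sum_i (i-1)\lambda_i}$ up to the twist. Ideally this sign cancels against the sign with which ${}^\theta$ acts on the minimal $K$-type vector $W_i$, which again involves $\det(\omega_n)$ and the restriction of $\lambda$-data to $\SO(n)$.

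The main obstacle is exactly this last cancellation: showing that the $\lambda$-dependent signs from the coefficient system and from the minimal $K$-type cancel, leaving a sign depending only on $n$. I will attack it by factoring through parabolic induction. I will write $\pi$ as induced from the $(2,\dots,2,(1))$ parabolic, which $\theta$ preserves up to conjugation by $\omega_n$. Then I will reduce via Künneth to $\GL_2(\R)$ discrete series (plus a $\GL_1$ factor for $n$ odd), where the computation is explicit with $\omega_2$. The remaining combinatorial sign from permuting blocks and wedge factors depends only on $n$.
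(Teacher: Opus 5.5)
Your involution argument for $c^\varepsilon(\pi)\in\{\pm1\}$ can be made rigorous, but note what it actually yields. Composing the construction with itself gives $c^\varepsilon(\pi)\,c^\varepsilon(\pi^\vee)=1$. This uses three facts:
\begin{itemize}
\item $\theta_\W^2=\mathrm{id}$;
\item the twists by $\nu_{\sf w}\circ\det$ and $\nu_{-{\sf w}}\circ\det$ cancel;
\item $\Phi_\lambda\circ\Phi_\lambda=\mathrm{id}$ on the underlying space $\M_\lambda$, since it is $\GL_n(\C)$-equivariant and fixes $m_\lambda$. There is no highest/lowest weight swap to track, because $\theta(B)=B$.
\end{itemize}
To get $c^\varepsilon(\pi)^2=1$ you also need $c^\varepsilon(\pi^\vee)=c^\varepsilon(\pi)$. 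This follows from $\theta_\W\circ(\,\cdot\,\otimes\nu_{\sf w}\circ\det)=(\,\cdot\,\otimes\nu_{-{\sf w}}\circ\det)\circ\theta_\W$. It also uses the observation that, taking ${}^\theta\M_\lambda$ as the model of $\M_{\lambda^\vee}$, the normalized map $\Phi_{\lambda^\vee}$ is the same linear map as $\Phi_\lambda$. This is a pleasant shortcut the paper does not take, but it only pins $c^\varepsilon(\pi)$ down to a sign.

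The content of the theorem is that this sign is independent of $\lambda$ and $\varepsilon$, and here your argument has a genuine gap. Evaluation at $g=1$ does not make the Whittaker side drop out. It turns the comparison into $\Phi_\lambda(v(X))=c^\varepsilon(\pi)\,v(d\theta X)$ with $v(X)=\sum_i X_i^*(X)\,W_i(1)\,m_i$. So you need $v\neq0$, and you need to know $v$, i.e.\ the functional $W\mapsto W(1)$ on the minimal $\Orth(n)$-type. Since $\theta$ restricted to $\Orth(n)$ is conjugation by $\omega_n$, $\theta_\W$ moves vectors around inside that $K$-type. The comparison therefore amounts to knowing $W(\omega_n^{\pm1})/W(1)$ for minimal-type Whittaker functions with $W(1)\neq0$. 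The paper runs exactly this argument for $\GL_n(\C)$, where the Ishii--Miyazaki formula gives $W_\pi(1)>0$. For the generalized principal series of $\GL_n(\R)$ no such formula is available, which is why the real case is done differently.

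Your fallback via parabolic induction omits the analytic heart of the paper's proof. The twist carries the upper parabolic $P$ of type $(n_1,n_2)$ to the upper parabolic $Q$ of type $(n_2,n_1)$ and swaps the inducing data, also when $n_1=n_2$. So after twisting, the Whittaker functional is the Jacquet integral $\mathbb W^Q_{\psi_{N_0}}$, whereas the $\det$-twisted generator is realized through $\mathbb W^P_{\psi_{N_0}}$. Relating the two is not a block-permutation sign; it needs the standard intertwining operator $M^Q$.
\begin{itemize}
\item Its effect on Whittaker functionals is Shahidi's local coefficient $\omega_{\pi_2}(-1)^{n_1}\gamma(0,\pi_1\times\pi_2^\vee,\psi_\R)$.
\item Its effect on cohomology is Weselmann's factor $\eta(\sqrt{-1})^{n_1n_2/2}L(0,\pi_1\times\pi_2^\vee)/L(1,\pi_1\times\pi_2^\vee)$. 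This is computed through Delorme's lemma with a balanced Kostant representative, not a K\"unneth formula.
\end{itemize}
The net factor $(\sqrt{-1})^{n_1n_2/2}\varepsilon(0,\pi_1\times\pi_2^\vee,\psi_\R)$ depends on $\lambda$ through the $\kappa_i$. So does the scalar $C(\lambda^{(1)},\lambda^{(2)})$ comparing the Kostant embeddings $\imath_2$ and $\tilde\jmath$ after the $\vartheta$-, $\omega_n$- and $\det$-twists. Only their product is a sign depending on $n$ alone, which the paper checks by explicit computation for the partitions $(2r,1)$ and $(2r,2)$. Your assertion that what remains is a combinatorial sign from permuting blocks and wedge factors presupposes this cancellation without supplying any mechanism for it.
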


Recall the signed anti-diagonal representative $\omega_n$ of the longest Weyl element, as defined in \eqref{eq:long-Weyl}.
Let
\[
\vartheta:\GL_n\longrightarrow \GL_n,\quad
\vartheta(g)={}^tg^{-1},
\]
be the algebraic automorphism given by the inverse transpose. Thus
\[
\theta={}^{\omega_n}\vartheta.
\]
With respect to this decomposition, we decompose the diagram \eqref{eq:theta-twist}
into the following two subdiagrams:
\[
\begin{tikzcd}[row sep=normal, column sep=normal]
\extp^{\bullet}\frak{g}/\frak{k}
\arrow[r, "F_1"]
&
\mathcal{W}(\pi,\psi_{N_0})\otimes \mathcal{M}_{\lambda}
\arrow[d, "\vartheta_\W \otimes {\rm id}"]
\\
\extp^{\bullet}\frak{g}/\frak{k}
\arrow[r, "{}^\vartheta\!F_1"]
\arrow[u, "\extp^\bullet d\vartheta"]
&
\mathcal{W}(\pi^\vee,{}^\vartheta\psi_{N_0})\otimes {}^\vartheta\mathcal{M}_{\lambda}
\end{tikzcd}
\]
and
\[
\begin{tikzcd}[row sep=normal, column sep=normal]
\extp^{\bullet}\frak{g}/\frak{k}
\arrow[r, "F_2"]
&
\mathcal{W}(\pi^\vee,{}^\vartheta\psi_{N_0})\otimes {}^\vartheta\mathcal{M}_{\lambda}
\arrow[d, "{\rm Int}(\omega_n)^* \otimes {\rm id}"]
\\
\extp^{\bullet}\frak{g}/\frak{k}
\arrow[r, "{}^{\omega_n}F_2"]
\arrow[u, "\extp^\bullet {\rm Ad}(\omega_n)"]
&
\mathcal{W}(\pi^\vee,\psi_{N_0})\otimes {}^\theta\mathcal{M}_{\lambda}
\end{tikzcd}
\]
for any $\C$-linear homomorphisms $F_1$ and $F_2$.
It is easy to verify that, for all $k \in K$, we have
\[
k\cdot {}^\vartheta F_1 = {}^\vartheta(\vartheta(k)\cdot F_1),\quad k\cdot {}^{\omega_n}F_2 = {}^{\omega_n}(\omega_nk\omega_n^{-1}\cdot F_2).
\]
Hence in particular
\[
{}^\theta[\pi]^\varepsilon = {}^{\omega_n}({}^\vartheta[\pi]^\varepsilon).
\]

\begin{lemma}\label{lem:theta-to-vartheta}
We have
\[
(L_{\omega_n^{-1}} \otimes {}^\theta\rho_\lambda(\omega_n)^{-1} ) \circ {}^\theta[\pi]^\varepsilon = \varepsilon(-1)^n\cdot {}^\vartheta[\pi]^\varepsilon.
\]
Here \(L_g\) denotes the operator induced by left multiplication by \(g\)
\end{lemma}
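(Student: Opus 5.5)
The plan is to unwind ${}^\theta[\pi]^\varepsilon={}^{\omega_n}({}^\vartheta[\pi]^\varepsilon)$, established just before the statement. The aim is to recognise the composite on the left-hand side as the natural action of the element $\omega_n^{-1}\in C=\Orth(n)$ on the relative Lie algebra cochain complex $\Hom_{K^\circ}(\extp^\bullet\frg/\frk,\W(\pi^\vee,{}^\vartheta\psi_{N_0})\otimes{}^\vartheta\M_\lambda)$. On the $\varepsilon$-isotypic part, this action is the scalar $\varepsilon(\omega_n)$.

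\emph{Step 1: reduce to a group action.} By definition,
\[
{}^{\omega_n}F_2=(\mathrm{Int}(\omega_n)^*\otimes\mathrm{id})\circ F_2\circ\extp^\bullet\mathrm{Ad}(\omega_n),
\]
where $\mathrm{Int}(\omega_n)^*W(g)=W(\omega_n g\omega_n^{-1})$. Composing with $L_{\omega_n^{-1}}$ cancels the left conjugation. Up to the chosen sign conventions for $L$ and $\mathrm{Int}$, which I will fix at this point, what remains is right translation $R(\omega_n^{-1})$ on functions on $\GL_n(\R)$. Right translation preserves $\W(\pi^\vee,{}^\vartheta\psi_{N_0})$, so the Whittaker character does not change; this matches the target of ${}^\vartheta[\pi]^\varepsilon$.

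\emph{Step 2: identify the coefficient factor.} Since $\omega_n$ is orthogonal, $\theta(\omega_n)=\omega_n\,{}^t\omega_n^{-1}\omega_n^{-1}=\omega_n$, so ${}^\theta\rho_\lambda(\omega_n)^{-1}=\rho_\lambda(\omega_n^{-1})$. We also have ${}^\vartheta\rho_\lambda(\omega_n)={}^\vartheta\rho_\lambda(\omega_n)$ with $\vartheta(\omega_n)=\omega_n$. Hence the coefficient operator is exactly the action of $\omega_n^{-1}$ on ${}^\vartheta\M_\lambda$; the underlying spaces ${}^\theta\M_\lambda={}^\vartheta\M_\lambda=\M_\lambda$ coincide.

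Combining Steps 1 and 2, the left-hand side equals
\[
(R(\omega_n^{-1})\otimes{}^\vartheta\rho_\lambda(\omega_n^{-1}))\circ{}^\vartheta[\pi]^\varepsilon\circ\extp^\bullet\mathrm{Ad}(\omega_n),
\]
which is precisely $\omega_n^{-1}\cdot{}^\vartheta[\pi]^\varepsilon$ for the standard $K$-action on cochains.

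\emph{Step 3: evaluate the scalar.} Since $K^\circ$ acts trivially on $\Hom_{K^\circ}(\cdots)$, the $K$-action factors through $\pi_0=K/K^\circ$. Here $\pi_0$ is identified with $\{\pm1\}$ through $\det$. On the $\varepsilon$-isotypic component, and ${}^\vartheta[\pi]^\varepsilon$ lies there because $\vartheta$ preserves $K^\circ$ and commutes with $\pi_0$, the element $\omega_n^{-1}$ therefore acts by $\varepsilon(\det\omega_n)$. A direct computation gives $\det\omega_n=(-1)^n$: the antidiagonal permutation has sign $(-1)^{n(n-1)/2}$, and the product of the signed entries supplies the complementary sign. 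This can be checked for $n=1,2,3$ and then in general. Hence the scalar is $\varepsilon((-1)^n)=\varepsilon(-1)^n$, as claimed.

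I expect the main obstacle to be Step 1, namely the bookkeeping of conventions. One must check that $L_{\omega_n^{-1}}$ combined with $\mathrm{Int}(\omega_n)^*$ yields right translation by $\omega_n^{-1}$ and not by $\omega_n$; the two differ only by $\varepsilon(\det\omega_n)^{\pm1}$, which is the same sign here. One must also check that it is the $\vartheta$-twisted action on $\M_\lambda$ that appears. I would settle this by evaluating both sides on a pure tensor $X^*\otimes W\otimes m$, exactly as in the proof of Theorem~\ref{thm:period-relation}.
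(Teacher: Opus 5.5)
Your proposal is correct and is essentially the paper's proof: the paper uses the identity $L_{g^{-1}}\circ\mathrm{Int}(g)^*=R_{g^{-1}}$ to recognise the left-hand side as $\omega_n^{-1}\cdot{}^\vartheta[\pi]^\varepsilon$ for the $K$-action on cochains (the coefficient factor matches because $\theta(\omega_n)=\vartheta(\omega_n)=\omega_n$), and then concludes from $\det\omega_n=(-1)^n$. One fix is needed: in Step 2 the identity should read ${}^\vartheta\rho_\lambda(\omega_n)=\rho_\lambda(\omega_n)$. Also, with the paper's conventions the function part is unambiguously $R_{\omega_n^{-1}}$, so your sign hedge in the last paragraph is unnecessary.
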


\begin{proof}
Let $F \in {\rm Hom}_\C(\extp^{\bullet}\frak{g}/\frak{k}, \,\mathcal{W}(\pi^\vee,{}^\vartheta\psi_{N_0})\otimes {}^\vartheta\mathcal{M}_{\lambda})$. 
Note that
\[
L_{g^{-1}} \circ {\rm Int}(g)^* = R_{g^{-1}}, \quad g\in \GL_n(\R).
\]
Therefore, for $X \in \extp^{\bullet}\frak{g}/\frak{k}$, we have
\begin{align*}
& \left((L_{\omega_n^{-1}} \otimes {}^\theta\rho_\lambda(\omega_n)^{-1} ) \circ {}^{\omega_n}F \right) (X)\\
& = \left(\left((L_{\omega_n^{-1}}\circ {\rm Int}(\omega_n)^*) \otimes {}^\theta\rho_\lambda(\omega_n)^{-1} \right) \circ F \right) ({\rm Ad}(\omega_n)X)\\
& = \left( (R_{\omega_n^{-1}} \otimes {}^\theta\rho_\lambda(\omega_n)^{-1}) \circ F \right)({\rm Ad}(\omega_n)X)\\
& = (\omega_n^{-1}\cdot F)(X).
\end{align*}
Since $\det \omega_n = (-1)^n$, applying this identity to \(F={}^\vartheta[\pi]^\varepsilon\) gives the assertion.
\end{proof}

Define a $\GL_n(\C)$-equivariant isomorphism
\[
\varphi_\lambda:={}^\theta\rho_\lambda(\omega_n)^{-1} \circ \Phi_\lambda : (\rho_{\lambda-{\sf w}}, \mathcal{M}_{\lambda-{\sf w}}) \longrightarrow ({}^\vartheta\rho_{\lambda}, {}^\vartheta\mathcal{M}_{\lambda}).
\]
By our normalization of $\Phi_\lambda$, we have
\begin{align}\label{eq:normalization}
\varphi_\lambda(m_\lambda) = {}^\theta\rho_\lambda(\omega_n)^{-1}m_\lambda
\end{align}
for any highest weight vector $m_\lambda \in \mathcal{M}_\lambda$.
By Lemma \ref{lem:theta-to-vartheta}, Theorem \ref{thm:archi-period-relation-1} is equivalent to the following period relation.

\begin{theorem}\label{thm:archi-period-relation-2}
There exists $\eta \in \{\pm1\}$ depending only on $n$ such that
\[
\widetilde{[\pi]}^\varepsilon = \eta\varepsilon(-1)^n\cdot (L_{\omega_n}\otimes \varphi_\lambda^{-1})\circ {}^\vartheta[\pi]^\varepsilon.
\]
\end{theorem}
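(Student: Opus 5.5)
The identity compares two generators of a one-dimensional space, so I will compute both sides on an explicit generator and compare. Both $\widetilde{[\pi]}^\varepsilon$ and $(L_{\omega_n}\otimes\varphi_\lambda^{-1})\circ{}^\vartheta[\pi]^\varepsilon$ lie in the $\varepsilon$-isotypic part of $\Hom_{K^\circ}(\extp^\bullet\frak g/\frak k,\W(\pi^\vee)\otimes\M_{\lambda-{\sf w}})$. This space is one-dimensional in the extreme degrees $b_n,t_n$, so the two sides differ by a scalar $c$. The task is to show that $c=\eta\,\varepsilon(-1)^n$ with $\eta\in\{\pm1\}$ depending only on $n$. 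Since both sides are linear in $[\pi]^\varepsilon$, $c$ does not depend on the choice of generator. We may therefore work with the explicit generator coming from the standard description of $\pi$ as an induced representation.

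The first step is to reduce to a single vector and a single evaluation point. Write $\pi={\rm Ind}(D_{\ell_1}\otimes\cdots)$ with the usual discrete series blocks, and recall that $\pi^\vee\cong\pi\otimes\nu_{\sf w}\circ\det$. The $K^\circ$-types that carry cohomology in the extreme degrees are the minimal $K$-types, whose multiplicity in $\W(\pi)$ is one, together with their duals. Fix a highest weight vector $m_\lambda\in\M_\lambda$ and a suitable wedge $X_0\in\extp^\bullet\frak g/\frak k$, built from the root vectors $X_{\alpha}$ in $\frak p$ that correspond to the Kostant representative. Pair $[\pi]^\varepsilon(X_0)$ against a lowest weight vector $m^\vee$ of $\M_\lambda^\vee$; this gives a Whittaker function $W_0$ in the minimal $K$-type, and we normalize so that $W_0\neq0$.

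Next, evaluate both sides at $X_0$ and project onto the corresponding line in $\M_{\lambda-{\sf w}}$. On the left we get $W_0\cdot(\nu_{\sf w}\circ\det)$ times the $m_\lambda$-component. On the right we get $L_{\omega_n}({}^\vartheta W_{0}')$, where $W_0'$ is the value at $\extp^\bullet d\vartheta(X_0)$. Since $d\vartheta(X)=-{}^tX$ permutes the root vectors in $\frak p$ up to sign and $\frak p$ is symmetric, $\extp^\bullet d\vartheta(X_0)=\pm X_0'$, where $X_0'$ is another standard wedge and the sign depends only on the degree, hence only on $n$. Also, $\varphi_\lambda^{-1}$ sends ${}^\theta\rho_\lambda(\omega_n)^{-1}m_\lambda$ to $m_\lambda$ by \eqref{eq:normalization}, so it maps the lowest weight line of ${}^\vartheta\M_\lambda$ to the highest weight line with no extra scalar. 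Comparing the two sides then reduces to comparing $W_0(g)\nu_{\sf w}(\det g)$ with $W_0''(\omega_n^{-1}\,{}^tg^{-1})$ at $g=1$, or better along the torus. Here $W_0''$ is the component of the generator at $X_0'$, and it is related to $W_0$ through the $K^\circ$-equivariance of $[\pi]^\varepsilon$ by a Weyl-type element of $K$. The action of $-1\in\GL_n(\R)$ on the relevant coefficient contributes the factor $\varepsilon(-1)^n$ through $\det\omega_n=(-1)^n$, in the same way as in Lemma \ref{lem:theta-to-vartheta}.

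The main obstacle is the last comparison. One must show that the ratio of these Whittaker values at the identity is a sign independent of $\lambda$, $\ell_i$ and the central character. My plan is to use the explicit archimedean Whittaker functions on the minimal $K$-type via their Mellin–Barnes or Jacquet-integral description for the $\GL_2$ blocks. The $\theta$-twist of the induced model is computed by the intertwining ${\rm Ind}(\chi_1,\dots,\chi_n)\mapsto{\rm Ind}(\chi_n^{-1},\dots,\chi_1^{-1})$, and the Jacquet integral is compatible with this intertwining up to the standard epsilon factors. The real-place epsilon factors of the cohomological $\GL_2$ blocks are powers of $i$ depending only on the parity data. Their product, combined with $\nu_{\sf w}$, should cancel the $\lambda$-dependence. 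Cancelling all $\lambda$- and $\ell_i$-dependent powers of $i$ so that only a sign depending on $n$ remains is the step I expect to require the most care. If it proves intractable, I would instead argue by continuity and translation functors: $c$ is a locally constant function of the parameters within families related by Zuckerman translation, which preserves the normalizations above, and it is a root of unity, so it can be read off from the trivial-coefficient case $\lambda=0$.
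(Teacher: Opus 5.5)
Your setup is sound: the two sides span the same line, the scalar does not depend on the generator, and $\varepsilon(-1)^n$ enters through $\det\omega_n=(-1)^n$ as in Lemma \ref{lem:theta-to-vartheta}. But the proposal stops exactly where the content of the theorem lies, and the step you flag as the main obstacle is a genuine gap.

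To see this concretely, call your scalar $c$ and set $\ell(X):=\langle[\pi]^\varepsilon(X)(1),m^\vee\rangle$ for $X\in\extp^{\bullet}\frak p$. Evaluate the desired identity at $g=1$ and pair with $m^\vee$. Then use $K$-equivariance, or equivalently pass to ${}^\theta[\pi]^\varepsilon$, noting $d\theta=-{\rm Ad}(\omega_n)$ on $\frak p$. All this yields is $\ell=(-1)^{\bullet}\varepsilon(-1)^n c\,\ell\circ\extp^{\bullet}{\rm Ad}(\omega_n)$. Since $\extp^{\bullet}{\rm Ad}(\omega_n)$ is an involution, this pins down $c$ only if $\ell\neq0$. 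Even then, the sign, and its independence of $\lambda$, is decided by which eigenspace of $\extp^{\bullet}{\rm Ad}(\omega_n)$ supports $\ell$.

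You have neither piece of information. ``Normalize so that $W_0\neq0$'' gives nonvanishing as a function, not at the evaluation point. Relating $W_0''$ to $W_0$ through $K$-equivariance also does not reduce to a scalar, since for $n\geq3$ the element $\omega_n$ does not in general act by a scalar on the multi-dimensional minimal $\Orth(n)$-type.

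Over $\C$ the paper does argue this way (Theorem \ref{thm:archi-period-relation-3}). There $d\theta$ fixes the highest weight wedge $X_\bullet$ up to sign, and Ishii--Miyazaki's positivity (Proposition \ref{prop:Ishii-Miyazaki}) gives $W_\pi(1)\neq0$. For the cohomological generalized principal series of $\GL_n(\R)$ no such explicit formulas are known. Mellin--Barnes formulas for the $\GL_2$ blocks do not produce the Whittaker functions of the induced representation.

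The translation-functor fallback is not an argument either. $\lambda$ runs over a discrete set, so ``locally constant'' has no content. Nothing guarantees that translation functors respect $\theta_{\W}$, the determinant twist, and the highest weight normalization of $\Phi_\lambda$. That $c$ is a root of unity is the conclusion, not an input, and even at $\lambda=0$ the constant would still have to be computed. In the actual computation the individual contributions $\varepsilon(0,\pi_1\times\pi_2^\vee,\psi_\R)$ and $C(\lambda^{(1)},\lambda^{(2)})$ do depend on $\lambda$; only their product does not.

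The missing idea is the paper's induction on $n$, which never evaluates a Whittaker function for $n\geq3$.
\begin{itemize}
\item Write $\pi={\rm Ind}_{P(\R)}^{\GL_n(\R)}(\pi_1\otimes\pi_2)$ with $n_1n_2$ even and each $\pi_i\otimes\delta_{P,i}$ cohomological.
\item Express $[\pi]^\varepsilon$ through generators for $\pi_i\otimes\delta_{P,i}$ by Delorme's lemma (Lemma \ref{lem:Delorme}), and check that this isomorphism is compatible with $\vartheta$, $L_{\omega_n}$ and the determinant twist (Proposition \ref{prop:Delorme-theta-det}).
\item Since $\vartheta$ and $L_{\omega_n}$ carry $P$ to $Q$, compare the $P$- and $Q$-realizations. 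This needs both Shahidi's local coefficient formula \eqref{eq:Shahidi} and Weselmann's computation of the standard intertwining operator on cohomology (Proposition \ref{prop:Weselmann}). Your remark that the Jacquet integral is compatible with the intertwining ``up to epsilon factors'' is only the first of these, and by itself says nothing about cohomology classes.
\end{itemize}
The result is the recursion $\mathbf c^\varepsilon(\pi)=\pm(\sqrt{-1})^{\frac{n_1n_2}{2}}\varepsilon(0,\pi_1\times\pi_2^\vee,\psi_\R)\,C(\lambda^{(1)},\lambda^{(2)})\,\mathbf c^\varepsilon(\pi_1)\mathbf c^\varepsilon(\pi_2)$. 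Here the sign depends only on $n_1,n_2$ and the degree, and $C(\lambda^{(1)},\lambda^{(2)})$ compares the equivariant maps $\imath_2$ and $\tilde\jmath$. The base cases $n=1,2$ are done by hand in Lemma \ref{lem:base-cases}; for $n=2$ evaluation works because $\omega_2\in\SO(2)$ acts by scalars on the weight vectors $W_\pi^\pm$. For $(n_1,n_2)=(2r,1)$ or $(2r,2)$, an explicit balanced Kostant representative makes the $\lambda$-dependence of $C$ and of the epsilon factor cancel (Lemma \ref{lem:core-sign}).
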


We explain the strategy of the proof, postponing the technical ingredients to the following subsections.

Let $\mathbf{c}^{\varepsilon}(\pi)$ be the ratio between the two generators.
The goal is to show that $\mathbf{c}^{\varepsilon}(\pi)\varepsilon(-1)^n$ is a sign depending only on $n$.
The proof is by induction on $n$. We realize $\pi$ as a normalized induced representation from a maximal Levi subgroup
\[
M_P\cong \GL_{n_1}\times \GL_{n_2},
\quad n_1+n_2=n,
\]
with $n_1n_2$ even, in such a way that the inducing data
$\pi_i\otimes\delta_{P,i}$ are again cohomological. After choosing a nonzero $M_P(\C)$-equivariant map
\[
\imath:
\mathcal M_{\lambda^{(1)}}\otimes \mathcal M_{\lambda^{(2)}}
\longrightarrow
\extp^{\frac{n_1n_2}{2}}\frak u_P^*\otimes \mathcal M_\lambda,
\]
Delorme's lemma, recalled in Lemma \ref{lem:Delorme}, identifies the cohomology of $\pi$ with the tensor product of the cohomologies of the two factors, with the contribution of a balanced Kostant representative recalled in \S\,\ref{sec:Kostant}.

There are then two ways to compare the generator attached to $\pi$ with the corresponding generator for the opposite parabolic. On the analytic side, Shahidi's formula for local coefficients recalled in \eqref{eq:Shahidi}, together with Weselmann's computation of the cohomological intertwining operator recalled in Proposition \ref{prop:Weselmann}, gives a comparison involving
\[
(\sqrt{-1})^{\frac{n_1n_2}{2}}
\varepsilon(0,\pi_1\times\pi_2^\vee,\psi_\R).
\]
On the algebraic side, Proposition \ref{prop:Delorme-theta-det} checks that the Delorme isomorphisms are compatible with the $\vartheta$-twist, the action of $L_{\omega_n}$, and the determinant twist. Combining these two comparisons gives the inductive relation
\[
\mathbf{c}^{\varepsilon}(\pi)
=
\pm
(\sqrt{-1})^{\frac{n_1n_2}{2}}
\varepsilon(0,\pi_1\times\pi_2^\vee,\psi_\R)
\cdot C(\lambda^{(1)},\lambda^{(2)})
\cdot
\mathbf{c}^{\varepsilon}(\pi_1)
\mathbf{c}^{\varepsilon}(\pi_2),
\]
where the initial sign depends only on $n_1,n_2$. Here
$C(\lambda^{(1)},\lambda^{(2)})$ is the scalar comparing the two equivariant maps
\[
\imath_2
\quad\text{and}\quad
\tilde{\jmath},
\]
defined from the initial choice of $\imath$ by the diagrams \eqref{eq:diagram-1}, \eqref{eq:diagram-2}, and \eqref{eq:diagram-3}.

The cases $n=1,2$ are computed directly in Lemma \ref{lem:base-cases}. It remains to prove that
\[
(\sqrt{-1})^{\frac{n_1n_2}{2}}
\varepsilon(0,\pi_1\times\pi_2^\vee,\psi_\R)
\cdot C(\lambda^{(1)},\lambda^{(2)})
\]
is a sign depending only on $n_1,n_2$. This is reduced in Lemma \ref{lem:core-sign} to an explicit computation for the partitions
\[
(n_1,n_2)=
\begin{cases}
(2r,1) & \mbox{ if $n=2r+1$},\\
(2r,2) & \mbox{ if $n=2r+2$}.
\end{cases}
\]
For these choices, the relevant balanced Kostant representative is written down explicitly, and the scalar
$C(\lambda^{(1)},\lambda^{(2)})$ is computed by evaluating $\imath_2$ and $\tilde{\jmath}$ on highest weight vectors. The archimedean epsilon factor is evaluated from the Langlands parameters using the standard formula for real epsilon factors. The two contributions cancel all dependence on the weight $\lambda$, leaving only a sign depending on $n$.

\subsubsection{Kostant representatives}\label{sec:Kostant}

Let $W_n$ be the Weyl group of $\GL_n$ with respect to the diagonal maximal torus $T_0 = T_0(n)$. For $w \in W_n$, we denote by $\dot{w}$ the unique permutation matrix representing $w$. 
We shall use the dot action 
\[ 
w\cdot\lambda:=w(\lambda+\rho_n)-\rho_n, 
\] where $\rho_n$ denotes the half-sum of the positive roots of $\GL_n$.

Let $P$ be an upper or lower triangular maximal parabolic subgroup of $\GL_n$. Let $M_P$ be the standard Levi subgroup of $P$ consisting of block diagonal matrices. Note that 
\[
M_P = \left.\left\{ \bp g_1 & 0 \\ 0 & g_2 \ep \,\right\vert\, g_1 \in \GL_{n_1},\, g_2 \in \GL_{n_2} \right\} \cong \GL_{n_1} \times \GL_{n_2}
\]
for some $n_1$ and $n_2$. 
Let \(\delta_{P,1}\) and \(\delta_{P,2}\) be characters of \(\GL_{n_1}(\mathbb R)\) and \(\GL_{n_2}(\mathbb R)\), respectively, defined by 
\[ \delta_P^{1/2} \left( \bp g_1 & 0 \\ 0 & g_2 \ep \right) = \delta_{P,1}(g_1)\delta_{P,2}(g_2), 
\] 
where \(\delta_P\) denotes the modulus character of \(P(\mathbb R)\). 

Assume that \(P\) is upper triangular. We now spell out Kostant's theorem \cite[Theorem 5.14]{Kostant1961} in the present setting.
We identify \(\frak g\) with the space of \(n\times n\) complex matrices. For \(1\leq i,j\leq n\), let \(X_{ij}\in \frak g\) denote the elementary matrix with \(1\) in the \((i,j)\)-entry and \(0\) elsewhere, and let \(\{X_{ij}^*\}\subset \frak g^*\) denote the dual basis.
Let $W_n^P$ be the set of Kostant representatives of $W_{M_P}\backslash W_n$, where $W_{M_P}$ is the Weyl group of $M_P$. Recall that $w \in W_n^P$ if and only if
\[
w(X^+(T_0))\subset X^+(T_0(n_1)) \times X^+(T_0(n_2)).
\]
Let $w \in W_n^P$. 
Define a set
\[
S_w^P:=\left\{ (i,j)\,\left\vert\,1 \leq i \leq n_1,\,1 \leq j \leq n_2,\,w^{-1}(i) > w^{-1}(n_1+j) \right\}\right..
\]
By \emph{loc.\ cit.}, the pure wedge
\[
v_w^P:= \extp_{(i,j) \in S_w^P}X_{i,n_1+j}^* \in \extp^{\ell(w)}\frak{u}_P^*
\]
is a highest weight vector of weight $w \cdot 0$, where the exterior product is taken in lexicographic order.
Moreover, for $\lambda \in X^+(T_0(n))$, write 
\[
w\cdot \lambda = (\lambda^{(1)},\lambda^{(2)})
\] 
for some $\lambda^{(i)} \in X^+(T_0(n_i))$. Then $\M_{\lambda^{(1)}} \otimes \M_{\lambda^{(2)}}$ appears in $\extp^{\ell(w)}\frak{u}_P^* \otimes \M_\lambda$ with multiplicity one, and a highest weight vector is given by
\[
v_w^P \otimes \rho_{\lambda}(\dot{w})v_\lambda,
\]
where $v_\lambda \in \M_\lambda$ is a highest weight vector.

Recall that $w \in W_n^P$ is called \emph{balanced} (cf.\,\cite[Definition 5.9]{HR2020}) if 
\[
\ell(w) = \tfrac{1}{2}\dim_\C\frak{u}_P = \tfrac{n_1n_2}{2}.
\]
We have a bijection (cf.\,\cite[Lemma 5.6]{HR2020})
\[
W_n^P \longrightarrow W_n^Q,\quad w \longrightarrow w_Pw
\]
with $\ell(w) + \ell(w_Pw) = n_1n_2$ and 
\[
w_Pw \cdot \lambda = (\lambda^{(2)}-n_1, \lambda^{(1)}+n_2).
\]



\subsubsection{Delorme's lemma}\label{sec:Delorme}

Let $P$ be an upper or lower triangular maximal parabolic subgroup of $\GL_n$ such that $M_P\cong \GL_{n_1} \times \GL_{n_2}$.
For $i=1,2$, we write
\[
\frak{g}_i:= \Lie(\GL_{n_i}(\R))_\C,\quad C_i:={\rm O}(n_i),\quad K_i:=C_i\R_+^\times.
\]
Let $(\pi_1,V_{\pi_1})$ and $(\pi_2,V_{\pi_2})$ be irreducible admissible $(\frak{g}_1,C_1)$ and $(\frak{g}_2,C_2)$ modules respectively.
We denote by $(\pi_1^\infty,V_{\pi_1}^\infty)$ and $(\pi_2^\infty,V_{\pi_2}^\infty)$ the Casselman--Wallach smooth globalization of $\pi_1$ and $\pi_2$ respectively.
We write
\[
(\pi,V_{\pi}) := \left({\rm Ind}_{P(\R)}^{\GL_n(\R)}(\pi_1{\otimes} \pi_2),\, {\rm Ind}_{P(\R)}^{\GL_n(\R)}(V_{\pi_1}{\otimes} V_{\pi_2})\right)
\]
for the $(\frak{g},C)$-module realized as the space of $C$-finite vectors of the normalized smooth induced representation
\[
{\rm Ind}_{P(\R)}^{\GL_n(\R)}(V_{\pi_1}^\infty\widehat{\otimes} V_{\pi_2}^\infty).
\]
Consider the following conditions on $\pi_1$ and $\pi_2$:
\begin{align}\label{eq:temper}
\begin{gathered}
\mbox{\(\pi_1\) and \(\pi_2\) are both essentially unitary};\\
\mbox{\(\pi_1\) and \(\pi_2\) have the same exponents};\\
\mbox{the ratio }\frac{L(s+1,\pi_1 \times \pi_2^\vee)}
{L(s,\pi_1 \times \pi_2^\vee)}
\mbox{ is holomorphic and nonvanishing at \(s=0\)}.
\end{gathered}
\end{align}
Under the first two conditions, $\pi$ is irreducible and essentially unitary.
We also consider the following cohomological condition:
\begin{align}\label{eq:cohomological-condition}
\begin{gathered}
\pi \in \Omega(\lambda),\quad \pi_1\otimes\delta_{P,1} \in \Omega(\lambda^{(1)}),\quad \pi_2\otimes\delta_{P,2} \in \Omega(\lambda^{(2)})\\
\mbox{for some pure weights $\lambda \in X_0^+(T_0(n))$, $\lambda^{(i)} \in X_0^+(T_0(n_i))$ for $i=1,2$.}
\end{gathered}
\end{align}
In this case, all the conditions in (\ref{eq:temper}) are satisfied. Indeed, the first condition is immediate, the purity of $\lambda$ implies the second condition, and the third condition follows from the regularity of $\lambda + \rho_n$.
Moreover, $n_1n_2$ must be even (cf.\,\cite[Proposition 7.26]{HR2020}) and there exists a balanced Kostant representative $w$ such that $w \cdot \lambda = (\lambda^{(1)},\lambda^{(2)})$ by the combinatorial lemma \cite[Lemma 7.14]{HR2020}.
The following lemma is an explicit form of a well-known result of Borel--Wallach \cite[III, Theorem 3.3]{BW2000}, also known as Delorme's lemma, applied to the induced representation \(\pi\).

\begin{lemma}\label{lem:Delorme}
Let \(\bullet \in \{b_n,t_n\}\) be an extreme degree, and let \(\bullet_i\) be the corresponding
extreme degree for \(\mathrm{GL}_{n_i}(\mathbb R)\), \(i=1,2\). Thus
\(\bullet_i=b_{n_i}\) if \(\bullet=b_n\), and
\(\bullet_i=t_{n_i}\) if \(\bullet=t_n\). 
Assume the cohomological condition (\ref{eq:cohomological-condition}) holds.
Let
\[
\imath : \M_{\lambda^{(1)}} \otimes \M_{\lambda^{(2)}}
\longrightarrow
\extp^{\frac{n_1n_2}{2}}\frak{u}_P^* \otimes \M_\lambda
\]
be a nonzero \(M_P(\mathbb C)\)-equivariant homomorphism, and let
\[
\imath^*:
\M_{\lambda^{(1)}}^\vee\otimes \M_{\lambda^{(2)}}^\vee
\longrightarrow
\extp^{\frac{n_1n_2}{2}}\frak{u}_P\otimes \M_\lambda^\vee
\]
be the corresponding \(M_P(\mathbb C)\)-equivariant homomorphism on
contragredient representations. Then we have a
\(\pi_0(\mathrm{GL}_n(\mathbb R))\)-equivariant isomorphism
\begin{align*}
I_\imath^P:\ 
&{\rm Hom}_{K^\circ}\left(
\extp^{\bullet}\frak{g}/\frak{k} \otimes \mathcal{M}_\lambda^\vee,
V_\pi\right) \\
&\longrightarrow
\extp^{d(\bullet)}\frak{a}_P/\frak{a}_G
\otimes
\left(
{\rm Hom}_{K_1^\circ}\left(
\extp^{\bullet_1}\frak{g}_1/\frak{k}_1
\otimes \mathcal{M}_{\lambda^{(1)}}^\vee,
V_{\pi_1\otimes\delta_{P,1}}\right)
\right. \\
&\hspace{4.5cm}\left.
\otimes
{\rm Hom}_{K_2^\circ}\left(
\extp^{\bullet_2}\frak{g}_2/\frak{k}_2
\otimes \mathcal{M}_{\lambda^{(2)}}^\vee,
V_{\pi_2\otimes\delta_{P,2}}\right)
\right)^{\pi_0(K^{M_P})}
\end{align*}
defined by
\[
I_\imath^P(F)\left(
Z\otimes(X_1\otimes m_1^\vee)\otimes(X_2\otimes m_2^\vee)
\right)
:=
F\left(
Z\otimes X_1\otimes X_2\otimes
\imath^*(m_1^\vee\otimes m_2^\vee)
\right)(1)
\]
for
\[
Z \in \extp^{d(\bullet)}\frak{a}_P/\frak{a}_G,\quad
X_i \otimes m_i^\vee
\in
\extp^{\bullet_i}\frak{g}_i/\frak{k}_i
\otimes \mathcal{M}_{\lambda^{(i)}}^\vee .
\]
Here \(K^{M_P}:=K^\circ \cap M_P(\mathbb R)\), and
\[
d(\bullet):=
\bullet-\bullet_1-\bullet_2-\tfrac{n_1n_2}{2}
=
\begin{cases}
0 & \mbox{ if } \bullet=b_n,\\
1 & \mbox{ if } \bullet=t_n.
\end{cases}
\]
\end{lemma}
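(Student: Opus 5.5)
The plan is to reduce Lemma~\ref{lem:Delorme} to the Borel--Wallach form of Delorme's lemma \cite[III, Theorem 3.3]{BW2000} and then make each identification explicit. Throughout I use that, since $\pi$ is unitary up to a twist, the relevant complexes have zero differential. First, Frobenius reciprocity for the smooth normalized induction $V_\pi$ identifies
\[
{\rm Hom}_{K^\circ}\bigl(\extp^\bullet\frak g/\frak k\otimes\M_\lambda^\vee,V_\pi\bigr)
\]
with ${\rm Hom}_{K^{M_P}}$-type cochains on $\frak p/(\frak k\cap\frak p)$ with values in $V_{\pi_1}\otimes V_{\pi_2}\otimes\delta_P^{1/2}$. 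Concretely, the identification is $F\mapsto (\text{evaluation at }1)\circ F$ restricted along $\frak p/\frak k\cap\frak p\cong\frak g/\frak k$, using the Iwasawa decomposition $\frak g=\frak p+\frak k$.

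Next I decompose $\frak p/(\frak k\cap\frak p)=\frak m_1/\frak k_1\oplus\frak m_2/\frak k_2\oplus\frak a_P/\frak a_G\oplus\frak u_P$ (modulo the common center, which is absorbed in $K^\circ$). This gives the Hochschild--Serre spectral sequence for $\frak u_P$. Kostant's theorem, as spelled out in \S\ref{sec:Kostant}, identifies $H^\bullet(\frak u_P,\M_\lambda^\vee)$ as a sum of $M_P$-types indexed by $W_n^P$.

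I would then use the infinitesimal character to isolate the relevant type. Condition \eqref{eq:cohomological-condition} forces only the Kostant type $w$ with $w\cdot\lambda=(\lambda^{(1)},\lambda^{(2)})$ to contribute. By \cite[Lemma 7.14]{HR2020}, $w$ is balanced, so the contribution sits in $\frak u_P$-degree $n_1n_2/2$. Dually, it is picked out by the nonzero map $\imath^*$, which is unique up to scalar by the multiplicity-one statement in \S\ref{sec:Kostant}.

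The $\frak a_P/\frak a_G$ part is one-dimensional, with trivial action after the $\delta_P$ normalization. It therefore contributes $\extp^{d}$ with $d\in\{0,1\}$. The Levi cohomology then splits by Künneth into the two factors, twisted by $\delta_{P,i}$ to account for $\delta_P^{1/2}$ together with the $\frak u_P$-weight shift. Finally, taking $\pi_0(K^{M_P})$-invariants accounts for passing from $K^{M_P}$ to $K^{M_P,\circ}$.

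The degree bookkeeping is what forces each degree to be extreme. Since cohomology of $\pi_i\otimes\delta_{P,i}$ is nonzero only in $[b_{n_i},t_{n_i}]$, the total degree $\bullet=b_n$ must be $b_{n_1}+b_{n_2}+n_1n_2/2$ with $d=0$. Symmetrically, $t_n$ requires $t_{n_1}+t_{n_2}+n_1n_2/2+1$. I check these numerically from the formulas for $b_n,t_n$ with $n_1n_2$ even. This matching is what forces $\bullet_i$ to be the corresponding extreme degrees.

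The main obstacle is checking that the composite isomorphism is literally given by the stated formula $F\mapsto F(Z\otimes X_1\otimes X_2\otimes\imath^*(\cdot))(1)$. In particular, the spectral sequence must degenerate at the chain level, with no extra contributions from non-balanced $w$ in the extreme degrees. The first tool I would try is to exploit that all differentials vanish for unitary-type modules (Wigner's lemma plus the infinitesimal-character argument). This lets one identify cochains with their restrictions to the $w$-isotypic piece, and then check $\pi_0$-equivariance directly from the formula, since $\pi_0(\GL_n(\R))$ is represented by elements of $M_P(\R)\cap C$.
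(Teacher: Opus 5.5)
Your proposal is correct and follows the paper's route. The paper gives no separate proof: it presents the lemma as an explicit form of the Borel--Wallach version of Delorme's lemma (III, Theorem~3.3), whose ingredients are exactly the ones you list — evaluation at $1$ via Frobenius reciprocity, Hochschild--Serre for $\frak{u}_P$, Kostant's theorem with $\imath^*$ singling out the multiplicity-one balanced constituent, K\"unneth for $M_P$, and passage to $\pi_0(K^{M_P})$-invariants. For the chain-level step you flag, one clean finish is this: since all differentials vanish, every cochain lies in the Hochschild--Serre filtration step $F^{\bullet-\frac{n_1n_2}{2}}$, and pairing with $\imath^*$, whose image consists of Kostant's harmonic $\frak{u}_P$-cycles and hence kills $\frak{u}_P$-coboundaries, computes its component in $E_\infty=E_2$, so the formula is injective and your dimension count gives bijectivity (note also that the $\frak{u}_P$-cohomology that enters is $H^\bullet(\frak{u}_P,\M_\lambda)$, not $H^\bullet(\frak{u}_P,\M_\lambda^\vee)$).
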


\begin{remark}
The notation \(\imath^*\) does not mean the ordinary linear dual of
\(\imath\). Rather, since
\(\M_{\lambda^{(1)}}\otimes \M_{\lambda^{(2)}}\) occurs with multiplicity one in
\(\extp^{\frac{n_1n_2}{2}}\frak{u}_P^* \otimes \M_\lambda\), the choice of
\(\imath\) determines the corresponding homomorphism on the contragredient representations.
In the definition of \(I_\imath^P\), we use the standard decomposition
\[
\frak{g}/\frak{k}
\cong
(\frak{a}_P/\frak{a}_G)
\oplus
(\frak{g}_1/\frak{k}_1)
\oplus
(\frak{g}_2/\frak{k}_2)
\oplus
\frak{u}_P
\]
and the induced exterior product. 
\end{remark}

\subsubsection{Intertwining operators and Jacquet's integrals}

Keep the notation and assumptions of \S\,\ref{sec:Delorme}. Let \(w_P\in W_n\) be the relative longest Weyl element, represented by the block permutation matrix interchanging the two Levi blocks:
\[
\dot w_P
=
\bp
0 & {\bf 1}_{n_2} \\
{\bf 1}_{n_1} & 0
\ep .
\]
Let $Q$ be the maximal parabolic subgroup defined by 
\[ 
Q := \dot{w}_P\cdot{}^tP\cdot \dot{w}_P^{-1}. 
\]
For $f \in {\rm Ind}_{P(\R)}^{\GL_n(\R)}(V_{\pi_1} \otimes V_{\pi_2})$ and $\underline{s} = (s_1,s_2) \in \C^2$, let 
\[
f_{\underline{s}} \in {\rm Ind}_{P(\R)}^{\GL_n(\R)}\left((V_{\pi_1}\otimes|\det|^{s_1}) \otimes (V_{\pi_2}\otimes|\det|^{s_2}) \right)
\]
be the associated flat section. 
The standard intertwining integral
\[
M^Q(\underline{s})f(g)
:=
\int_{U_Q(\mathbb R)}
f_{\underline{s}}(\dot w_P^{-1}ug)\,du,
\quad g\in\GL_n(\mathbb R)
\]
converges absolutely for \({\rm Re}(s_1-s_2)\) sufficiently large (resp.\,sufficiently small) if \(P\) is upper triangular (resp.\,lower triangular). 
Here the Haar measure $du$ is normalized to be the product measure of the Lebesgue measures on $\R$.
It admits meromorphic continuation to \(\underline{s}\in\mathbb C^2\). 
If the conditions in (\ref{eq:temper}) are satisfied, then \(M(\underline{s})\) is holomorphic at \(\underline{s}=0\). In this case, we obtain an intertwining isomorphism
\[
M^Q:
{\rm Ind}_{P(\mathbb R)}^{\GL_n(\mathbb R)}
(V_{\pi_1}\otimes V_{\pi_2})
\longrightarrow
{\rm Ind}_{Q(\mathbb R)}^{\GL_n(\mathbb R)}
(V_{\pi_2}\otimes V_{\pi_1}),
\quad
f\longmapsto \left. M^Q(\underline{s})f\right|_{\underline{s}=0}.
\]

Assume that \(\pi_1\) and \(\pi_2\) are generic. Recall that
\(N_0=N_0(n)\) denotes the maximal unipotent subgroup of \(\GL_n\)
consisting of upper triangular unipotent matrices. Let \(\xi\) be a
non-degenerate character of \(N_0(n,\mathbb R)\). It determines
characters \(\xi_i\) of \(N_0(n_i,\mathbb R)\), \(i=1,2\), by
\[
\xi\big|_{M_P(\mathbb R)\cap N_0(n,\mathbb R)}
=
\xi_1\times \xi_2
\]
under the isomorphism \(M_P\cong \GL_{n_1}\times \GL_{n_2}\). We realize each
\(\pi_i\) in its Whittaker model with respect to $\xi_i$
\[
V_{\pi_i}=\mathcal{W}(\pi_i,\xi_i),\quad i=1,2.
\]
Assume that \(P\) is upper triangular. 
For $f\in {\rm Ind}_{P(\mathbb R)}^{\GL_n(\mathbb R)}
\left(
\mathcal W(\pi_1,\xi_1)\otimes \mathcal W(\pi_2,\xi_2)
\right)$, Jacquet's integral is defined by
\[
\mathbb W_\xi^P(g;f_{\underline{s}})
:=
\int_{U_Q(\mathbb R)}
f_{\underline{s}}(\dot w_P^{-1}ug)
({\bf 1}_{n_1},{\bf 1}_{n_2})\overline{\xi(u)}\,du,
\quad g\in \GL_n(\mathbb R).
\]
It converges absolutely for \({\rm Re}(s_1-s_2)\) sufficiently large and admits analytic continuation to \(\underline{s}\in\mathbb C^2\). Its continuation defines a Whittaker function on \(\GL_n(\mathbb R)\) with respect to \(\xi\).
If the first two conditions in \((\ref{eq:temper})\) are satisfied, then the specialization at \(\underline{s}=0\) gives an intertwining isomorphism
\[
\mathbb W_\xi^P:
{\rm Ind}_{P(\mathbb R)}^{\GL_n(\mathbb R)}
\left(
\mathcal W(\pi_1,\xi_1)\otimes \mathcal W(\pi_2,\xi_2)
\right)
\longrightarrow
\mathcal W(\pi,\xi),
\quad
f\longmapsto
\left.\mathbb W_\xi^P(f_{\underline{s}})\right|_{\underline{s}=0}.
\]
Assume in addition that the third condition in \((\ref{eq:temper})\) is also  satisfied and that \(\xi=\psi_{N_0}\) is the character defined as in
\((\ref{eq:additive})\). Then the ratio between
\(\mathbb W_{\psi_{N_0}}^P\) and
\(\mathbb W_{\psi_{N_0}}^Q\circ M^Q\) is given by the \(\gamma\)-factor for
\(\pi_1\times\pi_2^\vee\) with respect to \(\psi_{\mathbb R}\). 
More precisely, by Shahidi's formula for local coefficients \cite[Theorem 3.1]{Shahidi1985}, we have
\begin{align}\label{eq:Shahidi}
\mathbb W_{\psi_{N_0}}^P
=
\omega_{\pi_2}(-1)^{n_1}\cdot
\gamma(0,\pi_1\times\pi_2^\vee,\psi_{\mathbb R})\cdot
\mathbb W_{\psi_{N_0}}^Q\circ M^Q .
\end{align}
For the factor $\omega_{\pi_2}(-1)^{n_1}$, we refer to \cite[Theorem 5.1]{Shahidi1984} and \cite[Remark 5.1.3]{Shahidi2010}.

We now turn to the cohomological setting, and assume that the cohomological condition (\ref{eq:cohomological-condition}) is satisfied. 
Let $\bullet \in \{b_n,t_n\}$ be an extreme degree, and $\varepsilon \in \widehat{\pi_0(\GL_n(\R))}$ be permissible for $\pi$.
Let \(\bullet_i\) be the corresponding
extreme degree for \(\mathrm{GL}_{n_i}(\mathbb R)\), \(i=1,2\). 
By Lemma \ref{lem:Delorme}, $\varepsilon \in \widehat{\pi_0(\GL_n(\R))}$ is also permissible for 
\[
\pi_1 \otimes \delta_{P,1} = \pi_1 \otimes |\det|^{\frac{n_2}{2}},\quad \pi_2 \otimes \delta_{P,2} = \pi_2 \otimes |\det|^{-\frac{n_1}{2}}.
\]
For $i=1,2$, we fix a generator
\begin{align*}
[\pi_i \otimes \delta_{P,i}]^\varepsilon \in H^{\bullet_i}\left(\frak{g}_i,K_i^\circ; \mathcal{W}(\pi_i \otimes \delta_{P,i}) \otimes \mathcal{M}_{\lambda^{(i)}}\right)(\varepsilon).
\end{align*}
Since $n_1n_2$ is even, it is easy to see that $\varepsilon \in \widehat{\pi_0(\GL_n(\R))}$ is permissible for 
\[
\pi_2 \otimes \delta_{Q,1} = \pi_2 \otimes |\det|^{\frac{n_1}{2}},\quad \pi_1 \otimes \delta_{Q,2} = \pi_1 \otimes |\det|^{-\frac{n_2}{2}}.
\]
The generators $[\pi_i \otimes \delta_{P,i}]^\varepsilon$ for $i=1,2$ then uniquely determine generators 
\begin{align*}
&[\pi_2 \otimes \delta_{Q,1}]^\varepsilon \in H^{\bullet_2}\left(\frak{g}_2,K_2^\circ; \mathcal{W}(\pi_2 \otimes \delta_{Q,1}) \otimes \mathcal{M}_{\lambda^{(2)}-n_1}\right)(\varepsilon),\\
&[\pi_1 \otimes \delta_{Q,2}]^\varepsilon \in H^{\bullet_1}\left(\frak{g}_1,K_1^\circ; \mathcal{W}(\pi_1 \otimes \delta_{Q,2}) \otimes \mathcal{M}_{\lambda^{(1)}+n_2}\right)(\varepsilon)
\end{align*}
by
\begin{align}\label{eq:det-twist-generators}
\begin{split}
[\pi_2 \otimes \delta_{Q,1}]^\varepsilon &:= ({\det}^{n_1}\otimes{\rm id})\circ [\pi_2 \otimes \delta_{P,2}]^\varepsilon,\\
[\pi_1 \otimes \delta_{Q,2}]^\varepsilon &:= ({\det}^{-n_2}\otimes{\rm id})\circ [\pi_1 \otimes \delta_{P,1}]^\varepsilon.
\end{split}
\end{align}
Let 
\[
\imath : \mathcal{M}_{\lambda^{(1)}} \otimes \mathcal{M}_{\lambda^{(2)}} \longrightarrow \extp^{\frac{n_1n_2}{2}} \frak{u}_P^* \otimes \mathcal{M}_\lambda
\]
be a nonzero $M_P(\C)$-equivariant homomorphism.
It determines an $M_Q(\C)$-equivariant homomorphism $\jmath = \jmath(\imath)$ by the following diagram
\begin{equation}\label{eq:diagram-1}
\begin{tikzcd}[row sep=normal, column sep=normal]
\mathcal{M}_{\lambda^{(1)}} \otimes \mathcal{M}_{\lambda^{(2)}} \arrow[r, "\imath"] & \extp^{\frac{n_1n_2}{2}} \frak{u}_P^* \otimes \mathcal{M}_\lambda \arrow[d, "\mathscr{C}_P\otimes{\rm id}"] \\
\mathcal{M}_{\lambda^{(1)}+n_2} \otimes \mathcal{M}_{\lambda^{(2)}-n_1} \arrow[u, "{\rm id}\otimes{\rm id}"]  & \extp^{\frac{n_1n_2}{2}} \frak{u}_{{}^t\!P}^* \otimes \mathcal{M}_\lambda \arrow[d, "{\rm Ad}(\dot{w}_P^{-1})^*\otimes{\rho_\lambda(\dot{w}_P)}"]\\
\mathcal{M}_{\lambda^{(2)}-n_1} \otimes \mathcal{M}_{\lambda^{(1)}+n_2}\arrow[r, "\jmath"]\arrow[u, "{\rm switch}"]  & \extp^{\frac{n_1n_2}{2}} \frak{u}_Q^* \otimes \mathcal{M}_\lambda.
\end{tikzcd}
\end{equation}
In the diagram, ``switch'' denotes the natural flip of tensor factors, and the arrows labelled with ``\({\rm id}\)'' are the canonical identifications of the underlying vector spaces induced by the determinant twists.
Here \(\mathscr C_P\) is the following contraction map. Let
\[
\Omega_P:=
\extp_{\substack{1\le i\le n_1\\ 1\le j\le n_2}}
X_{i,n_1+j}
\in
\extp^{n_1n_2}\frak u_P,
\]
where the exterior product is taken in lexicographic order. For any
subset
\[
I\subset \{1,...,n_1\}\times \{1,...,n_2\}
\]
of cardinality \(\frac{n_1n_2}{2}\), define
\[
X_I:=
\extp_{(i,j)\in I}X_{i,n_1+j} \in \extp^{\frac{n_1n_2}{2}}\frak u_P,
\quad
Y_I:=
\extp_{(i,j)\in I^c}X_{n_1+j,i} \in \extp^{\frac{n_1n_2}{2}}\frak u_{{}^t\!P},
\]
where both exterior products are taken in lexicographic order. We define
\[
\mathscr C_P:
\extp^{\frac{n_1n_2}{2}}\frak u_P^*
\longrightarrow
\extp^{\frac{n_1n_2}{2}}\frak u_{{}^t\!P}^*
\]
by
\[
\mathscr C_P(X_I^*)={\rm sgn}(I)Y_I^*,
\]
where \({\rm sgn}(I)\in\{\pm1\}\) is determined by
\[
X_I\wedge X_{I^c}={\rm sgn}(I)\Omega_P.
\]
This map is compatible with the determinant twist
\(\det^{\,n_2}\times \det^{-n_1}\) on $M_P$, which accounts for the shifts
\(\lambda^{(1)}\mapsto \lambda^{(1)}+n_2\) and
\(\lambda^{(2)}\mapsto \lambda^{(2)}-n_1\) in the diagram.

The following period relation is an important consequence of Shahidi's formula and Weselmann's result in \cite[Chapter 9]{HR2020} (see also \cite[Lemma 2.12]{Chen2026}).

\begin{proposition}\label{prop:Weselmann}
Assume the cohomological condition (\ref{eq:cohomological-condition}) is satisfied.
Then there exists $\eta \in \{\pm1\}$ depending only on $n_1$ and $n_2$ such that 
\begin{align*}
&{\mathbb W}_{\psi_{N_0}}^P\circ \left(I^P_{\imath}\right)^{-1}\left(Z\otimes[\pi_1\otimes\delta_{P,1}]^{\varepsilon} \otimes [\pi_2\otimes\delta_{P,2}]^{\varepsilon}\right)\\
& = \omega_{\pi_2}(-1)^{n_1}\eta(\sqrt{-1})^{\frac{n_1n_2}{2}}\varepsilon(0,\pi_1\times\pi_2^\vee,\psi_\R)\\
&\quad\times{\mathbb W}_{\psi_{N_0}}^Q\circ \left(I^Q_{\jmath}\right)^{-1}\left({\rm Ad}(\dot{w}_P)Z\otimes[\pi_2\otimes\delta_{Q,1}]^{\varepsilon} \otimes [\pi_1\otimes\delta_{Q,2}]^{\varepsilon}\right)
\end{align*}
for any $Z \in \extp^{d(\bullet)}\frak{a}_P/\frak{a}_G$.
\end{proposition}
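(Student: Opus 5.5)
The plan is to combine three ingredients: Shahidi's formula \eqref{eq:Shahidi}, Weselmann's computation of the cohomological intertwining operator in \cite[Chapter 9]{HR2020}, and Delorme's lemma (Lemma~\ref{lem:Delorme}).

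First I realize the two cohomology classes in the statement as honest $(\frak g,K^\circ)$-cocycles on the induced modules. Set
\[
\Xi_P := \left(I^P_\imath\right)^{-1}\left(Z\otimes[\pi_1\otimes\delta_{P,1}]^\varepsilon\otimes[\pi_2\otimes\delta_{P,2}]^\varepsilon\right)\in {\rm Hom}_{K^\circ}\Bigl(\extp^\bullet\frak g/\frak k\otimes\M_\lambda^\vee,\ {\rm Ind}_{P(\R)}^{\GL_n(\R)}(\pi_1\otimes\pi_2)\Bigr),
\]
and define $\Xi_Q$ in the same way from $\jmath$, ${\rm Ad}(\dot w_P)Z$ and the $Q$-generators \eqref{eq:det-twist-generators}. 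Applying \eqref{eq:Shahidi} to the left-hand side turns it into $\omega_{\pi_2}(-1)^{n_1}\gamma(0,\pi_1\times\pi_2^\vee,\psi_\R)\,\mathbb W^Q_{\psi_{N_0}}\circ M^Q\circ\Xi_P$. This reduces the proposition to a purely cohomological statement:
\[
M^Q\circ\Xi_P=\eta\,(\sqrt{-1})^{\frac{n_1n_2}{2}}\frac{\varepsilon(0,\pi_1\times\pi_2^\vee,\psi_\R)}{\gamma(0,\pi_1\times\pi_2^\vee,\psi_\R)}\,\Xi_Q
\]
in cohomology. Both sides lie in the one-dimensional $\varepsilon$-isotypic part of the extreme-degree cohomology of ${\rm Ind}_Q$, since $M^Q$ is an isomorphism under \eqref{eq:temper}. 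So only a scalar has to be identified.

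Next I identify that scalar. Since $\gamma/\varepsilon=L(1,\pi_2\times\pi_1^\vee)/L(0,\pi_1\times\pi_2^\vee)$, this is exactly the normalization of $M^Q$ that Weselmann's theorem addresses. That theorem says that the normalized operator
\[
\frac{L(0,\pi_1\times\pi_2^\vee)}{L(1,\pi_1\times\pi_2^\vee)}\,M^Q
\]
acts on the Delorme-decomposed cohomology by transporting the class built from $\imath$ and the $P$-generators to the class built from $\jmath$ and the $Q$-generators. The transport factor is $(\sqrt{-1})^{n_1n_2/2}$ times a sign depending only on $n_1,n_2$, where $\jmath$ is obtained from $\imath$ precisely by the contraction $\mathscr C_P$, ${\rm Ad}(\dot w_P^{-1})^*$, $\rho_\lambda(\dot w_P)$ and the switch of \eqref{eq:diagram-1}.

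Concretely, I would evaluate $M^Q\circ\Xi_P$ at $g=1$ on the element $Z\otimes X_1\otimes X_2\otimes\imath^*(m_1^\vee\otimes m_2^\vee)$. I would then use the standard decomposition of $\frak g/\frak k$ to move the $\frak u_P$-part of the wedge to $\frak u_{{}^tP}$; this is where $\mathscr C_P$ and its sign ${\rm sgn}(I)$ appear. Finally I would conjugate by $\dot w_P$ and match with the formula for $I^Q_\jmath$. The determinant twists in \eqref{eq:det-twist-generators} match the shift $\delta_{P,i}\mapsto\delta_{Q,\cdot}$ and the shift of the weights in \eqref{eq:diagram-1}, so no further constants arise there.

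The main obstacle is aligning conventions. \cite[Chapter 9]{HR2020} and \cite[Lemma 2.12]{Chen2026} use their own normalizations of measures, of the contraction, and of the ordering of wedges. I must check that each of these discrepancies contributes only a sign depending on $n_1,n_2$. In particular the sign must not depend on $\lambda$ or on $\varepsilon$: the $\varepsilon$-dependence should be absorbed entirely by $\omega_{\pi_2}(-1)^{n_1}$, already isolated in \eqref{eq:Shahidi}. I would first verify this by checking the lexicographic orderings in $X_I$, $Y_I$ and $\Omega_P$ against the ordering used in Weselmann's computation, and confirming that the Lebesgue-measure normalization of $du$ introduces no factor of $\pi$ or $2$ beyond those already in the $L$-factors.
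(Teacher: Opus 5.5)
Your route is the paper's: you use Shahidi's formula \eqref{eq:Shahidi} to reduce to the identity $M^Q\circ\Xi_P=\eta(\sqrt{-1})^{\frac{n_1n_2}{2}}\frac{\varepsilon(0,\pi_1\times\pi_2^\vee,\psi_\R)}{\gamma(0,\pi_1\times\pi_2^\vee,\psi_\R)}\,\Xi_Q$, and then you invoke Weselmann's computation of the cohomological intertwining operator. The paper simply cites \cite[\S\,9.6.12, Lemma 9.19]{HR2020} for the latter and does not re-derive the wedge-ordering and measure conventions you propose to check.

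However, you have quoted the normalization inverted. The cited result reads
\[
M^Q\circ\Xi_P=\eta(\sqrt{-1})^{\frac{n_1n_2}{2}}\,\frac{L(0,\pi_1\times\pi_2^\vee)}{L(1,\pi_1\times\pi_2^\vee)}\,\Xi_Q .
\]
So the operator $\frac{L(1,\pi_1\times\pi_2^\vee)}{L(0,\pi_1\times\pi_2^\vee)}M^Q$ carries $\Xi_P$ to $\eta(\sqrt{-1})^{n_1n_2/2}\,\Xi_Q$; this is the Gindikin--Karpelevich direction, not $\frac{L(0)}{L(1)}M^Q$ as you wrote. With your version, the $L$-factors would square rather than cancel against $\gamma/\varepsilon$, and the argument would not close.

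Also, $\gamma/\varepsilon$ produces $L(1,\pi_1^\vee\times\pi_2)$, while Weselmann's formula has $L(1,\pi_1\times\pi_2^\vee)$. Identifying the two uses $L(s,\pi_1\times\pi_2^\vee)=L(s,\pi_1^\vee\times\pi_2)$, which holds because $\pi_1$ and $\pi_2$ have the same exponents (the second condition in \eqref{eq:temper}). You should state this step explicitly; it is exactly where the paper's proof uses that hypothesis.
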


\begin{proof}
Recall that
\[
\gamma(s,\pi_1 \times \pi_2^\vee,\psi_\R) = \varepsilon(s,\pi_1\times\pi_2^\vee,\psi_\R)\cdot \frac{L(1-s,\pi_1^\vee \times \pi_2)}{L(s,\pi_1 \times \pi_2^\vee)}.
\]
By the result of Weselmann in \cite[\S\,9.6.12]{HR2020}, we have
\begin{align*}
&M^Q\circ \left(I^P_{\imath}\right)^{-1}\left(Z\otimes[\pi_1\otimes\delta_{P,1}]^{\varepsilon} \otimes [\pi_2\otimes\delta_{P,2}]^{\varepsilon}\right)\\
& = \eta(\sqrt{-1})^{\frac{n_1n_2}{2}}\cdot \frac{L(0,\pi_1 \times \pi_2^\vee)}{L(1,\pi_1 \times \pi_2^\vee)}\cdot \left(I^Q_{\jmath}\right)^{-1}\left({\rm Ad}(\dot{w}_P)Z\otimes[\pi_2\otimes\delta_{Q,1}]^{\varepsilon} \otimes [\pi_1\otimes\delta_{Q,2}]^{\varepsilon}\right)
\end{align*}
for some $\eta$ depending only on $n_1$ and $n_2$. 
Note that the factor $\eta(\sqrt{-1})^{\frac{n_1n_2}{2}}$ is computed in \cite[Lemma 9.19]{HR2020}.
Since $\pi_1$ and $\pi_2$ have the same exponents, we have
\[
L(s,\pi_1 \times \pi_2^\vee) = L(s,\pi_1^\vee \times \pi_2).
\]
The assertion then follows immediately from (\ref{eq:Shahidi}).
\end{proof}

\subsubsection{Equivariant homomorphisms}\label{sec:Equivariant homomorphisms}

Keep the notation and assumptions of \S\,\ref{sec:Delorme}.
Assume that the cohomological condition \eqref{eq:cohomological-condition} is satisfied.
Let 
\[
\imath : \mathcal{M}_{\lambda^{(1)}} \otimes \mathcal{M}_{\lambda^{(2)}} \longrightarrow \extp^{\frac{n_1n_2}{2}} \frak{u}_P^* \otimes \mathcal{M}_\lambda,\quad \jmath : \mathcal{M}_{\lambda^{(2)}-n_1} \otimes \mathcal{M}_{\lambda^{(1)}+n_2} \longrightarrow \extp^{\frac{n_1n_2}{2}} \frak{u}_Q^* \otimes \mathcal{M}_\lambda
\]
be nonzero $M_P(\C)$-equivariant and $M_Q(\C)$-equivariant homomorphisms respectively.
They determine $M_P(\C)$-equivariant homomorphism $\imath_1$ and $M_Q(\C)$-equivariant homomorphisms $\imath_2$, $\tilde\jmath$ by the following diagrams
\begin{equation}\label{eq:diagram-2}
\begin{tikzcd}[row sep=normal, column sep=normal]
\mathcal{M}_{\lambda^{(1)}} \otimes \mathcal{M}_{\lambda^{(2)}} \arrow[r, "\imath"] & \extp^{\frac{n_1n_2}{2}} \frak{u}_P^* \otimes \mathcal{M}_\lambda \arrow[d, "(d\vartheta)^*\otimes{\rm id}"] \\
{}^{\vartheta} \mathcal{M}_{\lambda^{(1)}} \otimes {}^{\vartheta} \mathcal{M}_{\lambda^{(2)}} \arrow[r, "\imath_1"]\arrow[u, "{\rm id} \otimes {\rm id}"] & \extp^{\frac{n_1n_2}{2}} \frak{u}_{{}^t\!P}^* \otimes {}^{\vartheta}\mathcal{M}_\lambda \arrow[dd, "{\rm Ad}(\dot{w}_P^{-1})^*\otimes (\rho_{\lambda-{\sf w}}(\dot{w}_P)\circ\varphi_\lambda^{-1})"] \\
\mathcal{M}_{\lambda^{(1)}+n_2-{\sf w}} \otimes \mathcal{M}_{\lambda^{(2)}-n_1-{\sf w}}\arrow[u, "\varphi_{\lambda^{(1)}} \otimes \varphi_{\lambda^{(2)}}"]
\\
\mathcal{M}_{\lambda^{(2)}-n_1-{\sf w}} \otimes \mathcal{M}_{\lambda^{(1)}+n_2-{\sf w}}\arrow[r, "\imath_2"]\arrow[u, "{\rm switch}"]  & \extp^{\frac{n_1n_2}{2}} \frak{u}_{Q}^* \otimes \mathcal{M}_{\lambda-{\sf w}}
\end{tikzcd}
\end{equation}
and
\begin{equation}\label{eq:diagram-3}
\begin{tikzcd}
\mathcal{M}_{\lambda^{(2)}-n_1} \otimes \mathcal{M}_{\lambda^{(1)}+n_2}\arrow[r, "\jmath"] & \extp^{\frac{n_1n_2}{2}} \frak{u}_{Q}^* \otimes \mathcal{M}_{\lambda}\arrow[d, "{\rm id}\otimes {\rm id}"]\\
\mathcal{M}_{\lambda^{(2)}-n_1-{\sf w}} \otimes \mathcal{M}_{\lambda^{(1)}+n_2-{\sf w}}\arrow[r, "\tilde\jmath"]\arrow[u, "{\rm id}\otimes{\rm id}"]  & \extp^{\frac{n_1n_2}{2}} \frak{u}_{Q}^* \otimes \mathcal{M}_{\lambda-{\sf w}}.
\end{tikzcd}
\end{equation}
Here ${\sf w}$ is the purity weight of $\lambda$.

The main result of this subsection is the following key proposition on the compatibility of the isomorphisms in Delorme's lemma with the \(\vartheta\)-twist and the determinant twist.

\begin{proposition}\label{prop:Delorme-theta-det}
We have
\begin{align}\label{eq:key-1}
\begin{split}
&(L_{\omega_n}\otimes \varphi_\lambda^{-1})\circ {}^\vartheta\left(\mathbb{W}_{\psi_{N_0}}^P\circ (I^P_{\imath})^{-1}(Z\otimes[\pi_1\otimes\delta_{P,1}]^\varepsilon \otimes [\pi_2\otimes\delta_{P,2}]^\varepsilon)\right)\\
&= \omega_{\pi_2}(-1)^{n_1}\cdot
\mathbb{W}_{\psi_{N_0}}^Q\circ(I^Q_{\imath_2})^{-1}
\left(
\begin{aligned}
&({\rm Ad}(\dot{w}_P)\circ d\vartheta) Z \\
&\otimes
 (L_{\omega_{n_2}}\otimes \varphi_{\lambda^{(2)}}^{-1})
 \circ {}^\vartheta[\pi_2\otimes\delta_{P,2}]^\varepsilon \\
&\otimes
 (L_{\omega_{n_1}}\otimes \varphi_{\lambda^{(1)}}^{-1})
 \circ {}^\vartheta[\pi_1\otimes\delta_{P,1}]^\varepsilon
\end{aligned}
\right)
\end{split}
\end{align}
and
\begin{align}\label{eq:key-2}
\begin{split}
&({\det}^{\sf w}\otimes {\rm id})\circ
\left(
\mathbb{W}_{\psi_{N_0}}^Q\circ (I^Q_{\jmath})^{-1}
\left(
Z'\otimes[\pi_2\otimes\delta_{Q,1}]^\varepsilon
\otimes [\pi_1\otimes\delta_{Q,2}]^\varepsilon
\right)
\right) \\
&=
\mathbb{W}_{\psi_{N_0}}^Q\circ(I^Q_{\tilde\jmath})^{-1}
\left(
\begin{aligned}
& Z' \\
&\otimes
({\det}^{\sf w}\otimes {\rm id})\circ
[\pi_2\otimes\delta_{Q,1}]^\varepsilon \\
&\otimes
({\det}^{\sf w}\otimes {\rm id})\circ
[\pi_1\otimes\delta_{Q,2}]^\varepsilon
\end{aligned}
\right)
\end{split}
\end{align}
for any $Z \in \extp^{d(\bullet)}\frak{a}_P/\frak{a}_G$ and $Z' \in \extp^{d(\bullet)}\frak{a}_Q/\frak{a}_G$.
\end{proposition}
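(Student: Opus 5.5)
Both identities come from unwinding the definitions and evaluating the resulting cohomology classes at the identity, where Delorme's isomorphism $I^P_\imath$ of Lemma~\ref{lem:Delorme} reads them off. Then one tracks how the twist operations move through Jacquet's integral and through the identifications $\imath$, $\imath_2$, $\jmath$, $\tilde\jmath$.

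For \eqref{eq:key-2}, the argument is formal. Twisting by $\det^{\sf w}$ is multiplication of Whittaker functions by the character $\nu_{\sf w}\circ\det$ on $\GL_n(\R)$. On induced sections this character restricts to $\det^{\sf w}\otimes\det^{\sf w}$ on $M_Q$. The modulus factors are unchanged, and $\nu_{\sf w}\circ\det$ is trivial on $U_Q$. Hence multiplying $f_{\underline s}$ by $\nu_{\sf w}\circ\det$ commutes with Jacquet's integral $\mathbb W^Q_{\psi_{N_0}}$. On the coefficient side, $\M_{\lambda-{\sf w}}=\M_\lambda$ as a space, and $\tilde\jmath$ is $\jmath$ viewed through the identity maps in \eqref{eq:diagram-3}. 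Evaluating both sides via the formula for $I^Q$ at $g=1$ then gives equality term by term.

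For \eqref{eq:key-1}, I will compute ${}^\vartheta$ of the class $F:=\mathbb W^P\circ(I^P_\imath)^{-1}(\cdots)$ in three steps.

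First, transport the induced model. The map $f\mapsto f\circ\vartheta$ sends $\mathrm{Ind}_{P}(\pi_1\otimes\pi_2)$ to $\mathrm{Ind}_{{}^tP}({}^\vartheta\pi_1\otimes{}^\vartheta\pi_2)$. Left translation by $\dot w_P$ then conjugates ${}^tP$ into $Q$ and swaps the two blocks. So $L_{\omega_n}\circ{}^\vartheta$ carries $P$-induced data to $Q$-induced data with the blocks swapped. Writing $\omega_n=\dot w_P\cdot\diag(\omega_{n_2},\omega_{n_1})$ up to a sign, the inner factors become $L_{\omega_{n_i}}\circ{}^\vartheta$ on each block.

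Second, change variables in Jacquet's integral. Substituting $u\mapsto\vartheta(u)$ conjugated by $\omega_n$ shows that
\[
L_{\omega_n}\circ{}^\vartheta\circ\mathbb W^P_{\psi_{N_0}}=\pm\,\mathbb W^Q_{\psi_{N_0}}\circ(\text{transported section}).
\]
The needed inputs are that $\theta$ preserves $\psi_N$ (by \eqref{eq:theta-additive}) and that the Lebesgue measures match. The sign here comes from central elements $\pm1$ acting on $\pi_2$, which produces the factor $\omega_{\pi_2}(-1)^{n_1}$.

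Third, match the Lie algebra and coefficient data. On $\extp^\bullet\frak g/\frak k$, the map $d\vartheta$ followed by $\mathrm{Ad}(\dot w_P)$ sends $\frak a_P$ to $\frak a_Q$, sends $\frak g_1\oplus\frak g_2$ to the swapped Levi, and sends $\frak u_P$ to $\frak u_Q$. This is exactly how $\imath_2$ is built from $\imath$ in \eqref{eq:diagram-2}, together with $\varphi_\lambda^{-1}$, $\varphi_{\lambda^{(i)}}$, and $\rho_{\lambda-{\sf w}}(\dot w_P)$. Plugging into the Delorme formula and evaluating at $g=1$ then yields \eqref{eq:key-1}.

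The main obstacle is the bookkeeping of signs and scalars. There are two concerns. One is ensuring that no extra constant appears from the factorization of $\omega_n$ through $\dot w_P$ and the block $\omega_{n_i}$'s, in particular the $(-1)$ entries. The other is the ordering of the wedge factors when $\frak g/\frak k$ is decomposed. I would handle this by fixing $\omega_n=\dot w_P\,\diag(\omega_{n_2},\omega_{n_1})\cdot z$ with $z$ central, and checking that $z$ acts on the induced representation by $\omega_{\pi_2}(-1)^{n_1}$ (times a trivial coefficient contribution, absorbed by the normalization of $\varphi_\lambda$ in \eqref{eq:normalization}). The wedge reordering is already built into the definition of $\imath_2$, since $\imath_2$ is defined through the same maps, so it should not contribute a further sign.
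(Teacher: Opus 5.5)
Your strategy is the paper's. Its proof transports Jacquet's integral under $\vartheta$, $L_{\omega_n}$ and the $\det^{\sf w}$-twist (Lemma \ref{lem:Delorme-theta-det-1}), checks that Delorme's isomorphisms are compatible with the same operations (Lemma \ref{lem:Delorme-theta-det-2}), and reads everything off at the identity. Your argument for \eqref{eq:key-2} is essentially the paper's. Doing the $\vartheta$-step and the $\omega_n$-step as one substitution $u\mapsto\theta(u)$, instead of passing through ${}^t\!P$ and ${}^\vartheta\psi_{N_0}$, is harmless.

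However, your resolution of what you call the main obstacle is wrong as stated. The correct factorization is $\omega_n=\dot w_P\cdot\diag(\omega_{n_1},(-1)^{n_1}\omega_{n_2})$, with the block-diagonal factor in $M_P$. With $M_Q$-ordered blocks to the right of $\dot w_P$, as you wrote it, the product is not $\omega_n$ when $n_1\neq n_2$. What makes the substitution work is that $\dot w_P^{-1}\omega_n\in M_P$, so the $P$-equivariance of the section applies. The correcting element $\diag(\mathbf{1}_{n_1},(-1)^{n_1}\mathbf{1}_{n_2})$ is central in $M_P$, not in $\GL_n$. It acts on the inducing data through the central character of the second factor only, and that is where $\omega_{\pi_2}(-1)^{n_1}$ comes from. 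A genuinely central $z=\pm\mathbf{1}_n$ would act on the induced representation by $\omega_{\pi_1}(\pm1)\,\omega_{\pi_2}(\pm1)$, which is the wrong constant. There is also no coefficient contribution to absorb into the normalization of $\varphi_\lambda$: $L_{\omega_n}$ acts only on Whittaker functions, and the coefficient side is handled entirely by the definition of $\imath_2$.

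Second, your Step 3 skips the step that produces the ${\rm Ad}(\dot w_P^{-1})^*$ and $\rho_{\lambda-{\sf w}}(\dot w_P)$ built into $\imath_2$ in \eqref{eq:diagram-2}. The transported section is $g\mapsto(\cdots)f(\dot w_P^{-1}\vartheta(g))$. So at $g=1$ you see the original cochain evaluated at $\dot w_P^{-1}$, not at $1$, and Delorme's formula cannot be applied directly. The paper closes this with the $K^\circ$-invariance of the cochain, $G(X\otimes m^\vee)(k)=G({\rm Ad}(k)X\otimes k\,m^\vee)(1)$ for $k\in K^\circ$, applied to $k=\dot w_P^{-1}$. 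This needs $\dot w_P\in K^\circ$, which holds because $\det\dot w_P=(-1)^{n_1n_2}=1$. It trades the evaluation point $\dot w_P^{-1}$ for the action of $\dot w_P^{-1}$ on $\extp^{\bullet}\frg/\frk$ and on the coefficients, which is exactly what $\imath_2$ absorbs. This is where the evenness of $n_1n_2$ is used, so it is not mere bookkeeping. The same parity is needed in \eqref{eq:key-2}: the argument of Jacquet's integral is $\dot w_Q^{-1}ug$, so pulling $\det^{\sf w}$ through requires $(\det\dot w_Q)^{\sf w}=1$, not only triviality of $\det$ on $U_Q$.
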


\begin{proof}
(\ref{eq:key-1}) follows from (1) and (2) of Lemmas \ref{lem:Delorme-theta-det-1} and \ref{lem:Delorme-theta-det-2} below.
Similarly, (\ref{eq:key-2}) follows from (3) of Lemmas \ref{lem:Delorme-theta-det-1} and \ref{lem:Delorme-theta-det-2}.
\end{proof}

\begin{lemma}\label{lem:Delorme-theta-det-1}
Let $\xi$ be a non-degenerate character of $N_0(\R)$.
\\
\emph{(1)} Define
\[
\mathscr T_\vartheta : {\rm Ind}_{P(\mathbb R)}^{\GL_n(\mathbb R)}
\left(
\mathcal W(\pi_1,\xi_1)\otimes \mathcal W(\pi_2,\xi_2)
\right) \longrightarrow {\rm Ind}_{{}^t\!P(\mathbb R)}^{\GL_n(\mathbb R)}
\left(
\mathcal W(\pi_1^\vee,{}^\vartheta\xi_1)\otimes \mathcal W(\pi_2^\vee,{}^\vartheta\xi_2)
\right)
\]
by 
\[
\mathscr T_\vartheta f(g):= (\vartheta_\W \otimes \vartheta_\W)\circ f({}^\vartheta g),\quad g \in \GL_n(\R).
\]
Then we have
\[
\vartheta_\W\circ \mathbb{W}_\xi^P = \mathbb{W}_{{}^\vartheta\xi}^{{}^t\!P}\circ\mathscr T_\vartheta.
\]
\\
\emph{(2)} Define
\[
\mathscr{L}_{\omega_n} : {\rm Ind}_{{}^t\!P(\mathbb R)}^{\GL_n(\mathbb R)}\left(
\mathcal W(\pi_1^\vee,\xi_1)\otimes \mathcal W(\pi_2^\vee,\xi_2) 
\right) \longrightarrow {\rm Ind}_{Q(\mathbb R)}^{\GL_n(\mathbb R)}\left(
\mathcal W(\pi_2^\vee,{}^{\omega_{n_2}}\xi_2)\otimes \mathcal W(\pi_1^\vee,{}^{\omega_{n_1}}\xi_1) 
\right) 
\]
by
\[
\mathscr{L}_{\omega_n} f(g):= {\rm switch}\circ(L_{\omega_{n_1}} \otimes L_{\omega_{n_2}})\circ f(\dot{w}_P^{-1} g),\quad g \in \GL_n(\R).
\]
Here $``{\rm switch}"$ denotes the natural flip of the two tensor factors.
Then we have
\[
L_{\omega_n}\circ \mathbb{W}_\xi^{{}^t\!P} = \omega_{\pi_2}(-1)^{n_1}\cdot \mathbb{W}_{{}^{\omega_n}\xi}^Q\circ \mathscr{L}_{\omega_n}.
\]
\\
\emph{(3)} Define
\[
\mathscr{D}_{\sf w} : {\rm Ind}_{Q(\mathbb R)}^{\GL_n(\mathbb R)}\left(
\mathcal W(\pi_2,\xi_2)\otimes \mathcal W(\pi_1,\xi_1)\right)  \longrightarrow {\rm Ind}_{Q(\mathbb R)}^{\GL_n(\mathbb R)}\left(
\mathcal W(\pi_2^\vee,\xi_2)\otimes \mathcal W(\pi_1^\vee,\xi_1) 
\right)
\]
by
\[
\mathscr{D}_{\sf w}f(g)
:=
(\det g)^{\sf w}\cdot
\bigl(f(g)\otimes({\det}^{\sf w}\times{\det}^{\sf w})\bigr),\quad g \in \GL_n(\R)
\]
where, for \(f(g)=W_2\otimes W_1\), we set
\[
f(g)\otimes({\det}^{\sf w}\times{\det}^{\sf w})
:=
(W_2\otimes{\det}^{\sf w})\otimes(W_1\otimes{\det}^{\sf w}).
\]
Then we have
\[
\mathbb{W}_{\xi}^Q \otimes {\det}^{\sf w} = \mathbb{W}_{\xi}^Q \circ \mathscr{D}_{\sf w}.
\]
\end{lemma}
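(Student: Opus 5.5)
Each of the three identities will be checked directly from the definitions. The method is to substitute into the Jacquet integral
\[
\mathbb W_\xi^P(g;f_{\underline s})=\int_{U_Q(\R)} f_{\underline s}(\dot w_P^{-1}ug)({\bf 1}_{n_1},{\bf 1}_{n_2})\overline{\xi(u)}\,du .
\]
In that integral we will change variables in $u$ and track the Jacobians, the Whittaker evaluation points and the characters. We work in the range of absolute convergence, $\mathrm{Re}(s_1-s_2)\gg 0$ or $\ll 0$ as appropriate. The identities then extend to $\underline s=0$ by meromorphic continuation, since every operator involved maps flat sections to flat sections, up to the obvious relabelling of $\underline s$.

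For (1), we first note that ${}^\vartheta P={}^tP$, ${}^\vartheta U_Q=U_{{}^\vartheta Q}$, and that $\vartheta$ intertwines $\dot w_P$ with the corresponding Weyl element for ${}^tP$, since ${}^\vartheta\dot w_P=\dot w_P$ for a permutation matrix. Next we check that $\mathscr T_\vartheta f$ really lies in the induced space from ${}^tP$: this uses $f({}^\vartheta(pg))=\pi({}^\vartheta p)f({}^\vartheta g)$, the relation $\vartheta_\W\circ\pi_i(\vartheta(m_i))=\pi_i^\vee(m_i)\circ\vartheta_\W$ coming from Lemma \ref{lem:GK}, and the fact that $\vartheta$ preserves $\delta_P$ in the appropriate sense. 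The integral identity then follows from $(\vartheta_\W W)({\bf 1})=W({\bf 1})$ and the substitution $u\mapsto{}^\vartheta u$. This substitution preserves the product Lebesgue measure because $\vartheta$ restricted to a unipotent radical is $u\mapsto {}^tu^{-1}$, which is unimodular. It also carries $\xi$ to ${}^\vartheta\xi$. The definition of $\mathbb W^{{}^tP}$ (the lower-triangular version of Jacquet's integral) has to be read with the matching Weyl representative, and we will fix this convention consistently.

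For (3), the identity is immediate. The flat section attached to $\mathscr D_{\sf w}f$ equals $(\det g)^{\sf w}$ times the flat section of $f$ with values twisted by $\det^{\sf w}$. On the Levi the two copies of $\det^{\sf w}$ multiply to $\det^{\sf w}$ of the whole matrix, so the induction is compatible. In the integral, $\det(\dot w_P^{-1}ug)^{\sf w}=\det(\dot w_P)^{-{\sf w}}\det(g)^{\sf w}$, and evaluating the twisted Whittaker functions at $({\bf 1},{\bf 1})$ removes the inner twists. A sign $\det(\dot w_P)^{\pm{\sf w}}=(-1)^{n_1n_2{\sf w}}$ arises, and it is trivial because $n_1n_2$ is even in our setting. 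If we want the lemma without that evenness assumption, the definition of $\mathscr D_{\sf w}$ should be read with $g$ evaluated after $\dot w_P$, and we check which reading is intended.

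We expect (2) to be the main obstacle, and specifically the appearance of the factor $\omega_{\pi_2}(-1)^{n_1}$. The plan is to write $\omega_n=\dot w_P\cdot\diag(\omega_{n_1}',\omega_{n_2}')\cdot\epsilon$ with explicit signed anti-diagonal blocks, where $\epsilon$ is a diagonal sign matrix, so that $L_{\omega_n}$ of the ${}^tP$-Jacquet integral becomes an integral over $U_Q$ after conjugating $u$ by $\omega_n$. Conjugation by $\omega_n$ sends $U_{{}^tP}$-type radicals to $U_Q$ with Jacobian $1$ and sends $\xi$ to ${}^{\omega_n}\xi$. The Levi part $\diag(\omega_{n_1},\omega_{n_2})$ is then pulled out of $f$ using the induced-representation property, where it acts as $L_{\omega_{n_1}}\otimes L_{\omega_{n_2}}$ on the Whittaker evaluation at the identity, and the switch handles the block interchange.

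The residual sign comes from comparing the product of the block anti-diagonals with $\omega_n$. The block representatives carry signs $(-1)^{n_i}$ in their corners, while $\omega_n$ carries a single global alternating sign pattern. The discrepancy is a scalar matrix $-{\bf 1}_{n_2}$ raised to the power $n_1$ on the second block. It acts through the central character, giving $\omega_{\pi_2^\vee}(-1)^{n_1}=\omega_{\pi_2}(-1)^{n_1}$. We will carry out this matrix bookkeeping explicitly, first checking the case $n_1=n_2=1$ to fix the conventions.
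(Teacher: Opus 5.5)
Your proposal is correct and follows the paper's argument. For (2), the paper uses exactly your factorization $\omega_n=\dot w_P\,\mathrm{diag}(\omega_{n_1},(-1)^{n_1}\omega_{n_2})$, pulls the Levi element out of $f_{\underline s}$ so that the Whittaker functions are evaluated at $(\omega_{n_1},(-1)^{n_1}\omega_{n_2})$, lets the scalar $(-1)^{n_1}{\bf 1}_{n_2}$ produce $\omega_{\pi_2}(-1)^{n_1}$, and changes variables $u'=\omega_n^{-1}u\omega_n$. In the paper's conventions this turns the $U_{{}^tQ}(\R)$-integral into the $U_P(\R)$-integral defining $\mathbb W^Q$, so the radical you land in is $U_P$, not $U_Q$. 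The paper leaves (1) and (3) to the reader, and your direct substitutions are the intended checks. This includes your observation that the sign $(-1)^{n_1n_2{\sf w}}$ in (3) is trivial because $n_1n_2$ is even under the standing cohomological assumption.
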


\begin{proof}
We verify (2) and leave the details for (1) and (3) to the readers. 
Let
\[
f \in {\rm Ind}_{{}^t\!P(\mathbb R)}^{\GL_n(\mathbb R)}\left(
\mathcal W(\pi_1^\vee,\xi_1)\otimes \mathcal W(\pi_2^\vee,\xi_2) 
\right).
\]
Then
\begin{align*}
&(L_{\omega_n}\circ \mathbb{W}_\xi^{{}^t\!P})(g;f_{\underline s})\\
& = \int_{U_{{}^t\!Q}(\R)} f_{\underline{s}}(\dot{w}_P^{-1}u\omega_ng)({\bf 1}_{n_1},{\bf 1}_{n_2})\overline{\xi(u)}\,du\\
& = \int_{U_{{}^t\!Q}(\R)} f_{\underline{s}}({\rm diag}(\omega_{n_1}^{-1},(-1)^{n_1}\omega_{n_2}^{-1})\dot{w}_P^{-1}u\omega_ng)(\omega_{n_1},(-1)^{n_1}\omega_{n_2})\overline{\xi(u)}\,du\\
& = \int_{U_{{}^t\!Q}(\R)} f_{\underline{s}}(\dot{w}_P^{-1}\dot{w}_Q^{-1}\cdot \omega_n^{-1}u\omega_n\cdot g)(\omega_{n_1},(-1)^{n_1}\omega_{n_2})\overline{\xi(u)}\,du\\
& = \omega_{\pi_2}(-1)^{n_1}\cdot \int_{U_P(\R)} \left((L_{\omega_{n_1}} \otimes L_{\omega_{n_2}})\circ f_{\underline s}(\dot{w}_P^{-1}\dot{w}_Q^{-1}u g)\right)({\bf 1}_{n_1},{\bf 1}_{n_2})\,\overline{{}^{\omega_n}\xi(u)}\,du\\
& = \omega_{\pi_2}(-1)^{n_1}\cdot\int_{U_{P}(\R)} \mathscr{L}_{\omega_n} f_{\underline{s}}(\dot{w}_Q^{-1}ug)({\bf 1}_{n_2},{\bf 1}_{n_1})\overline{{}^{\omega_n}\xi(u)}\,du\\
& = \omega_{\pi_2}(-1)^{n_1}\cdot \mathbb{W}_{{}^{\omega_n}\xi}^Q(g;\mathscr{L}_{\omega_n} f_{\underline{s}}).
\end{align*}
This completes the proof.
\end{proof}

\begin{lemma}\label{lem:Delorme-theta-det-2}
Let $Z \in \extp^{d(\bullet)}\frak{a}_P/\frak{a}_G$ and $Z' \in \extp^{d(\bullet)}\frak{a}_Q/\frak{a}_G$.
\\
\emph{(1)} We have
\begin{align*}
&(\mathscr T_\vartheta \otimes {\rm id})\circ \left((I^P_{\imath})^{-1}(Z\otimes[\pi_1\otimes\delta_{P,1}]^\varepsilon \otimes [\pi_2\otimes\delta_{P,2}]^\varepsilon)\right) \circ \extp^\bullet d\vartheta \\ 
& =  (I_{\imath_1}^{{}^t\!P})^{-1}\left( d\vartheta Z \otimes {}^\vartheta[\pi_1\otimes\delta_{P,1}]^\varepsilon\otimes {}^\vartheta[\pi_2\otimes\delta_{P,2}]^\varepsilon\right).
\end{align*}
\\
\emph{(2)} We have
\begin{align*}
&(\mathscr L_{\omega_n} \otimes \varphi_\lambda^{-1})\circ \left((I_{\imath_1}^{{}^t\!P})^{-1}(Z\otimes{}^\vartheta[\pi_1\otimes\delta_{P,1}]^\varepsilon \otimes {}^\vartheta[\pi_2\otimes\delta_{P,2}]^\varepsilon)\right) \\ 
&=
(I_{\imath_2}^{Q})^{-1}
\left(
\begin{aligned}
&{\rm Ad}(\dot{w}_P)Z \\
&\otimes
(L_{\omega_{n_2}}\otimes \varphi_{\lambda^{(2)}}^{-1})\circ
{}^\vartheta[\pi_2\otimes\delta_{P,2}]^\varepsilon \\
&\otimes
(L_{\omega_{n_1}}\otimes \varphi_{\lambda^{(1)}}^{-1})\circ
{}^\vartheta[\pi_1\otimes\delta_{P,1}]^\varepsilon
\end{aligned}
\right).
\end{align*}
\\
\emph{(3)} We have
\begin{align*}
&(\mathscr D_{\sf w} \otimes {\rm id})\circ \left((I_{\jmath}^Q)^{-1}(Z'\otimes[\pi_2\otimes\delta_{Q,1}]^\varepsilon \otimes [\pi_1\otimes\delta_{Q,2}]^\varepsilon)\right) \\ 
&=
(I_{\tilde\jmath}^{Q})^{-1}
\left(
\begin{aligned}
&Z' \\
&\otimes
({\det}^{\sf w}\otimes {\rm id})\circ
[\pi_2\otimes\delta_{Q,1}]^\varepsilon \\
&\otimes
({\det}^{\sf w}\otimes {\rm id})\circ
[\pi_1\otimes\delta_{Q,2}]^\varepsilon
\end{aligned}
\right).
\end{align*}
\end{lemma}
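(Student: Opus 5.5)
The three identities all follow by unwinding the explicit formula for $I^P_\imath$ in Lemma \ref{lem:Delorme}. In each case I would evaluate both sides on a test vector $Z\otimes X_1\otimes X_2\otimes(\text{coefficient dual vector})$ and then evaluate the resulting induced-representation element at $g=1$. Since $I^P_\imath$ is an isomorphism, it suffices to apply $I$ for the target parabolic to the left-hand side and check that the answer is the stated tensor of generators. So the plan is to compute
\[
I^{{}^t\!P}_{\imath_1}\bigl((\mathscr T_\vartheta\otimes{\rm id})\circ F\circ \textstyle\extp^\bullet d\vartheta\bigr),\quad
I^{Q}_{\imath_2}\bigl((\mathscr L_{\omega_n}\otimes\varphi_\lambda^{-1})\circ F'\bigr),\quad
I^{Q}_{\tilde\jmath}\bigl((\mathscr D_{\sf w}\otimes{\rm id})\circ F''\bigr)
\]
directly from the definitions.

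For (1), note that $d\vartheta$ carries the decomposition $\frak{a}_P\oplus\frak g_1/\frak k_1\oplus\frak g_2/\frak k_2\oplus\frak u_P$ to the corresponding one for ${}^t\!P$. It acts on $\frak u_P$ by $X_{ij}\mapsto -X_{ji}$, and on the Levi blocks by the transpose-inverse differential. Evaluating $\mathscr T_\vartheta f$ at $1$ gives $(\vartheta_\W\otimes\vartheta_\W)(f(1))$. The coefficient side is matched exactly by the definition of $\imath_1$ in \eqref{eq:diagram-2}, i.e.\ $\imath_1=((d\vartheta)^*\otimes{\rm id})\circ\imath$ on the same underlying spaces. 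Combining these with the definition of ${}^\vartheta[\cdot]$ (the analogue of \eqref{eq:theta-twist}) yields the identity, with no sign.

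For (2), evaluate $\mathscr L_{\omega_n}f$ at $1$; this is ${\rm switch}\circ(L_{\omega_{n_1}}\otimes L_{\omega_{n_2}})f(\dot w_P^{-1})$. Then use the $K^\circ$-invariance of the cochain, or rather transport by $\dot w_P$. The point is that $\dot w_P$ conjugates ${}^t\!P$ to $Q$, so moving the argument $\dot w_P^{-1}$ across the cochain converts $\extp^\bullet\frak g/\frak k$ by ${\rm Ad}(\dot w_P)$ and the coefficient by $\rho(\dot w_P)$. This is exactly the vertical map ${\rm Ad}(\dot w_P^{-1})^*\otimes\rho_{\lambda-{\sf w}}(\dot w_P)\circ\varphi_\lambda^{-1}$ in \eqref{eq:diagram-2}. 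The factors $\varphi_{\lambda^{(i)}}^{-1}$ are absorbed into the definition of $\imath_2$ through the left vertical arrow. One subtlety is that $\dot w_P\notin K^\circ$ possibly, with $\det\dot w_P=(-1)^{n_1n_2}=1$ since $n_1n_2$ is even, so $\dot w_P\in {\rm SO}(n)$ and the invariance is legitimate. A second check is that the Levi twists $L_{\omega_{n_i}}$ act compatibly with the $(\frak g_i,K_i^\circ)$-structure, as in Lemma \ref{lem:theta-to-vartheta}.

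For (3), $\mathscr D_{\sf w}$ multiplies by $\det^{\sf w}$, which restricts to $\det^{\sf w}\times\det^{\sf w}$ on $M_Q$. Since $\rho_{\lambda-{\sf w}}=\rho_\lambda\otimes\det^{-\sf w}$ and $\tilde\jmath$ is $\jmath$ on identical underlying spaces, the check is immediate.

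The main obstacle is bookkeeping in (2). One must verify that the wedge ordering and the sign from ${\rm Ad}(\dot w_P)$ on $\extp^{n_1n_2/2}$ are consistently absorbed into $\imath_2$ rather than producing an extra sign. I would settle this by evaluating on the highest weight vector $v_w^P\otimes\rho_\lambda(\dot w)v_\lambda$.
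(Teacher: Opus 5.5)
Your proposal is correct and follows the paper's proof essentially step for step. You apply $I^{{}^t\!P}_{\imath_1}$, $I^Q_{\imath_2}$, $I^Q_{\tilde\jmath}$ to the left-hand sides, evaluate at $g=1$ on test vectors $Z\otimes X_1\otimes X_2\otimes(\text{dual coefficient})$, invoke the defining diagrams \eqref{eq:diagram-2} and \eqref{eq:diagram-3}, and in (2) use that $\dot w_P\in{\rm SO}(n)\subset K^\circ$ (since $n_1n_2$ is even) to absorb the evaluation at $\dot w_P^{-1}$. The extra highest-weight check you propose at the end is unnecessary here: the ${\rm Ad}(\dot w_P^{-1})^*$ and $\rho_{\lambda-{\sf w}}(\dot w_P)$ factors are built into $\imath_2$ by definition, and that computation is only needed later, for the constant $C(\lambda^{(1)},\lambda^{(2)})$ in Lemma~\ref{lem:core-sign}.
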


\begin{proof}
Recall that
\[
H^\bullet(\frak{g},K^\circ;V_\pi\otimes\M_\lambda)
=
{\rm Hom}_{K^\circ}\left(
\extp^\bullet \frg/\frk,
V_\pi\otimes\M_\lambda
\right).
\]
In the verifications below, it will be convenient to move the coefficient
system \(\M_\lambda\) to the source, using the canonical isomorphism
\[
{\rm Hom}_\C\left(
\extp^\bullet \frg/\frk,
V_\pi\otimes\M_\lambda
\right)
\cong
{\rm Hom}_\C\left(
\extp^\bullet \frg/\frk\otimes \M_\lambda^\vee,
V_\pi
\right).
\]
We shall use the analogous identifications for \(\pi_1\) and \(\pi_2\).
With this convention, Delorme's lemma takes the more transparent form stated in Lemma \ref{lem:Delorme}.

Write $F_i = [\pi_i \otimes \delta_{P,i}]^\varepsilon$ for $i=1,2$ and
\[
F:= (I^P_{\imath})^{-1}(Z\otimes F_1 \otimes F_2).
\]
Let
\[
X_i \in \extp^{\bullet_i}\frak{g}_i/\frak{k}_i,\quad m_i^\vee \in {}^\vartheta \M_{\lambda^{(i)}}^\vee,\quad i=1,2.
\]
By the definition of $\mathscr{T}_\vartheta$, $\imath_1$ and Lemma \ref{lem:Delorme}, we have
\begin{align*}
&\left(\mathscr{T}_\vartheta \circ F \circ (\extp^\bullet d\vartheta \otimes {\rm id}) \right) (d\vartheta Z\otimes X_1 \otimes X_2 \otimes \imath_1^*(m_1^\vee \otimes m_2^\vee))(1)\\
& = (\vartheta_\W \otimes \vartheta_\W)\circ F \left( Z \otimes \extp^{\bullet_1} d\vartheta X_1 \otimes \extp^{\bullet_2} d\vartheta X_2 \otimes (d\vartheta\otimes{\rm id})\circ\imath_1^*(m_1^\vee \otimes m_2^\vee)\right)(1)\\
& = (\vartheta_\W \otimes \vartheta_\W)\circ F \left( Z \otimes \extp^{\bullet_1} d\vartheta X_1 \otimes \extp^{\bullet_2} d\vartheta X_2 \otimes \imath^*(m_1^\vee \otimes m_2^\vee)\right)(1)\\
& = (\vartheta_\W \otimes \vartheta_\W)\circ \left( F_1(\extp^{\bullet_1} d\vartheta X_1 \otimes m_1^\vee) \otimes F_2(\extp^{\bullet_2} d\vartheta X_2 \otimes m_2^\vee) \right)\\
& = {}^\vartheta F_1(X_1 \otimes m_1^\vee)\otimes {}^\vartheta F_2(X_2 \otimes m_2^\vee).
\end{align*}
By Lemma \ref{lem:Delorme} again, we then have
\[
I_{\imath_1}^{{}^t\!P}\left(\mathscr{T}_\vartheta \circ F \circ (\extp^\bullet d\vartheta \otimes {\rm id}) \right) = d\vartheta Z\otimes {}^\vartheta F_1 \otimes {}^\vartheta F_2.
\]
This proves (1).

Write $G_i = {}^\vartheta[\pi_i\otimes\delta_{P,i}]^\varepsilon$ for $i=1,2$ and
\[
G:= (I_{\imath_1}^{{}^t\!P})^{-1}(Z\otimes G_1 \otimes G_2).
\]
Associated to $\varphi_\lambda$, we have a $\GL_n(\C)$-equivariant isomorphism between the contragredient representations 
\[
\Psi_\lambda:=(\varphi_\lambda^\vee)^{-1} :  ({}^\vartheta\rho_{\lambda}^\vee, {}^\vartheta\mathcal{M}_{\lambda}^\vee)\longrightarrow (\rho_{\lambda-{\sf w}}^\vee, \mathcal{M}_{\lambda-{\sf w}}^\vee).
\]
Similarly we define $\Psi_{\lambda^{(i)}}:=(\varphi_{\lambda^{(i)}}^\vee)^{-1}$ for $i=1,2$.
Let
\[
X_i \in \extp^{\bullet_i}\frak{g}_i/\frak{k}_i,\quad m_1^\vee \in \M_{\lambda^{(1)}+n_2-{\sf w}}^\vee,\quad m_2^\vee \in \M_{\lambda^{(2)}-n_1-{\sf w}}^\vee.
\]
By the definition of $\mathscr{L}_{\omega_n}$, $\imath_2$ and Lemma \ref{lem:Delorme}, we have
\begin{align*}
&\left(\mathscr{L}_{\omega_n} \circ G \circ ({\rm id}\otimes \Psi_\lambda) \right) ({\rm Ad}(\dot{w}_P)Z\otimes X_2 \otimes X_1 \otimes \imath_2^*(m_2^\vee \otimes m_1^\vee))(1)\\
& = \left(\mathscr{L}_{\omega_n}\circ G \left( {\rm Ad}(\dot{w}_P)Z \otimes X_2 \otimes  X_1 \otimes ({\rm Ad}(\dot{w}_P)\otimes {}^\vartheta\rho_{\lambda}^\vee(\dot{w}_P))\circ\imath_1^*(\Psi_{\lambda^{(1)}}m_1^\vee \otimes \Psi_{\lambda^{(2)}}m_2^\vee)\right)\right)(1)\\
& = {\rm switch}\circ(L_{\omega_{n_1}} \otimes L_{\omega_{n_2}})\\
&\quad\circ G \left(  {\rm Ad}(\dot{w}_P)Z \otimes X_2 \otimes  X_1 \otimes ({\rm Ad}(\dot{w}_P)\otimes {}^\vartheta\rho_{\lambda}^\vee(\dot{w}_P))\circ\imath_1^*(\Psi_{\lambda^{(1)}}m_1^\vee \otimes \Psi_{\lambda^{(2)}}m_2^\vee)\right)(\dot{w}_P^{-1})\\
& = {\rm switch}\circ(L_{\omega_{n_1}} \otimes L_{\omega_{n_2}})\circ (\dot{w}_P^{-1}\cdot G)\left(Z \otimes X_1 \otimes X_2 \otimes \imath_1^*(\Psi_{\lambda^{(1)}}m_1^\vee \otimes \Psi_{\lambda^{(2)}}m_2^\vee) \right)(1)\\
& = L_{\omega_{n_2}}\circ G_2 \circ ({\rm id}\otimes \Psi_{\lambda^{(2)}}) (X_2 \otimes m_2^\vee) \otimes L_{\omega_{n_1}}\circ G_1 \circ ({\rm id}\otimes \Psi_{\lambda^{(1)}}) (X_1 \otimes m_1^\vee).
\end{align*}
Note that $n_1n_2$ is even, which implies that $\dot{w}_P \in K^\circ$ and hence $\dot{w}_P^{-1}\cdot G = G$.
By Lemma \ref{lem:Delorme} again, we then have
\begin{align*}
& I_{\imath_2}^Q\left( \mathscr{L}_{\omega_n} \circ G \circ ({\rm id}\otimes \Psi_\lambda)\right)\\
& = {\rm Ad}(\dot{w}_P)Z \otimes L_{\omega_{n_2}}\circ G_2 \circ ({\rm id}\otimes \Psi_{\lambda^{(2)}}) \otimes L_{\omega_{n_1}}\circ G_1 \circ ({\rm id}\otimes \Psi_{\lambda^{(1)}}).
\end{align*}
This proves (2).

Write $H_1 = [\pi_1 \otimes \delta_{Q,2}]^\varepsilon$,  $H_2 = [\pi_2 \otimes \delta_{Q,1}]^\varepsilon$, and
\[
H:= (I_{\jmath}^{Q})^{-1}(Z'\otimes H_2 \otimes H_1).
\]
Let $X_i$, $m_i^\vee$ as in the previous paragraph for $i=1,2$.
By the definition of $\mathscr{D}_{\sf w}$, $\tilde{\jmath}$ and Lemma \ref{lem:Delorme}, we have
\begin{align*}
& (\mathscr{D}_{\sf w}\circ H)\left( Z' \otimes X_2 \otimes X_1 \otimes (\tilde{\jmath})^*(m_2^\vee \otimes m_1^\vee) \right)(1)\\
& = H\left( Z' \otimes X_2 \otimes X_1 \otimes {\jmath}^*(m_2^\vee \otimes m_1^\vee) \right)(1) \otimes ({\det}^{\sf w} \times {\det}^{\sf w})\\
& = \left(H_2(X_2 \otimes m_2^\vee) \otimes {\det}^{\sf w}\right) \otimes \left(H_1(X_1 \otimes m_1^\vee) \otimes {\det}^{\sf w}\right).
\end{align*}
By Lemma \ref{lem:Delorme} again, we then have
\[
I_{\tilde\jmath}^Q(\mathscr{D}_{\sf w}\circ H) = Z'\otimes ({\det}^{\sf w}\circ H_2) \otimes ({\det}^{\sf w}\circ H_1).
\]
This proves (3).
\end{proof}

\subsubsection{Proof of Theorem \ref{thm:archi-period-relation-2}}

Let $\pi \in \Omega(\lambda)$ for some pure weight 
$\lambda \in X_0^+(T_0(n))$, and let ${\sf w}$ be the purity weight of $\lambda$.
We denote by $\mathbf{c}^\varepsilon(\pi)$ the scalar such that
\[
\widetilde{[\pi]}^\varepsilon
=
\mathbf{c}^\varepsilon(\pi) \cdot
(L_{\omega_n}\otimes \varphi_\lambda^{-1})\circ {}^\vartheta[\pi]^\varepsilon.
\]
It is clear that this scalar is independent of the choice of $[\pi]^\varepsilon$.
In this subsection, we prove Theorem \ref{thm:archi-period-relation-2}, which asserts that
$\mathbf{c}^\varepsilon(\pi)\cdot\varepsilon(-1)^n$ belongs to $\{\pm1\}$ and depends only on $n$.

By the explicit description of $\pi$ as an induced representation, for any partition
$(n_1,n_2)$ of $n$ with $n_1n_2$ even, there exist $\pi_1$ and $\pi_2$ such that
\[
\pi \cong {\rm Ind}_{P(\R)}^{\GL_n(\R)}(\pi_1\otimes\pi_2),
\]
and such that $\pi_i \otimes \delta_{P,i} \in \Omega(\lambda^{(i)})$ for some pure weight
$\lambda^{(i)} \in X_0^+(T_0(n_i))$ for $i=1,2$.
Here $P$ is the upper triangular maximal parabolic subgroup of $\GL_n$ such that
$M_P \cong \GL_{n_1} \times \GL_{n_2}$.
In other words, the cohomological condition \eqref{eq:cohomological-condition} is satisfied.
Fix a nonzero $M_P(\C)$-equivariant homomorphism
\[
\imath : \mathcal{M}_{\lambda^{(1)}} \otimes \mathcal{M}_{\lambda^{(2)}}
\longrightarrow
\extp^{\frac{n_1n_2}{2}} \frak{u}_P^* \otimes \mathcal{M}_\lambda.
\]
Let $\jmath=\jmath(\imath)$ be the $M_Q(\C)$-equivariant homomorphism associated with
$\imath$ via the diagram \eqref{eq:diagram-1}.
Fix a generator $[\pi]^\varepsilon$ and an element
$Z \in \extp^{d(\bullet)}\frak{a}_P/\frak{a}_G$. We choose generators
$[\pi_1\otimes \delta_{P,1}]^\varepsilon$ and
$[\pi_2\otimes \delta_{P,2}]^\varepsilon$ such that
\[
I_\imath^P\circ (\mathbb{W}_{\psi_{N_0}}^P)^{-1}([\pi]^\varepsilon)
=
Z \otimes [\pi_1\otimes \delta_{P,1}]^\varepsilon
\otimes [\pi_2\otimes \delta_{P,2}]^\varepsilon.
\]
These choices determine generators
$[\pi_2\otimes \delta_{Q,1}]^\varepsilon$ and
$[\pi_1\otimes \delta_{Q,2}]^\varepsilon$ as in
(\ref{eq:det-twist-generators}).
Since $d\vartheta Z=-Z$, it follows from (\ref{eq:key-1}) that
\begin{align*}
&(L_{\omega_n}\otimes \varphi_\lambda^{-1})\circ {}^\vartheta[\pi]^\varepsilon \\
&= (-1)^{d(\bullet)}\omega_{\pi_2}(-1)^{n_1}\cdot
\mathbb{W}_{\psi_{N_0}}^Q\circ(I^Q_{\imath_2})^{-1}
\left(
\begin{aligned}
&{\rm Ad}(\dot{w}_P) Z \\
&\otimes
 (L_{\omega_{n_2}}\otimes \varphi_{\lambda^{(2)}}^{-1})
 \circ {}^\vartheta[\pi_2\otimes\delta_{P,2}]^\varepsilon \\
&\otimes
 (L_{\omega_{n_1}}\otimes \varphi_{\lambda^{(1)}}^{-1})
 \circ {}^\vartheta[\pi_1\otimes\delta_{P,1}]^\varepsilon
\end{aligned}
\right).
\end{align*}
On the other hand, by Proposition \ref{prop:Weselmann} and
(\ref{eq:key-2}) applied with $Z'={\rm Ad}(\dot{w}_P)Z$, we obtain
\begin{align*}
& \omega_{\pi_2}(-1)^{n_1}\eta(\sqrt{-1})^{-\frac{n_1n_2}{2}}
\varepsilon(0,\pi_1\times\pi_2^\vee,\psi_\R)^{-1}\cdot
\widetilde{[\pi]}^\varepsilon \\
&=
\mathbb{W}_{\psi_{N_0}}^Q\circ(I^Q_{\tilde\jmath})^{-1}
\left(
\begin{aligned}
& {\rm Ad}(\dot{w}_P) Z \\
&\otimes
({\det}^{\sf w}\otimes {\rm id})\circ
[\pi_2\otimes\delta_{Q,1}]^\varepsilon \\
&\otimes
({\det}^{\sf w}\otimes {\rm id})\circ
[\pi_1\otimes\delta_{Q,2}]^\varepsilon
\end{aligned}
\right)
\end{align*}
for some $\eta \in \{\pm1\}$ depending only on $n_1$ and $n_2$.
Here $\imath_2$ and $\tilde\jmath$ are determined by $\imath$ and
$\jmath$ through the diagrams (\ref{eq:diagram-2}) and
(\ref{eq:diagram-3}), respectively.
By the normalization in (\ref{eq:det-twist-generators}), we have
\[
({\det}^{\sf w}\otimes {\rm id})\circ
[\pi_2\otimes\delta_{Q,1}]^\varepsilon
=
({\det}^{{\sf w}+n_1}\otimes{\rm id})\circ
[\pi_2\otimes\delta_{P,2}]^\varepsilon
=
\widetilde{[\pi_2\otimes\delta_{P,2}]}^\varepsilon.
\]
Similarly,
\[
({\det}^{\sf w}\otimes {\rm id})\circ
[\pi_1\otimes\delta_{Q,2}]^\varepsilon
=
\widetilde{[\pi_1\otimes\delta_{P,1}]}^\varepsilon.
\]
Comparing the preceding two identities, we conclude that
\[
\mathbf{c}^\varepsilon(\pi)
=
(-1)^{d(\bullet)}
\eta(\sqrt{-1})^{\frac{n_1n_2}{2}}
\varepsilon(0,\pi_1\times\pi_2^\vee,\psi_\R)
\cdot C(\lambda^{(1)},\lambda^{(2)})\cdot
\mathbf{c}^\varepsilon(\pi_1)\mathbf{c}^\varepsilon(\pi_2),
\]
where $C(\lambda^{(1)},\lambda^{(2)})$ is defined by
\[
\imath_2=C(\lambda^{(1)},\lambda^{(2)})\cdot \tilde{\jmath}.
\]
In Lemma \ref{lem:base-cases} below, we prove
Theorem \ref{thm:archi-period-relation-2} for $n=1,2$ by explicit computations. 
Hence, by mathematical induction,
to complete the proof of Theorem \ref{thm:archi-period-relation-2}, it remains only to show that
\[
(\sqrt{-1})^{\frac{n_1n_2}{2}}
\varepsilon(0,\pi_1\times\pi_2^\vee,\psi_\R)\cdot
C(\lambda^{(1)},\lambda^{(2)})
\]
belongs to $\{\pm1\}$ and depends only on $n_1$ and $n_2$.
Since $\lambda^{(1)}$ and $\lambda^{(2)}$ may be chosen freely subject to condition (\ref{eq:cohomological-condition}), we simplify the computation in Lemma \ref{lem:core-sign} below by specializing to a particular choice with $n_2=1$ or $n_2=2$.

\begin{lemma}\label{lem:base-cases}
For $n=1,2$, we have
\[
\mathbf{c}^\varepsilon(\pi) = \varepsilon(-1)^n.
\]
\end{lemma}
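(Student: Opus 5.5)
We verify the identity $\widetilde{[\pi]}^\varepsilon = \varepsilon(-1)^n\cdot(L_{\omega_n}\otimes\varphi_\lambda^{-1})\circ{}^\vartheta[\pi]^\varepsilon$ by explicit evaluation of both sides on a single vector. Both sides lie in a one-dimensional space, so it suffices to evaluate them on one $X\in\extp^\bullet\frak{g}/\frak{k}$ where the left side is nonzero, and to pair the result with a highest weight vector of $\M_\lambda^\vee$.

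\emph{Case $n=1$.} Here $\frak g/\frak k=0$ in degree $b_1=0$, and in degree $t_1=0$ as well, since $d(\bullet)$ contributes nothing at a single real place. The module $\pi$ is the character $a\mapsto a^{\lambda}\,\mathrm{sgn}(a)^{e}$, and its Whittaker model is $\C\cdot\pi$. The generator $[\pi]^\varepsilon$ is $\pi\otimes m$ with $m\in\M_\lambda=\C$. Then:
\begin{itemize}
\item $\widetilde{[\pi]}^\varepsilon=(\pi\cdot a^{{\sf w}})\otimes m$ with ${\sf w}=2\lambda$, which is the character $a\mapsto a^{-\lambda}\mathrm{sgn}(a)^e=\pi^\vee$;
\item ${}^\vartheta[\pi]^\varepsilon=\pi(a^{-1})\otimes m$;
\item $L_{\omega_1}$ is translation by $-1$, contributing $\pi^\vee(-1)=\mathrm{sgn}(-1)^e$;
\item $\varphi_\lambda^{-1}$ contributes ${}^\theta\rho_\lambda(\omega_1)=(-1)^{-\lambda}$ by the normalization \eqref{eq:normalization}.
\end{itemize}
The product of these signs is $(-1)^{e+\lambda}=\omega_\pi(-1)(-1)^{{\sf w}/2}$. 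Checking this against $\varepsilon(\pi)=(-1)^{{\sf w}(\omega)}\omega(-1)$ is bookkeeping: the purity weight of the character $\pi$ is $\lambda$, so the sign is $\varepsilon(-1)$.

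\emph{Case $n=2$.} Here $\pi=D_\ell\otimes|\det|^{-{\sf w}/2}$ with $\ell=\lambda_1-\lambda_2+2$. The degrees are $b_2=1$ and $t_2=1$, the latter because $\lfloor (n-1)/2\rfloor=0$. We take the standard basis of $\frak p=\frak g/\frak k$ modulo the center, with $X_\pm=\frac12\bp 1&\pm i\\ \pm i&-1\ep$, so that $\frak p_\C=\C X_+\oplus\C X_-$. The standard generator sends $X_\pm\mapsto W_{\pm\ell}\otimes m_{\mp}$, where $W_{\pm\ell}$ are the lowest/highest $\SO(2)$-type Whittaker functions, normalized via $W_\ell(\diag(y,1))=y^{\ell/2}e^{-2\pi y}$ for $y>0$, and $m_\mp$ are extremal weight vectors of $\M_\lambda$ of $\SO(2)$-weight $\mp(\ell-2)$. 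The $\varepsilon$-component is obtained by combining with the action of $\diag(-1,1)$. We then compute the effect of each operation:
\begin{itemize}
\item $d\vartheta$ sends $X_\pm$ to $-{}^tX_\pm=-X_\pm$.
\item $\vartheta_\W$ fixes the $\SO(2)$-types: since ${}^\vartheta k=k$ for $k\in\SO(2)$, it acts on $W_{\pm\ell}$ by $W\mapsto W({}^tg^{-1})$. Comparing with $W_{\pm\ell}\otimes\det^{{\sf w}}$ on $\diag(y,1)$ and using the Iwasawa decomposition of ${}^t\diag(y,1)^{-1}$ gives an explicit scalar. This scalar is computed from $W(\diag(y^{-1},1))$ versus $W(\diag(1,y))=\omega(y)W(\diag(y^{-1},1))$.
\item $L_{\omega_2}$ contributes $\omega_\pi(-1)$ together with the weight change: since $\omega_2=\bp 0&1\\-1&0\ep\in\SO(2)$, left translation by it equals $\psi$-twist-free right translation by conjugation.
\item $\varphi_\lambda^{-1}$ sends $m_\mp$ to $\pm$ itself, by \eqref{eq:normalization} and the explicit action of ${}^\theta\rho_\lambda(\omega_2)$ on the extremal weights.
\end{itemize}
Collecting these signs should yield $\varepsilon(-1)^2=1$, independent of $\ell$ and $\varepsilon$. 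The $\varepsilon$-dependence cancels because both sides transform identically under $\diag(-1,1)$, and $\vartheta$ commutes with it.

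The main obstacle is the $n=2$ sign bookkeeping. The $\lambda$-dependent signs come from two sources, $\rho_\lambda(\omega_2)$ on the extremal vectors, which gives $(-1)^{\lambda_2}$-type factors, and the central character, which gives $(-1)^{{\sf w}}$-type factors. These must be shown to cancel exactly. I would fix $\lambda=(a,b)$ and track each factor as a power of $(-1)^a,(-1)^b$.
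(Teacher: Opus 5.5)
Your strategy is the same as the paper's: both sides lie in a one-dimensional space, so you evaluate them on a single vector using explicit models. For $n=1$ your computation agrees with the paper's up to a sign slip. Cohomology with coefficients in $\M_\lambda$ forces $\pi=\mathrm{sgn}^{e}|\cdot|^{-\lambda}$; your $a^{\lambda}\mathrm{sgn}(a)^{e}$ contradicts your next line, where $\pi\cdot a^{\sf w}$ becomes $\pi^\vee$. With the correct $\pi$ one has ${}^\vartheta W_\pi=W_\pi\otimes{\det}^{\sf w}$. Then $L_{\omega_1}$ contributes $(-1)^e$ and $\varphi_\lambda^{-1}$ contributes $(-1)^{\lambda}$, giving $(-1)^{e+{\sf w}/2}=\varepsilon(-1)$, as you say.

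For $n=2$ there is a genuine gap. You do not carry out the computation (``should yield''), and several of the ingredients you list are wrong. The scalars involved are not signs such as $(-1)^{\lambda_2}$ or $(-1)^{\sf w}$; they are fourth roots of unity depending on $\kappa=\lambda_1-\lambda_2+2$.

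\emph{The coefficient side.} On $\GL_2$ one has $\theta(g)=g/\det g$, so $\Phi_\lambda=\mathrm{id}$. In the model of $\M_\lambda$ by homogeneous polynomials of degree $\kappa-2$, this gives $\varphi_\lambda P(x,y)=P(y,-x)$. On the $\SO(2)$-weight vectors $(\sqrt{-1}\,x+y)^{\kappa-2}$ and $(x+\sqrt{-1}\,y)^{\kappa-2}$ its eigenvalues are $(\sqrt{-1})^{\pm(\kappa-2)}$. These are not the values $\pm1$ you claim; for odd $\kappa$ they are $\pm\sqrt{-1}$.

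\emph{The Whittaker side.} $\vartheta_{\mathcal W}W^\pm$ is a Whittaker function for the \emph{lower} unipotent subgroup. Comparing it with $W^\pm\otimes{\det}^{\sf w}$ on $\diag(y,1)$ produces no scalar: $W(\diag(y^{-1},1))$ is not proportional to $W(\diag(y,1))$, as your own normalization $y^{\ell/2}e^{-2\pi y}$ shows. Only the composite $L_{\omega_2}\circ\vartheta_{\mathcal W}$ lands in $\mathcal{W}(\pi^\vee)$, and the identity you need is
\[
(L_{\omega_2}{}^\vartheta W^\pm)(\diag(a,1))=W^\pm(\diag(1,a^{-1})\omega_2)=\omega_\pi(a)^{-1}W^\pm(\diag(a,1)\omega_2)=(\sqrt{-1})^{\pm\kappa}a^{\sf w}\,W^\pm(\diag(a,1)).
\]
This uses three facts: $\omega_2$ is rotation by $\pi/2$, $W^\pm$ has $\SO(2)$-weight $\pm\kappa$, and $\kappa\equiv{\sf w}\pmod 2$.

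\emph{Collecting factors.} Evaluated at $X_\pm$, the right-hand side picks up
\[
(-1)\cdot(\sqrt{-1})^{\pm\kappa}\cdot(\sqrt{-1})^{\mp(\kappa-2)}=1,
\]
where the $-1$ comes from $d\vartheta X_\pm=-X_\pm$. So the $\SO(2)$-weight of the Whittaker vector at $\omega_2$ and that of the coefficient vector under $\varphi_\lambda^{-1}$ combine to $(\sqrt{-1})^{\pm2}=-1$, which cancels the sign from $d\vartheta$. With your claimed eigenvalues $\pm1$ for $\varphi_\lambda^{-1}$, a factor $(\sqrt{-1})^{\pm\kappa}$ would survive, and the answer could not be independent of $\lambda$.
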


\begin{proof}
Assume $n=1$. Then $\lambda = \tfrac{\sf w}{2}$ and $\pi$ is a character ${\rm sgn}^\delta |\mbox{ }|^{-\frac{\sf w}{2}}$ for some $\delta \in \{0,1\}$. 
Note that $\varepsilon(-1) = (-1)^{\delta+\frac{\sf w}{2}}$ and $\M_\lambda = \C$ with $\rho_\lambda$ acts by the $\tfrac{\sf w}{2}$-th power. 
Fix a nonzero Whittaker function $W_\pi \in \W(\pi)$.
Then $W_\pi(a) = \pi(a)W(1)$ for $a \in \R^\times$.
A generator is given by
\[
[\pi]^\varepsilon:= W_\pi \otimes 1 \in (\W(\pi)\otimes \M_\lambda)^{K_1^\circ}.
\]
Since $\omega_1=-1$, the normalization \eqref{eq:normalization} gives $\varphi_\lambda(1)=(-1)^{\frac{\sf w}{2}}$.
It is easy to see that
\[
L_{\omega_1}\circ{}^\vartheta W_\pi = (-1)^\delta\cdot W_\pi \otimes {\det}^{\sf w}.
\]
Thus
\[
\mathbf{c}^\varepsilon(\pi) = (-1)^{\delta}\cdot (-1)^{\frac{{\sf w}}{2}} = \varepsilon(-1).
\]

Assume $n=2$. Then $\pi \cong D_\kappa \otimes |\det|^{-\frac{\sf w}{2}}$ for some $\kappa \geq 2$ such that $\kappa \equiv {\sf w}\,({\rm mod}\,2)$, where $D_\kappa$ is the discrete series representation of $\GL_2(\R)$ with central character ${\rm sgn}^\kappa$ and minimal ${\rm SO}(2)$-type $\pm \kappa$.
We take $\M_\lambda$ to be the space of homogeneous polynomials in variables $x,y$ of degree $\kappa-2$, and $\rho_\lambda$ acts by
\[
\rho_\lambda(g)P(x,y) := (\det g)^{\frac{-\kappa+2+{\sf w}}{2}}\cdot P((x,y)g),\quad g \in \GL_2(\C).
\]
Let $W_\pi^\pm \in \W(\pi)$ be Whittaker functions of ${\rm SO}(2)$-weight $\pm \kappa$ such that
\[
W_\pi^+({\rm diag}(-a,1)) = W_\pi^-({\rm diag}(a,1)),\quad a \in \R^\times.
\]
A generator
\[
[\pi]^\varepsilon\in   \left((\frak{g}_{2}/\frak{k}_{2})^* \otimes \mathcal{W}(\pi) \otimes \M_\lambda\right)^{K_2^\circ}(\varepsilon)
\]
is given by
\[
[\pi]^\varepsilon := Y_+^*\otimes W_\pi^+ \otimes (\sqrt{-1}\,x+y)^{\kappa-2}+\varepsilon(-1)\cdot  (\sqrt{-1})^{-{\sf w}}\cdot Y_-^*\otimes W_{\pi}^- \otimes  (x+\sqrt{-1}\,y)^{\kappa-2},
\]
where 
\[
Y_\pm := \bp -\sqrt{-1} & \pm1\\ \pm1& \sqrt{-1}\ep.
\]
A direct computation gives
\[
(d\vartheta)^*Y_\pm^* = -Y_\pm^*,\quad L_{\omega_2}\circ {}^\vartheta W_\pi^\pm = (\sqrt{-1})^{\pm\kappa}\cdot W_\pi^\pm\otimes {\det}^{\sf w}.
\]
Indeed,
\begin{align*}
(L_{\omega_2}\circ {}^\vartheta W_\pi^\pm)({\rm diag}(a,1))
 & = W_\pi^\pm({\rm diag}(1,a^{-1})\omega_2)\\
 & = (\sqrt{-1})^{\pm\kappa}\cdot a^{\sf w}W_\pi^\pm({\rm diag}(a,1))\\
 & = (\sqrt{-1})^{\pm\kappa}\cdot (W_\pi^\pm\otimes {\det}^{\sf w})({\rm diag}(a,1)).
\end{align*}
By the normalization (\ref{eq:normalization}), we have $\varphi_\lambda P(x,y) = P(y,-x)$. 
It follows that
\begin{align*}
\varphi_\lambda(\sqrt{-1}\,x+y)^{\kappa-2} &= (\sqrt{-1})^{\kappa-2}(\sqrt{-1}\,x+y)^{\kappa-2},\\
 \varphi_\lambda(x+\sqrt{-1}\,y)^{\kappa-2} &= (\sqrt{-1})^{-\kappa+2}(x+\sqrt{-1}\,y)^{\kappa-2}.
\end{align*}
Hence
\[
\mathbf{c}^\varepsilon(\pi) = (-1)\cdot(\sqrt{-1})^{\mp\kappa}\cdot(\sqrt{-1})^{\pm\kappa\mp2} = 1.
\]
\end{proof}

\begin{lemma}\label{lem:core-sign}
Let
\[
(n_1,n_2) = \begin{cases}
(2r,1) & \mbox{ if $n=2r+1$},\\
(2r,2) & \mbox{ if $n=2r+2$}.
\end{cases}
\]
Let $w \in W_n^P$ be the balanced Kostant representative defined by
\[
w^{-1} :=
\begin{cases}
\bp
1 & 2 & \cdots & r & r+1 & r+2 & \cdots & 2r & 2r+1\\
1 & 2 & \cdots & r & r+2 & r+3 & \cdots & 2r+1 & r+1
\ep
& \mbox{ if } n=2r+1,\\[1.2em]
\bp
1 & 2 & \cdots & 2r & 2r+1 & 2r+2\\
2 & 3 & \cdots & 2r+1 & 1 & 2r+2
\ep
& \mbox{ if } n=2r+2.
\end{cases}
\]
Let $\lambda^{(1)}$ and $\lambda^{(2)}$ be the pure weights determined by $w \cdot \lambda = (\lambda^{(1)},\lambda^{(2)})$.
Then we have
\begin{align*}
(\sqrt{-1})^{\frac{n_1n_2}{2}}
\varepsilon(0,\pi_1\times\pi_2^\vee,\psi_\R)\cdot
C(\lambda^{(1)},\lambda^{(2)})=\begin{cases}
(-1)^{\frac{r(r-1)}{2}} & \mbox{ if $n=2r+1$},\\
1 & \mbox{ if $n=2r+2$}.
\end{cases}
\end{align*}
\end{lemma}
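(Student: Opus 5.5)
The plan is a direct computation: I will write both factors as explicit powers of $\sqrt{-1}$ in terms of the cuspidal parameters of $\lambda$, and check that the $\lambda$-dependence cancels. First I parametrize $\lambda$ by $\ell_i:=\lambda_i-\lambda_{n+1-i}+n+1-2i$ for $1\le i\le \lfloor n/2\rfloor$, so that $\ell_1>\ell_2>\cdots>0$ have fixed parity. Then $\pi$ is induced from $D_{\ell_1+1}\otimes\cdots$, twisted by $|\det|^{-{\sf w}/2}$ and, when $n$ is odd, by a character ${\rm sgn}^{\delta}$ in the middle.

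With the given $w$, I compute $w\cdot\lambda=(\lambda^{(1)},\lambda^{(2)})$ explicitly.
\begin{itemize}
\item For $n=2r+1$, the block $\lambda^{(2)}$ is the single middle coordinate $\lambda_{r+1}$, shifted by the $\rho$-shift.
\item For $n=2r+2$, $\lambda^{(2)}$ is built from $\lambda_1$ and $\lambda_{2r+2}$, shifted. Thus $\pi_2\otimes\delta_{P,2}$ is the discrete series carrying the largest parameter $\ell_1$.
\end{itemize}
In both cases $\pi_1$ carries the remaining parameters. This identifies $\pi_1\times\pi_2^\vee$ as a direct sum of Weil-group representations of the form $D_{a}\otimes D_{b}=D_{a+b}\oplus D_{|a-b|}$ (and, for $n$ odd, $D_{\ell_i+1}\otimes{\rm sgn}^{\cdot}$). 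I then use $\varepsilon(0,D_k,\psi_\R)=(\sqrt{-1})^{k+1}$ and $\varepsilon(0,{\rm sgn}^\delta,\psi_\R)=(\sqrt{-1})^\delta$, together with the twist by $|\cdot|^{s}$, which does not change $\varepsilon$ at $s=0$ for $\psi_\R$. This gives $\varepsilon(0,\pi_1\times\pi_2^\vee,\psi_\R)=(\sqrt{-1})^{A(\ell)}$ with $A$ an explicit linear expression in the $\ell_i$ and $\delta$.

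Next I compute $C(\lambda^{(1)},\lambda^{(2)})$. I normalize $\imath$ by $\imath(v_{\lambda^{(1)}}\otimes v_{\lambda^{(2)}})=v_w^P\otimes\rho_\lambda(\dot w)v_\lambda$, as in \S\ref{sec:Kostant}. I then compute separately the images of the highest weight vector of $\M_{\lambda^{(2)}-n_1-{\sf w}}\otimes\M_{\lambda^{(1)}+n_2-{\sf w}}$ under $\imath_2$ (via \eqref{eq:diagram-2}) and under $\tilde\jmath$ (via \eqref{eq:diagram-1} and \eqref{eq:diagram-3}). Both land on multiples of the unique highest weight vector $v_{w_Pw}^Q\otimes\rho(\dot w_P\dot w)v_\lambda$, and $C$ is the ratio of the two multiples.
\begin{itemize}
\item On the $\tilde\jmath$ side, the contributions are: the sign ${\rm sgn}(S^P_w)$ from $\mathscr C_P$, the reordering sign when ${\rm Ad}(\dot w_P^{-1})^*$ carries $Y_{S_w}^*$ to the lexicographically ordered $v^Q_{w_Pw}$, and $\rho_\lambda(\dot w_P)$ acting on $\rho_\lambda(\dot w)v_\lambda$.
\item On the $\imath_2$ side, I use that $\varphi_\mu$ sends $v_\mu$ to ${}^\theta\rho_\mu(\omega_{k})^{-1}v_\mu=\rho_\mu(\vartheta(\omega_k))^{-1}v_\mu$, which is a lowest weight vector. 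I rewrite it as $\pm\rho_\mu(\dot w_0)v_\mu$ using $\omega_k=\dot w_0\cdot{\rm diag}(\pm1)$; the torus part contributes an explicit sign $\prod\mu_j^{\,\cdot}$ computed from the signs of the entries of $\omega_k$. The action of $(d\vartheta)^*$ on $v^P_w$ contributes $(-1)^{n_1n_2/2}$ and a reordering.
\end{itemize}
Collecting the terms, $C$ becomes $(\pm1)\cdot(\sqrt{-1})^{B(\lambda)}$ or $(-1)^{B(\lambda)}$, where $B$ depends on the coordinates $\lambda_i$ through the diagonal signs of $\omega_n$, $\omega_{n_1}$, $\omega_{n_2}$.

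Finally I express $B$ in terms of $\ell_i$, ${\sf w}$ and $\delta$, using purity and the relation $\varepsilon(-1)=(-1)^{\delta+{\sf w}/2}$-type identities coming from the central character. The claim to verify is that $\frac{n_1n_2}{2}+A+B$ is independent of $\lambda$ modulo $4$, and equals $2\cdot\frac{r(r-1)}{2}$ for $n=2r+1$ and $0$ for $n=2r+2$. The main obstacle is the sign bookkeeping in the previous step: the lexicographic reorderings of pure wedges under $\mathscr C_P$, $d\vartheta$ and ${\rm Ad}(\dot w_P)$, and the signed entries of $\omega_n$ versus the permutation matrices $\dot w$. To control it, I would first do the small cases $n=3,4$ by hand to fix conventions, and then run an induction on $r$ for these specific $w$, since increasing $r$ adds a predictable block of wedge factors.
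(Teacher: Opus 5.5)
Your route is the paper's: compute $w\cdot\lambda$ explicitly, read off the Langlands parameters of $\pi_1,\pi_2$ and evaluate $\varepsilon(0,\pi_1\times\pi_2^\vee,\psi_\R)$ with Knapp's formulas, and obtain $C(\lambda^{(1)},\lambda^{(2)})$ by evaluating $\imath_2$ and $\tilde\jmath$ on highest weight vectors, with $\imath$ normalized by $\imath(m_{\lambda^{(1)}}\otimes m_{\lambda^{(2)}})=v_w^P\otimes\rho_\lambda(\dot w)m_\lambda$.

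However, one input is wrong as you use it, because you mix two indexings of the discrete series. You index them by minimal $K$-type: $\pi$ is induced from $D_{\ell_i+1}$, which is the paper's $D_{\kappa_i}$. In that indexing Knapp's formula gives $\varepsilon(0,D_k,\psi_\R)=(\sqrt{-1})^{k}$, not $(\sqrt{-1})^{k+1}$, and the tensor rule is $D_a\otimes D_b=D_{a+b-1}\oplus D_{|a-b|+1}$. Your two formulas $(\sqrt{-1})^{k+1}$ and $D_a\otimes D_b=D_{a+b}\oplus D_{|a-b|}$ are correct only for the Weil-group index $\ell=k-1$ of $\phi_\ell:=\mathrm{Ind}_{\C^\times}^{W_\R}(z/|z|)^{\ell}$. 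Concretely:
\begin{itemize}
\item For $n=2r+1$, plugging $k=\ell_i+1$ into $(\sqrt{-1})^{k+1}$ for the summands $D_{\ell_i+1}\otimes{\rm sgn}^{\delta}$ produces a spurious factor $(\sqrt{-1})^{r}$. The final product would then not even be a sign for odd $r$.
\item For $n=2r+2$, using both of your formulas in the $K$-type indexing produces a spurious $(-1)^r$.
\end{itemize}
Done consistently, you get $(\sqrt{-1})^{n_1n_2/2}\varepsilon(0,\pi_1\times\pi_2^\vee,\psi_\R)=(\sqrt{-1})^{r+\sum_{i=1}^r\kappa_i}$, resp.\ $(\sqrt{-1})^{2r+2r\kappa_1}$. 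Moreover $\delta$ never enters: $\phi_\ell\otimes{\rm sgn}\cong\phi_\ell$, and $C$ is defined purely in terms of algebraic representations. So the central-character identities you plan to invoke are not needed.

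On the bookkeeping for $C$, you do not need to rewrite each $\varphi_\mu(m_\mu)$ as $\pm\rho_\mu(\dot w_0)m_\mu$ and track the signs separately. Use the equivariance of $\imath$ and $\varphi_\lambda$, together with $\vartheta(g)=g$ for signed permutation matrices, to collect everything into one expression:
\[
\imath_2(m_{\lambda^{(2)}}\otimes m_{\lambda^{(1)}})=\bigl({\rm Ad}(\dot w_P^{-1})^*(d\vartheta)^*{\rm Ad}({\rm diag}(\omega_{n_1},\omega_{n_2}))^*v_w^P\bigr)\otimes\rho_{\lambda-{\sf w}}\bigl(\dot w_P\,{\rm diag}(\omega_{n_1},\omega_{n_2})^{-1}\dot w\,\omega_n\bigr)m_\lambda .
\]
For these particular $w$ one has ${\rm diag}(\omega_{n_1},\omega_{n_2})^{-1}\dot w\,\omega_n=\dot w\,t_w$, where $t_w$ is an explicit diagonal sign matrix. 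The wedge parts are $\pm v^Q_{w_Pw}$, with sign $(-1)^r$ (resp.\ $1$) for $\imath_2$ and $(-1)^r$ for $\tilde\jmath$. Hence $C=(-1)^{r{\sf w}/2+\sum_{j=r+2}^{2r+1}\lambda_j}$, resp.\ $(-1)^{r+r{\sf w}}$, directly, and no small-case checks or induction on $r$ are required.
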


\begin{proof}
Fix highest weight vectors $m_{\lambda^{(i)}} \in \M_{\lambda^{(i)}}$ for $i=1,2$. Let $m_\lambda \in \M_\lambda$ be the highest weight vector such that
\[
\imath( m_{\lambda^{(1)}} \otimes m_{\lambda^{(2)}}) = v_w^P \otimes \rho_\lambda(\dot{w})m_\lambda.
\]
Please refer to \S\,\ref{sec:Kostant} for the notation.
Note that $w_Pw \in W_n^Q$ is also a balanced Kostant representative. 
By definition of $w$, we have 
\begin{align*}
S_w^P &= \begin{cases}
\left\{ (i,1)\,\vert\,r+1 \leq i \leq 2r\right\} & \mbox{ if $n=2r+1$},\\
\left\{ (i,1)\,\vert\,1 \leq i \leq 2r\right\} & \mbox{ if $n=2r+2$},
\end{cases}\\
S_{w_Pw}^Q &= \begin{cases}
\left\{ (1,j)\,\vert\,1 \leq j \leq r\right\} & \mbox{ if $n=2r+1$},\\
\left\{ (2,j)\,\vert\,1 \leq j \leq 2r\right\} & \mbox{ if $n=2r+2$},
\end{cases}
\end{align*}
and
\begin{align*}
\lambda^{(1)} &= \begin{cases}
(\lambda_1,...,\lambda_r,\lambda_{r+2}-1,...,\lambda_{2r+1}-1) & \mbox{ if $n = 2r+1$},\\
(\lambda_2-1,\lambda_3-1,...,\lambda_{2r+1}-1) & \mbox{ if $n=2r+2$},
\end{cases}\\
\lambda^{(2)} &= \begin{cases}
\lambda_{r+1}+r & \mbox{ if $n = 2r+1$},\\
(\lambda_1+2r,\lambda_{2r+2}) & \mbox{ if $n=2r+2$}.
\end{cases}
\end{align*}
We consider the evaluations of $\imath_2$ and $\tilde\jmath$ at $m_{\lambda^{(2)}} \otimes m_{\lambda^{(1)}}$. 
Both evaluations are scalar multiples of
\[
v_{w_Pw}^Q\otimes \rho_{\lambda-\mathsf w}(\dot w_P\dot w)m_\lambda.
\]
Then $C(\lambda^{(1)},\lambda^{(2)})$ is precisely the ratio of these two scalars.
By definition of $\imath_2$ and $\tilde\jmath$, we have
\begin{align*}
\imath_2(m_{\lambda^{(2)}} \otimes m_{\lambda^{(1)}}) & = ({\rm Ad}(\dot{w}_P^{-1})^*\circ (d\vartheta)^*\circ {\rm Ad}({\rm diag}(\omega_{n_1}, \omega_{n_2}))^*) v_w^P\\
& \quad\otimes \rho_{\lambda-{\sf w}}(\dot{w}_P{\rm diag}(\omega_{n_1}, \omega_{n_2})^{-1}\dot{w}\omega_n)m_\lambda,\\
\tilde{\jmath}(m_{\lambda^{(2)}} \otimes m_{\lambda^{(1)}}) & = ({\rm Ad}(\dot{w}_P^{-1})^*\circ \mathscr{C}_P) v_w^P \otimes \rho_{\lambda}(\dot{w}_P\dot{w})m_\lambda.
\end{align*}
By direct computations, we have
\begin{align*}
\left({\rm Ad}(\dot w_P^{-1})^*\circ (d\vartheta)^*
\circ {\rm Ad}({\rm diag}(\omega_{n_1},\omega_{n_2}))^*\right)v_w^P
&=
\begin{cases}
(-1)^rv_{w_Pw}^Q & \text{ if } n=2r+1,\\[0.4em]
 v_{w_Pw}^Q & \text{ if } n=2r+2,
\end{cases}\\
\left({\rm Ad}(\dot w_P^{-1})^*\circ \mathscr C_P\right)v_w^P
&=
(-1)^r v_{w_Pw}^Q,
\end{align*}
and 
\[
{\rm diag}(\omega_{n_1}, \omega_{n_2})^{-1}\dot{w}\omega_n = \dot{w}\cdot t_w,
\]
where
\[
t_w:=
\begin{cases}
{\rm diag}\left(
{\bf 1}_r,\,
(-1)^r,\,
-{\bf 1}_r
\right)
& \text{if } n=2r+1,\\[1.2em]
{\rm diag}\left(
1,\,
-{\bf 1}_{2r},\,
1
\right)
& \text{if } n=2r+2.
\end{cases}
\]
Also note that $(\det\dot{w}_P\dot{w})^{\sf w}$ is always equal to one. 
Therefore, we have
\begin{align*}
C(\lambda^{(1)},\lambda^{(2)}) = \begin{cases}
(-1)^{\frac{r{\sf w}}{2} + \sum_{j=r+2}^{2r+1}\lambda_j} & \mbox{ if $n=2r+1$},\\
(-1)^{r+r{\sf w}} & \mbox{ if $n=2r+2$}.
\end{cases}
\end{align*}
It remains to compute $\varepsilon(0,\pi_1\times\pi_2^\vee,\psi_\R)$. 
Let
\[
\kappa_i:= \lambda_i - \lambda_{n+1-i}+n+2-2i,\quad 1 \leq i \leq \lfloor \tfrac{n}{2} \rfloor.
\]
Then we have 
\[
\kappa_1 > \cdots > \kappa_{\lfloor \frac{n}{2} \rfloor} \geq 2.
\]
By our description for $\lambda^{(1)}$ and $\lambda^{(2)}$, we see that $\phi_{\kappa_1},...,\phi_{\kappa_r}$ (resp.\,$\phi_{\kappa_2},...,\phi_{\kappa_{r+1}}$) appear in the Langlands parameter of $\pi_1 \otimes |\det|^{\frac{\sf w}{2}}$ if $n=2r+1$ (resp.\,$n=2r+2$), and $\phi_{\kappa_1}$ is the Langlands parameter of $\pi_2 \otimes |\det|^{\frac{\sf w}{2}}$ if $n_2=2$. Here $\phi_\kappa$ denotes the Langlands parameter of the discrete series representation of $\GL_2(\R)$ with minimal ${\rm SO}(2)$-weight $\kappa \geq 2$ and central character ${\rm sgn}^\kappa$.
By the formula of archimedean $\varepsilon$-factors (cf.\,\cite[(3.7)]{Knapp1994}), we then have
\begin{align*}
(\sqrt{-1})^{\frac{n_1n_2}{2}}\varepsilon(0,\pi_1\times\pi_2^\vee,\psi_\R) &= \begin{cases}
(\sqrt{-1})^{r+\sum_{i=1}^r\kappa_i} & \mbox{ if $n=2r+1$},\\
(\sqrt{-1})^{2r+2r\kappa_1} & \mbox{ if $n=2r+2$},
\end{cases}\\
& = \begin{cases}
(-1)^{\frac{r{\sf w}}{2}+\frac{r(r+3)}{2} - \sum_{j=r+2}^{2r+1}\lambda_j} & \mbox{ if $n=2r+1$},\\
(-1)^{r+r{\sf w}} & \mbox{ if $n=2r+2$}.
\end{cases}
\end{align*}
This completes the proof.
\end{proof}

\subsection{Complex cases}
\label{sec:glnc}

In this subsection, let $v\in S_c$ be a complex place. Since $\GL_n(\C)$ is connected, there is no nontrivial component character to consider. We therefore suppress the superscript $\varepsilon$ and write
\[
[\pi]=[\pi]^\varepsilon,\quad c(\pi)=c^\varepsilon(\pi).
\]
Recall that $\pi\in \Omega(\lambda)$ for some pure weight
\[
\lambda=(\lambda^\tau,\lambda^{\overline{\tau}})
\in X_0^+(T_0(n))\times X_0^+(T_0(n)),
\]
and ${\sf w}$ is the purity weight of $\lambda$. We assume that $\lambda^\tau=\lambda^{\overline{\tau}}$,
or equivalently, $\lambda^\vee=\lambda-{\sf w}$.
We have define a representation of $\GL_n(\C)\times \GL_n(\C)$ by
\[
(\rho_\lambda,\mathcal M_\lambda)
=
\bigl(\rho_{\lambda^\tau}\otimes \rho_{\lambda^\tau},
\mathcal M_{\lambda^\tau}\otimes \mathcal M_{\lambda^\tau}\bigr).
\]
We have also fixed an equivariant isomorphism
\[
\Phi_\lambda:
(\rho_{\lambda-{\sf w}},\mathcal M_{\lambda-{\sf w}})
\longrightarrow
({}^\theta\rho_\lambda,{}^\theta\mathcal M_\lambda),
\]
normalized so that
\[
\Phi_\lambda(m_{\lambda})=m_\lambda
\]
for any highest weight vector $m_\lambda\in \mathcal M_\lambda$.
Then $c(\pi)$ is the scalar such that 
\[
({\rm id}\otimes \Phi_\lambda)\circ \widetilde{[\pi]} = c(\pi)\cdot {}^\theta[\pi].
\]

The main result of this subsection is the period relation stated in Theorem \ref{thm:archi-period-relation-3}. Unlike in the real cases, our argument here is more straightforward and eventually reduced to the explicit formulas of Ishii--Miyazaki \cite{IshiiMiyazaki2022} for highest weight Whittaker functions in the minimal $K$-type of principal series representations.
We note that analogous formulas for generalized principal series representations are not yet known in general.

\begin{lemma}\label{eq:equivariant-(-,+)}
Let $m_{\lambda^{\tau}} \in \M_{\lambda^\tau}$ be a highest weight vector. Then 
\[
\Phi_\lambda(\rho_\lambda(\omega_n)m_{\lambda^{\tau}} \otimes m_{\lambda^{\tau}}) = \rho_\lambda(\omega_n)m_{\lambda^{\tau}} \otimes m_{\lambda^{\tau}}.
\]
\end{lemma}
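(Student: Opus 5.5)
The plan is to factor $\Phi_\lambda$ along the two tensor factors, then use the equivariance of each factor at the single group element $\omega_n$. A short parity argument will kill the resulting determinant sign.

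\emph{Step 1: factorization.} Over $\C$, the group $G_v\times\C$ is $\GL_n(\C)\times\GL_n(\C)$, indexed by $(\tau,\overline\tau)$, and $\theta$ acts componentwise by the same formula $g\mapsto\omega_n{}^tg^{-1}\omega_n^{-1}$. Indeed $\omega_n$ has real entries, so it is fixed by both embeddings. Hence ${}^\theta\M_\lambda={}^\theta\M_{\lambda^\tau}\otimes{}^\theta\M_{\lambda^\tau}$. By assumption $\lambda^\tau=\lambda^{\overline\tau}$, and the purity relation reads $\lambda^\tau_i+\lambda^\tau_{n+1-i}={\sf w}$; therefore $(\lambda^\tau)^\vee=\lambda^\tau-{\sf w}$. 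So on each factor there is a $\GL_n(\C)$-equivariant isomorphism
\[
\Phi_{\lambda^\tau}:\M_{\lambda^\tau-{\sf w}}\to{}^\theta\M_{\lambda^\tau}
\]
fixing the highest weight vector $m_{\lambda^\tau}$, which is well defined by Schur's lemma. The tensor product $\Phi_{\lambda^\tau}\otimes\Phi_{\lambda^\tau}$ is equivariant and sends $m_{\lambda^\tau}\otimes m_{\lambda^\tau}$ to itself. By uniqueness in \eqref{eq:canonical-normalization}, it therefore equals $\Phi_\lambda$. This reduces the claim to the first factor, namely to showing $\Phi_{\lambda^\tau}(\rho_{\lambda^\tau}(\omega_n)m_{\lambda^\tau})=\rho_{\lambda^\tau}(\omega_n)m_{\lambda^\tau}$; the second factor is fixed by normalization.

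\emph{Step 2: equivariance at $\omega_n$.} Recall that $\M_{\lambda^\tau-{\sf w}}=\M_{\lambda^\tau}$ as spaces, with $\rho_{\lambda^\tau-{\sf w}}=\rho_{\lambda^\tau}\otimes\det^{-{\sf w}}$. Hence
\[
\rho_{\lambda^\tau}(\omega_n)m=(\det\omega_n)^{{\sf w}}\rho_{\lambda^\tau-{\sf w}}(\omega_n)m .
\]
Equivariance of $\Phi_{\lambda^\tau}$ then gives
\[
\Phi_{\lambda^\tau}(\rho_{\lambda^\tau}(\omega_n)m)=(\det\omega_n)^{{\sf w}}\,\rho_{\lambda^\tau}(\theta(\omega_n))m.
\]
Since $\omega_n$ is a signed antidiagonal permutation matrix, it is orthogonal, so ${}^t\omega_n^{-1}=\omega_n$ and thus $\theta(\omega_n)=\omega_n$. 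The right side is therefore $(-1)^{n{\sf w}}\rho_{\lambda^\tau}(\omega_n)m$.

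\emph{Step 3: the sign.} It remains to check that $n{\sf w}$ is even. If $n$ is even this is clear. If $n$ is odd, the purity relation at the middle index $i=(n+1)/2$ gives $2\lambda^\tau_{(n+1)/2}={\sf w}$, so ${\sf w}$ is even. In both cases the sign is $1$, which proves the lemma.

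The only delicate point is bookkeeping rather than a real obstacle. One must correctly identify how $\rho_\lambda(\omega_n)$ acts, namely through the first factor via $\omega_n$ viewed in the $\tau$-component. One must also make sure that the determinant twist used to define $\M_{\lambda-{\sf w}}$ is applied factorwise, so that the sign is $(\det\omega_n)^{{\sf w}}$ from one factor only. I expect to spend the most care on this identification.
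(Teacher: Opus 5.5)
Your proof is correct and is essentially the paper's argument: apply the equivariance of $\Phi_\lambda$ at $\omega_n$ (acting on the first tensor factor), and use $\rho_{\lambda-{\sf w}}(\omega_n)=(-1)^{n{\sf w}}\rho_\lambda(\omega_n)$ and $\theta(\omega_n)=\omega_n$. The factorization $\Phi_\lambda=\Phi_{\lambda^\tau}\otimes\Phi_{\lambda^\tau}$ is harmless but not needed. Your middle-index parity argument usefully justifies the paper's unproved remark that $(-1)^{n{\sf w}}=1$.
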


\begin{proof}
This is clear, since
\[
\rho_{\lambda-{\sf w}}(\omega_n) = (-1)^{n{\sf w}}\rho_\lambda(\omega_n),\quad {}^\theta\rho_\lambda(\omega_n) = \rho_\lambda(\omega_n).
\]
Also note that we always have $(-1)^{n{\sf w}}=1$.
\end{proof}

Let $\frak{g}$ and $\frak{k}$ denote the complexified
Lie algebras of $\GL_n(\C)$ and ${\rm U}(n)$, respectively, and let
\[
\frak{g}=\frak{k}\oplus\frak{p}
\]
be the Cartan decomposition. 
With respect to the natural adjoint action of the diagonal torus of ${\rm U}(n)$ on $\frak{p}$, the
weight-space decomposition of $\frak{p}$ is given by
\[
\frak{p}
=
\frak{a}
\oplus
\bigoplus_{\alpha\in\Phi^+}\C\cdot X_\alpha^p
\oplus
\bigoplus_{\beta\in\Phi^-}\C\cdot Y_\beta^p,
\]
where $\Phi^+$ denotes the set of positive roots, identified in the usual
way with pairs of indices $1\leq i<j\leq n$, and $\Phi^-=-\Phi^+$. More
explicitly, for $1\leq i<j\leq n$, we put
\begin{align*}
X_{ij}^p
&:=
(E_{ij}+E_{ji})\otimes \sqrt{-1}
+
\sqrt{-1}(E_{ij}-E_{ji})\otimes 1,\\
Y_{ij}^p
&:=
(E_{ij}+E_{ji})\otimes \sqrt{-1}
-
\sqrt{-1}(E_{ij}-E_{ji})\otimes 1.
\end{align*}
For $1\leq k\leq n-1$, we define
\[
Z_k:=(E_{kk}-E_{k+1,k+1})\otimes 1 \in \frak{a}.
\]
Define
\[
X_\bullet:=
\begin{cases}
\extp_{1\leq i<j\leq n} X_{ij}^p,
& \text{if } \bullet=b_n,\\
\extp_{1\leq k\leq n-1} Z_k
\wedge
\extp_{1\leq i<j\leq n} X_{ij}^p,
& \text{if } \bullet=t_n.
\end{cases}
\]
Here all exterior products are taken in lexicographic order, with the
elements $Z_k$ ordered before the elements $X_{ij}^p$. Then $X^+$ is a
highest weight vector in $\bigwedge^\bullet\frak{p}$ as a representation of
${\rm U}(n)$.

\begin{lemma}\label{eq:theta-X_+}
We have
\[
d\theta(X_\bullet) = \begin{cases}
X_\bullet & \mbox{ if $\bullet = b_n$},\\
(-1)^{\lfloor\frac{n-1}{2}\rfloor}X_\bullet& \mbox{ if $\bullet = t_n$}.
\end{cases}
\]
\end{lemma}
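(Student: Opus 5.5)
The plan is to compute $d\theta$ explicitly on the basis of $\frak{p}$ used to define $X_\bullet$. I expect $d\theta$ to act as a signed permutation of the $X_{ij}^p$ and as a permutation of the $Z_k$. The sign in the statement then comes from two sources: the signs in the images of the basis vectors, and the reordering needed to restore lexicographic order in the wedge products.

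\emph{Step 1: the formula for $d\theta$.} Since $\theta(g)=\omega_n\,{}^tg^{-1}\omega_n^{-1}$, we have $d\theta(Y)=-\omega_n\,{}^tY\,\omega_n^{-1}$. From \eqref{eq:long-Weyl}, the nonzero entries of $\omega_n$ alternate in sign along the anti-diagonal. Hence
\[
\omega_nE_{ab}\omega_n^{-1}=(-1)^{a-b}E_{n+1-a,\,n+1-b},
\qquad
d\theta(E_{ij})=-(-1)^{i-j}E_{n+1-j,\,n+1-i}.
\]
Write $i'=n+1-i$. For $i<j$ we have $j'<i'$. Both $(E_{ij}+E_{ji})$ and $(E_{ij}-E_{ji})$ are then sent to $-(-1)^{i-j}$ times the corresponding combination for the pair $(j',i')$, in the correct order. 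Because $d\theta$ is complex-linear in both tensor factors, this gives
\[
d\theta(X_{ij}^p)=-(-1)^{i-j}X^p_{j'i'}.
\]
Similarly $d\theta(Z_k)=-(E_{k'k'}-E_{(k+1)'(k+1)'})=Z_{n-k}$. In particular $d\theta$ preserves the span of the $Z_k$ and the span of the $X^p_{ij}$ separately. Since the $Z_k$ come first in $X_\bullet$, the two blocks can be treated independently.

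\emph{Step 2: the $Z$-block.} Here $d\theta$ reverses the order of the $m=n-1$ vectors $Z_1,\dots,Z_{n-1}$. This contributes the sign $(-1)^{m(m-1)/2}$. A check of residues of $m$ modulo $4$ shows that $m(m-1)/2\equiv\lfloor m/2\rfloor\pmod 2$, so this sign is $(-1)^{\lfloor\frac{n-1}{2}\rfloor}$. That is exactly the extra factor in the $t_n$ case.

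\emph{Step 3: the $X$-block, which is the main point.} We must show that $d\theta\bigl(\extp_{i<j}X_{ij}^p\bigr)=\extp_{i<j}X_{ij}^p$. Let $N=n(n-1)/2$. The coefficient sign from Step 1 is
\[
\prod_{i<j}\bigl(-(-1)^{j-i}\bigr)=(-1)^{N+\binom{n+1}{3}},
\]
using $\sum_{i<j}(j-i)=\binom{n+1}{3}$. The reordering sign is the sign of the involution $(i,j)\mapsto(n+1-j,\,n+1-i)$ on the set of positive roots. Its fixed points are the pairs with $i+j=n+1$, of which there are $\lfloor n/2\rfloor$. So the involution is a product of $(N-\lfloor n/2\rfloor)/2$ transpositions, and its sign is $(-1)^{(N-\lfloor n/2\rfloor)/2}$.

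It then remains to check the congruence
\[
N+\binom{n+1}{3}+\tfrac{N-\lfloor n/2\rfloor}{2}\equiv 0\pmod 2 .
\]
The cases $n=2$ and $n=3$ give $2$ and $8$. In general I would verify it by reducing $n$ modulo $8$, since every term is polynomial in $n$ up to the floor. This parity bookkeeping is the only delicate step. I would double-check the sign convention for the entries of $\omega_n$ against the case $n=2$, namely $\omega_2=\bigl(\begin{smallmatrix}0&1\\-1&0\end{smallmatrix}\bigr)$, because an error there would propagate into the factor $(-1)^{i-j}$.

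Combining the $b_n$ case from Step 3 with the extra $Z$-factor from Step 2 for $t_n$ gives the lemma.
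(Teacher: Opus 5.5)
Your proposal is correct and follows the paper's proof essentially step for step. The paper also writes $d\theta$ of the $b_n$-wedge as the wedge of $(-1)^{i+j+1}X^p_{n+1-j,\,n+1-i}$ and factors out two signs: the coefficient sign $(-1)^{n(n-1)(n+2)/2}$, which agrees with your $(-1)^{N+\binom{n+1}{3}}$, and the sign $(-1)^{\frac12(\frac{n(n-1)}{2}-\lfloor n/2\rfloor)}$ of the involution $(i,j)\mapsto(n+1-j,\,n+1-i)$. It likewise attributes the extra top-degree factor to $d\theta(Z_k)=Z_{n-k}$.

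Your outstanding parity check does hold for every $n$, and splitting by the parity of $n$ is enough; you do not need to work modulo $8$. For $n=2r$ both signs are $+1$, and for $n=2r+1$ both signs equal $(-1)^r$, so they cancel.
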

\begin{proof}
By direct computations, we have
\begin{align*}
d\theta(X_{b_n}) &= \extp_{1 \leq i < j \leq n}(-1)^{i+j+1}X_{n+1-j, n+1-i}\\
& = (-1)^{\sum_{1\leq i<j \leq n}(i+j+1)} {\rm sgn}(\sigma) X_{b_n},
\end{align*}
where $\sigma$ is the permutation on $\{(i,j): 1\leq i<j \leq n\}$ defined by
\[
 (i,j)\mapsto (n+1-j,n+1-i).
\] 
Note that 
\[
\sum_{1\leq i<j\leq n}(i+j+1)=\tfrac{n(n-1)(n+2)}{2},\quad {\rm sgn}(\sigma)=(-1)^{\frac{1}{2}(\frac{n(n-1)}{2}- \lfloor \frac{n}{2}\rfloor)}.
\] 
Therefore, $d\theta(X_{b_n}) = X_{b_n}$.
The extra factor $(-1)^{\lfloor\frac{n-1}{2}\rfloor}$ for the top degree comes from $d\theta (Z_k) = Z_{n-k}$.
\end{proof}

The following result is a consequence of a theorem of Ishii and Miyazaki in \cite{IshiiMiyazaki2022}.

\begin{proposition}[Ishii--Miyazaki]\label{prop:Ishii-Miyazaki}
Let $W_\pi \in \mathcal{W}(\pi)$ be a highest weight Whittaker function in the minimal ${\rm U}(n)$-type of $\pi$. Then the value $W_\pi(a)$ is positive for $a=\diag(a_1,...,a_n)$ such that $a_i\in \R^\times_+$. 
In particular, $W_\pi(1) \neq 0$.
\end{proposition}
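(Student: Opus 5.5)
The plan is to deduce the statement from the explicit formula of Ishii--Miyazaki \cite{IshiiMiyazaki2022}. The claim is to be read up to a normalizing constant: $W_\pi$ is determined only up to $\C^\times$, so we prove that some nonzero multiple is positive on the positive diagonal torus $A^+$.

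\textbf{Step 1: realize $\pi$ as a principal series.} For a complex place, every $\pi\in\Omega(\lambda)$ is an irreducible principal series ${\rm Ind}_{B(\C)}^{\GL_n(\C)}(\chi_1\otimes\cdots\otimes\chi_n)$ with
\[
\chi_i(z)=z^{p_i}\bar z^{q_i},\qquad p_i-q_i\in\Z .
\]
The cohomological condition, via the explicit description of $\Omega(\lambda)$ in \cite[\S\,2.4]{Raghuram2016}, forces two things. First, the $p_i,q_i$ differ from elements of $\tfrac12\Z$ by a common real shift determined by ${\sf w}$. Second, the differences $p_i-q_i$ are pairwise distinct, so the minimal ${\rm U}(n)$-type $\tau_{\min}$ occurs with multiplicity one. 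In particular every $\chi_i$ restricted to $\R_{>0}$ is a \emph{real} power $a\mapsto a^{p_i+q_i}$.

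\textbf{Step 2: input the Ishii--Miyazaki recursion.} Their theorem gives, for the highest weight vector $W_\pi$ of $\tau_{\min}$, a recursive integral expression for the radial part $W_\pi|_{A^+}$. Its shape is
\[
W^{(n)}_\pi(a)=\int_{(\R_{>0})^{n-1}} W^{(n-1)}_{\pi'}(a')\,\kappa(a,t)\,dt ,
\]
where $\pi'$ is the principal series of $\GL_{n-1}(\C)$ built from $\chi_1,\dots,\chi_{n-1}$ with parameters shifted by half-integers. The kernel $\kappa$ is a product of real powers of the $a_i,t_j$, exponentials $\exp(-\pi(\cdot))$ of positive quantities, and $K$-Bessel functions $K_\nu(x)$ with real index $\nu$ and $x>0$. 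Equivalently, one can work with their Mellin--Barnes form, in which the Mellin transform is a product of $\Gamma_\C$-factors at real shifts; either version suffices.

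\textbf{Step 3: induction on $n$.} The case $n=1$ holds because $W(a)=\chi_1(a)W(1)=a^{p_1+q_1}W(1)$, and we normalize $W(1)>0$. The case $n=2$ reduces to the classical formula $W(\diag(a_1,a_2))=c\,a_1^{\alpha}a_2^{\beta}K_\nu(4\pi a_1/a_2)$ with $\alpha,\beta,\nu$ real; since $K_\nu(x)>0$ for $x>0$ and real $\nu$, positivity follows after fixing $c>0$. For the inductive step, the reality established in Step 1 is preserved under passing to $\pi'$, so $W^{(n-1)}_{\pi'}>0$ on $A^+$ by induction. The kernel $\kappa$ is positive, so the integral of a positive continuous function is positive. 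Hence $W_\pi(a)>0$ for all $a\in A^+$, and in particular $W_\pi(1)\neq0$.

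\textbf{Main obstacle.} The delicate point is Step 2. The Ishii--Miyazaki formula for a general vector of $\tau_{\min}$ involves Gelfand--Tsetlin components, with signs and complex coefficients. One must check that for the \emph{highest weight} vector the recursion collapses to a single scalar term with no alternating signs; I expect this collapse is exactly the special feature of the minimal $K$-type highest weight vector that their theorem exhibits. One must also check that the cohomological parameters make every Bessel index and power exponent real. I would verify both by specializing their theorem to $\tau_{\min}$ and tracking the parameters through the recursion, using Step 1 to confirm that the exponents $p_i+q_i$ and the differences $(p_i-q_i)-(p_j-q_j)$ appearing as Bessel indices are all real.
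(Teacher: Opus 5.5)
This is essentially the paper's proof: induction on $n$ via the Ishii--Miyazaki propagation formula \cite[Theorem A.1]{IshiiMiyazaki2022} for the highest weight vector of the minimal ${\rm U}(n)$-type. The cohomological parameters are real (the paper records them as $d_i=\lambda_i^{\bar\tau}-\lambda_{n-i+1}^{\tau}+(n-2i+1)$ and $\nu_i=-\tfrac{{\sf w}}{2}+\tfrac{n-2i+1}{2}$), so the kernel is positive, and running the induction over all principal series with real parameters is the right way to handle the $\GL_{n-1}$ representation $\pi'$. One correction to Step~2: the Mellin--Barnes form does not by itself give positivity, because a function whose Mellin transform is positive on the real axis need not be positive; only the positive-kernel propagation formula carries the argument, not ``either version''.
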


\begin{proof}
The assertion follows by induction, using the propagation formula for $W_\pi$ due to Ishii--Miyazaki \cite[Theorem A.1]{IshiiMiyazaki2022}. In the notation of \emph{loc. cit.}, the parameters $d=(d_1,...,d_n)$ and $\nu=(\nu_1,...,\nu_n)$ are, in the present situation, given by
\[
d_i=\lambda_i^{\bar{\tau}}-\lambda_{n-i+1}^{\tau}+(n-2i+1),
\quad
\nu_i=-\tfrac{{\sf w}}{2}+\tfrac{n-2i+1}{2}.
\]
\end{proof}

We have the following period relation.

\begin{theorem}\label{thm:archi-period-relation-3}
We have 
\[
c(\pi) = \begin{cases}
1 & \mbox{ if $\bullet = b_n$},\\
(-1)^{\lfloor\frac{n-1}{2}\rfloor} & \mbox{ if $\bullet = t_n$}.
\end{cases}
\]
\end{theorem}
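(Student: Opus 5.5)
Since $\GL_n(\C)$ is connected and the extreme-degree cohomology is one-dimensional, I will compute $c(\pi)$ by evaluating both sides of $({\rm id}\otimes\Phi_\lambda)\circ\widetilde{[\pi]} = c(\pi)\cdot{}^\theta[\pi]$ on a single well-chosen vector and comparing. I realize the generator as $[\pi]\in{\rm Hom}_{K^\circ}(\extp^\bullet\frak{p},\W(\pi)\otimes\M_\lambda)$. By Schur's lemma, the minimal ${\rm U}(n)$-type of $\pi$ tensored with $\M_\lambda$ contains the ${\rm U}(n)$-type of $\extp^\bullet\frak p$ generated by $X_\bullet$, with multiplicity one. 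Hence $[\pi](X_\bullet)$ is a nonzero vector of the form $\sum W\otimes m$. I will extract from it a single coefficient by pairing with an explicit vector in $\M_\lambda=\M_{\lambda^\tau}\otimes\M_{\lambda^\tau}$ of extremal weight, namely $\rho_\lambda(\omega_n)m_{\lambda^\tau}\otimes m_{\lambda^\tau}$. This vector has $K$-weight matching the lowest-weight side, so the corresponding Whittaker component is the highest weight Whittaker function $W_\pi$ of the minimal type.

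The first step is the right-hand side. By definition, ${}^\theta[\pi](X_\bullet)=(\theta_\W\otimes{\rm id})[\pi](d\theta X_\bullet)$, and Lemma \ref{eq:theta-X_+} gives $d\theta X_\bullet=X_\bullet$ in bottom degree and $(-1)^{\lfloor\frac{n-1}{2}\rfloor}X_\bullet$ in top degree. This accounts for exactly the claimed sign. It then remains to show that, in the distinguished coefficient, $\theta_\W$ applied to the Whittaker component agrees with $(\cdot\otimes\nu_{\sf w}\circ\det)$ applied to it, after $\Phi_\lambda$ is accounted for.

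The second step is the left-hand side. Here $\widetilde{[\pi]}(X_\bullet)$ is obtained by multiplying the Whittaker part by $|\det|_\C^{\sf w}$ and leaving the coefficient vector unchanged. Applying $\Phi_\lambda$ and using Lemma \ref{eq:equivariant-(-,+)}, the distinguished vector $\rho_\lambda(\omega_n)m_{\lambda^\tau}\otimes m_{\lambda^\tau}$ is fixed. Comparing coefficients of this vector, the claim therefore reduces to the scalar identity
\[
{}^\theta W'(1)=(W'\cdot|\det|_\C^{\sf w})(1)=W'(1),
\]
where $W'$ is the Whittaker component paired with that vector. This $W'$ is a $K$-translate (by $\omega_n$, essentially) of the highest weight Whittaker function $W_\pi$. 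Because both sides are $\frak{n}$-Whittaker and $K$-equivariant, it suffices to compare values at $1$. To do this I would use ${}^\theta W(1)=W(\theta(1))=W(1)$ together with the intertwining $\theta_\W\circ\pi(k)=\pi(\theta k)\circ\theta_\W$, which transports the weight structure. That turns the identity into the equality of two values of $W_\pi$ at the identity, both evaluated on the same minimal-type vector.

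The main obstacle is ensuring the pairing is nondegenerate, that is, that the relevant Whittaker value is nonzero. Otherwise the comparison is vacuous, and any sign hidden in how $\theta$ permutes weight vectors inside the minimal ${\rm U}(n)$-type could go undetected. This is precisely where Proposition \ref{prop:Ishii-Miyazaki} enters: $W_\pi(1)>0$. Positivity also pins down the fact that no extra sign arises from the torus part, since $\theta$ maps positive diagonal $a$ to positive diagonal $\omega_n a^{-1}\omega_n^{-1}$, where $W_\pi$ is still positive. Consequently the ratio of the two evaluations is a positive number of modulus one, i.e.\ $1$, and $c(\pi)$ equals the sign coming from $d\theta X_\bullet$.
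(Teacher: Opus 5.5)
Your proposal follows the paper's proof: you evaluate both generators at $X_\bullet$, isolate the $m_\lambda^{-,+}=\rho_\lambda(\omega_n)m_{\lambda^\tau}\otimes m_{\lambda^\tau}$ component, take the sign from $d\theta(X_\bullet)$, use that $\Phi_\lambda$ fixes $m_\lambda^{-,+}$, and compare $\theta_{\mathcal W}W_\pi$ with $W_\pi\otimes|\det|^{{\sf w}}$ at the identity using $W_\pi(1)\neq 0$ from Ishii--Miyazaki; the paper differs only in quoting the identity $[\pi](X_\bullet)=W_\pi\otimes m_\lambda^{-,+}$ from an earlier lemma instead of extracting a coefficient. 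One correction: the $m_\lambda^{-,+}$-component is $W_\pi$ itself, not an $\omega_n$-translate of it (for a translate, Ishii--Miyazaki at $g=1$ would not apply), and once this is stated the positivity and ``modulus one'' discussion is unnecessary, since both evaluations equal $W_\pi(1)$ exactly.
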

\begin{proof}
Fix a highest weight vector $m_{\lambda^{\tau}} \in \M_{\lambda^{\tau}}$, and define
\[
m_\lambda^{-,+}:=\rho_{\lambda}(\omega_n)m_{\lambda^{\tau}} \otimes m_{\lambda^{\tau}} \in \M_\lambda.
\]
As shown in \cite[Lemma 6.17]{DR2024}, the cohomological generators $[\pi]$, $\widetilde{[\pi]}$, and $ {}^\theta[\pi]$ are determined by their values at $X_\bullet$. Moreover, 
\[
[\pi](X_\bullet) = W_\pi\otimes m_{\lambda}^{-,+}
\] 
for some highest weight Whittaker function $W_\pi \in \W(\pi)$ in the minimal ${\rm U}(n)$-type of $\pi$.
By the definition of $\widetilde{[\pi]}$, $ {}^\theta[\pi]$, and Lemmas \ref{eq:theta-X_+}, \ref{eq:equivariant-(-,+)}, we have
\begin{align*}
    \left(({\rm id}\otimes \Phi_\lambda)\circ \widetilde{[\pi]}\right)(X_\bullet) & = (W_\pi\otimes |{\det}|^{\sf w})\otimes m_{\lambda}^{-,+}, \\ 
{}^\theta[\pi](X_\bullet) & =\theta_\W W_\pi\otimes m_{\lambda}^{-,+}\cdot \begin{cases}
1 & \mbox{ if $\bullet = b_n$},\\
(-1)^{\lfloor\frac{n-1}{2}\rfloor} & \mbox{ if $\bullet = t_n$}.
\end{cases}
\end{align*}
It remains only to determine the ratio between $W_\pi\otimes |\det|^{\sf w}$ and $\theta_\mathcal W W_\pi$. Their values at the identity are all equal to $W_\pi(1)$, which is nonzero by Proposition \ref{prop:Ishii-Miyazaki}. This completes the proof.
\end{proof}

\section{Global period relations and applications to critical $L$-values}
\label{sec:critical-l-values}

\subsection{Essentially self-dual automorphic representations}\label{sec:esse.-self-dual}

Let $\Pi$ be a cuspidal automorphic representation of $G(\A) = \GL_n(\A_F)$.
We say $\Pi$ is \emph{essentially self-dual} if there exists a Hecke character $\chi$ of $F$ such that
\begin{align}\label{eq:essen.-self-dual}
\Pi^\vee \cong \Pi \otimes \chi^{-1}\circ\det.
\end{align}
In this case, we have a factorization of $L$-functions:
\[
L(s,\Pi \times \Pi^\vee) = L(s, \Pi, {\rm Sym}^2 \otimes \chi^{-1})\cdot L(s, \Pi, \wedge^2 \otimes \chi^{-1}),
\]
where the left-hand side is the Rankin--Selberg $L$-function of $\Pi \times \Pi^\vee$ and the right-hand side are the $\chi^{-1}$-twisted symmetric square 
and exterior square $L$-functions of $\Pi$.
Since the Rankin--Selberg $L$-function admits a simple pole at $s=1$ by the result of Jacquet and Shalika \cite{JS1981b}, 
exactly one of the $L$-functions on the right-hand side has a pole at $s=1$.
We say $\Pi$ is \emph{$\chi$-orthogonal} (resp.\,\emph{$\chi$-symplectic}) if (\ref{eq:essen.-self-dual}) 
holds and the $\chi^{-1}$-twisted symmetric square (resp.\,$\chi^{-1}$-twisted exterior square) $L$-function of $\Pi$ has a pole at $s=1$.
Note that $\Pi$ must be $\chi$-orthogonal if $n$ is odd by \cite[Theorem 2]{JS1990b}.

Let $\chi$ be a Hecke character of $F$.
An isobaric automorphic representation
\[
\Pi=\Pi_1\boxplus\cdots\boxplus \Pi_k
\]
of $\GL_n(\A_F)$ is said to be $\chi$-orthogonal (resp.\,$\chi$-symplectic) if the $\Pi_i$'s are pairwise non-isomorphic 
and each $\Pi_i$ is $\chi$-orthogonal (resp.\,$\chi$-symplectic).
Assume that
\[
n=2r
\quad\text{and}\quad
(\chi^r\omega_\Pi^{-1})^2=1.
\]
This is a necessary condition for $\Pi$ to be $\chi$-orthogonal or $\chi$-symplectic when $n$ is even. In this situation, the quadratic character $\chi^r\omega_\Pi^{-1}$ determines a quasi-split general spin group ${\rm GSpin}_n^*$ over $F$, as in \cite[\S\,2.1.3]{AS2014}. We also write ${\rm GSpin}_{n+1}$ for the split general spin group over $F$.
By the results of Asgari--Shahidi \cite{AS2006b,AS2014} and Hundley--Sayag \cite{HS2016}, $\Pi$ is $\chi$-orthogonal (resp.\,$\chi$-symplectic) if and only if $\Pi$ is the weak functorial transfer of a globally generic cuspidal automorphic representation of
\[
{\rm GSpin}_n^*(\A_F)
\quad
(\text{resp.\,}{\rm GSpin}_{n+1}(\A_F))
\]
with central character $\chi$. Moreover, the transfer is strong at the archimedean places.

When $F$ is not totally imaginary and $\Pi$ is regular algebraic, the type of $\Pi$ is determined by the signature character $\varepsilon(\chi)$ of $\chi$, defined in \S\,\ref{sec:Hecke-char-Gauss-sums}.
Note that, in this case, $\chi$ is necessarily algebraic, so that $\varepsilon(\chi)$ is defined.
More precisely, we have the following result, which is a straightforward generalization of \cite[Lemma 4.1]{Chen2023}.

\begin{proposition}\label{prop:sgn-criterion}
Assume $F$ is not totally imaginary and $n$ is even. 
Let $\Pi$ be a regular algebraic, essentially unitary, isobaric automorphic representation of $\GL_n(\A_F)$. 
Then $\Pi$ is $\chi$-orthogonal (resp.\,$\chi$-symplectic) if and only if $\varepsilon(\chi) = -1$ (resp.\,$\varepsilon(\chi) = 1$).
\end{proposition}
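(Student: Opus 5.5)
We reduce the statement to a local computation at one real place $v$. Fix such a $v$, and write $\Pi_v$ for the $v$-component (a $(\frak g_v,C_v)$-module) and $\chi_v$ for the local component of $\chi$. Since $F$ is not totally imaginary and $n=2r$ is even, the global type of $\Pi$ is read off at $v$, as we now explain.

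The first step is to show that $\chi$-orthogonality versus $\chi$-symplecticity of $\Pi$ is detected by whether the Langlands parameter $\phi_v$ of $\Pi_v$ factors through $\mathrm{GO}_n(\C)$ or $\mathrm{GSp}_n(\C)$ with similitude $\chi_v$. We use the Asgari--Shahidi / Hundley--Sayag characterization recalled above. If $\Pi$ is $\chi$-orthogonal (resp.\ $\chi$-symplectic), it is the transfer of a generic cuspidal representation of ${\rm GSpin}_n^*$ (resp.\ ${\rm GSpin}_{n+1}$) with central character $\chi$. The transfer is strong at archimedean places. Hence $\phi_v$ factors through the dual group $\mathrm{GO}_{n}(\C)$ (resp.\ $\mathrm{GSp}_{n}(\C)$) with similitude character corresponding to $\chi_v$.

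For isobaric sums, the same argument applies to each constituent $\Pi_i$. The signs of the individual constituents will turn out to agree, so the statement for $\Pi$ follows from the statement for the $\Pi_i$. Note that if some $\Pi_i$ has odd rank, it is automatically $\chi$-orthogonal. Its infinitesimal character then forces a weight parity compatible with $\varepsilon(\chi)=-1$, and the computation below covers this case as well.

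The second step is the local computation. Since $\Pi$ is regular algebraic and essentially unitary, $\Pi_v\cong \mathrm{Ind}(D_{\kappa_1}\otimes\cdots\otimes D_{\kappa_r})\otimes|\det|^{-{\sf w}/2}$ with distinct $\kappa_i\ge2$ and $\kappa_i\equiv {\sf w}\pmod 2$. Thus $\phi_v=\bigoplus_i \phi_{\kappa_i}\otimes|\cdot|^{-{\sf w}/2}$ is a sum of distinct irreducible two-dimensional pieces. Each piece is $\mathrm{Ind}_{W_\C}^{W_\R}(z^{a}\bar z^{b})$, and it carries a unique (up to scalar) invariant bilinear form with similitude $\det\phi_{\kappa_i}\otimes|\cdot|^{-{\sf w}}$, which is alternating. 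Since the pieces are distinct, any invariant form on $\phi_v$ is an orthogonal sum of these. Therefore $\phi_v$ preserves only a symplectic form, with similitude $\mu=\det\phi_{\kappa}\,|\cdot|^{-{\sf w}}$, where $\mu(-1)=(-1)^{\kappa+1}=(-1)^{{\sf w}+1}$.

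Matching conventions, $\chi_v$ corresponds to $\mu$ possibly twisted by $\mathrm{sgn}$, depending on whether the self-duality is $\Pi^\vee\cong\Pi\otimes\chi^{-1}$. We then compute $\varepsilon(\chi)_v=(-1)^{{\sf w}(\chi)}\chi_v(-1)$. Here ${\sf w}(\chi)={\sf w}$ (up to sign), read off from $\chi_v(x)=x^{\pm{\sf w}}$ for $x>0$. In the symplectic case this gives $\varepsilon(\chi)_v=+1$. Since $\chi_v$ is determined by $\Pi_v$ only through its central character relation $\chi_v^r=\omega_{\Pi_v}\cdot(\text{quadratic})$, an orthogonal similitude would require $\chi_v=\mu\cdot\mathrm{sgn}$, giving $-1$. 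Because $\varepsilon(\chi)$ is constant across real places by algebraicity (pure weight), we conclude that $\chi$-symplectic forces $\varepsilon(\chi)=1$ and $\chi$-orthogonal forces $\varepsilon(\chi)=-1$. The dichotomy of types then yields the converses.

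The main obstacle is pinning down the sign conventions: the identification of the similitude character of the dual group embedding with $\chi_v$ (versus $\chi_v^{-1}$ or a sign twist) and the orthogonal case for $\chi_v$. For this, I would first redo Chen's $\Q$ argument \cite[Lemma 4.1]{Chen2023} verbatim at the place $v$, checking it on the case $n=2$. There, $\Pi$ is always $\omega_\Pi$-symplectic, with $\chi=\omega_\Pi$ and $\varepsilon(\omega_\Pi)=(-1)^{\sf w}\mathrm{sgn}^\kappa(-1)=1$. This fixes the normalization, after which the general case follows by the orthogonal-sum decomposition.
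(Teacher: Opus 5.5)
Your architecture matches the paper's. You pass to cuspidal constituents, use the Asgari--Shahidi/Hundley--Sayag transfer (strong at archimedean places) to convert the global type into the type of $\phi_{\Pi_v}$ at a real place, and read that local type off $\varepsilon(\chi)_v$. The paper cites Gan--Raghuram and Chen for the local dichotomy; you compute it by hand, which is fine once stated correctly. The summands $\phi_{\kappa_i}\otimes|\cdot|^{-{\sf w}/2}$ are pairwise distinct with common determinant $\mu$, and the only possible similitudes are $\mu$ and $\mu\,\mathrm{sgn}$. So $\phi_v$ does \emph{not} preserve only symplectic forms: since $\phi_\kappa\otimes\mathrm{sgn}\cong\phi_\kappa$, it also preserves a symmetric form with similitude $\mu\,\mathrm{sgn}$. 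These two cases give $\varepsilon(\chi)_v=+1$ and $-1$ respectively.

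\textbf{Constancy across real places.} You fix one real place and then appeal to `$\varepsilon(\chi)$ being constant across real places by algebraicity'. This is false. Purity makes ${\sf w}(\chi)$ constant but says nothing about the signs $\chi_v(-1)$. For example, the quadratic character of $\Q(\sqrt{2})$ attached to $\Q(\sqrt{2})(\sqrt{\sqrt{2}})$ is ramified at exactly one real place, so its $\varepsilon$ is $(1,-1)$. Constancy is an output of the proposition, not an input; the paper uses it that way in Corollary~\ref{coro:Asai-transfer}. The repair is the paper's: the transfer is strong at \emph{every} archimedean place, so run the local argument at each $v\in S_r$ for the forward implications. One real place suffices for the converses.

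\textbf{Isobaric reduction.} Consider the implications `$\varepsilon(\chi)=\mp1\Rightarrow\Pi$ is $\chi$-orthogonal/symplectic'. Here the definition requires the $\Pi_i$ to be pairwise non-isomorphic and each to satisfy $\Pi_i^\vee\cong\Pi_i\otimes\chi^{-1}\circ\det$. A priori, $\Pi^\vee\cong\Pi\otimes\chi^{-1}$ only permutes the constituents. Both facts come from regularity. If $\Pi_i\cong\Pi_j$ or $\Pi_i^\vee\otimes\chi\cong\Pi_j$ with $j\neq i$, then $\phi_{\Pi_{i,v}}$ and $\phi_{\Pi_{j,v}}$ share summands with the same exponent. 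This produces repeated entries in the infinitesimal character of $\Pi_v$.

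\textbf{Odd-rank constituents.} The same observation disposes of odd ranks, as in the paper. Odd-rank constituents come in pairs because $n$ is even, and two of them each contribute a one-dimensional summand with exponent $-{\sf w}/2$ at a real place. Your parity remark for an odd-rank constituent happens to be consistent: $\chi_v=|\cdot|^{-{\sf w}}$ with ${\sf w}$ odd gives $\varepsilon(\chi)_v=-1$. However, your local computation assumes $\Pi_v\cong\mathrm{Ind}(D_{\kappa_1}\otimes\cdots\otimes D_{\kappa_r})\otimes|\det|^{-{\sf w}/2}$, so it does not cover this case, and the case is vacuous anyway.

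\textbf{Sign slip.} $D_\kappa$ has central character $\mathrm{sgn}^\kappa$, so $\det\phi_\kappa=\mathrm{sgn}^\kappa$ and $\mu(-1)=(-1)^{\kappa}=(-1)^{\sf w}$, not $(-1)^{\kappa+1}$. With your value, the symplectic case would give $\varepsilon(\chi)_v=-1$, contradicting your own $n=2$ check.
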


\begin{proof}

We first reduce to the case where $\Pi$ is cuspidal.
Write
\[
\Pi = \Pi_1 \boxplus \cdots \boxplus \Pi_k
\]
for some cuspidal automorphic representations $\Pi_i$ of $\GL_{n_i}(\A_F)$ for $1 \leq i \leq k$.
By assumption, there exists ${\sf w} \in \Z$ such that $\Pi_i \otimes |\det|^{\frac{\sf w}{2}}$ is unitary for all $i$.
The regularity condition then implies that $\Pi_i \ncong \Pi_j$ for $i \neq j$ and that
\[
\Pi_i^\vee \cong \Pi_i \otimes \chi^{-1}\circ\det,\quad 1 \leq i \leq k.
\]
Moreover, since $F$ is not totally imaginary, $n_i$ is even for all $i$. Indeed, if $n_i$ is odd for some $i$, then, since $n$ is even, $n_j$ is also odd for some $j \neq i$. Consider a real place $v \in S_r$. Then the Langlands parameters of $\Pi_{i,v}$ and $\Pi_{j,v}$ both contain a one-dimensional summand. This contradicts the regularity of $\Pi$.

Assume that $\Pi$ is cuspidal.
Let $v \in S_r$ be a real place of $F$. The Langlands parameter $\phi_{\Pi_v}$ of $\Pi_v$ is an $n$-dimensional admissible representation of the Weil group $W_\R$ of $\R$.
As explained in the proofs of \cite[Theorem 5.3]{GR2013} and \cite[Lemma 4.1]{Chen2023}, when $n$ is even, the image of $\phi_{\Pi_v}$ factors through exactly one of the two subgroups ${\rm GO}_n(\C)$ and ${\rm GSp}_n(\C)$ of $\GL_n(\C)$, according as $\varepsilon(\chi)_v=-1$ or $\varepsilon(\chi)_v=1$.
On the other hand, by the results of Asgari--Shahidi and Hundley--Sayag recalled above, if $\Pi$ is $\chi$-orthogonal (resp.\,$\chi$-symplectic), then the image of $\phi_{\Pi_v}$ factors through ${\rm GO}_n(\C)$ (resp.\,${\rm GSp}_n(\C)$) for all archimedean places $v$.
Conversely, since $\Pi$ is cuspidal, it is of exactly one of the two global types. Since the transfer is strong at $v \in S_r$, the global type of $\Pi$ must agree with the type of $\phi_{\Pi_v}$. The assertion then follows from the existence of a real place.
\end{proof}

As an immediate consequence, we obtain an arithmeticity result for the orthogonal and symplectic types, 
extending a result of Gan and Raghuram \cite[Theorem 5.3]{GR2013}. 

\begin{corollary}\label{coro:arithmeticity}
Assume $F$ is not totally imaginary and $n$ is even. 
Let $\Pi$ be a regular algebraic cuspidal automorphic representation of $\GL_n(\A_F)$. 
If $\Pi$ is $\chi$-orthogonal (resp.\,$\chi$-symplectic), then ${}^\sigma\Pi$ is ${}^\sigma\chi$-orthogonal (resp.\,${}^\sigma\chi$-symplectic) for all $\sigma \in {\rm Aut}(\C)$.
\end{corollary}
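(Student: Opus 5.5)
The plan is to apply Proposition~\ref{prop:sgn-criterion} to both $\Pi$ and ${}^\sigma\Pi$ and to compare the signature characters of $\chi$ and ${}^\sigma\chi$.

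First, I will check that ${}^\sigma\Pi$ satisfies the hypotheses of the proposition. By Clozel's result recalled in \S\,\ref{sec:Galois-action-coho-mod}, ${}^\sigma\Pi$ is again a regular algebraic cuspidal automorphic representation of $\GL_n(\A_F)$. A cuspidal representation is essentially unitary. It is also isobaric with a single summand, so it is trivially isobaric in the required sense.

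Next, I will check that ${}^\sigma\Pi$ is essentially self-dual with respect to ${}^\sigma\chi$. The relation $\Pi^\vee\cong\Pi\otimes\chi^{-1}\circ\det$ concerns finite parts, and by strong multiplicity one it suffices to compare these. Applying $\sigma$ to the finite parts gives ${}^\sigma(\Pi_f^\vee)\cong({}^\sigma\Pi_f)^\vee$, because the contragredient commutes with $\sigma$-conjugation; this is clear from the Satake parameters at unramified places. We also have ${}^\sigma(\Pi_f\otimes\chi_f^{-1}\circ\det)={}^\sigma\Pi_f\otimes{}^\sigma\chi_f^{-1}\circ\det$. Here $\chi$ is algebraic, as noted before Proposition~\ref{prop:sgn-criterion}, so ${}^\sigma\chi$ is a well-defined algebraic Hecke character. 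Hence $({}^\sigma\Pi)^\vee\cong{}^\sigma\Pi\otimes({}^\sigma\chi)^{-1}\circ\det$.

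Since $n$ is even and $\Pi$ is cuspidal, ${}^\sigma\Pi$ is either ${}^\sigma\chi$-orthogonal or ${}^\sigma\chi$-symplectic. Proposition~\ref{prop:sgn-criterion} decides which by the sign $\varepsilon({}^\sigma\chi)$. By \S\,\ref{sec:Hecke-char-Gauss-sums}, $\varepsilon({}^\sigma\chi)=\varepsilon(\chi)$. Applying the same proposition to $\Pi$, the type of $\Pi$ is decided by $\varepsilon(\chi)$, so the two types agree.

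The only step needing care is the identity $\varepsilon({}^\sigma\chi)=\varepsilon(\chi)$, since the definition of ${}^\sigma\omega_\infty$ in \S\,\ref{sec:Hecke-char-Gauss-sums} builds this in. Because $F$ has a real place, ${\sf w}({}^\sigma\chi)={\sf w}(\chi)$. The sign $\chi_v(-1)$ at a real place $v$ is determined through the global relation $\chi(-1)=1$ from the finite part $\chi_f(-1)$ and the other real components. One checks that $\sigma$ preserves these data, since $\chi_f(-1)=\pm1$ is fixed by $\sigma$. I expect this to be a consistency remark rather than a genuine obstacle.
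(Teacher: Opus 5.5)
Your proposal is correct and follows the paper's proof: you check that ${}^\sigma\Pi$ is regular algebraic cuspidal with $({}^\sigma\Pi)^\vee\cong{}^\sigma\Pi\otimes({}^\sigma\chi)^{-1}\circ\det$, apply Proposition~\ref{prop:sgn-criterion}, and conclude from $\varepsilon({}^\sigma\chi)=\varepsilon(\chi)$ as recorded in \S\ref{sec:Hecke-char-Gauss-sums}. One caution about your optional last paragraph: when $|S_r|\ge 2$, the single relation $\chi(-1)=1$ only determines the product $\prod_{v\in S_r}\varepsilon(\chi)_v$, not each sign, so a place-by-place justification should instead apply $\sigma$ to the identity $\chi_f(x)\prod_\tau\tau(x)^{a(\chi)_\tau}=\prod_{v\in S_r}\varepsilon(\chi)_v(\operatorname{sgn}\tau_v(x))$ for $x\in F^\times$, using weak approximation to realize every sign pattern at the real places.
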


\begin{proof}
Assume $\Pi$ satisfies (\ref{eq:essen.-self-dual}).
For $\sigma \in {\rm Aut}(\C)$, ${}^\sigma\Pi$ is regular algebraic and cuspidal, and 
\[
{}^\sigma\Pi^\vee \cong {}^\sigma\Pi \otimes {}^\sigma\chi^{-1}\circ\det.
\]
Thus, by Proposition \ref{prop:sgn-criterion}, ${}^\sigma\Pi$ is either ${}^\sigma\chi$-orthogonal or ${}^\sigma\chi$-symplectic according as $\varepsilon({}^\sigma\chi)=-1$ or $\varepsilon({}^\sigma\chi)=1$.
Since $\varepsilon({}^\sigma\chi) = \varepsilon(\chi)$, the assertion follows at once. 
\end{proof}

\begin{remark}\label{rem:Clozel-Kret-Taibi}
For an arbitrary number field $F$, if $\chi = |\mbox{ }|_{\A_F}^{-{\sf w}}$, then the assertion is established by Clozel--Kret--Ta\"ibi \cite[Proposition 8.2]{CKT2026}, subject to Arthur's endoscopic classification \cite{Arthur2013}.
\end{remark}

\subsubsection{Automorphic tensor product}
For automorphic representations $\Pi_1$ and $\Pi_2$ of $\GL_{n_1}(\A_F)$ and $\GL_{n_2}(\A_F)$, respectively, 
recall that the \emph{automorphic tensor product}
$\Pi_1 \boxtimes \Pi_2$
is the irreducible admissible $(\frak{g}_\infty,C_\infty) \times G(\A_f)$-module such that, for each place $v$ of $F$, the Langlands parameter of $(\Pi_1 \boxtimes \Pi_2)_v$ is the tensor product
$\phi_{\Pi_{1,v}} \otimes \phi_{\Pi_{2,v}}$
of the Langlands parameters of $\Pi_{1,v}$ and $\Pi_{2,v}$.
As a special instance of the Langlands functoriality conjecture, $\Pi_1 \boxtimes \Pi_2$ is expected to be automorphic. Moreover, it should be isobaric if $\Pi_1$ and $\Pi_2$ are.
Another consequence concerns the criterion for the automorphic tensor product to be orthogonal or symplectic.
Note that the criterion reduces to Proposition \ref{prop:sgn-criterion} itself when $n_1=n$ and $n_2=1$.

\begin{corollary}\label{coro:automorphic-tensor-product}
Let $\Pi_1$ and $\Pi_2$ be regular algebraic, essentially unitary, 
isobaric automorphic representations of $\GL_{n_1}(\A_F)$ and $\GL_{n_2}(\A_F)$ such that
\[
\Pi_1^\vee\cong\Pi_1 \otimes \chi_1^{-1}\circ\det,\quad \Pi_2^\vee\cong\Pi_2 \otimes \chi_2^{-1}\circ\det.
\]
Assume $F$ is not totally imaginary, $n_1n_2$ is even, and $\Pi_1 \boxtimes \Pi_2$ is regular and isobaric automorphic. 
Then 
\[
(\Pi_1 \boxtimes \Pi_2) \otimes |\det|_{\A_F}^{\frac{n_1+n_2-1}{2}}
\] 
is regular algebraic, and it is $\chi_1\chi_2|\det|_{\A_F}^{n_1+n_2-1}$-orthogonal (resp.\,$\chi_1\chi_2|\det|_{\A_F}^{n_1+n_2-1}$-symplectic) 
if and only if $\varepsilon(\chi_1\chi_2) = (-1)^{n_1+n_2}$ (resp.\,$\varepsilon(\chi_1\chi_2) = (-1)^{1+n_1+n_2}$).
\end{corollary}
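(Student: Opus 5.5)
Let $\Pi := (\Pi_1\boxtimes\Pi_2)\otimes|\det|_{\A_F}^{\frac{n_1+n_2-1}{2}}$, $n := n_1n_2$, and $\chi := \chi_1\chi_2|\ |_{\A_F}^{n_1+n_2-1}$. The plan is to reduce everything to Proposition~\ref{prop:sgn-criterion} applied to $\Pi$ with this $\chi$, and then to compute the signature $\varepsilon(\chi)$ in terms of $\varepsilon(\chi_1\chi_2)$.

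\textbf{Step 1 (hypotheses of Proposition~\ref{prop:sgn-criterion}).} Essential self-duality is a place-by-place check on Langlands parameters. Since $\phi_{\Pi_{i,v}}^\vee\cong\phi_{\Pi_{i,v}}\otimes\chi_{i,v}^{-1}$, we get
\[
(\phi_{\Pi_{1,v}}\otimes\phi_{\Pi_{2,v}})^\vee\cong(\phi_{\Pi_{1,v}}\otimes\phi_{\Pi_{2,v}})\otimes(\chi_1\chi_2)_v^{-1}.
\]
Hence $(\Pi_1\boxtimes\Pi_2)^\vee\cong(\Pi_1\boxtimes\Pi_2)\otimes(\chi_1\chi_2)^{-1}\circ\det$ at every place. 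The twist by $|\det|^{\frac{n_1+n_2-1}{2}}$ then replaces $\chi_1\chi_2$ by $\chi$, and by strong multiplicity one for isobaric representations this gives $\Pi^\vee\cong\Pi\otimes\chi^{-1}\circ\det$ globally. Essential unitarity and isobaricity are inherited. For regular algebraicity, regular algebraic $\Pi_i$ have infinitesimal characters in $(\Z+\frac{n_i-1}{2})^{n_i}$, so the parameter of $\Pi_{1,v}\otimes\Pi_{2,v}$ has exponents in $\Z+\frac{n_1-1}{2}+\frac{n_2-1}{2}$. Shifting by $\frac{n_1+n_2-1}{2}$ lands in $\Z+\frac{n_1n_2-1}{2}$, since
\[
n_1+n_2-1+\tfrac{n_1+n_2}{2}-1\equiv\tfrac{n_1n_2-1}{2}\pmod 1
\]
exactly when $\frac{(n_1-1)(n_2-1)}{2}\in\frac12\Z$, which always holds. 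Regularity is assumed. Since $n$ is even and $F$ has a real place, Proposition~\ref{prop:sgn-criterion} applies.

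\textbf{Step 2 (signature).} Proposition~\ref{prop:sgn-criterion} says $\Pi$ is $\chi$-orthogonal iff $\varepsilon(\chi)=-1$. By definition $\varepsilon(\chi)_v=(-1)^{{\sf w}(\chi)}\chi_v(-1)$. Twisting by $|\ |^{m}$ with $m=n_1+n_2-1$ leaves $\chi_v(-1)$ unchanged but shifts the pure weight by $-m$, since $|x|^m=x^m$ for $x>0$ changes $a(\chi)$ by $m$ up to the sign convention, and only parity matters. Therefore
\[
\varepsilon(\chi)=(-1)^{n_1+n_2-1}\varepsilon(\chi_1\chi_2).
\]
So $\varepsilon(\chi)=-1$ iff $\varepsilon(\chi_1\chi_2)=(-1)^{n_1+n_2}$, and the symplectic case is the complement.

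\textbf{Main obstacle.} The delicate point is that Proposition~\ref{prop:sgn-criterion} requires $\chi$ to be an algebraic Hecke character with well-defined pure weight. I would verify directly that $\chi_1\chi_2$ is algebraic, which follows from the regular algebraicity of the $\Pi_i$ and the proof of Proposition~\ref{prop:sgn-criterion}. The integrality bookkeeping in Step 1 also needs care.
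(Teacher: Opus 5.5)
Your route is the paper's: apply Proposition~\ref{prop:sgn-criterion} to $\Pi=(\Pi_1\boxtimes\Pi_2)\otimes|\det|_{\A_F}^{\frac{n_1+n_2-1}{2}}$ with $\chi=\chi_1\chi_2|\cdot|_{\A_F}^{n_1+n_2-1}$, and use $\varepsilon(\chi)=(-1)^{n_1+n_2-1}\varepsilon(\chi_1\chi_2)$. Your Step~2 is correct, since twisting by $|\cdot|^m$ changes the weight by $m$ and leaves $\chi_v(-1)$ unchanged. Your place-by-place check of $\Pi^\vee\cong\Pi\otimes\chi^{-1}\circ\det$ is also fine, and the paper leaves it implicit.

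The one real error is the integrality bookkeeping in Step~1. The infinitesimal character of $\Pi_{1,v}\otimes\Pi_{2,v}$ has entries $\mu_i+\nu_j\in\Z+\frac{n_1+n_2-2}{2}$. After the shift by $\frac{n_1+n_2-1}{2}$ they lie in $\Z+\frac{2(n_1+n_2)-3}{2}=\Z+\frac12$. Regular algebraicity on $\GL_{n_1n_2}$ requires $\Z+\frac{n_1n_2-1}{2}$, and this equals $\Z+\frac12$ if and only if $n_1n_2$ is even.

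So the condition does \emph{not} always hold. For $n_1=n_2=1$, the twist by $|\cdot|^{1/2}$ destroys algebraicity. This is exactly where the hypothesis that $n_1n_2$ is even enters, as the paper records (``$\Pi$ is algebraic since $n_1n_2$ is even''). It is a second use of that hypothesis, besides making $n$ even for the Proposition. Your displayed congruence and the criterion $\frac{(n_1-1)(n_2-1)}{2}\in\frac12\Z$ are wrong and should be replaced by this computation.

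The ``main obstacle'' you flag is not really one. Once $\Pi$ is regular algebraic with $\Pi^\vee\cong\Pi\otimes\chi^{-1}\circ\det$, comparing infinitesimal characters at the archimedean places forces $\chi$ to be algebraic, as the paper notes just before Proposition~\ref{prop:sgn-criterion}.
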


\begin{proof}
The assertion follows from applying Proposition \ref{prop:sgn-criterion} to  
\[
\Pi := (\Pi_1 \boxtimes \Pi_2) \otimes |\det|_{\A_F}^{\frac{n_1+n_2-1}{2}}, \quad \chi := \chi_1\chi_2|\det|_{\A_F}^{n_1+n_2-1}.
\]
Note that $\Pi$ is algebraic since $n_1n_2$ is even; and $\varepsilon(\chi) = (-1)^{n_1+n_2+1}\varepsilon(\chi_1\chi_2)$.
\end{proof}

\subsubsection{Asai transfer}

Let $E/F$ be a finite extension of degree $d=[E:F]$, and let $\Sigma$ be an automorphic representation of $\GL_m(\A_E)$. 
We recall the \emph{Asai transfer} of $\Sigma$,
\[
{\rm As}(\Sigma)={\bigotimes_v}' {\rm As}(\Sigma_v),
\]
in terms of local Langlands parameters. Let $v$ be a place of $F$, and write
$E_v = E\otimes_F F_v=\prod_{w\mid v}E_w.$
Set
$I_v:=\Hom_{F_v}(E_v,\overline F_v) = \coprod_{w\mid v}\Hom_{F_v}(E_w,\overline F_v).$
Then $|I_v|=d$, and
\[
{}^L(\Res_{E_v/F_v}\GL_m)
=
\prod_{\tau\in I_v}\GL_m(\C)\rtimes W_{F_v},
\]
where $W_{F_v}$ acts on $I_v$ by $\gamma\cdot\tau=\gamma\circ\tau$, and hence
acts on $\prod_{\tau\in I_v}\GL_m(\C)$ by
$\gamma\cdot(g_\tau)_{\tau\in I_v} =
(g_{\gamma^{-1}\tau})_{\tau\in I_v}.$
The local Langlands parameter of
$\Sigma_v=\otimes_{w\mid v}\Sigma_w$
is then a homomorphism
\[
\phi_{\Sigma_v}:W_{F_v}'\longrightarrow
\prod_{\tau\in I_v}\GL_m(\C)\rtimes W_{F_v}.
\]
Let
\[
{\rm As}:
\prod_{\tau\in I_v}\GL_m(\C)\rtimes W_{F_v}
\longrightarrow
\GL\left(\bigotimes_{\tau\in I_v}\C^m\right)
\]
be the representation defined by
\[ 
{\rm As}\bigl((g_\tau)_{\tau\in I_v}\bigr)\cdot
\bigotimes_{\tau\in I_v}v_\tau : =
\bigotimes_{\tau\in I_v}g_\tau v_\tau,\ \ {\rm and} \ \ 
{\rm As}(\gamma)\cdot
\bigotimes_{\tau\in I_v}v_\tau
:=
\bigotimes_{\tau\in I_v}v_{\gamma^{-1}\tau},
\quad \gamma\in W_{F_v}.
\]
We define ${\rm As}(\Sigma_v)$ to be the representation of
$\GL_{m^d}(F_v)$ corresponding to the parameter
$
{\rm As}\circ\phi_{\Sigma_v}
$
under the local Langlands correspondence.
Conjecturally, the Asai transfer is automorphic. Moreover, it should be isobaric if $\Sigma$ is.

\begin{corollary}\label{coro:Asai-transfer}
Assume $F$ is not totally imaginary, and all real places split in $E$. Let $\Sigma$ be a regular algebraic cuspidal automorphic representation of $\GL_m(\A_E)$ such that
\[
\Sigma^\vee \cong \Sigma \otimes \mu^{-1}\circ\det.
\]
Assume $m$ is even, ${\rm As}(\Sigma)$ is regular and isobaric automorphic. 
Then 
\[
{\rm As}(\Sigma) \otimes |\det|_{\A_F}^{\frac{d-1}{2}}
\]
is regular algebraic, and it is $(\mu\vert_{\A_F^\times})|\mbox{ }|_{\A_F}^{d-1}$-orthogonal if and only if $d$ is even or $\Sigma$ is $\mu$-orthogonal. 
\end{corollary}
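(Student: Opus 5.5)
Following Corollary~\ref{coro:automorphic-tensor-product}, I would apply Proposition~\ref{prop:sgn-criterion} to
\[
\Pi := {\rm As}(\Sigma)\otimes|\det|_{\A_F}^{\frac{d-1}{2}},\qquad \chi := (\mu\vert_{\A_F^\times})\,|\ |_{\A_F}^{d-1}.
\]
This requires checking that $\Pi$ is regular algebraic, essentially unitary, isobaric and essentially self-dual with respect to $\chi$, and then computing the signature $\varepsilon(\chi)$ at the real places. The dimension $m^d$ is even because $m$ is even, and $F$ has a real place by assumption, so the hypotheses of Proposition~\ref{prop:sgn-criterion} on $F$ and $n$ hold.

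\textbf{Essential self-duality.} This follows locally from the definition via parameters. From $\phi_{\Sigma_v}^\vee\cong\phi_{\Sigma_v}\otimes\mu_v^{-1}$, where $\mu$ is viewed as a character of each $W_{E_w}$, the tensor-induction functor ${\rm As}$ gives
\[
{\rm As}(\phi_{\Sigma_v})^\vee\cong{\rm As}(\phi_{\Sigma_v})\otimes(\mu\vert_{F_v^\times})^{-1}.
\]
Here one uses that the tensor induction of a character is its transfer, namely the restriction $\mu\vert_{F^\times}$ on the automorphic side. Strong multiplicity one for isobaric representations then gives ${\rm As}(\Sigma)^\vee\cong{\rm As}(\Sigma)\otimes(\mu\vert_{\A_F^\times})^{-1}\circ\det$, and the shift by $|\det|^{\frac{d-1}{2}}$ turns this into self-duality for $\Pi$ with respect to $\chi$.

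\textbf{Algebraicity and unitarity.} At a real place $v$ of $F$, the assumption that $v$ splits in $E$ makes ${\rm As}(\Sigma)_v$ have parameter $\bigotimes_{w\mid v}\phi_{\Sigma_w}$, a tensor product of $d$ parameters of $\GL_m(\R)$. Each $\phi_{\Sigma_w}$ has infinitesimal character in $(\Z+\frac{m-1}{2})^m$, and $m$ is even, so the entries are half-integers. A sum of $d$ half-integers lies in $\Z+\frac d2$, and the twist by $\frac{d-1}{2}$ moves this into $\Z+\frac{m^d-1}{2}=\Z+\frac12$. So $\Pi$ is algebraic; together with the assumed regularity it is regular algebraic. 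Complex places of $F$ are handled the same way. Essential unitarity follows from purity of $\Sigma$: $\Sigma\otimes|\ |^{{\sf w}/2}$ is unitary, hence so is the Asai transfer after the corresponding twist.

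\textbf{The signature, which is the main obstacle.} I need
\[
\varepsilon(\chi)_v=(-1)^{{\sf w}(\chi)}\chi_v(-1)
\]
at each real place $v$. Since $\chi_v=\prod_{w\mid v}\mu_w\cdot|\ |^{d-1}$ and $|{-1}|=1$, we get $\chi_v(-1)=\prod_{w\mid v}\mu_w(-1)$ and ${\sf w}(\chi)=d\,{\sf w}(\mu)-2(d-1)\cdot(\text{sign convention})$, so that
\[
\varepsilon(\chi)_v=\prod_{w\mid v}\varepsilon(\mu)_w,
\]
possibly up to a global sign $(-1)^{d-1}$ coming from the normalisation of the pure weight. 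I would pin this sign down by comparing with the tensor-product case, where Corollary~\ref{coro:automorphic-tensor-product} gives a factor $(-1)^{n_1+n_2+1}$. Next, $\Sigma$ is cuspidal, regular algebraic and $m$ is even, and $E$ has real places $w\mid v$, so Proposition~\ref{prop:sgn-criterion} gives $\varepsilon(\mu)_w=-1$ if $\Sigma$ is $\mu$-orthogonal and $+1$ if $\mu$-symplectic, uniformly in $w$. Hence the product is $(-1)^d$ in the orthogonal case and $1$ in the symplectic case. Combining with the normalisation sign, $\varepsilon(\chi)=-1$ exactly when $d$ is even or $\Sigma$ is $\mu$-orthogonal. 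By Proposition~\ref{prop:sgn-criterion}, this is precisely the orthogonality of $\Pi$. The delicate point is this careful bookkeeping of ${\sf w}(\chi)$ against the twist $|\ |^{d-1}$. As a sanity check I would verify it against $d=1$, where the criterion must reduce to $\Sigma$ being $\mu$-orthogonal.
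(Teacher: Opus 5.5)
Your route is the paper's. You apply Proposition~\ref{prop:sgn-criterion} to $\Pi={\rm As}(\Sigma)\otimes|\det|_{\A_F}^{\frac{d-1}{2}}$ and $\chi=(\mu\vert_{\A_F^\times})|\cdot|_{\A_F}^{d-1}$. You apply it again to $\Sigma$ over $E$, which has real places, to see that $\varepsilon(\mu)$ is a constant sign. Then you use the splitting of the real places to get $\varepsilon(\mu\vert_{\A_F^\times})_v=\prod_{w\mid v}\varepsilon(\mu)_w=\varepsilon(\mu)^d$. Your checks of the hypotheses (self-duality via the tensor induction of $\mu$, half-integrality of the infinitesimal character) are fine, and go beyond what the paper writes.

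\textbf{The gap.} It sits at the one step that produces the clause ``$d$ is even'': the contribution of the twist $|\cdot|_{\A_F}^{d-1}$ to the signature. Your weight formula, with correction term $\pm2(d-1)$, is wrong. Taken literally it gives $\varepsilon(\chi)=\varepsilon(\mu)^d$ with no extra sign. That leads to the false criterion ``orthogonal iff $d$ is odd and $\Sigma$ is $\mu$-orthogonal''. You then insert the factor $(-1)^{d-1}$ because it gives the desired answer, so at the decisive step you read the answer off the statement rather than derive it. Your proposed sanity check at $d=1$ cannot catch this, since $(-1)^{d-1}=1$ there.

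\textbf{The fix.} It is one line. In the paper's normalisation, $|\cdot|_{\A_F}^{k}$ has infinity type $a_\tau=k$ for every $\tau$. This holds because $|x|^k=x^k$ for $x>0$ at real places, and $|x|_\C^k=x^k\overline{x}^k$ at complex places. Hence ${\sf w}(|\cdot|_{\A_F}^{k})=k$ and $\varepsilon(|\cdot|_{\A_F}^{k})_v=(-1)^k\,|{-1}|^k=(-1)^k$.

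Since $\varepsilon(\cdot)$ is multiplicative, ${\sf w}(\chi)=d\,{\sf w}(\mu)+d-1$ and $\varepsilon(\chi)=(-1)^{d-1}\varepsilon(\mu)^d$. This is exactly the source of the sign $(-1)^{n_1+n_2+1}$ in Corollary~\ref{coro:automorphic-tensor-product}, so your suggested comparison with that case would indeed have pinned it down. It follows that $\varepsilon(\chi)=-1$ iff $\varepsilon(\mu)^d=(-1)^d$, i.e.\ iff $d$ is even or $\varepsilon(\mu)=-1$, which is the paper's conclusion.
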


\begin{proof}
The assertion follows from applying Proposition \ref{prop:sgn-criterion} to 
\[
\Pi := {\rm As}(\Sigma) \otimes |\det|_{\A_F}^{\frac{d-1}{2}},\quad \chi:=(\mu\vert_{\A_F^\times})|\mbox{ }|_{\A_F}^{d-1}.
\]
By Proposition \ref{prop:sgn-criterion}, both $\varepsilon(\mu\vert_{\A_F^\times})$ and $\varepsilon(\mu)$ are constant sign characters. 
Moreover, since every real place splits in $E$, we see that 
$\varepsilon(\mu\vert_{\A_F^\times}) = \varepsilon(\mu)^d.$ 
Therefore, $\varepsilon(\mu\vert_{\A_F^\times}) = (-1)^d$ if and only if $d$ is even or $\varepsilon(\mu) = -1$.
\end{proof}

\begin{remark}
If some real place of $F$ is not split in $E$, then ${\rm As}(\Sigma)$ is not regular.
\end{remark}

\subsection{Global period relations}

Let $\Pi$ be a regular algebraic cuspidal automorphic representation of $G(\A)$ such that $\Pi_f \in \Coh_{\rm cusp}(G,\lambda)$ for some $\lambda \in X_{00}^+(T\times\C)$. 
Let $\bullet \in \{b_n^F,t_n^F\}$ be an extreme degree, and let $\varepsilon \in \widehat{\pi_0(G(\R))}$ be permissible for $\Pi_\infty$.
Fix a generator
\[
[\Pi_\infty]^\varepsilon \in H^\bullet(\frak{g}_\infty,K_\infty^\circ; \W(\Pi_\infty)\otimes\M_\lambda)(\varepsilon).
\]

In this section, we establish two period relations for the Betti--Whittaker periods. 
The first is a relation between the $\det$-twisted and $\theta$-twisted constructions, and the second is a relation between opposite sign characters.

\subsubsection{Period relation under $\det$-twist and $\theta$-twist}

The generator $[\Pi_\infty]^\varepsilon$ gives the $\theta$-twisted generator (cf.\,\S\,\ref{sec:archi-local-dual})
\[
{}^\theta[\Pi_\infty]^\varepsilon \in H^\bullet(\frak{g}_\infty,K_\infty^\circ;\W(\Pi_\infty^\vee)\otimes{}^\theta\M_\lambda)(\varepsilon).
\]
Assume further that $\lambda^\vee = \lambda-{\sf w}$.
Then we also have the determinant-twisted generator defined in (\ref{eq:archi-local-dual-2})
\[
\widetilde{[\Pi_\infty]}^\varepsilon \in H^\bullet(\frak{g}_\infty,K_\infty^\circ;\W(\Pi_\infty^\vee)\otimes\M_{\lambda-{\sf w}})(\varepsilon).
\]
With respect to these generators, we have two Betti--Whittaker periods $p_{\det}^\varepsilon(\Pi^\vee)$ and $p_{\theta}^\varepsilon(\Pi^\vee)$ of $\Pi^\vee$, defined in \S\,\ref{sec:comparison-isomorphism}. The subscripts refer to the dependence on the determinant-twisted and $\theta$-twisted generators, respectively.
Similarly, for each $\sigma \in {{\rm Aut}(\C)}$, we have the Betti--Whittaker periods $p_{\det}^\varepsilon({}^\sigma\Pi^\vee)$ and $p_{\theta}^\varepsilon({}^\sigma\Pi^\vee)$ of ${}^\sigma\Pi^\vee$, with respect to a chosen generator $[{}^\sigma\Pi_\infty]^\varepsilon$.
For compatibility, we assume $[{}^\sigma\Pi_\infty]^\varepsilon = [\Pi_\infty]^\varepsilon$ if $\sigma \in {\rm Aut}(\C/\Q(\Pi))$.
We normalize the periods $(p_{\det}^\varepsilon({}^\sigma\Pi^\vee))_{\sigma \in {{\rm Aut}(\C)}}$ and $(p_{\theta}^\varepsilon({}^\sigma\Pi^\vee))_{\sigma \in { {\rm Aut}(\C)}}$ by (\ref{eq:W-H-periods}).

\begin{theorem}\label{thm:det-theta}
With the notations as above, one has
\[
\sigma \left( \frac{p_{\det}^\varepsilon(\Pi^\vee)}{p_{\theta}^\varepsilon(\Pi^\vee)} \right) = \frac{p_{\det}^\varepsilon({}^\sigma\Pi^\vee)}{p_{\theta}^\varepsilon({}^\sigma\Pi^\vee)},\quad \sigma \in {\rm Aut}(\C).
\]
\end{theorem}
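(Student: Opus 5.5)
The plan is to compare the two comparison maps $\mathcal F_{\Pi_f^\vee,\widetilde{[\Pi_\infty]}^\varepsilon}$ and $\mathcal F_{\Pi_f^\vee,{}^\theta[\Pi_\infty]^\varepsilon}$ directly. They have the same source $\W(\Pi_f^\vee)$ but different targets: the cuspidal cohomology with coefficients in $\M_{\lambda-{\sf w}}$, respectively in ${}^\theta\M_\lambda$. The $G(\C)$-isomorphism $\Phi_\lambda$ of \eqref{eq:canonical-normalization} induces an isomorphism of local systems, hence a $\pi_0(G(\R))\times G(\A_f)$-equivariant isomorphism
\[
\Phi_\lambda^\bullet : H^\bullet_{\rm cusp}(S^G,\widetilde\M_{\lambda-{\sf w}})(\varepsilon\times\Pi_f^\vee)\longrightarrow H^\bullet_{\rm cusp}(S^G,{}^\theta\widetilde\M_{\lambda})(\varepsilon\times\Pi_f^\vee).
\]
The identity \eqref{eq:local-period}, applied after $\Upsilon_{\Pi^\vee}^\bullet$ (which only acts on the $\W$-factor and commutes with $\mathrm{id}\otimes\Phi_\lambda$), gives
\[
\Phi_\lambda^\bullet\circ\mathcal F_{\Pi_f^\vee,\widetilde{[\Pi_\infty]}^\varepsilon} = c^\varepsilon(\Pi_\infty)\cdot\mathcal F_{\Pi_f^\vee,{}^\theta[\Pi_\infty]^\varepsilon}.
\]
The same identity holds with $\Pi$ replaced by ${}^\sigma\Pi$, with $\Phi_{{}^\sigma\lambda}$ and $c^\varepsilon({}^\sigma\Pi_\infty)$ in place of $\Phi_\lambda$ and $c^\varepsilon(\Pi_\infty)$.

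The second step is to show that $\Phi^\bullet$ is compatible with the $\Aut(\C)$-action:
\[
\sigma^\bullet_{\rm dR}\circ\Phi^\bullet_\lambda=\Phi^\bullet_{{}^\sigma\lambda}\circ\sigma^\bullet_{\rm dR},
\]
where ${}^\sigma\M$ is identified with $\M_{{}^\sigma\lambda}$ via \eqref{eq:finite-rep}. By the definition of $\sigma_{\rm dR}^\bullet$ through the Betti realization, as in the proof of Proposition~\ref{prop:theta-deRham-rational}, this reduces to a statement about coefficient modules. Concretely, the $\sigma$-linear maps $t_{\tau,\sigma}$ must intertwine $\Phi_{\lambda^\tau}$ and $\Phi_{\lambda^{\sigma\circ\tau}}$. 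To arrange this, choose the $\Q$-structures $\M_{\lambda^\tau,\Q}$ so that they contain a highest weight vector; this is possible since $\GL_n$ and its Borel are split over $\Q$. The automorphism $\theta$ is defined over $\Q$, so $\M_{\lambda^\tau,\Q}$ is also a $\Q$-structure on ${}^\theta\M_{\lambda^\tau}$ stable under $G(\Q)$. Then $t_{\sigma}^{-1}\circ\Phi_{{}^\sigma\lambda}\circ t_\sigma$ is again a $G(\C)$-isomorphism $\M_{\lambda-{\sf w}}\to{}^\theta\M_\lambda$ fixing the rational highest weight vector. By uniqueness in \eqref{eq:canonical-normalization} it equals $\Phi_\lambda$. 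I expect this to be the main technical point: one must check that the normalization ``fix a highest weight vector'' is insensitive to the chosen rational structures and is compatible with the identification \eqref{eq:finite-rep}.

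Finally I combine the pieces. By the normalization \eqref{eq:W-H-periods} applied to both families of periods, and the two displays above, we get
\[
\sigma\!\left(\frac{p_{\det}^\varepsilon(\Pi^\vee)}{p_\theta^\varepsilon(\Pi^\vee)}\right)\sigma\bigl(c^\varepsilon(\Pi_\infty)\bigr)^{-1}=\frac{p_{\det}^\varepsilon({}^\sigma\Pi^\vee)}{p_\theta^\varepsilon({}^\sigma\Pi^\vee)}\,c^\varepsilon({}^\sigma\Pi_\infty)^{-1}.
\]
This comes from transporting $\sigma^\bullet_{\rm dR}$ through $\Phi^\bullet$ and comparing the two commutative squares.

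It then remains to invoke Theorem~\ref{thm:archi-period-relation}, which gives $c^\varepsilon(\Pi_\infty)=c^\varepsilon({}^\sigma\Pi_\infty)\in\{\pm1\}$, so that $\sigma$ fixes it. Its hypothesis $\lambda^{\sigma\circ\tau}=\lambda^{\sigma\circ\bar\tau}$ must be verified. The condition $\lambda^\vee=\lambda-{\sf w}$ reads $\lambda_i^\tau+\lambda^\tau_{n+1-i}={\sf w}$ for every $\tau$, so it is stable under $\lambda\mapsto{}^\sigma\lambda$. Strong purity says ${}^\sigma\lambda$ is pure with the same purity weight ${\sf w}$. Together these force $({}^\sigma\lambda)^{\tau}=({}^\sigma\lambda)^{\bar\tau}$ for all $\tau$, as required.
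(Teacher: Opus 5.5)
Your proposal is correct and follows essentially the paper's own argument. You relate the two comparison maps through $\Phi_\lambda$, which produces the factor $c^\varepsilon(\Pi_\infty)$, and use the highest-weight normalization to get $t_\sigma\circ\Phi_\lambda=\Phi_{{}^\sigma\lambda}\circ t_\sigma$; Theorem~\ref{thm:archi-period-relation} then removes the archimedean constant. You also check explicitly that ${}^\sigma\lambda$ satisfies that theorem's hypothesis, a step the paper leaves implicit.
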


\begin{proof}
Recall the $G(\C)$-equivariant isomorphism 
\[
\Phi_\lambda : (\rho_{\lambda-{\sf w}}, \mathcal{M}_{\lambda-{\sf w}}) \longrightarrow ({}^\theta\rho_{\lambda}, {}^\theta\mathcal{M}_{\lambda}) 
\]
normalized by (\ref{eq:canonical-normalization}).
By the normalization, for each $\sigma \in {\rm Aut}(\C)$, we have ${}^\sigma\Phi_\lambda = \Phi_{{}^\sigma\lambda}$ if we take ${}^\sigma \M_\lambda$ to be the model of $\M_{{}^\sigma\lambda}$. In other words, the following diagram
\[
\begin{tikzcd}
	\mathcal{M}_{\lambda-{\sf w}} \arrow[r,"\Phi_\lambda"] \arrow[d,"{t_\sigma}"] 
	& {}^\theta\mathcal{M}_\lambda \arrow[d,"{t_\sigma}"] \\
	{}^\sigma\mathcal{M}_{\lambda-{\sf w}} \arrow[r,"\Phi_{{}^\sigma\lambda}"] 
	& {}^{\sigma}({}^\theta\mathcal{M}_\lambda) = {}^{\theta}({}^\sigma\mathcal{M}_\lambda)
\end{tikzcd}
\]
is commutative, where the vertical maps are the natural maps defined in (\ref{eq:Galois-identity}).
Let $c^\varepsilon({}^\sigma\Pi_\infty)$ be the scalar defined as in (\ref{eq:local-period}) for ${}^\sigma\Pi_\infty$.
By the above diagram and the definition of Betti--Whittaker periods, we then have
\[
\sigma \left( \frac{p_{\det}^\varepsilon(\Pi^\vee)}{c^\varepsilon(\Pi_\infty)\cdot p_{\theta}^\varepsilon(\Pi^\vee)} \right) = \frac{p_{\det}^\varepsilon({}^\sigma\Pi^\vee)}{c^\varepsilon({}^\sigma\Pi_\infty)\cdot p_{\theta}^\varepsilon({}^\sigma\Pi^\vee)},\quad \sigma \in {\rm Aut}(\C).
\]
The assertion then follows from the archimedean period relation Theorem \ref{thm:archi-period-relation}.
\end{proof}

\subsubsection{Period relation for cuspidal automorphic representations of orthogonal type}

Assume $n$ is even.
The generator $[\Pi_\infty]^\varepsilon$ determines a generator in the $-\varepsilon$-isotypic part
\[
[\Pi_\infty]^{-\varepsilon} \in H^\bullet(\frak{g}_\infty,K_\infty^\circ; \W(\Pi_\infty)\otimes\M_\lambda)(-\varepsilon)
\]
by the diagram
\begin{equation*}
\begin{tikzcd}[row sep=normal, column sep=normal]
\extp^{\bullet}\frak{g}_\infty/\frak{k}_\infty \arrow[r, "{[\Pi_\infty]}^\varepsilon"]  & \mathcal{W}(\Pi_\infty)\otimes \mathcal{M}_{\lambda}\arrow[d, "(\,\cdot\, \otimes {\rm sgn}\circ\det)\otimes {\rm id}"]\\
\extp^{\bullet}\frak{g}_\infty/\frak{k}_\infty \arrow[r, "{[\Pi_\infty]}^{-\varepsilon}"]\arrow[u, "{\rm id}"] & \mathcal{W}(\Pi_\infty)\otimes \mathcal{M}_{\lambda},
\end{tikzcd}
\end{equation*}
where $``{\rm sgn}"$ is the character of $F_\infty^\times$ defined by
\[
{\rm sgn}_v:= \begin{cases}
{\rm sgn} & \mbox{ if $v \in S_r$},\\
1 & \mbox{ if $v \in S_c$}.
\end{cases}
\]
With respect to the generators $[\Pi_\infty]^\varepsilon$ and $[\Pi_\infty]^{-\varepsilon}$, we have the Betti--Whittaker periods $p^\varepsilon(\Pi)$ and $p^{-\varepsilon}(\Pi)$ of $\Pi$ defined in \S\,\ref{sec:comparison-isomorphism}.

We have the following period relation when $\Pi$ is of orthogonal type.

\begin{theorem}\label{thm:opposite-period-relation}
Assume $n$ is even, $\Pi$ is $\chi$-orthogonal, and either $F$ is not totally imaginary or $\chi = |\mbox{ }|_{\A_F}^{-{\sf w}}$. Then we have
\begin{align*}
\sigma \left(\frac{p^\varepsilon(\Pi)}{\Gsum(\chi^{\frac{n}{2}}\omega_\Pi^{-1})\cdot p^{-\varepsilon}(\Pi)} \right) = \frac{p^\varepsilon({}^\sigma \Pi)}{\Gsum(\chi^{\frac{n}{2}}\omega_{\Pi}^{-1})\cdot p^{-\varepsilon}({}^\sigma \Pi)} ,\quad \sigma \in {\rm Aut}(\C).
\end{align*}
\end{theorem}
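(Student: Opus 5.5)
The plan is to obtain the relation by composing three comparisons: the duality relation of Theorem~\ref{thm:period-relation}, the comparison of Theorem~\ref{thm:det-theta} between the $\theta$-twisted and $\det$-twisted generators, and a twisting relation for Betti--Whittaker periods under $\Pi\mapsto\Pi\otimes\chi^{-1}\circ\det$. Throughout, $\Pi$ is $\chi$-orthogonal, so $\Pi^\vee\cong\Pi\otimes\chi^{-1}\circ\det$. If $F$ is not totally imaginary, Proposition~\ref{prop:sgn-criterion} gives $\varepsilon(\chi)=-1$. If $\chi=|\ |_{\A_F}^{-{\sf w}}$, then $\chi$ is unramified and algebraic with a transparent archimedean component.

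\emph{Step 1.} Theorem~\ref{thm:period-relation} gives the $\mathrm{Aut}(\C)$-equivariance of
\[
p_\theta^\varepsilon(\Pi^\vee)\big/\bigl(\Gsum(\omega_\Pi)^{1-n}p^\varepsilon(\Pi)\bigr),
\]
and Theorem~\ref{thm:det-theta} replaces $p_\theta^\varepsilon(\Pi^\vee)$ by $p_{\det}^\varepsilon(\Pi^\vee)$ equivariantly. Here the $\det$-twist realizes $\Pi^\vee\cong\Pi\otimes\nu_{\sf w}\circ\det$ only at infinity. Globally we instead realize $\W(\Pi_f^\vee)=\W(\Pi_f)\otimes\chi_f^{-1}\circ\det$ and $\W(\Pi^\vee_\infty)=\W(\Pi_\infty)\otimes\chi_\infty^{-1}\circ\det$.

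\emph{Step 2.} For this I would prove (or cite, following Raghuram--Shahidi) a twisting lemma. The map $W\mapsto W\otimes\chi_f^{-1}\circ\det$ intertwines $\sigma_\W$ up to ${}^\sigma\chi_f(\det\underline u_\sigma)$, which produces the factor $\Gsum(\chi^{-1})^{n(n-1)/2}$ by \eqref{eq:Gauss-sum}. On the cohomological side, cup product with the class of the algebraic character $\chi^{-1}\circ\det$ is $\mathrm{Aut}(\C)$-equivariant. It changes the coefficient system $\M_\lambda$ to $\M_{\lambda-{\sf w}}=\M_{\lambda^\vee}$ and multiplies the $\pi_0(G(\R))$-character by $\varepsilon(\chi)$. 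At infinity, $\chi_\infty^{-1}\circ\det$ differs from $\nu_{\sf w}\circ\det$ by $\mathrm{sgn}\circ\det$ at the real places exactly when $\varepsilon(\chi)=-1$. Comparing generators, $\widetilde{[\Pi_\infty]}^\varepsilon$ therefore corresponds to the generator of $\Pi\otimes\chi^{-1}$ obtained from $[\Pi_\infty]^{-\varepsilon}$ as in the diagram defining it. The outcome is
\[
p_{\det}^\varepsilon(\Pi^\vee)\sim\Gsum(\chi^{-1})^{n(n-1)/2}\,p^{-\varepsilon}(\Pi),
\]
in the equivariant sense. In the totally imaginary case $\varepsilon$ is vacuous and $\pm\varepsilon$ coincide, so the same argument applies.

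\emph{Step 3 (Gauss sums).} Combining Steps 1 and 2 gives the equivariance of
\[
p^\varepsilon(\Pi)\big/\bigl(\Gsum(\omega_\Pi)^{n-1}\Gsum(\chi^{-1})^{r(n-1)}p^{-\varepsilon}(\Pi)\bigr),\qquad n=2r.
\]
By \eqref{eq:Gauss-sum-2}, the product of Gauss sums may be replaced equivariantly by $\Gsum(\eta)^{n-1}$, where $\eta:=\omega_\Pi\chi^{-r}$ is quadratic. Since $\eta=\eta^{-1}$, the number $\Gsum(\eta)^2$ transforms equivariantly; this follows from \eqref{eq:Gauss-sum} together with $\eta(u_\sigma)^2=1$. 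Hence $\Gsum(\eta)^{n-1}$ may be replaced by $\Gsum(\eta)=\Gsum(\chi^{n/2}\omega_\Pi^{-1})$, which is the assertion.

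I expect the main obstacle to be Step 2. The delicate points are the exact matching of archimedean generators, namely the sign character and the identification $\M_{\lambda-{\sf w}}$ versus ${}^\theta\M_\lambda$ through $\Phi_\lambda$, and verifying that the cup product with the character class commutes with $\sigma_{\rm dR}^\bullet$ without introducing additional transcendental factors.
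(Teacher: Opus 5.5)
Your proposal is correct and follows the paper's proof essentially step for step. You combine Theorem~\ref{thm:period-relation} with Theorem~\ref{thm:det-theta}, then apply the Raghuram--Shahidi twisting relation with $\omega=\chi^{-1}$ together with the identification $[\Pi_\infty\otimes\chi_\infty^{-1}]^\varepsilon=\widetilde{[\Pi_\infty]}^\varepsilon$ coming from $\chi_\infty=\mathrm{sgn}\cdot\nu_{\sf w}^{-1}$; your Step~3 makes explicit the Gauss-sum bookkeeping the paper leaves implicit.

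The one input you assert without proof is $\lambda^\vee=\lambda-{\sf w}$, which is needed even to define $p^\varepsilon_{\det}$ and to apply Theorem~\ref{thm:det-theta}. The paper supplies it in two ways. When $F$ has a real place it cites \cite[Proposition 3.3]{DR2024}; this also follows directly from strong purity evaluated at a real embedding. When $\chi=|\cdot|_{\A_F}^{-{\sf w}}$ it follows from $\Pi_\infty^\vee\cong\Pi_\infty\otimes|\det|_\infty^{\sf w}$.
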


\begin{proof}
Note that the assumptions imply that $\lambda^\vee = \lambda-{\sf w}$.
Indeed, if $\chi = |\mbox{ }|_{\A_F}^{-{\sf w}}$, then the second assumption implies that $\Pi_\infty^\vee \cong \Pi_\infty \otimes |\det|_{\infty}^{\sf w}$.
If $F$ is not totally imaginary, then it is shown in
\cite[Proposition 3.3]{DR2024} that $\lambda$ is a base change from the maximal totally real subfield of $F$. 
In both cases, we have $\lambda^\tau = \lambda^{\overline{\tau}}$ for all $\tau:F\rightarrow \C$.

By the period relations in Theorems \ref{thm:period-relation} and \ref{thm:det-theta}, we have
\begin{equation}\label{eq:opposite-period-relation-pf1}
		\sigma\left( \frac{p_{\det}^\eps(\Pi\dual)}{\Gsum(\omega_{\Pi})^{1-n}\cdot p^\eps(\Pi)} \right) = \frac{p_{\det}^\eps({}^\sigma\Pi\dual)}{\Gsum({}^\sigma\omega_{\Pi})^{1-n}\cdot p^\eps({}^\sigma\Pi)},\quad \sigma \in {\rm Aut}(\C).
\end{equation}
On the other hand, for each algebraic Hecke character $\omega$ of $F$, we have the period relation established by Raghuram--Shahidi \cite[Theorem 4.1]{RS2008}:
\begin{align}\label{eq:opposite-period-relation-pf2}
\sigma \left( \frac{p^\varepsilon(\Pi\otimes\omega)}{\Gsum(\omega)^{{\frac{n(n-1)}{2}}}\cdot p^{\varepsilon\cdot\varepsilon(\omega)}(\Pi)} \right) = \frac{p^\varepsilon({}^\sigma\Pi\otimes{}^\sigma\omega)}{\Gsum({}^\sigma\omega)^{{\frac{n(n-1)}{2}}}\cdot p^{\varepsilon\cdot\varepsilon(\omega)}({}^\sigma\Pi)},\quad \sigma \in {\rm Aut}(\C).
\end{align}
Here the Betti--Whittaker period $p^\varepsilon(\Pi\otimes\omega)$ is defined with respect to the generator $[\Pi_\infty \otimes \omega_\infty]^\varepsilon$ normalized by the diagram
\begin{equation*}
\begin{tikzcd}[row sep=normal, column sep=normal]
\extp^{\bullet}\frak{g}_\infty/\frak{k}_\infty \arrow[rr, "{[\Pi_\infty]}^{\varepsilon\cdot\varepsilon(\omega)}"]  && \mathcal{W}(\Pi_\infty)\otimes \mathcal{M}_{\lambda}\arrow[d, "(\,\cdot\, \otimes \omega_\infty\circ\det)\otimes {\rm id}"]\\
\extp^{\bullet}\frak{g}_\infty/\frak{k}_\infty \arrow[rr, "{[\Pi_\infty\otimes \omega_\infty]}^{\varepsilon}"]\arrow[u, "{\rm id}"] && \mathcal{W}(\Pi_\infty\otimes\omega_\infty)\otimes (\mathcal{M}_{\lambda}\otimes \omega_{\rm alg}^{-1}\circ\det),
\end{tikzcd}
\end{equation*}
where $\omega_{\rm alg}$ is the algebraic character of $\prod_{\tau : F \rightarrow \C}\C^\times$ defined by (cf.\,\S\,\ref{sec:Hecke-char-Gauss-sums})
\[
\omega_{\rm alg}(x):= \prod_{\tau : F \rightarrow \C} x_\tau^{a(\omega)_\tau},\quad x = (x_\tau)_{\tau : F \rightarrow \C}.
\]
Assume $\Pi$ is $\chi$-orthogonal. By Proposition \ref{prop:sgn-criterion}, we have $\varepsilon(\chi) = -1$ if $F$ is not totally imaginary.
Together with the condition $\lambda^\vee = \lambda - {\sf w}$, we deduce that $\chi_\infty = {\rm sgn}\cdot \nu_{\sf w}^{-1}$, where $\nu_{\sf w}$ is defined in (\ref{eq:det-character}).
Therefore, 
\[
[\Pi_\infty \otimes \chi_\infty^{-1}]^\varepsilon = \widetilde{[\Pi_\infty]}^\varepsilon
\] 
and hence
\[
p^\varepsilon(\Pi \otimes \chi^{-1}) = p_{\det}^\varepsilon(\Pi^\vee).
\] 
The proposed period relation then follows from (\ref{eq:opposite-period-relation-pf1}) and (\ref{eq:opposite-period-relation-pf2}).
This completes the proof.
\end{proof}

\begin{remark}
\label{rem:bhagwat-raghuram-1}
In the above theorem, suppose $F$ is totally real and $n \equiv 0 \pmod{4}$,  
and the cuspidal $\Pi \otimes |\det|_{\A_F}^{\frac{\sf w}{2}}$ is the transfer of a cuspidal automorphic representation $\sigma$ of the 
split orthogonal group ${\rm O}(n/2, n/2)$; then $\Pi$ is orthogonal ($\chi = |\mbox{ }|_{\A_F}^{-{\sf w}}$ and $\omega_\Pi = |\mbox{ }|_{\A_F}^{-\frac{n{\sf w}}{2}}$).
The relative period $\Omega^\epsilon(\Pi)$ of $\Pi$, defined in \cite[\S\,5.2.3]{HR2020}, being 
expressible as the ratio $p^\varepsilon(\Pi)/p^{-\varepsilon}(\Pi)$ of the Betti--Whittaker periods (cf.\,\cite[Theorem 3.1]{Raghuram2013}), 
becomes trivial by Theorem~\ref{thm:opposite-period-relation}. 
This gives another proof of the main theorem of Bhagwat and Raghuram \cite{BR2020} on the ratios of special values of the degree-$n$ $L$-function of $\sigma$. 
See also Remark~\ref{rem:bhagwat-raghuram-2}. 
\end{remark}

\subsection{Ratio of consecutive critical $L$-values}

\subsubsection{Triple product $L$-functions}

Let $\Pi_i$ be cuspidal automorphic representations of $\GL_{n_i}(\A_F)$ for $i=1,2,3$.
The associated triple product $L$-function is defined by
\[
L(s,\Pi_1\times\Pi_2\times\Pi_3) := \prod_v L(s,\phi_{\Pi_{1,v}}\otimes\phi_{\Pi_{2,v}}\otimes\phi_{\Pi_{3,v}}),\quad {\rm Re}(s) \gg 0.
\]
Here $v$ varies over the places of $F$.
Conjecturally, it admits meromorphic continuation to the whole complex plane and satisfies a functional equation relating $L(s,\Pi_1\times\Pi_2\times\Pi_3)$ with $L(1-s,\Pi_1^\vee\times\Pi_2^\vee\times\Pi_3^\vee)$.
This is known if the automorphic tensor product $\Pi_i \boxtimes \Pi_j$ is isobaric automorphic for some $i\neq j$.

Assume $\Pi_1,\Pi_2,\Pi_3$ are regular algebraic.
A critical point for the triple product $L$-function is a half-integer $m_0 \in \Z+ \tfrac{n_1+n_2+n_3+1}{2}$ such that the local factors $L(s,\Pi_{1,\infty}\times\Pi_{2,\infty}\times\Pi_{3,\infty})$ and $L(1-s,\Pi_{1,\infty}^\vee\times\Pi_{2,\infty}^\vee\times\Pi_{3,\infty}^\vee)$ are holomorphic at $s = m_0$.
As an application to the period relation Theorem \ref{thm:opposite-period-relation} and the 
algebraicity result of Harder--Raghuram \cite{HR2020}, we prove the algebraicity of the ratio of the triple product $L$-functions at consecutive critical points.

\begin{theorem}\label{thm:ratio-triple}
Assume the following conditions are satisfied:
\begin{itemize}
\item[(1)] $F$ is totally real.
\item[(2)] $n_1n_2$ is even.
\item[(3)] There exist algebraic Hecke characters $\chi_1$ and $\chi_2$ of $F$ such that 
\[
\Pi_1^\vee\cong\Pi_1 \otimes \chi_1^{-1}\circ\det,\quad \Pi_2^\vee\cong\Pi_2 \otimes \chi_2^{-1}\circ\det.
\]
\item[(4)] $\varepsilon(\chi_1\chi_2) = (-1)^{n_1+n_2}$. 
\item[(5)] $\Pi_1 \boxtimes \Pi_2$ is regular and automorphic.
\end{itemize}
Let $m_0,m_0+1 \in \Z+\tfrac{n_1+n_2+n_3+1}{2}$ be critical points such that $L(m_0+1,\Pi_1 \times \Pi_2 \times \Pi_3) \neq 0$. Then we have
\begin{align*}
&\sigma \left( \frac{L(m_0,\Pi_1 \times \Pi_2 \times \Pi_3)}{(\sqrt{-1})^{\frac{n_1n_2n_3}{2}}\cdot \mathcal{G}\left(\chi_1^{\frac{n_1n_2}{2}}\omega_{\Pi_1}^{-n_2}\right)^{n_3} \mathcal{G}\left(\chi_2^{\frac{n_1n_2}{2}}\omega_{\Pi_2}^{-n_1}\right)^{n_3} \cdot L(m_0+1,\Pi_1 \times \Pi_2 \times \Pi_3)} \right)\\
& =  \frac{L(m_0,{}^\sigma\Pi_1 \times {}^\sigma\Pi_2 \times {}^\sigma\Pi_3)}{(\sqrt{-1})^{\frac{n_1n_2n_3}{2}}\cdot \mathcal{G}\left(\chi_1^{\frac{n_1n_2}{2}}\omega_{\Pi_1}^{-n_2}\right)^{n_3} \mathcal{G}\left(\chi_2^{\frac{n_1n_2}{2}}\omega_{\Pi_2}^{-n_1}\right)^{n_3} \cdot L(m_0+1,{}^\sigma\Pi_1 \times {}^\sigma\Pi_2 \times {}^\sigma\Pi_3)}
\end{align*}
for all $\sigma \in {\rm Aut}(\C)$.
\end{theorem}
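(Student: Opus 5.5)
The plan is to reduce the triple product $L$-function to a Rankin--Selberg $L$-function for $\GL_N\times\GL_{n_3}$ with $N=n_1n_2$, and then use the Harder--Raghuram ratio theorem \cite{HR2020}. That theorem expresses ratios of consecutive critical values in terms of a relative period, which we will remove using Theorem~\ref{thm:opposite-period-relation}.

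First, set $\Pi:=(\Pi_1\boxtimes\Pi_2)\otimes|\det|_{\A_F}^{\frac{n_1+n_2-1}{2}}$. By hypothesis (5) this is automorphic and regular. Since $\Pi_1,\Pi_2$ are regular algebraic and $n_1n_2$ is even, it is regular algebraic, and we may take it to be isobaric (cuspidal summands). By Corollary~\ref{coro:automorphic-tensor-product} together with hypotheses (3) and (4), $\Pi$ is $\chi$-orthogonal with $\chi:=\chi_1\chi_2|\ |_{\A_F}^{n_1+n_2-1}$. Its central character is $\omega_\Pi=\omega_{\Pi_1}^{n_2}\omega_{\Pi_2}^{n_1}|\ |^{N(n_1+n_2-1)/2}$, so $\chi^{N/2}\omega_\Pi^{-1}=\chi_1^{N/2}\omega_{\Pi_1}^{-n_2}\cdot\chi_2^{N/2}\omega_{\Pi_2}^{-n_1}$ up to a power of the norm. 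Each of the two factors is quadratic on finite ideles. By \eqref{eq:Gauss-sum-2}, the Gauss sum of the product equals the product of the two Gauss sums appearing in the statement, up to a factor that is $\mathrm{Aut}(\C)$-equivariant.

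Next, note that $L(s,\Pi_1\times\Pi_2\times\Pi_3)=L(s+\tfrac{n_1+n_2-1}{2},\Pi\times\Pi_3)$, and under this shift critical points match up. Harder--Raghuram, applied to $\GL_N\times\GL_{n_3}$ over the totally real field $F$ with $N$ even, give the following. For consecutive critical points with nonvanishing denominator, the ratio $L(m,\Pi\times\Pi_3)/L(m+1,\Pi\times\Pi_3)$ is Galois-equivariantly equal to $(\sqrt{-1})^{Nn_3/2}$ times $\Omega^{\varepsilon}(\Pi)^{\pm n_3}$, or rather times the appropriate product of relative periods attached to $\Pi$ (and a signature depending on $\Pi_3$). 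Here $\Omega^{\varepsilon}(\Pi)=p^{\varepsilon}(\Pi)/p^{-\varepsilon}(\Pi)$ by \cite[Theorem 3.1]{Raghuram2013}. If $\Pi$ is not cuspidal, I would extend this using the isobaric version in \cite{HR2020} or factor into cuspidal summands, each of which is again $\chi$-orthogonal.

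Finally, Theorem~\ref{thm:opposite-period-relation} shows that $p^{\varepsilon}(\Pi)/p^{-\varepsilon}(\Pi)$ equals $\Gsum(\chi^{N/2}\omega_\Pi^{-1})$ up to Galois-equivariant algebraic factors. Raising this to the power $n_3$ (or $-n_3$) gives the stated Gauss sums; the sign of the exponent is immaterial since the characters are quadratic, so $\Gsum^2$ is rational up to a sign. Substituting yields the theorem.

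I expect the main obstacle to be bookkeeping in the Harder--Raghuram input. One must match signatures $\varepsilon$ with the parity of $m_0$, check the exact power of the relative period (including the $n_3$ exponent), and handle the case where $\Pi$ is only isobaric, which requires applying Theorem~\ref{thm:opposite-period-relation} summand-wise and multiplying the resulting Gauss sums via \eqref{eq:Gauss-sum-2}.
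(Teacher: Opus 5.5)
Your overall route matches the paper's. You form $\Pi=(\Pi_1\boxtimes\Pi_2)\otimes|\det|_{\A_F}^{\frac{n_1+n_2-1}{2}}$, split it into cuspidal summands $\Sigma_i$, apply \cite[Theorem 7.21]{HR2020} to each $\Sigma_i\times\Pi_3$, remove the relative periods with Theorem~\ref{thm:opposite-period-relation}, and collect Gauss sums via \eqref{eq:Gauss-sum-2}.

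The gap is your sentence ``we may take it to be isobaric.'' Hypothesis (5) only says that $\Pi_1\boxtimes\Pi_2$ is automorphic, and automorphic representations need not be isobaric. Yet isobaricity (and essential unitarity) is exactly what Corollary~\ref{coro:automorphic-tensor-product} needs in order to conclude that $\Pi$ is $\chi$-orthogonal. It is also what gives the factorization $L(s,\Pi_1\times\Pi_2\times\Pi_3)=\prod_i L(s-\tfrac{n_1+n_2-1}{2},\Sigma_i\times\Pi_3)$ into cuspidal Rankin--Selberg $L$-functions, which is the only setting where Harder--Raghuram applies. The paper obtains isobaricity from hypothesis (1), in three steps:
\begin{itemize}
\item Since $F$ is totally real, the results of Caraiani et al.\ \cite{Caraiani2012} make the regular algebraic, essentially self-dual $\Pi_1$ and $\Pi_2$ essentially tempered at every place.
\item Hence $\Pi$ is essentially tempered everywhere, because its local parameters are tensor products.
\item Then \cite[Lemme 1.5]{Clozel1990} shows that $\Pi$ is isobaric.
\end{itemize}
This Ramanujan-type input is the missing idea. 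In your write-up, hypothesis (1) is used only to place you in the totally real setting of \cite{HR2020}.

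There is also a smaller omission. Theorem~\ref{thm:opposite-period-relation} requires even rank, so you must check that every $t_i$ is even. Being $\chi$-orthogonal does not give this, since odd-rank cuspidal representations are always of orthogonal type. As in the proof of Proposition~\ref{prop:sgn-criterion}, an odd $t_i$ would contribute a one-dimensional summand to the Langlands parameter of $\Pi_v$ at a real place $v$. That contradicts the regularity of $\Pi$, since $n_1n_2$ is even. Evenness also ensures that each $\Sigma_i$ is itself regular algebraic without a half-integral twist.

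Two bookkeeping corrections:
\begin{itemize}
\item The shift is $s-\tfrac{n_1+n_2-1}{2}$, not $s+\tfrac{n_1+n_2-1}{2}$.
\item \cite[Theorem 7.21]{HR2020} gives the first power of a relative period $\Omega^{\varepsilon}(\Sigma_i)=(\sqrt{-1})^{t_i/2}\,p^{\varepsilon}(\Sigma_i)/p^{-\varepsilon}(\Sigma_i)$ when $n_3$ is odd, with $\varepsilon$ depending on $m_0$ and $\Pi_3$. It gives no period when $n_3$ is even.
\end{itemize}
As you observe, the $n_3$-th powers in the statement absorb this. The Gauss sums involved are of quadratic characters, so their squares lie in $\Q^\times$.
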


\begin{proof}
Let $n:=n_1n_2$, and set
\[
\Pi := (\Pi_1 \boxtimes \Pi_2)\otimes |\det|_{\A_F}^{\frac{n_1+n_2-1}{2}},
\quad
\chi := \chi_1\chi_2|\det|_{\A_F}^{n_1+n_2-1}.
\]
By conditions (2) and (5), $\Pi$ is a regular algebraic automorphic representation of $\GL_n(\A_F)$.
We note that condition (1), together with the results of Caraiani et al. (cf.\,\cite[Theorem 1.2]{Caraiani2012} and the references therein), implies that $\Pi_1$ and $\Pi_2$ are essentially tempered everywhere.
Hence $\Pi$ is essentially tempered everywhere.
By \cite[Lemme 1.5]{Clozel1990}, it follows that $\Pi$ is isobaric.

Write
\[
\Pi = \Sigma_1 \boxplus \cdots \boxplus \Sigma_k
\]
for some cuspidal automorphic representations $\Sigma_i$ of $\GL_{t_i}(\A_F)$ for $1 \leq i \leq k$.
By condition (4) and Corollary \ref{coro:automorphic-tensor-product}, $\Pi$ is $\chi$-orthogonal. Moreover, as explained in the proof of 
Proposition \ref{prop:sgn-criterion}, $t_i$ is even for all $i$.
Since
\[
\prod_{i=1}^k \omega_{\Sigma_i} = \omega_{\Pi_1}^{n_2}\omega_{\Pi_2}^{n_1}|\mbox{ }|_{\A_F}^{\frac{n_1n_2(n_1+n_2-1)}{2}},
\]
by (\ref{eq:Gauss-sum-2}) we have
\begin{align*}
\sigma \left( \frac{\mathcal{G}\left(\chi_1^{\frac{n_1n_2}{2}}\omega_{\Pi_1}^{-n_2}\right) 
\mathcal{G}\left(\chi_2^{\frac{n_1n_2}{2}}\omega_{\Pi_2}^{-n_1}\right)}{\prod_{i=1}^k \Gsum(\chi^{\frac{t_i}{2}}\omega_{\Sigma_i}^{-1})} \right) = 
\frac{\mathcal{G}\left(\chi_1^{\frac{n_1n_2}{2}}\omega_{\Pi_1}^{-n_2}\right) 
\mathcal{G}\left(\chi_2^{\frac{n_1n_2}{2}}\omega_{\Pi_2}^{-n_1}\right)}{\prod_{i=1}^k \Gsum(\chi^{\frac{t_i}{2}}\omega_{\Sigma_i}^{-1})},
\quad \sigma \in {\rm Aut}(\C).
\end{align*}
Note that 
\[
L(s,\Pi_1 \times \Pi_2 \times \Pi_3) = \prod_{i=1}^kL(s-\tfrac{n_1+n_2-1}{2},\Sigma_i \times \Pi_3).
\]
Let 
\[
m_0':=m_0-\tfrac{n_1+n_2-1}{2} \in \Z+\tfrac{n_3}{2},
\]
which is a critical point for $L(s,\Sigma_i \times \Pi_3)$ for all $i$.
We recall the result of Harder--Raghuram \cite[Theorem 7.21]{HR2020} on the ratio of Rankin--Selberg $L$-functions at consecutive critical points: If $n_3$ is odd, then we have
\begin{align*}
&\sigma \left( \frac{L(m_0',\Sigma_i \times \Pi_3)}{\Omega^{\varepsilon_i\cdot\varepsilon(\Pi_3)}(\Sigma_i)\cdot L(m_0'+1,\Sigma_i \times \Pi_3)} \right)\\
& = \frac{L(m_0',{}^\sigma\Sigma_i \times {}^\sigma\Pi_3)}{\Omega^{\varepsilon_i\cdot\varepsilon(\Pi_3)}({}^\sigma\Sigma_i)\cdot L(m_0'+1,{}^\sigma\Sigma_i \times {}^\sigma\Pi_3)},\quad \sigma \in {\rm Aut}(\C)
\end{align*}
where 
\[
\varepsilon_i:= (-1)^{m_0' + \frac{t_i+n_3}{2}},
\]
and
$\Omega^\varepsilon(\Sigma_i)$ is the relative period of $\Sigma_i$ defined in \cite[\S\,5.2.3]{HR2020} 
and can be expressed in terms of ratios of the Betti--Whittaker periods (cf.\,\cite[Theorem 3.1]{Raghuram2013})
\[
\Omega^\varepsilon(\Sigma_i) = (\sqrt{-1})^{\frac{t_i}{2}}\cdot \frac{p^\varepsilon(\Sigma_i)}{p^{-\varepsilon}(\Sigma_i)}.
\]
If $n_3$ is even, then similar result holds with the relative periods replaced by $1$.
Finally, since $t_i$ is even and $\Sigma_i$ is $\chi$-orthogonal, by the period relation in Theorem \ref{thm:opposite-period-relation}, we have
\[
\sigma \left( \frac{\Omega^\varepsilon(\Sigma_i)}{(\sqrt{-1})^{\frac{t_i}{2}}\cdot \Gsum(\chi^{\frac{t_i}{2}}\omega_{\Sigma_i}^{-1})} \right) = \frac{\Omega^\varepsilon({}^\sigma\Sigma_i)}{(\sqrt{-1})^{\frac{t_i}{2}}\cdot \Gsum(\chi^{\frac{t_i}{2}}\omega_{\Sigma_i}^{-1})},\quad \sigma \in {\rm Aut}(\C).
\]
This completes the proof.
\end{proof}

\begin{remark}\label{rem:bhagwat-raghuram-2}
If $n_3$ is even, then conditions (3) and (4) can be removed.

If $n_2 = 1$ and $\Pi_2$ is the trivial representation, then conditions (2)-(5) can be replaced by (i) $n_1$ is even, (ii) $\Pi_1$ is $\chi_1$-orthogonal. 
In this case, the triple product $L$-function is a tensor product $L$-function for ${\rm GSpin}_{n_1}^* \times \GL_{n_3}$, and our result generalizes the 
theorem of Bhagwat--Raghuram \cite{BR2020}, where $\chi_1$ is a power of $|\cdot|_{\A_F}$ and $n_1$ is assumed to be divisible by $4$.
See also Remark~\ref{rem:bhagwat-raghuram-1}. 

The automorphy of $\Pi_1 \boxtimes \Pi_2$ is known when $n_1=n_2=2$ by the work of Ramakrishnan \cite{Rama2000}, and when $n_1=2$ and $n_2=3$ by the work of Kim and Shahidi \cite{KS2002}. 
\end{remark}

\subsubsection{Twisted Asai $L$-functions}

Let $E/F$ be a finite extension of degree $d=[E:F]$. 
Let $K$ be the étale quadratic algebra over $F$ (discriminant algebra of $E/F$) defined by
\[
K:= \begin{cases}
F\left(\sqrt{{\rm disc}(E/F)}\right) & \mbox{ if ${\rm disc}(E/F) \notin (F^\times)^2$},\\
F \times F & \mbox{ if ${\rm disc}(E/F) \in (F^\times)^2$}.
\end{cases}
\]
Let $\Sigma$ and $\Pi'$ be cuspidal automorphic representations of $\GL_m(\A_E)$ and $\GL_{n'}(\A_F)$ respectively. 
The associated twisted Asai $L$-function is defined by
\[
L(s,{\rm As}(\Sigma) \times \Pi'):=\prod_v L(s,({\rm As}\circ \phi_{\Sigma_v}) \otimes \phi_{\Pi_v'}),\quad {\rm Re}(s) \gg0.
\]
Here $v$ varies over places of $F$. 
Conjecturally, it admits meromorphic continuation to the whole complex plane and satisfies a functional equation relating $L(s,{\rm As}(\Sigma) \times \Pi')$ with $L(1-s,{\rm As}(\Sigma^\vee) \times (\Pi')^\vee)$.
This is known if $d=m=2$ by the result of Krishnamurthy \cite{Kris2003}.

Assume $\Sigma$ and $\Pi'$ are regular algebraic.
A critical point for the twisted Asai $L$-function is a half-integer $m_0 \in \Z+ \tfrac{d+n'+1}{2}$ such that the local factors $L(s,{\rm As}(\Sigma_\infty) \times \Pi_\infty')$ and $L(1-s,{\rm As}(\Sigma_\infty^\vee) \times (\Pi_\infty')^\vee)$ are holomorphic at $s = m_0$.
We have the algebraicity of the ratio of the twisted Asai $L$-functions at consecutive critical points.

\begin{theorem}\label{thm:ratio-Asai}
Assume the following conditions are satisfied:
\begin{itemize}
\item[(1)] $E$ is totally real and $d>1$.
\item[(2)] $m$ is even.
\item[(3)] There exists an algebraic Hecke character $\mu$ of $E$ such that
\[
\Sigma^\vee \cong \Sigma \otimes \mu^{-1}\circ\det.
\]
\item[(4)] $d$ is even or $\Sigma$ is $\mu$-orthogonal.
\item[(5)] ${\rm As}(\Sigma)$ is regular and automorphic.
\end{itemize}
Let $m_0,m_0+1 \in \Z+\frac{d+n'+1}{2}$ be critical points such that $L(m_0+1,{\rm As}(\Sigma_\infty) \times \Pi_\infty') \neq 0$.
Then we have
\begin{align*}
&\sigma \left( \frac{L(m_0,{\rm As}(\Sigma) \times \Pi')}{|D_{K/\Q}|^{\frac{n'}{2}}\cdot L(m_0+1,{\rm As}(\Sigma) \times \Pi')} \right)\\
&  = \frac{L(m_0,{\rm As}({}^\sigma\Sigma) \times {}^\sigma\Pi')}{|D_{K/\Q}|^{\frac{n'}{2}}\cdot L(m_0+1,{\rm As}({}^\sigma\Sigma) \times {}^\sigma\Pi')},\quad \sigma \in {\rm Aut}(\C).
\end{align*}
Here $D_{K/\Q}$ is the absolute discriminant of the discriminant algebra $K$ of $E/F$.
\end{theorem}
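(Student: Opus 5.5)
The plan follows the proof of Theorem~\ref{thm:ratio-triple}: transfer the problem to $\GL_{m^d}$ via the Asai transfer, decompose into cuspidal pieces of orthogonal type, and combine Harder--Raghuram with Theorem~\ref{thm:opposite-period-relation}.

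First set
\[
\Pi := {\rm As}(\Sigma)\otimes|\det|_{\A_F}^{\frac{d-1}{2}},\qquad \chi := (\mu\vert_{\A_F^\times})|\ |_{\A_F}^{d-1}.
\]
By (1), $F\subset E$ is totally real, so every real place of $F$ splits in $E$. Condition (5) together with Corollary~\ref{coro:Asai-transfer} then shows that $\Pi$ is regular algebraic. By (4) it is $\chi$-orthogonal, provided we know that $\Pi$ is isobaric. For isobaricity I would argue as in Theorem~\ref{thm:ratio-triple}: $\Sigma$ is essentially tempered everywhere (Caraiani), hence so is ${\rm As}(\Sigma)$, and Clozel's Lemme~1.5 gives isobaricity. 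Write $\Pi=\Sigma_1\boxplus\cdots\boxplus\Sigma_k$ with cuspidal $\Sigma_i$ on $\GL_{t_i}(\A_F)$. As in the proof of Proposition~\ref{prop:sgn-criterion}, each $t_i$ is even and each $\Sigma_i$ is $\chi$-orthogonal. Then $L(s,{\rm As}(\Sigma)\times\Pi')=\prod_i L(s-\frac{d-1}{2},\Sigma_i\times\Pi')$, and the shifted point $m_0'$ is critical for every factor.

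Next apply Harder--Raghuram \cite[Theorem 7.21]{HR2020} to each $L(s,\Sigma_i\times\Pi')$ at $m_0',m_0'+1$. If $n'$ is even, the ratio is directly Galois equivariant. If $n'$ is odd, the relative period $\Omega^{\pm}(\Sigma_i)=(\sqrt{-1})^{t_i/2}p^{\varepsilon}(\Sigma_i)/p^{-\varepsilon}(\Sigma_i)$ appears. Theorem~\ref{thm:opposite-period-relation} replaces this period, up to Galois-equivariant factors, by $(\sqrt{-1})^{t_i/2}\Gsum(\chi^{t_i/2}\omega_{\Sigma_i}^{-1})$. Taking the product over $i$ and using \eqref{eq:Gauss-sum-2}, the total Gauss sum is equivalent to $\Gsum(\chi^{m^d/2}\omega_\Pi^{-1})$. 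Since $\sum t_i=m^d$ is divisible by $4$ when $m$ is even and $d\ge2$, the total power of $\sqrt{-1}$ is $\sqrt{-1}^{\,m^d/2}=\pm1$ and can be discarded. This power enters with exponent $n'$ only when $n'$ is odd; in that case we still need to match it to $|D_{K/\Q}|^{n'/2}$ modulo rational Galois-equivariant factors.

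The remaining step, and the main obstacle, is identifying the quadratic character $\chi^{m^d/2}\omega_\Pi^{-1}$. The central character of ${\rm As}(\Sigma)$ is computed from the Langlands parameter as $\omega_\Sigma^{m^{d-1}}\vert_{\A_F^\times}$ times a power of the discriminant character $\eta_{K/F}$ of $K/F$. Using $\omega_\Sigma^2=\mu^{m}$ (since $m$ is even), the product should reduce to $\eta_{K/F}$, possibly times the trivial character. Then one uses $\Gsum(\eta_{K/F})\sim |D_{K/\Q}|^{1/2}$ up to Galois-equivariant rationals; the factor $|\delta_{F/\Q}|^{-1/2}$ is handled by the normalization of $\Gsum$, and the sign is $\pm$ for totally real $F$. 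The exponent of $\eta_{K/F}$ in the determinant of ${\rm As}$ comes from the sign of the permutation action of $W_F$ on $\bigotimes_{\tau\in I_v}\C^m$. For $m$ even this sign is $\eta_{K/F}^{m^{d-1}\cdot(\text{stuff})}$, and it has to be tracked carefully together with the parity of $m^d/2$.
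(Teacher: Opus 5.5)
Your route is the paper's route. The paper applies Theorem~\ref{thm:ratio-triple} with $\Pi_1={\rm As}(\Sigma)\otimes|\det|_{\A_F}^{\frac{d-1}{2}}$, $\chi_1=(\mu|_{\A_F^\times})|\ |_{\A_F}^{d-1}$, $n_2=1$ and $\Pi_2$ trivial. The proof of that theorem is exactly your chain: isobaric decomposition into $\chi$-orthogonal cuspidal pieces of even rank, Harder--Raghuram, Theorem~\ref{thm:opposite-period-relation}, and \eqref{eq:Gauss-sum-2}. Your remark that $(\sqrt{-1})^{m^dn'/2}=\pm1$ is also what disposes of the power of $\sqrt{-1}$ there.

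The genuine gap is the step you leave open. You never determine the quadratic character
$\chi^{m^d/2}\omega_\Pi^{-1}=(\omega_\Sigma|_{\A_F^\times})^{m^{d-1}}\omega_{{\rm As}(\Sigma)}^{-1}$. Yet the specific factor in the statement rests entirely on this character being exactly $\omega_{K/F}$, since then $\Gsum(\omega_{K/F})\in|D_{K/\Q}|^{1/2}\Q^\times$ ($\omega_{K/F}$ is trivial at the real places because $E$ is totally real). If the character were trivial, the normalizing factor would instead be $\Gsum(1)^{n'}\in\Q^\times$. That differs from $|D_{K/\Q}|^{n'/2}$ by a non-Galois-invariant number when $n'$ is odd and $|D_{K/\Q}|$ is not a rational square. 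The paper closes this step by quoting Prasad's Lemma~7.1(f), in the corrected form recorded by Chen: $\omega_{{\rm As}(\Sigma)}=\omega_{K/F}\cdot(\omega_\Sigma|_{\A_F^\times})^{m^{d-1}}$. Together with $\mu^m=\omega_\Sigma^2$, this gives $\chi^{m^d/2}\omega_\Pi^{-1}=\omega_{K/F}$.

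Your caution here is warranted, and you should settle the exponent yourself rather than cite around it. Carry out the count you suggest:
\begin{itemize}
\item On $\prod_\tau\GL_m(\C)$, $\det\circ{\rm As}$ is $\prod_\tau\det(g_\tau)^{m^{d-1}}$. Composed with $\phi_\Sigma$ this gives $(\omega_\Sigma|_{\A_F^\times})^{m^{d-1}}$, because $(z_\tau)_\tau\rtimes w\mapsto\prod_\tau z_\tau$ corresponds to restricting characters from $\A_E^\times$ to $\A_F^\times$, with no sign.
\item A transposition of two tensor positions of $(\C^m)^{\otimes d}$ fixes the basis vectors $e_{i_1}\otimes\cdots\otimes e_{i_d}$ with equal entries in those positions. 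It swaps the remaining ones in $m^{d-2}\binom{m}{2}$ pairs.
\end{itemize}
Hence $\omega_{{\rm As}(\Sigma)}=\omega_{K/F}^{m^{d-2}\binom m2}(\omega_\Sigma|_{\A_F^\times})^{m^{d-1}}$, and so $\chi^{m^d/2}\omega_\Pi^{-1}=\omega_{K/F}^{m^{d-2}\binom m2}$.

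For $m$ even, this exponent is odd exactly when $d=2$ and $m\equiv2\pmod 4$. An example is $\GL_2$ over a quadratic extension, where it matches the classical Asai central character. In that case your argument goes through and recovers the stated factor. For $4\mid m$ or $d\ge3$, however, the count gives the trivial character, which is not what the exponent-one formula used in the paper says.

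So as written your proposal is incomplete. Completing it means settling this exponent, and by the count above it does not uniformly produce $|D_{K/\Q}|^{n'/2}$.
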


\begin{proof}
We apply Theorem \ref{thm:ratio-triple} to
\[
\Pi_1:={\rm As}(\Sigma)\otimes |\det|_{\A_F}^{\frac{d-1}{2}},
\quad
\chi_1:=(\mu\vert_{\A_F^\times})|\mbox{ }|_{\A_F}^{d-1},
\]
with $n_2=1$ and with $\Pi_2$ equal to the trivial representation.
Since $E$ is totally real, the automorphic representation $\Pi_1$ is isobaric, as explained
in the first paragraph of the proof of Theorem \ref{thm:ratio-triple}. By the
assumptions and Corollary \ref{coro:Asai-transfer}, $\Pi_1$ is
$\chi_1$-orthogonal.

By \cite[Lemma 7.1-(f)]{Prasad1992}, we have
\[
\omega_{{\rm As}(\Sigma)}
=
\omega_{K/F}\cdot
(\omega_\Sigma\vert_{\A_F^\times})^{m^{d-1}}.
\]
Here $\omega_{K/F}$ is the Hecke character of $F$ associated with $K/F$ by class field theory.
There is a typo in \emph{loc. cit.}; the formula above is the corrected one, as
recorded in \cite[p.\,755]{Chen2021b}. Since $d>1$, $m$ is even, and
$\mu^m=\omega_\Sigma^2$, it follows that
\[
\chi_1^{\frac{m^d}{2}}\omega_{\Pi_1}^{-1}
=(\mu\vert_{\A_F^\times})^{\frac{m^d}{2}}
\omega_{{\rm As}(\Sigma)}^{-1}
=
(\omega_\Sigma\vert_{\A_F^\times})^{m^{d-1}}
\omega_{{\rm As}(\Sigma)}^{-1}
=
\omega_{K/F}.
\]
Finally, it is well known that
\[
\Gsum(\omega_{K/F})\in |D_{K/\Q}|^{\frac{1}{2}}\cdot \Q^\times.
\]
The assertion therefore follows from Theorem \ref{thm:ratio-triple}.
\end{proof}

\begin{remark}
If $E$ is totally real and $m=d=2$, then all conditions are satisfied by the result of Krishnamurthy \cite{Kris2003}.
\end{remark}

\begin{remark}\label{rem:muthu-raghuram}
When $d = 2$ and $n'=1$, without assumption (5), i.e., without assuming the existence of the Asai transfer, the above theorem will follow from a forthcoming 
work of Krishnamurthy and Raghuram \cite{muthu-raghuram}. 
\end{remark}

\section{Variations for other periods}

Let $\Pi$ be a regular algebraic cuspidal automorphic representation of $G(\A)$ such that $\Pi_f \in \Coh_{\rm cusp}(G,\lambda)$ for some $\lambda \in X_{00}^+(T\times\C)$ with purity weight ${\sf w}$.
Let $\bullet \in \{b_n^F,t_n^F\}$ be an extreme degree, and $\varepsilon \in \widehat{\pi_0(G(\R))}$ be permissible for $\Pi_\infty$.
In this section, we consider two variants of the period relation under duality. The first concerns the Betti--Shalika periods under duality in the case where $\Pi$ admits a Shalika model. The second relates the Betti--Whittaker periods of $\Pi$ and $\Pi^c$, where $c$ is a $\Q$-algebra automorphism of $F$.

\subsection{Betti--Shalika periods}

In this subsection, assume that 
\[
n=2r \quad\mbox{and}\quad \mbox{$\Pi$ is $\chi$-symplectic}
\] 
for some algebraic Hecke character $\chi$ of $F$ (cf.\,\S\,\ref{sec:esse.-self-dual}).
By the well-known theorem of Jacquet--Shalika \cite[\S\,8, Theorem 1]{JS1990b}, this is equivalent to $\chi^r = \omega_\Pi$ and the $(\chi,\psi_F)$-Shalika integral
\[
{S}_\chi(g;\phi):= \int_{Z_G(\A)R(\Q)\backslash R(\A)}\phi(ug)(\chi\otimes\psi_F)^{-1}(u)\,du^{\rm Tam},\quad g \in G(\A)
\]
is nonzero for some cusp form $\phi \in V_\Pi$.
Here $R$ is the Shalika subgroup of $G$ defined by
\[ 
R := \left.\left\{u(h,x)= \begin{pmatrix} h & 0 \\ 0 & h \end{pmatrix} \begin{pmatrix} {\bf 1}_r & x \\ 0 & {\bf 1}_r\end{pmatrix}\,\right\vert\, h\in {\rm Res}_{F/\Q}\GL_r ,\, x\in {\rm Res}_{F/\Q}{\rm M}_{r\times r}\right\}, 
\]
$du^{\rm Tam}$ is the Tamagawa measure on $Z_G(\A)\backslash R(\A)$, 
and $\chi\otimes\psi_F$ denotes the character 
\[
(\chi\otimes\psi_F)(u(h,x)):= \chi(\det h)\psi_F({\rm tr}(x)).
\]
In this case, let 
\[
\mathcal{S}(\Pi,\chi) \defeq \{ S_\chi(\phi) \,\vert\, \phi \in V_\Pi\}
\] 
be the \emph{$(\chi,\psi_F)$-Shalika model} of $\Pi$; it is a model of $\Pi$ in the sense that we have an isomorphism of $(\frak{g}_\infty,C_\infty)\times G(\A_f)$-modules:
\begin{align}\label{eq:Shalika-realization}
V_\Pi \longrightarrow \mathcal{S}(\Pi,\chi), \quad \phi \longmapsto S_\chi(\phi).
\end{align}
For each place $v$ of $\Q$, let $\mathcal{S}(\Pi_v,\chi_v)$ be the $(\chi_v,\psi_{F,v})$-Shalika model of $\Pi_v$. It is realized in the space of smooth functions $S:G(\Q_v) \rightarrow \C$ such that
\[
S(ug) = (\chi_v\otimes\psi_{F,v})(u)S(g),\quad u \in R(\Q_v),\ g \in G(\Q_v)
\]
and $S$ is right $C_\infty$-finite and of moderate growth when $v=\infty$. 
Then we have an isomorphism
\[
{\bigotimes_v}' \mathcal{S}(\Pi_v,\chi_v) \longrightarrow \mathcal{S}(\Pi,\chi),\quad \bigotimes_v S_v \longmapsto \prod_v S_v.
\]

\subsubsection{An algebraic automorphism of $G$}\label{sec:alg-auto-2}

Let $\iota : G \rightarrow G$ be an algebraic automorphism defined by
\[
\iota(g):= \omega_{r,r}\cdot {}^tg^{-1} \cdot \omega_{r,r}^{-1},
\]
where $\omega_{r,r}$ denotes denotes the standard symplectic matrix:
\begin{align*}
\omega_{r,r}:=\bp 0 & {\bf 1}_r \\ -{\bf 1}_r & 0\ep.
\end{align*}
This is analogous to the automorphism $\theta$ defined in \S\,\ref{sec:alg-auto-1} with the long Weyl element $\omega_n$ replaced by $\omega_{r,r}$.
We also write $\iota(g) = {}^\iota g$. Note that ${}^\iota C_\infty = C_\infty$ and 
${}^\iota u(h,x) = u({}^t h^{-1},{}^tx)$.
In particular, ${}^\iota R = R$ and 
\begin{align*}
(\chi\otimes\psi_F)({}^\iota u) = (\chi^{-1}\otimes\psi_F)(u),\quad u \in R(\A).
\end{align*}

\subsubsection{${\rm Aut}(\C)$-action and $\iota$-twist on the Shalika models}

For $\sigma \in {\rm Aut}(\C)$, the $\sigma$-conjugate ${}^\sigma\Pi_f$ of $\Pi_F$ admits a $({}^\sigma\chi_f,\psi_{F,f})$-Shalika model. Moreover, we have a $\sigma$-linear $G(\A_f)$-equivariant isomorphism 
\[
\sigma_{\mathcal S} : \mathcal{S}(\Pi_f,\chi_f)\longrightarrow \mathcal{S}({}^\sigma\Pi_f,{}^\sigma\chi_f)
\]
defined by
\[
(\sigma_\mathcal{S} S)(g):= \sigma(S(\overline{u}_\sigma^{-1} g)),\quad g \in G(\A_f).
\]
Here
\[
\overline{u}_\sigma:=\diag(u_{\sigma}\cdot {\bf 1}_r, {\bf 1}_r) \in T(\A_f),
\]
and $u_\sigma \in \prod_p \Z_p^\times \subset \A_{F,f}^\times$ is the unique element such that
$\sigma(\psi(x)) = \psi(u_\sigma x)$ for all $x \in \A_f$.
By taking the Galois invariants over $\Q(\Pi,\chi)$, we obtain a subspace of $\mathcal{S}(\Pi_f,\chi_f)$ over $\Q(\Pi,\chi)$:
\begin{align}\label{eq:Shalika-mod-rational-structure}
\mathcal{S}(\Pi_f,\chi_f)^{{\rm Aut}(\C/\Q(\Pi,\chi))} := \left.\left\{S\in \mathcal{S}(\Pi_f,\chi_f)\,\right\vert\,\sigma_\cS S=S\mbox{ for $\sigma \in {\rm Aut}(\C/\Q(\Pi,\chi))$}\right\}.
\end{align}
This defines a $\Q(\Pi,\chi)$-rational structure of $\cS(\Pi_f,\chi_f)$.

As explained in \S\,\ref{sec:local-dual}, the $\iota$-twist representation ${}^\iota\Pi_v$ is isomorphic to $\Pi_v^\vee$ for all places $v$ of $\Q$. Moreover, we have a $\C$-linear isomorphism
\[
\iota_\cS : \cS(\Pi_v,\chi_v) \longrightarrow \cS(\Pi_v^\vee,\chi_v^{-1}),\quad S\longmapsto {}^\iota S.
\]

\begin{lemma}\label{lem:Sh-rational}
Let $\sigma \in {\rm Aut}(\C)$. As $\sigma$-linear isomorphisms from $\cS(\Pi_f,\chi_f)$ to $\cS({}^\sigma\Pi_f^\vee,{}^\sigma\chi_f^{-1})$, we have
\[
\sigma_\cS \circ \frac{\iota_\cS}{\mathcal{G}(\omega_\Pi)}
=
\frac{\iota_\cS}{\mathcal{G}({}^\sigma\omega_\Pi)} \circ \sigma_\cS.
\]
\end{lemma}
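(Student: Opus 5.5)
The plan mirrors the proof of Lemma~\ref{lem:Wh-rational}: compute the two composites pointwise and compare them. The only new input is the conjugation behaviour of the torus element $\overline{u}_\sigma$ under $\iota$. The first step is to check that both sides really are $\sigma$-linear maps $\cS(\Pi_f,\chi_f)\to\cS({}^\sigma\Pi_f^\vee,{}^\sigma\chi_f^{-1})$. Indeed, $\iota_\cS$ lands in $\cS(\Pi_f^\vee,\chi_f^{-1})$ because $(\chi\otimes\psi_F)({}^\iota u)=(\chi^{-1}\otimes\psi_F)(u)$. Then $\sigma_\cS$, defined with the same element $\overline{u}_\sigma$ for any Shalika character, sends this to $\cS({}^\sigma\Pi_f^\vee,{}^\sigma\chi_f^{-1})$. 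The same bookkeeping works in the other order.

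The key computation is
\[
\iota(\overline{u}_\sigma^{-1})
=\omega_{r,r}\,\diag(u_\sigma{\bf 1}_r,{\bf 1}_r)\,\omega_{r,r}^{-1}
=\diag({\bf 1}_r,u_\sigma{\bf 1}_r)
=u_\sigma\cdot \overline{u}_\sigma^{-1},
\]
where $u_\sigma$ on the right is viewed as a central element of $G(\A_f)$. For $S\in\cS(\Pi_f,\chi_f)$ and $g\in G(\A_f)$ this gives
\[
(\sigma_\cS\iota_\cS S)(g)=\sigma\bigl(S(\iota(\overline{u}_\sigma^{-1})\,{}^\iota g)\bigr)
=\sigma\bigl(\omega_{\Pi_f}(u_\sigma)\,S(\overline{u}_\sigma^{-1}\,{}^\iota g)\bigr)
={}^\sigma\omega_\Pi(u_\sigma)\,(\iota_\cS\sigma_\cS S)(g).
\]
Here I use that the centre acts on the Shalika model by right translation through $\omega_\Pi$. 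I should double-check that left and right translation by a central element agree, which is clear since it is central. So
\[
\sigma_\cS\circ\iota_\cS={}^\sigma\omega_\Pi(u_\sigma)\,\iota_\cS\circ\sigma_\cS .
\]

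Finally, \eqref{eq:Gauss-sum} gives $\sigma(\Gsum(\omega_\Pi))={}^\sigma\omega_\Pi(u_\sigma)\,\Gsum({}^\sigma\omega_\Pi)$. Dividing, and using that $\sigma_\cS$ is $\sigma$-linear so the scalar $\Gsum(\omega_\Pi)^{-1}$ becomes $\sigma(\Gsum(\omega_\Pi))^{-1}$, the factors ${}^\sigma\omega_\Pi(u_\sigma)$ cancel. This yields the claimed identity.

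The only real obstacle is sign and convention bookkeeping. I must confirm that the conjugation by $\omega_{r,r}$ swaps the two diagonal blocks and that the inverse-transpose is applied to $\overline{u}_\sigma^{-1}$ rather than $\overline{u}_\sigma$. An error there would produce $\omega_\Pi(u_\sigma)^{-1}$, i.e.\ $\Gsum(\omega_\Pi)^{-1}$ in place of $\Gsum(\omega_\Pi)$, so I would recheck that single matrix identity carefully.
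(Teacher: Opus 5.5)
Your proof is correct and is the same argument as the paper's: you compute $\sigma_\cS\circ\iota_\cS$ and $\iota_\cS\circ\sigma_\cS$ pointwise, use how $\iota$ acts on the torus element in the definition of $\sigma_\cS$, pull the resulting scalar out through the central character, and cancel the factor ${}^\sigma\omega_\Pi(u_\sigma)$ using \eqref{eq:Gauss-sum}. Your matrix identity ${}^\iota(\overline{u}_\sigma^{-1}) = u_\sigma\cdot\overline{u}_\sigma^{-1}$, equivalently ${}^\iota\overline{u}_\sigma = u_\sigma^{-1}\cdot\overline{u}_\sigma$, is the right one. The paper's one-line justification, written as ${}^\iota\overline{u}_\sigma = u_\sigma\cdot\overline{u}_\sigma$, should be read with $\overline{u}_\sigma^{-1}$ in place of $\overline{u}_\sigma$; read that way, it gives exactly your factor $\omega_\Pi(u_\sigma)$, and hence $\Gsum(\omega_\Pi)$ in the statement.
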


\begin{proof}
This is clear, since
\[
{}^\iota \overline{u}_\sigma = u_\sigma\cdot \overline{u}_\sigma.
\]
\end{proof}

\subsubsection{Betti--Shalika periods}

Fix a generator
\[
[\Pi_\infty]_\cS^\varepsilon \in H^\bullet(\frak{g}_\infty,K_\infty^\circ;\mathcal{S}(\Pi_\infty,\chi_\infty)\otimes\M_\lambda)(\varepsilon).
\]
Similarly as in \S\,\ref{sec:comparison-isomorphism}, we have a $G(\A_f)$-equivariant isomorphism 
\[
\mathcal F_{\Pi_f,[\Pi_\infty]_\cS^\varepsilon}^\cS : \cS(\Pi_f,\chi_f) \longrightarrow H_{\rm cusp}^\bullet(S^G,\widetilde\M_\lambda)(\varepsilon\times\Pi_f)
\]
defined by
\[
\mathcal{F}^\cS_{\Pi_f,[\Pi_\infty]_\cS^\varepsilon}(W):= (\Upsilon_\Pi^\cS)^\bullet([\Pi_\infty]_\cS^\varepsilon\otimes W),
\]
where $\Upsilon_\Pi^\cS$ is the inverse of the isomorphism (\ref{eq:Shalika-realization}). 
The \emph{Betti--Shalika period} of $\Pi$ with respect to $[\Pi_\infty]_\cS^\varepsilon$ is the scalar, well-defined up to multiplication by an element of $\Q(\Pi,\chi)^\times$, denoted by
\[
p_\cS^\varepsilon(\Pi) \in \C^\times / \Q(\Pi,\chi)^\times,
\]
and obtained by comparing the two $\Q(\Pi,\chi)$-rational structures on $\Pi_f$ via the isomorphism $\mathcal F_{\Pi_f,[\Pi_\infty]_\cS^\varepsilon}^\cS$.
Assume further that either $F$ is not totally imaginary or $\chi = |\mbox{ }|_{\A_F}^{-{\sf w}}$. 
Then, for each $\sigma \in {{\rm Aut}(\C)}$, 
${}^\sigma\Pi$ is also ${}^\sigma\chi$-symplectic by either Corollary \ref{coro:arithmeticity} or \cite[Proposition 8.2]{CKT2026} (see Remark \ref{rem:Clozel-Kret-Taibi}).
Similarly, we have the Betti--Shalika period $p_\cS^\varepsilon({}^\sigma\Pi)$ with respect to a chosen generator $[{}^\sigma\Pi_\infty]_\cS^\varepsilon$. 
{For compatibility, we assume $[{}^\sigma\Pi_\infty]_\cS^\varepsilon = [\Pi_\infty]_\cS^\varepsilon$ if $\sigma \in {\rm Aut}(\C/\Q(\Pi,\chi))$.}
We normalize the collection of periods
\[
\bigl(p_\cS^\varepsilon({}^\sigma\Pi)\bigr)_{\sigma \in {{\rm Aut}(\C)}}
\]
in the same manner as in (\ref{eq:W-H-periods}). 
When $F$ is totally real these periods were defined by Grobner--Raghuram \cite{GR2014a}. 

\subsubsection{Period relation}

As in \S\,\ref{sec:archi-local-dual}, let
\[
{}^\iota[\Pi_\infty]_\cS^\varepsilon
\in
H^\bullet\bigl(\frak{g}_\infty,K_\infty^\circ;
\mathcal{S}(\Pi_\infty^\vee,\chi_\infty^{-1})
\otimes{}^\iota\M_\lambda\bigr)(\varepsilon)
\]
be the dual generator associated with $[\Pi_\infty]_\cS^\varepsilon$, defined as in
(\ref{eq:theta-twist}) with $\theta$ replaced by $\iota$. Similarly, for each
$\sigma \in {\rm Aut}(\C)$, let ${}^\iota[{}^\sigma\Pi_\infty]_\cS^\varepsilon$ denote the dual generator associated with $[{}^\sigma\Pi_\infty]_\cS^\varepsilon$.
This gives rise to a collection of Betti--Shalika periods
\[
\bigl(p_\cS^\varepsilon({}^\sigma\Pi^\vee)\bigr)_
{\sigma \in {\rm Aut}(\C)}
\]
which we normalize as in (\ref{eq:W-H-periods}).

We have the following result on the period relation under duality.

\begin{theorem}\label{thm:period-relation-2}
Assume $n$ is even, $\Pi$ is $\chi$-symplectic, and either $F$ is not totally imaginary or $\chi = |\mbox{ }|_{\A_F}^{-{\sf w}}$.
We have
	\begin{equation*}
		\sigma\left( \frac{p_\cS^\eps(\Pi\dual)}{\Gsum(\omega_{\Pi})^{-1}\cdot p_\cS^\eps(\Pi)} \right) = \frac{p_\cS^\eps({}^\sigma\Pi\dual)}{\Gsum({}^\sigma\omega_{\Pi})^{-1}\cdot p_\cS^\eps({}^\sigma\Pi)},\quad \sigma \in {\rm Aut}(\C).
	\end{equation*}
\end{theorem}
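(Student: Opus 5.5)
The plan is to run the proof of Theorem~\ref{thm:period-relation} verbatim, with the Whittaker model replaced by the Shalika model and $\theta$ replaced by $\iota$. I would consider the analogue of the cube diagram \eqref{eq:cube-diagram}. Its vertices are $\cS(\Pi_f,\chi_f)$, $\cS(\Pi_f^\vee,\chi_f^{-1})$, their $\sigma$-conjugates, and the corresponding isotypic pieces of cuspidal cohomology with coefficients $\M_\lambda$ and ${}^\iota\M_\lambda$. The edges are $\mathcal F^\cS$, $\sigma_\cS$, $\iota_\cS$, $\sigma_{\rm dR}^\bullet$ and $\iota_{\rm dR}^\bullet$.

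First I set up the cohomological side. Everything in \S\,\ref{sec:dual-arithm} used only that $\theta$ is an algebraic automorphism of $G$ defined over $\Q$ preserving $C_\infty$. Hence $\iota_{\rm B}$ and $\iota_{\rm dR}$ are defined in the same way. Their analogues of Propositions~\ref{prop:theta-dR-B} and \ref{prop:theta-deRham-rational} hold with identical proofs, and so do the twisted Hecke equivariance \eqref{eq:theta-coh-rational} and the Lie-algebra description \eqref{eq:Lie-deRham}. I also need ${}^\iota V_\Pi=V_{\Pi^\vee}$; this is the same argument as the lemma for $\theta$, since $\iota$ and $\theta$ differ by an inner automorphism. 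The right face of the cube then commutes, and $\iota_{\rm dR}^\bullet$ maps the $\varepsilon\times\Pi_f$-isotypic part onto the $\varepsilon\times\Pi_f^\vee$-isotypic part. The $\varepsilon$ is preserved because $\iota$ acts trivially on $\pi_0(G(\R))$.

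Second, the top and bottom faces. From $(\chi\otimes\psi_F)({}^\iota u)=(\chi^{-1}\otimes\psi_F)(u)$ one gets ${}^\iota S_\chi(\phi)=S_{\chi^{-1}}({}^\iota\phi)$, hence $\Upsilon^\cS_{\Pi^\vee}\circ\iota=\iota\circ\Upsilon^\cS_\Pi$. Writing $[\Pi_\infty]^\varepsilon_\cS=\sum_i X_i^*\otimes S_i\otimes m_i$ exactly as before gives
\[
\mathcal F^\cS_{\Pi_f^\vee,{}^\iota[\Pi_\infty]^\varepsilon_\cS}\circ\iota_\cS=\iota_{\rm dR}^\bullet\circ\mathcal F^\cS_{\Pi_f,[\Pi_\infty]^\varepsilon_\cS},
\]
and the same identity holds for ${}^\sigma\Pi$. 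Here I use that ${}^\sigma\Pi$ is ${}^\sigma\chi$-symplectic, which is where the hypothesis on $F$ or $\chi$ enters through Corollary~\ref{coro:arithmeticity} or \cite{CKT2026}, so that the Shalika models and the periods $p^\varepsilon_\cS({}^\sigma\Pi)$ make sense.

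Third, the left face is Lemma~\ref{lem:Sh-rational}: $\sigma_\cS\circ\iota_\cS/\Gsum(\omega_\Pi)=\iota_\cS/\Gsum({}^\sigma\omega_\Pi)\circ\sigma_\cS$. Combining the three faces with the normalizations \eqref{eq:W-H-periods} for $\Pi$ and $\Pi^\vee$ gives the stated identity, by the same manipulation as in Theorem~\ref{thm:period-relation}. The one point needing care, and what I expect to be the main (though mild) obstacle, is bookkeeping. The rational structure for $\Pi^\vee$ is over $\Q(\Pi,\chi)=\Q(\Pi^\vee,\chi^{-1})$, and Lemma~\ref{lem:Sh-rational} must be applied with the correct $\overline u_\sigma$-conventions for $\chi^{-1}$, so that the power of the Gauss sum comes out as $\Gsum(\omega_\Pi)^{-1}$ rather than, say, a power of $\Gsum(\chi)$. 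I would check this via ${}^\iota\overline u_\sigma=u_\sigma\overline u_\sigma$ and the central character acting by $\omega_\Pi(u_\sigma)$, together with \eqref{eq:Gauss-sum}.
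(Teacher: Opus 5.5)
Your proposal is correct and is essentially the paper's proof. It uses the same non-commutative cube built from $\cS$, $\iota_\cS$ and $\iota_{\rm dR}^\bullet$: the top and bottom faces come from $\Upsilon^\cS_{\Pi^\vee}\circ\iota=\iota\circ\Upsilon^\cS_\Pi$, the right face from the $\iota$-analogue of Proposition~\ref{prop:theta-deRham-rational}, and the left face from Lemma~\ref{lem:Sh-rational}.

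One small correction to the bookkeeping step you flagged. Conjugation by $\omega_{r,r}$ swaps the two diagonal blocks, so ${}^\iota\overline{u}_\sigma=u_\sigma^{-1}\overline{u}_\sigma$, i.e.\ ${}^\iota(\overline{u}_\sigma^{-1})=u_\sigma\,\overline{u}_\sigma^{-1}$. Since $\sigma_\cS$ translates by $\overline{u}_\sigma^{-1}$, this gives $\sigma_\cS\circ\iota_\cS={}^\sigma\omega_\Pi(u_\sigma)\cdot\iota_\cS\circ\sigma_\cS$ and hence the exponent in $\Gsum(\omega_\Pi)^{-1}$; used literally, the identity ${}^\iota\overline{u}_\sigma=u_\sigma\overline{u}_\sigma$ would flip that exponent.
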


\begin{proof}
Consider the diagram (non-commutative)
\begingroup
\begin{equation*}
	\begin{tikzpicture}[baseline= (a).base]
		\node[scale=0.813] (a) at (0,0){
		\begin{tikzcd}[column sep={4.8cm,between origins}]
		& [0.5cm] \cS(\Pi_f^\vee,\chi_f^{-1}) \arrow[rr,"\mathcal F^\cS_{\Pi_f^\vee,{}^\iota[\Pi_\infty]_\cS^\varepsilon}"] \arrow[d,"\sigma_\cS"] & & [0.5cm] H_{\rm cusp}^\bullet(S^G, {}^\iota\widetilde \M_{\lambda})(\eps\times\Pi_f^\vee) \arrow[d,"\sigma_{\rm dR}^\bullet"] \\[20pt]
		& [0.5cm] \cS({}^\sigma\Pi_f^\vee,{}^\sigma\chi_f^{-1}) \arrow[ddl,swap,"\iota_\cS" {xshift=-5pt,yshift=-5pt},<-] & & [0.5cm] H_{\rm cusp}^\bullet(S^G, {}^{\sigma\circ\iota}\widetilde\M_{\lambda})(\eps\times{}^\sigma\Pi_f^\vee) \arrow[ll,<-,swap,"\mathcal F^\cS_{{}^\sigma\Pi_f^\vee,{}^\iota[{}^\sigma\Pi_\infty]_\cS^\varepsilon}"] \\
		\cS(\Pi_f,\chi_f)
  \arrow[rr,crossing over,swap,
    "{\mathcal F^\cS_{\Pi_f,[\Pi_\infty]_\cS^\varepsilon}}" {yshift=-0.5ex}] \arrow[d,"\sigma_\cS"] \arrow[uur,"\iota_\cS" {xshift=-5pt,yshift=-5pt}] & & H_{\rm cusp}^\bullet(S^G, \widetilde \M_{\lambda})(\eps\times \Pi_f) \arrow[d,"\sigma_{\rm dR}^\bullet"] \arrow[uur,crossing over,"\iota_{\rm dR}^\bullet" {xshift=15pt,yshift=10pt}] & \\[20pt]
		\cS({}^\sigma\Pi_f,{}^\sigma\chi_f) \arrow[rr,"\mathcal F^\cS_{{}^\sigma \Pi_f,[{}^\sigma\Pi_\infty]_\cS^\varepsilon}"]  & & H_{\rm cusp}^\bullet(S^G, {}^\sigma\widetilde \M_{\lambda})(\eps\times{}^\sigma \Pi_f) \arrow[uur,"\iota_{\rm dR}^\bullet" {xshift=15pt,yshift=10pt}] &
		\end{tikzcd}
	};
	\end{tikzpicture}
\end{equation*}
\endgroup
Here $\iota_{\rm dR}^\bullet$ is defined as in \S\,\ref{sec:dual-arithm} with $\theta$ replaced by $\iota$.
As in the proof of Theorem \ref{thm:period-relation}, it follows from the definition of the Betti--Shalika period and Lemma \ref{lem:Sh-rational} that, in order to prove the desired period relation, it suffices to show that the top, bottom, and right-hand faces of the diagram commute. The verification is identical to that in the proof of Theorem \ref{thm:period-relation}, with $\theta$, $\Upsilon_\Pi$, and $\Upsilon_{\Pi^\vee}$ replaced respectively by $\iota$, $\Upsilon_\Pi^\cS$, and $\Upsilon_{\Pi^\vee}^\cS$.
\end{proof}

\subsection{Period relations under algebraic conjugation}

In this subsection, fix a $\Q$-algebra automorphism
\[
c:F\longrightarrow F.
\]
By abuse of notation, we also denote by
\[
c:G\longrightarrow G,\quad g\longmapsto g^c:=c(g),
\]
the induced algebraic automorphism of $G$.

\subsubsection{$c$-conjugate representations}
For an algebraic representation
$(\rho,\M)$ of $G\times\C$, define its $c$-conjugate to be the algebraic representation
$(\rho^c,\M^c)$ given by
\[
\M^c:=\M,\quad \rho^c(g):=\rho(g^c).
\]
Here
\[
(g_\tau)_{\tau:F\rightarrow\C}^c
=
(g_{\tau\circ c})_{\tau:F\rightarrow\C}.
\]
It is immediate from the definitions that
\[
({}^\sigma(\rho^c),{}^\sigma(\M^c))
=
(({}^\sigma\rho)^c,({}^\sigma\M)^c),
\quad \sigma\in {\rm Aut}(\C).
\]
If $(\rho,\M) = (\rho_\lambda,\M_\lambda)$ for some $\lambda \in X^+(T\times\C)$, then we have a canonical isomorphism
\[
(\rho_\lambda^c,\M_\lambda^c) \longrightarrow (\rho_{\lambda^c},\M_{\lambda^c}),\quad \bigotimes_\tau m_\tau \longmapsto \bigotimes_\tau m_{\tau\circ c^{-1}}.
\]
Here $\lambda^c \in X^+(T\times\C)$ is the $c$-conjugate of $\lambda$ defined by
\[
(\lambda^c)^\tau := \lambda^{\tau\circ c^{-1}},\quad \tau \in {\rm Hom}(F,\C).
\]

For each place $v$ of $\Q$, define the $c$-conjugate $\Pi_v^c$ of $\Pi_v$ as follows. If $v=p$ is finite, let
$\Pi_p^c$ be the representation of $G(\Q_p)$ given by
\[
\pi_p^c(g):=\pi_p(g^c),\quad g\in G(\Q_p).
\]
If $v=\infty$, let $\Pi_\infty^c$ be the $(\frak g_\infty,C_\infty)$-module defined by
\[
\pi_\infty^c(Y):=\pi_\infty(c^*Y),\quad
\pi_\infty^c(k):=\pi_\infty(k^c),
\quad
Y\in \frak g_\infty,\; k\in C_\infty.
\]
It is clear that 
\[
{}^\sigma(\Pi_v^c) = ({}^\sigma\Pi_v)^c,\quad\sigma \in {\rm Aut}(\C).
\]
Define a $\C$-linear isomorphism
\[
c_\W : \W(\Pi_v) \longrightarrow \W(\Pi_v^c),\quad W \longmapsto W^c.
\]

Note that
\[
\Pi^c:={\bigotimes_v}' \Pi_v^c
\]
is an irreducible admissible $(\frak g_\infty,C_\infty)\times G(\A_f)$-module. It is again cuspidal automorphic, and its automorphic realization is given by
\[
V_{\Pi^c}
=
\{\phi^c\mid \phi\in V_\Pi\}.
\]

\subsubsection{Period relation}

Let $\varepsilon^c \in \widehat{\pi_0(G(\R))}$ be the $c$-conjugate of $\varepsilon$ defined by
\[
\varepsilon^c(g):=\varepsilon(g^c), \quad g \in G(\R).
\]
In other words, $(\varepsilon^c)_v = \varepsilon_{v\circ c^{-1}}$ for $v \in S_r$.
Consider the association 
\begin{equation*}
\begin{tikzcd}[row sep=normal, column sep=normal]
\extp^{\bullet}\frak{g}_\infty/\frak{k}_\infty \arrow[r, "F"] & \mathcal{W}({}^\sigma\Pi_\infty)\otimes {}^\sigma\mathcal{M}_{\lambda}\arrow[d, "c_\W \otimes {\rm id}"]\\
\extp^{\bullet}\frak{g}_\infty/\frak{k}_\infty \arrow[r, "F^c"]\arrow[u, "\extp^\bullet dc"]  & \mathcal{W}({}^\sigma\Pi_\infty^c)\otimes {}^\sigma\mathcal{M}_{\lambda}^c.
\end{tikzcd}
\end{equation*}
It is easy to verify that, for all $k \in K_\infty$, we have
\[
k\cdot F^c = (k^c\cdot F)^c.
\]
In particular, $\varepsilon^c$ is permissible for $\Pi_\infty^c$.

For each $\sigma \in {{\rm Aut}(\C)}$, we fix a generator 
\[
[{}^\sigma\Pi_\infty]^\varepsilon \in H^\bullet(\frak{g}_\infty,K_\infty^\circ;\W({}^\sigma\Pi_\infty)\otimes{}^\sigma\M_\lambda)(\varepsilon).
\]
Then we have an associated $c$-conjugate generator 
\[
([{}^\sigma\Pi_\infty]^\varepsilon)^c \in H^\bullet(\frak{g}_\infty,K_\infty^\circ;\W({}^\sigma\Pi_\infty^c)\otimes{}^\sigma\M_\lambda^c)(\varepsilon^c).
\]
{For compatibility, we assume $[{}^\sigma\Pi_\infty]^\varepsilon = [\Pi_\infty]^\varepsilon$ if $\sigma \in {\rm Aut}(\C/\Q(\Pi))$.}
With respect to these generators, we obtain two collections of Betti--Whittaker periods
\[
\bigl(p^\varepsilon({}^\sigma\Pi)\bigr)_
{\sigma \in {{\rm Aut}(\C)}},\quad \bigl(p^{\varepsilon^c}({}^\sigma\Pi^c)\bigr)_
{\sigma \in {{\rm Aut}(\C)}}
\]
normalized as in (\ref{eq:W-H-periods}).

We have the following period relation under $c$-conjugation.

\begin{theorem}\label{thm:period-relation-3}
We have
\[
\sigma \left(\frac{p^{\varepsilon^c}(\Pi^c)}{p^\varepsilon(\Pi)}\right) = \frac{p^{\varepsilon^c}({}^\sigma\Pi^c)}{p^\varepsilon({}^\sigma\Pi)},\quad \sigma \in {\rm Aut}(\C).
\]
\end{theorem}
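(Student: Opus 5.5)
We follow the template of the proof of Theorem~\ref{thm:period-relation}, with $\theta$ replaced by the algebraic automorphism $c$ of $G$. We consider the analogous cube diagram. Its front face is the comparison square for $\Pi$ from (\ref{eq:W-H-periods}). Its back face is the comparison square for $\Pi^c$, built from the generators $([{}^\sigma\Pi_\infty]^\varepsilon)^c$. The horizontal connecting maps are $c_\W:\W(\Pi_f)\to\W(\Pi_f^c)$ and $c_{\rm dR}^\bullet:H^\bullet_{\rm cusp}(S^G,\widetilde\M_\lambda)(\varepsilon\times\Pi_f)\to H^\bullet_{\rm cusp}(S^G,\widetilde\M_\lambda^c)(\varepsilon^c\times\Pi_f^c)$. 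Here $c_{\rm dR}^\bullet$ and $c_{\rm B}^\bullet$ are defined exactly as $\theta_{\rm dR},\theta_{\rm B}$ in \S\,\ref{sec:dual-arithm}, using the $G(\Q)$-equivariant diffeomorphism $X/c^{-1}(K_f)\to X/K_f$, $x\mapsto x^c$. As in that proof, once the remaining faces are shown to commute (up to normalizing scalars), the period relation is equivalent to the statement that $\sigma^\bullet_{\rm dR}$ intertwines the two composites $\mathcal F_{\Pi_f^c,([\Pi_\infty]^\varepsilon)^c}\circ c_\W\circ\mathcal F_{\Pi_f,[\Pi_\infty]^\varepsilon}^{-1}$, one for $\Pi$ and one for ${}^\sigma\Pi$.

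The key steps are as follows.

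First, the Whittaker face. We show $\sigma_\W\circ c_\W=c_\W\circ\sigma_\W$ on the nose, so no Gauss sum appears. The argument uses three facts: $c$ preserves $N$; $\psi_N(u^c)=\psi_N(u)$, since $\psi_F=\psi_\Q\circ{\rm tr}_{F/\Q}$ and ${\rm tr}_{F/\Q}\circ c={\rm tr}_{F/\Q}$; and $\underline u_\sigma^c=\underline u_\sigma$, since $u_\sigma\in\prod_p\Z_p^\times$ lies in $\A_f^\times$ and so is fixed by $c$. The same invariance of $\psi_N$ also shows that $\phi\mapsto\phi^c$ commutes with taking Whittaker coefficients, i.e.\ $\Upsilon_{\Pi^c}\circ c=c\circ\Upsilon_\Pi$.

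Second, the top and bottom faces. We write $[\Pi_\infty]^\varepsilon=\sum_i X_i^*\otimes W_i\otimes m_i$ and compare with the Lie-algebra description (\ref{eq:Lie-deRham}) of $c_{\rm dR}$. This gives $\mathcal F_{\Pi_f^c,([\Pi_\infty]^\varepsilon)^c}\circ c_\W=c^\bullet_{\rm dR}\circ\mathcal F_{\Pi_f,[\Pi_\infty]^\varepsilon}$ verbatim as in Theorem~\ref{thm:period-relation}. For the isotypic components we use the relation $c_{\rm dR}\circ m_x=m_{x^c}\circ c_{\rm dR}$, which shows that $c^\bullet_{\rm dR}$ carries the $\varepsilon\times\Pi_f$-part onto the $\varepsilon^c\times\Pi_f^c$-part. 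By the same argument, the bottom face, built from $[{}^\sigma\Pi_\infty]^\varepsilon$, also commutes.

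Third, the right face: $\sigma^\bullet_{\rm dR}\circ c^\bullet_{\rm dR}=c^\bullet_{\rm dR}\circ\sigma^\bullet_{\rm dR}$. We first prove the analogue of Proposition~\ref{prop:theta-dR-B}, $I\circ c_{\rm dR}=c_{\rm B}\circ I$, by the same change of variables. This reduces the claim to the Betti complex, where $c_{\rm B}(\ell)=\ell\circ c_\sharp$ and $\sigma_{\rm B}(\ell)=t_\sigma\circ\ell$ visibly commute. It remains to use ${}^\sigma(\M^c)=({}^\sigma\M)^c$ and to check that the identifications (\ref{eq:finite-rep}) of ${}^\sigma\M_\lambda$ with $\M_{{}^\sigma\lambda}$ are compatible with the canonical isomorphism $\M_\lambda^c\cong\M_{\lambda^c}$ (which permutes tensor factors). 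This holds because both maps are defined factorwise via the fixed rational structures $\M_{\lambda^\tau,\Q}$; it requires that $\M_{\lambda^\tau,\Q}$ be chosen to depend only on the weight $\lambda^\tau$.

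The main obstacle is this bookkeeping of coefficient systems: making sure that the $c$-twist and the Galois twist are compatible with the chosen rational structures on the $\M_{\lambda^\tau}$, so that no stray constant appears. Here $\Phi$-type normalizations play no role, since $c$ merely permutes factors, so unlike Theorem~\ref{thm:archi-period-relation} no archimedean computation is needed. Once the three faces commute, the normalizations (\ref{eq:W-H-periods}) for $\Pi$ and $\Pi^c$ give $\sigma(p^{\varepsilon^c}(\Pi^c)/p^\varepsilon(\Pi))=p^{\varepsilon^c}({}^\sigma\Pi^c)/p^\varepsilon({}^\sigma\Pi)$ exactly as at the end of the proof of Theorem~\ref{thm:period-relation}.
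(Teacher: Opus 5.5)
Your proposal is correct and is essentially the paper's proof. It uses the same cube diagram, with the Whittaker (left) face commuting on the nose because $u_\sigma\in\prod_p\Z_p^\times$ is fixed by $c$, and with the top, bottom and right faces handled exactly as for $\theta$ in Theorem~\ref{thm:period-relation}. You also make explicit the $c$-invariance of $\psi_N$ via ${\rm tr}_{F/\Q}\circ c={\rm tr}_{F/\Q}$, which the paper leaves implicit.

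One small slip: with $c_{\rm dR}$ defined as pullback along $x\mapsto x^c$, the Hecke relation is $c_{\rm dR}\circ m_x=m_{c^{-1}(x)}\circ c_{\rm dR}$, not $m_{x^c}\circ c_{\rm dR}$; the two agree only when $c^2=1$. It is the corrected relation that actually sends the $\varepsilon\times\Pi_f$-part onto the $\varepsilon^c\times\Pi_f^c$-part.
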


\begin{proof}
Consider the diagram
\begingroup
\begin{equation*}
	\begin{tikzpicture}[baseline= (a).base]
		\node[scale=0.813] (a) at (0,0){
		\begin{tikzcd}[column sep={4.8cm,between origins}]
		& [0.5cm] \W(\Pi_f^c) \arrow[rr,"\mathcal F_{\Pi_f^c,([\Pi_\infty]^\varepsilon)^c}"] \arrow[d,"\twistW"] & & [0.5cm] H_{\rm cusp}^\bullet(S^G, \widetilde \M_{\lambda}^c)(\eps^c\times\Pi_f^c) \arrow[d,"\sigma_{\rm dR}^\bullet"] \\[20pt]
		& [0.5cm] \W({}^\sigma\Pi_f^c) \arrow[ddl,swap,"c_\W" {xshift=-5pt,yshift=-5pt},<-] & & [0.5cm] H_{\rm cusp}^\bullet(S^G, {}^{\sigma}\widetilde\M_{\lambda}^c)(\eps^c\times{}^\sigma\Pi_f^c) \arrow[ll,<-,swap,"\mathcal F_{{}^\sigma\Pi_f^c,([{}^\sigma\Pi_\infty]^\varepsilon)^c}"] \\
		\W(\Pi_f)
  \arrow[rr,crossing over,swap,
    "{\mathcal F_{\Pi_f,[\Pi_\infty]^\varepsilon}}" {yshift=-0.5ex}] \arrow[d,"\twistW"] \arrow[uur,"c_\W" {xshift=-5pt,yshift=-5pt}] & & H_{\rm cusp}^\bullet(S^G, \widetilde \M_{\lambda})(\eps\times \Pi_f) \arrow[d,"\sigma_{\rm dR}^\bullet"] \arrow[uur,crossing over,"c_{\rm dR}^\bullet" {xshift=15pt,yshift=10pt}] & \\[20pt]
		\W({}^\sigma\Pi_f) \arrow[rr,"\mathcal F_{{}^\sigma \Pi_f,[{}^\sigma\Pi_\infty]^\varepsilon}"]  & & H_{\rm cusp}^\bullet(S^G, {}^\sigma\widetilde \M_{\lambda})(\eps\times{}^\sigma \Pi_f) \arrow[uur,"c_{\rm dR}^\bullet" {xshift=15pt,yshift=10pt}] &
		\end{tikzcd}
	};
	\end{tikzpicture}
\end{equation*}
\endgroup
Here $c_{\rm dR}^\bullet$ is defined as in \S\,\ref{sec:dual-arithm} with $\theta$ replaced by $c$.
Note that the left-hand face is commutative.
Indeed, $u_\sigma \in \widehat{\Z}^\times$ is invariant under $c$.
As in the proof of Theorem \ref{thm:period-relation}, together with the definition of the Betti--Whittaker period, it suffices to show that the top, bottom, and right-hand faces of the diagram commute. The verification is identical to that in the proof of Theorem \ref{thm:period-relation}, with $\theta$, $\Pi^\vee$ replaced by $c$, $\Pi^c$.
\end{proof}


\end{document}